\pdfoutput=1 

\documentclass[a4paper,twoside,leqno]{article}

\usepackage{a4wide}

\usepackage[p,osf]{scholax}
\usepackage{amsmath,amsthm}
\usepackage[scaled=1.075,ncf,vvarbb]{newtxmath}

\usepackage{microtype}

\usepackage[only,llbracket,rrbracket]{stmaryrd}

\usepackage{graphicx}

\usepackage{enumitem}
\setlist[enumerate,1]{label=(\alph*)}
\setlist[enumerate,2]{label=(\roman*)}

\usepackage[T1]{fontenc}
\usepackage[utf8x]{inputenc}
\usepackage[usenames,dvipsnames]{xcolor}

\usepackage{url}
\usepackage[hidelinks,colorlinks=true,linkcolor=blue,citecolor=darkgray,linktocpage=false]{hyperref}

\usepackage{mathtools}
\mathtoolsset{showonlyrefs}

\usepackage{dsfont}

\newtheoremstyle{citing}
  {3pt}
  {3pt}
  {\itshape}
  {}
  {\bfseries}
  {.}
  {.5em}
  {\thmnote{#3}}
\theoremstyle{citing}

\theoremstyle{definition}

\theoremstyle{remark}

\swapnumbers
\theoremstyle{plain}

\newtheorem{theorem}{Theorem}[section]

\newtheorem{lemma}[theorem]{Lemma}
\newtheorem{proposition}[theorem]{Proposition}
\newtheorem{corollary}[theorem]{Corollary}

\theoremstyle{remark}
\newtheorem{remark}[theorem]{Remark}
\newtheorem*{remark*}{Remark}
\newtheorem{example}[theorem]{Example}

\theoremstyle{definition}
\newtheorem{definition}[theorem]{Definition}
\newtheorem*{definition*}{Definition}
\newtheorem{miniremark}[theorem]{}
\newtheorem{question}[theorem]{Question}
\newtheorem*{question*}{Question}

\theoremstyle{definition}
\swapnumbers

\usepackage{chngcntr}

\counterwithin*{equation}{theorem}
\counterwithin*{enumi}{theorem}

\newcommand{\End}[1]{ \mathrm{End}({#1}) }
\newcommand{\GF}[1]{ \mathcal{G}_{#1} }

\newcommand{\Choice}[2]{\boldsymbol{\Lambda}(#1,#2)}

\newcommand{\Partition}[2]{\boldsymbol{\Xi}(#1,#2)}

\newcommand{\ccspace}[1]{\mathscr{K}(#1)}

\newcommand{\project}[1]{{#1}_\natural}

\newcommand{\eqproject}[1]{(#1)_\natural}

\newcommand{\perpproject}[1]{{#1}^\perp_\natural}

\newcommand{\grass}[2]{\mathbf{G}(#1,#2)}

\newcommand{\pgrass}[2]{\mathbf{G}_{\natural}(#1,#2)}

\newcommand{\ograss}[2]{\mathbf{G}_0(#1,#2)}

\newcommand{\AllP}{\mathbf{P}}

\newcommand{\Proj}[2]{\AllP(#1,#2)}

\newcommand{\Var}[1]{\mathbf{V}_{#1}}     
\newcommand{\var}[1]{\mathbf{v}_{#1}}     

\newcommand{\adim}{n}
\newcommand{\vdim}{k}
\newcommand{\codim}{n-k}

\newcommand{\sphere}[1]{\mathbb{S}^{#1}}

\newcommand{\caniso}[1]{\boldsymbol{\Gamma}_{\!#1} }

\ifdefined\textint
    \renewcommand{\textint}[2]{{\textstyle\int_{#1}^{#2}}}
\else    
    \newcommand{\textint}[2]{{\textstyle\int_{#1}^{#2}}}
\fi

\ifdefined\textfint
    \renewcommand{\textfint}[2]{{\textstyle\fint_{#1}^{#2}}}
\else    
    \newcommand{\textfint}[2]{{\textstyle\fint_{#1}^{#2}}}
\fi

\ifdefined\textsum
    \renewcommand{\textsum}[2]{{\textstyle\sum_{#1}^{#2}}}
\else  
    \newcommand{\textsum}[2]{{\textstyle\sum_{#1}^{#2}}}
\fi

\ifdefined\textint
    \renewcommand{\textprod}[2]{{\textstyle\prod_{#1}^{#2}}}
\else    
    
\fi

\newcommand{\natp}{\mathscr{P}}
\newcommand{\nat}{\natp \cup \{0\}}

\newcommand{\C}{\mathbf{C}}

\newcommand{\R}{\mathbf{R}}

\newcommand{\LM}{\mathscr{L}}

\newcommand{\Lp}[1]{\mathbf{L}_{#1}}

\newcommand{\Lpnorm}[3]{{#1}_{({#2})}({#3})}

\newcommand{\HM}{\mathscr{H}}

\newcommand{\density}{\boldsymbol{\Theta}}

\newcommand{\restrict}{ \mathop{ \rule[0pt]{.5pt}{6pt} \rule[0pt]{4pt}{0.5pt} }\nolimits }

\newcommand{\contract}{ \mathop{ \rule[0pt]{4pt}{0.5pt} \rule[0pt]{.5pt}{6pt} }\nolimits }

\newcommand{\ud}{\ensuremath{\,\mathrm{d}}}

\newcommand{\uD}{\ensuremath{\mathrm{D}}}

\DeclareMathOperator{\extreme}{extreme}
\DeclareMathOperator{\exposed}{exposed}

\newcommand{\id}[1]{\mathds{1}_{#1}}

\newcommand{\lIm}{[}
\newcommand{\rIm}{]}

\newcommand{\scale}[1]{\boldsymbol{\mu}_{#1}}
\newcommand{\trans}[1]{\boldsymbol{\tau}_{#1}}

\newcommand{\tbwedge}{{\textstyle\boldsymbol{\bigwedge}}}

\newcommand{\aforms}[1]{\tbwedge^{\!{#1}}}

\newcommand{\extpower}[1]{\tbwedge_{{#1}}}

\newcommand{\adjoint}[1]{\aforms{1} {#1}}

\newcommand{\transpose}[1]{ #1^{*} }

\newcommand{\polar}[1]{{#1}^{\circ}}

\newcommand{\cconj}[1]{{#1}^{\star}}

\newcommand{\tbcup}{{{\textstyle \bigcup}}}

\newcommand{\Clos}[1]{\mathop{\mathrm{Clos}}#1}

\newcommand{\Dirac}[1]{ \boldsymbol{\delta}_{#1} }

\newcommand{\VF}{\mathscr{X}}

\DeclareMathOperator{\aff}{af{}f}

\DeclareMathOperator{\trace}{trace}

\DeclareMathOperator{\Hom}{Hom}

\DeclareMathOperator{\Bdry}{Bdry}

\DeclareMathOperator{\lin}{span}

\newcommand{\cnt}[1]{\mathscr{C}^{#1}}

\newcommand{\orthproj}[2]{\mathbf{O}^\ast({#1},{#2})}

\newcommand{\powerset}[1]{\mathbf{2}^{#1}}

\newcommand{\orthgroup}[1]{\mathbf{O}({#1})}

\newcommand{\ph}{\,\cdot\,}

\DeclareMathOperator{\Int}{Int}

\DeclareMathOperator{\rInt}{relint}

\DeclareMathOperator{\dom}{dom}

\DeclareMathOperator{\rBdry}{relbd}

\DeclareMathOperator{\dmn}{dmn}

\DeclareMathOperator{\grad}{grad}

\DeclareMathOperator{\subgrad}{subgr}

\DeclareMathOperator{\spt}{spt}

\DeclareMathOperator{\Sp}{space}

\DeclareMathOperator{\conv}{conv}

\DeclareMathOperator{\ray}{ray}

\DeclareMathOperator{\cone}{cone}

\DeclareMathOperator{\Tan}{Tan}

\DeclareMathOperator{\Nor}{Nor}

\DeclareMathOperator{\Lip}{Lip}

\DeclareMathOperator{\card}{card}

\newcommand{\without}{\!\sim\!}

\newcommand{\QEC}{\textrm{QEC}}
\newcommand{\EC}{\textrm{EC}}
\newcommand{\SAC}{\textrm{SAC}}
\newcommand{\USAC}{\textrm{USAC}}
\newcommand{\AC}{\textrm{AC}}

\newcommand{\imsp}{\mathcal{I}}

\DeclareMathOperator{\im}{im}

\date{\today}

\title{Geometry of the Atomic Condition}

\author{
  Jacek Jakimiuk
  \and
  Sławomir Kolasiński
  \and
  Maciej Leśniak
}

\begin{document}

\maketitle

\tableofcontents


\begin{abstract}
    The class~\AC{} of integrands satisfying the \emph{atomic condition} was introduced by
    De~Philippis, De~Rosa, and~Ghiraldin in 2018. These integrands give rise to Almgren elliptic
    geometric functionals as proven by the second author and De~Rosa in~2020. So far, it is not
    known how to verify the atomic condition for any particular integrand (apart from the area
    integrand and its class~2 neighbourhood) or how to construct, event artificial, members of~\AC.

    We~reinterpret the atomic condition in terms of convex geometry shedding light on the structure
    of this class. We also propose quantitative versions of~\AC{}, different from the~\SAC{} and
    \USAC{} defined by De~Rosa and Tione, which we call the \emph{exposed condition} \EC{} and the
    \emph{quadratic exposed condition} \QEC{}.  As~is the case with \USAC{}, the~\QEC{} is stable
    under class~2 perturbations and supports a~Caccioppoli-type inequality; hence, is suitable for
    proving regularity of critical points. Moreover, we~provide some conditions sufficient for~\EC{}
    in case the integrand is associated to a~norm on the exterior power of~$\R^{\adim}$. Finally,
    for all $\vdim \ge 2$ and $\adim - \vdim \ge 3$ we construct strictly polyconvex integrands --
    associated to inner-product norms on $\extpower{\vdim} \R^{\adim}$ -- which fail the atomic
    condition, showing that strict polyconvexity is necessary but far from sufficient for~\AC{}.
\end{abstract}

\section{Introduction}
\label{sec:intro}

\paragraph{The anisotropic Plateau problem and the atomic condition}
Let $F : \grass{\adim}{\vdim} \to \R \cap \{ t : t > 0 \}$ be a~continuous integrand on the
Grassmannian of unoriented $\vdim$-dimensional planes in~$\R^{\adim}$ and consider the functional
\begin{displaymath}
    \Phi_F(S) = \textint{S}{} F \bigl( \Tan^{\vdim}(\HM^{\vdim} \restrict S, x) \bigr) \ud
    \HM^{\vdim}(x)
\end{displaymath}
defined whenever $S \subseteq \R^{\adim}$ is $\HM^{\vdim}$~measurable and
$(\HM^{\vdim},\vdim)$~rectifiable. A~Plateau-type problem asks for existence and regularity of
minimisers of~$\Phi_F$ in a~class of competitors spanning a~given boundary; the classical frameworks
go back to Reifenberg~\cite{Reifenberg1960}, Federer and Fleming~\cite{Federer1960}, and
Almgren~\cite{Almgren1968}. Already for existence of rectifiable minimisers one must impose an
ellipticity condition on~$F$. Almgren's ellipticity~\cite{Almgren1968}
(cf.~\cite[Definition~3.16]{Fang2018} and~\cite[Theorem~6.7]{DeRosa2020}) is the prototype but it is
notoriously difficult to verify and depends sensitively on the class of competitors;
see~\cite[Theorem~5]{Burago2004} and~\cite[Theorem~1.2]{DeRosa2025}.

In the varifold formulation one considers, for an integrand~$F$ of class~$\cnt{1}$, the functional
\begin{displaymath}
    \Phi_F(V) = \textint{}{} F(T) \ud V(x,T) \quad \text{for $V \in \Var{\vdim}(U)$}
\end{displaymath}
defined on $\vdim$-dimensional varifolds in an open set $U \subseteq \R^{\adim}$,
i.e., Radon measures over $U \times \grass{\adim}{\vdim}$; see~\cite{Allard1972}. Given a~smooth
compactly supported vectorfield $g : U \to \R^{\adim}$ with flow $h_t : U \to U$ the
\emph{anisotropic first
  variation} of~$V$ is
\begin{equation}
    \label{eq:intro_fst_var} \delta_F V(g) = \left. \tfrac{\ud}{\ud t} \right|_{t=0} \Phi_F(h_{t*}V)
    = \textint{}{} B_F(T) \bullet \uD g(x) \ud V(x,T) \,,
\end{equation} where $B_F(T) \in \End{\R^{\adim}}$ can be computed explicitly; identifying
$\grass{\adim}{\vdim}$ with the submanifold $\pgrass{\adim}{\vdim} \subseteq \End{\R^{\adim}}$ of
orthogonal projections of rank~$\vdim$, one may extend~$F$ so that $B_F(T) = \grad F(T)$ for $T \in
\pgrass{\adim}{\vdim}$; see~\ref{rem:BF_gradient}.

A~decisive step was taken by De~Philippis, De~Rosa, and Ghiraldin~\cite{DePhilippis2018}, who
identified a~condition on~$F$ -- the \emph{atomic condition}~\AC{} (see~\ref{def:AC}) -- which is
both necessary and sufficient for the validity of the anisotropic Allard rectifiability theorem:
every $V \in \Var{\vdim}(U)$ such that $\| \delta_F V \|$ is a~Radon measure and
$\density_{*}^{\vdim}(\|V\|,x) > 0$ for $\|V\|$~almost all~$x$ must be rectifiable. In particular,
$F$-stationary varifolds of \AC{} integrands are rectifiable. Subsequently, De~Rosa and the second
author~\cite{DeRosa2020} proved that \AC{} implies Almgren ellipticity; De~Rosa and
Tione~\cite{DeRosa2022} introduced a~scalar variant \SAC{} together with its uniform version \USAC{}
(see~\ref{def:SAC}) and derived $\cnt{1,\alpha}$-regularity, away from a~closed null set, of
Lipschitz critical graphs with $p$-integrable anisotropic mean curvature, $p > \vdim$; and De~Rosa,
Lei, and Young~\cite[Theorem~1.16]{DeRosa2025} showed that every \AC{} integrand of class~$\cnt{1}$
is \emph{strictly polyconvex}, i.e., arises from a~strictly convex norm~$\varphi$ on
$\extpower{\vdim} \R^{\adim}$ restricted to the cone of simple $\vdim$-vectors
(see~\ref{def:polyconvex}). In codimension one, \cite[Theorem~1.3]{DePhilippis2018} characterises
\AC{} completely: it is equivalent to strict convexity of the one-homogeneous extension of~$F$.
In~a~recent work on the anisotropic Michael--Simon inequality the atomic condition is adopted as
a~standing hypothesis; cf.~\cite{DePhilippis2026v2}.

Despite this central role, the structure of the class~\AC{} in higher codimension has remained
opaque. Apart from the area integrand and its class~$2$
neighbourhood~\cite{DePhilippis2018,DeRosa2022}, no integrand is known to satisfy \AC{} when $2 \le
\vdim \le \adim - 2$; no workable necessary conditions beyond strict polyconvexity were available;
and it was unknown whether strict polyconvexity -- the natural candidate suggested by the
codimension-one picture and by~\cite{DeRosa2025} -- might in fact be sufficient. The purpose of this
paper is to develop a~systematic convex-geometric framework for~\AC{} which clarifies all three
issues: we obtain a~structural characterisation of~\AC{}, new necessary and sufficient conditions,
a~quantitative variant supporting a~regularity theory, and, on the negative side, the first examples
showing that strict polyconvexity is very far from sufficient.

\paragraph{A convex-geometric characterisation}
For $T \in \pgrass{\adim}{\vdim}$ set $\transpose{P_F}(T) = B_F(T) / F(T)$, which is an oblique
projection onto~$\im T$ (see~\ref{def:aux-objects}), and let $\GF{F} = \transpose{P_F} \lIm
\pgrass{\adim}{\vdim} \rIm \subseteq \End{\R^{\adim}}$ be the \emph{Gauss image} of~$F$. Our
starting point (\ref{thm:AC_extreme}) is the observation that \AC{} is precisely a~statement about
the convex position of~$\GF{F}$ relative to the determinantal variety of low-rank endomorphisms:
\begin{displaymath}
    F \in \AC{} \quad \iff \quad \conv \GF{F} \cap \bigl\{ A : \dim \im A \le \vdim \bigr\} = \GF{F}
    = \extreme ( \conv \GF{F} ) \,.
\end{displaymath}
This reformulation turns the measure-theoretic definition
of~\cite{DePhilippis2018} into finite-di\-men\-sion\-al convex geometry and is the engine behind
everything that follows. As a~first consequence (\ref{prop:AC_imPF_homeo}) we show that for $F \in
\AC{}$ the maps $S \mapsto \project{( \im \transpose{P_F}(S) )}$ and $S \mapsto \ker P_F(S)$ are
homeomorphisms of $\pgrass{\adim}{\vdim}$ onto $\pgrass{\adim}{\vdim}$ and $\grass{\adim}{\codim}$
respectively -- a~rigidity property of independent interest.

\paragraph{The exposed condition}
Guided by the characterisation above, we introduce the \emph{exposed condition}~\EC{}
(see~\ref{def:EC}): the same identity with extreme points replaced by \emph{exposed} points. Since
the hyperplane exhibiting exposedness may be chosen freely, \EC{} removes what we regard as an
artificial rigidity of~\SAC{}, where the exposing functional is prescribed to be~$Q_F(T)$;
cf.~\ref{rem:specific_hyperplane_for_SAC}. We prove
\begin{displaymath}
    \SAC{} \subseteq \EC{} \subseteq \AC{}
\end{displaymath}
(see~\ref{prop:SAC_in_EC} and~\ref{prop:EC_in_AC}, the latter via the Straszewicz
theorem~\cite[18.6]{Rockafellar1970}) and, adapting the argument of~\cite[\S{5}]{DePhilippis2018},
that $\EC{} = \AC{}$ whenever $\adim = \vdim + 1$ (see~\ref{thm:AC=EC-codim1}). Whether \EC{} and
\AC{} differ in general is an interesting open problem; cf.~\ref{rem:EC=AC?}
and~\ref{q:EC=AC}.

\paragraph{A spectral necessary condition}
In the opposite direction we exhibit a~new \emph{necessary} condition for~\AC{}. We study the field
of projections $P_F$ associated to an integrand~$F$ and establish a criterion ensuring the existence
of a perfect pairing field~$N$ such that for each $S,T \in \pgrass{\adim}{\vdim}$ the linear
endomorphism $N(T)$ has image in~$\im T^{\perp}$ and satisfies $P_F(S) \bullet N(T) \ge 0$. Define
the set
\begin{displaymath}
    \Sigma = \End{\R^{\adim}} \cap \bigl\{ A : \forall \lambda \in \sigma(A) \quad \deg_A \lambda
    \ge \codim \implies \lambda \ge 0 \bigr\} \,,
\end{displaymath}
i.e., the set of endomorphisms whose every real eigenspace of dimension at least~$\codim$ has
non-negative eigenvalue (see~\ref{def:master_set} and~\ref{rem:Sigma_eigenspace}), together with its
twisted variants~$\Sigma_J$ for $J \in \End{\R^{\adim}}$ (see~\ref{def:master_set_twisted}). We show
that $\Sigma$ is a~non-convex cone containing all rank-$\vdim$ idempotents, dense
in~$\End{\R^{\adim}}$ precisely when $\codim \ge 2$ (see~\ref{lem:Sigma_props}
and~\ref{rem:density_sharp}). Via a~min-max argument resting on the Kneser--Fan
theorem~\cite[4.2]{Sion1958} and an interchange lemma tailored to fields of normals over the
Grassmannian (see~\ref{lem:min_max_interchange}) -- we show (see~\ref{prop:master_criterion}) that
$\conv \im P_F \subseteq \Sigma_J$ is necessary for the existence of a field~$N$ as above.
As~a~corollary (\ref{cor:Sigma_necessary_for_AC})
\begin{displaymath}
    F \in \AC{} \implies \conv \transpose{\GF{F}} \subseteq \Sigma_J \quad \text{for some full rank
    } J \in \rInt \conv \transpose{\GF{F}} \,.
\end{displaymath}
Because membership in~$\Sigma_J$ is a~spectral condition on individual endomorphisms, this yields an
eigenvalue test which any candidate \AC{} integrand must pass.
Section~\ref{sec:necessary_condition_on_GF} closes with an~illustration (see~\ref{ex:J_fields}):
every $J \in \End{\R^{\adim}}$ with $J x \bullet x > 0$ for $x \ne 0$ gives rise to a~field of
oblique projections~$P^{J}$ satisfying $\conv \im P^{J} \subseteq \Sigma_J$, and $P^{J}$ is of the
form~$P_F$ for an~integrand~$F$ \emph{if and only if} $J$ is symmetric -- in which case $F$ is the
area integrand of the scalar product induced by~$J$. The fields to which the our criterion applies
are therefore strictly more numerous than those of variational origin.

\paragraph{Multilinear algebra and a sufficient condition via accretivity}
For polyconvex integrands one has $F = \psi \circ \Upsilon_{\!\vdim}$, where $\Upsilon_{\!\vdim}(A)
= \caniso{\vdim} \bigl( \extpower{\vdim} A \bigr)$ is the (polynomial) exterior-power map and $\psi$
is a~cross-norm built from~$\varphi$; see~\ref{def:cross-norm} and~\ref{lem:norm_on_tensors}. We
show that $\Upsilon_{\!\vdim}$ restricted to rank-$\vdim$ idempotents admits a~\emph{linear} left
inverse~$\Psi_{\!\vdim}$ (see~\ref{cor:Psi_inverse_to_Upsilon}), we compute its adjoint in closed
form,
\begin{displaymath}
    \Psi_{\!\vdim}^{*}(L) = \caniso{\vdim}( \varkappa(L) ) \quad \text{for $L \in \End{\R^{\adim}}$}
    \,,
\end{displaymath}
where $\varkappa$ is the derivative of $\extpower{\vdim}$ at the identity
(see~\ref{def:varkappa} and~\ref{lem:Psi_adjoint}), and we obtain the representation
\begin{displaymath}
    B_F(T) = \Psi_{\!\vdim} \grad \psi ( \Upsilon_{\!\vdim}(T) ) \quad \text{for $T \in
    \pgrass{\adim}{\vdim}$}
\end{displaymath}
(see~\ref{prop:BF_repr_with_Psi}). This linearisation reduces the verification of~\EC{} to a~family
of scalar inequalities on~$\extpower{\vdim} \R^{\adim}$ and leads to our main sufficient condition
(\ref{thm:F_accretive}): if for each $S \in \pgrass{\adim}{\vdim}$ there exists a~normal direction
$N(S) \in \imsp(S^{\perp})$ for which $\varkappa( \transpose{N}(S) )$ is \emph{strictly
  $\varphi$-accretive} in the sense of nonlinear semigroup theory~\cite[Definition~4.1]{Pazy1983},
\cite[Chap.~VI, \S{1.1}]{Cioranescu1990}, then $F \in \EC{} \subseteq \AC{}$. The appearance of
accretivity -- a~notion native to evolution equations -- as the operative hypothesis in the
ellipticity theory of geometric functionals appears to be new. It also connects the atomic condition
to Busemann's classical programme on convexity over Grassmann
cones~\cite{Busemann1960,Busemann1961}. In~particular, we discuss the \emph{total convexity} of
Carath{\'e}odory and Busemann (see~\ref{def:totally_convex_norm}) and its connection with the \SAC{}
in~\ref{rem:totally_convex_norms_and_AC}.

\paragraph{A quantitative condition and regularity}
We then introduce the \emph{quadratic exposed condition}~\QEC{} (see~\ref{def:QEC}), a~uniform
version of~\EC{} calibrated so that the classical Caccioppoli-type (tilt-excess) argument of
Allard~\cite[8.13]{Allard1972}, \cite[3.3]{Allard1986} runs verbatim; see~\ref{prop:tilt-height}.  We
prove
\begin{displaymath}
    \USAC{} \subseteq \QEC{} \subseteq \EC{} \,,
\end{displaymath}
so that \QEC{} integrands exist by~\cite[Proposition~3.10]{DeRosa2022}, and that
\QEC{} is stable under $\cnt{1,1}$~perturbations of the integrand
(see~\ref{thm:QEC_stability}). These are the two ingredients on which the regularity scheme of
De~Rosa and Tione rests, and we~state the corresponding conclusion
in~\ref{rem:QEC_regularity}: \emph{if $F$ is a~$\vdim$-integrand of class~$\cnt{2}$
  satisfying $\QEC(\Gamma,N)$ with $N$ of class~$\cnt{0,1}$, and $u : \Omega \to \R^{\codim}$ is
  a~Lipschitz function whose graph is a~varifold with anisotropic mean curvature in~$\Lp{p}$,
  $p > \vdim$, then $u$ is of class~$\cnt{1,\alpha}$ on an open subset of~$\Omega$ of full
  measure.} We~do not reproduce the proof: the passage to the nonparametric setting and the step
converting~\QEC{} into uniform Legendre--Hadamard ellipticity -- the analogue
of~\cite[Proposition~3.13]{DeRosa2022} -- are not carried out in this paper;
see~\ref{mr:regularity_missing_step}. Granting that step, every regularity conclusion
of~\cite[Theorem~A]{DeRosa2022} persists under the weaker and, we believe, more natural
hypothesis~\QEC{}.

\paragraph{Strict polyconvexity does not imply \AC{}}
Finally, we settle the sufficiency question in the negative, in a~strong form. For $\adim = 5$ and
$\vdim = 2$ we construct an explicit \emph{inner-product} norm~$\varphi$ on $\extpower{2} \R^{5}$ --
real analytic, strictly convex, invariant under a~cyclic group of order five -- whose associated
integrand violates already the first half~\ref{i:AC:dim} of the atomic condition~\ref{def:AC}, that
is, the condition (AC1) of~\cite[Definition~3.1]{DeRosa2022}: the uniform measure~$\mu$ on five
coordinate planes satisfies $A(\mu) = \tfrac{2}{5} \, \gamma_1(d) \cdot d$, an endomorphism of rank one
(see~\ref{thm:AC1_fails}). The example is hand-checkable, sits inside a~$15$-dimensional affine
family of inner-product norms exhibiting the same violation (see~\ref{cor:AC1_family}), and
propagates by suspension to all dimensions with $\vdim \ge 2$ and $\adim - \vdim \ge 3$
(see~\ref{thm:AC1_fails_all_codim}).  Consequently, for $2 \le \vdim \le \adim - 3$ the atomic
condition is \emph{strictly} stronger than strict polyconvexity, even within the class of integrands
induced by inner products on $\extpower{\vdim} \R^{\adim}$ -- in stark contrast with the
codimension-one characterisation of~\cite[Theorem~1.3]{DePhilippis2018} -- and no convexity property
of the extension~$\varphi$ alone can imply (AC1) in this range. In~particular, this answers,
negatively for $\codim \ge 3$, the question raised in~\cite[Remark~6.4]{DeRosa2022} on extending the
validity of (AC1) for $\ell^p$~norms beyond~$\pgrass{4}{2}$: the coordinate structure exploited
in~\cite[Theorem~6.1]{DeRosa2022} is essential rather than incidental. The remaining case $\codim =
2$ is open and appears to be a~natural boundary of the phenomenon;
cf.~\ref{rem:AC1_fails_discussion} and~\ref{q:AC1_codim2}.

\paragraph{Organisation of the paper}
Sections~\ref{sec:basic_notation} and~\ref{sec:integrands} fix notation -- largely following
Federer~\cite{Federer1969} and Rockafellar~\cite{Rockafellar1970} -- and collect basic properties of
integrands and of the associated objects $B_F$, $P_F$,
and~$Q_F$. Section~\ref{sec:ac_by_convex_geometry} develops the convex-geometric characterisation
of~\AC{} and introduces~\EC{}. Section~\ref{sec:necessary_condition_on_GF} constructs the master
set~$\Sigma$ and proves the spectral necessary condition. Section~\ref{sec:ext_power} and the
following one develop the multilinear machinery around $\Upsilon_{\!\vdim}$, $\Psi_{\!\vdim}$,
and~$\varkappa$; section~\ref{sec:polyconvex_integrands} treats polyconvex integrands, cross-norms,
total convexity, and the accretivity criterion for~\EC{}. Section~\ref{sec:uniform_AC}
introduces~\QEC{}, proves stability and the Caccioppoli-type inequality, and states the regularity
theorem. Section~\ref{sec:AC1_failure} contains the counterexamples to sufficiency of strict
polyconvexity, and section~\ref{sec:open_problems} collects the questions left open.

\section{Basic notions and notation}
\label{sec:basic_notation}

\begin{miniremark}
    In this section we fix the notation and recall the background material used throughout the
    paper. We~set up the model of the Grassmannian by orthogonal projections
    (see~\ref{def:projections} and~\ref{def:space}), recall the notions of convex geometry needed to
    speak about extreme and exposed points of a~convex set (see~\ref{def:exposed}
    and~\ref{thm:Straszewicz}), and collect the facts from multilinear algebra surrounding the
    exterior power which are employed in sections~\ref{sec:ext_power}, \ref{sec:Psi_adjoint},
    and~\ref{sec:polyconvex_integrands}.
\end{miniremark}

\begin{miniremark}
    \label{mr:notation_citations}
    In~principle we shall follow the notation of Federer; see~\cite[pp.~669--671]{Federer1969}. For
    notions of convex analysis we adopt the notation
    of~Rockafellar~\cite{Rockafellar1970}. Regarding varifolds, our main reference is the classical
    Allard's paper~\cite{Allard1972}.

    \begin{itemize}
        
    \item Given two real vectorspaces $X$ and $Y$ we write $\Hom(X,Y)$ for the space of linear maps
        of type $X \to Y$. The space $\Hom(X,\R)$ is denoted by $\adjoint{X}$ and
        $\End{X} = \Hom(X,X)$.

    \item If $A$ is a subset of a topological space, then $\Clos{A}$ is the closure
        of~$A$, $\Int{A}$ is the interior of~$A$, and $\Bdry{A}$ denotes the topological
        boundary of~$A$.

    \item By a~\emph{Euclidean space} we mean a finite dimensional real vectorspace
        equipped with an inner product. Whenever $X$ is a~Euclidean space the scalar
        product of $x,y \in X$ is denoted by $x \bullet y$. In~this section, unless stated
        otherwise, $X$ and $Y$ shall denote Euclidean spaces.
    
    \item If $A \subseteq X$ and $a \in X$, then $\Tan(A,x)$ and $\Nor(A,x)$ are the
        tangent and normal cones as defined in~\cite[3.1.21]{Federer1969}.

    \item We use three alternative notations for evaluation of a~linear~$L$ map on
        a~vector~$u$, namely $\langle u ,\, L \rangle$, $L u$, or~$L(u)$.

    \item For the Dirac measure centred at $x$ we write $\Dirac{x}$; cf.~\cite[2.5.19]{Federer1969}.

    \end{itemize}
\end{miniremark}

\begin{definition}
    \label{def:scale_translate}
    Whenever $X$ is a vectorspace, $a \in X$, and $r \in \R$, the maps
    \begin{gather}
        \scale{r} : X \to X
        \quad \text{and} \quad
        \trans{a} : X \to X
        \\
        \text{are given by} \quad
        \scale{r}(x) = rx
        \quad \text{and} \quad \trans{a}(x) = a+x
        \quad \text{for $x \in X$} \,.
    \end{gather}
\end{definition}

\begin{miniremark}
    In case $X$ is Euclidean, $A \subseteq X$, and $f : A \to \R$ is differentiable at $x \in A$
    we~distinguish between the \emph{gradient} $\grad f(x) \in X$ and the \emph{derivative}
    $\uD f(x) \in \adjoint{X}$ of~$f$ at~$x$. The two notions are related by the formula
    \begin{displaymath}
        \grad f(x) \bullet u = \langle u ,\, \uD f(x) \rangle
        \quad \text{for $u \in X$} \,.
    \end{displaymath}
    The set of differentiability points of~$f$ is the domain of~$\uD f$, i.e.,
    $\dmn \uD f$.
\end{miniremark}

\subsection*{Model of the Grassmannian}
\addcontentsline{toc}{subsection}{Model of the Grassmannian}

\begin{miniremark}
    In this section we assume $X$ is a Euclidean space.
\end{miniremark}

\begin{definition}
    Whenever $0 \le \vdim \le \adim = \dim X$ we define the \emph{projective Grassmannian}.
    \begin{gather}
        \pgrass{X}{\vdim} =  \End{X} \cap \bigl\{
        P : P \circ P = P ,\, \transpose{P} = P ,\, \trace P = \vdim
        \bigr\} 
        \\
        \text{and} \quad
        \pgrass{\adim}{\vdim} = \pgrass{\R^\adim}{\vdim}
        \quad \text{in case $X = \R^\adim$} \,.
    \end{gather}
\end{definition}

\begin{definition}
    \label{def:perp}
    If $T \in \End{X}$, we set
    \begin{displaymath}
        T^{\perp} = \id{X} - T \in \End{X}\,.
    \end{displaymath}
\end{definition}

\begin{remark}
    \label{rem:perp}
    In case $T \in \pgrass{X}{\vdim}$, we have $T^{\perp} \in \pgrass{X}{\dim X - \vdim}$.
    More generally, if $P \in \End{X}$ is a projection (meaning $P \circ P = P$), then
    $P^{\perp}$ is also a projection.
\end{remark}

\begin{definition}
    The Grassmannian of $\vdim$-dimensional subspaces of $X$ is denoted $\grass{X}{\vdim}$.
    Whenever $T \in \grass{X}{\vdim}$, i.e., $T$ is a linear subspace of~$X$, we define
    $\project{T} \in \pgrass{X}{\vdim}$ to be the orthogonal projection onto~$T$, i.e.,
    $\project{T} \in \End{X}$ is characterised by the conditions
    $\project{T} \circ \project{T} = \project{T}$, $\transpose{{\project{T}}} = \project{T}$, and
    $\im \project{T} = T$. The Grassmannian $\grass Xk$ is endowed with the metric inherited from
    $\bigl[ S \mapsto \project{S} \bigr] : \grass{X}{\vdim} \to \pgrass{X}{\vdim}$, i.e. the
    distance between $S,T \in \grass{X}{\vdim}$ equals $\| \project{S} - \project{T} \|$.
\end{definition}

\begin{remark}
    Note that $\pgrass{X}{\vdim}$ is a real analytic compact submanifold of~$\End{X}$ which is
    diffeomorphc to the abstract manifold $\grass{X}{\vdim}$ via the map
    $\bigl[ \grass{X}{\vdim} \ni T \mapsto \project{T} \bigr]$.
\end{remark}

\begin{remark}
    Note, if $T \in \pgrass{\adim}{\vdim}$, then
    \begin{displaymath}
        \project{(\im T)} = T 
        \quad \text{and} \quad
        \project{(\ker T)} = T^{\perp} \,.
    \end{displaymath}
\end{remark}

\begin{definition}
    \label{def:projections}
    For $j \in \{1,2,\ldots,\adim\} $ we define the \emph{space of projections of $\R^{\adim}$ onto
      $j$-di\-men\-sion\-al subspaces} as
    \begin{displaymath}
        \Proj{\adim}{j} = \End{\R^{\adim}} \cap \bigl\{ P : P \circ P = P ,\, \trace P = j \bigr\}
        \,.
    \end{displaymath}
\end{definition}

\begin{definition}[\protect{cf.~\cite[1.6.2 and 3.2.28(2)]{Federer1969}}]
    \label{def:oriented_grassmannian}
    The \emph{Grassmannian of oriented $\vdim$-planes} in~$\R^{\adim}$ is defined as
    \begin{displaymath}
        \ograss{\adim}{\vdim} = \extpower{\vdim} \R^{\adim} \cap \bigl\{ \xi : |\xi| = 1 \text{ and $\xi$ is simple} \bigr\} \,.
    \end{displaymath}
\end{definition}

\begin{definition}
    \label{def:space}
    Let $\xi \in \extpower{\adim}{\vdim}$. The space associated to $\xi$ is denoted
    \begin{displaymath}
        \Sp \xi = \R^{\adim} \cap \bigl\{ v : v \wedge \xi = 0 \bigr\} \,.
    \end{displaymath}
\end{definition}

\begin{definition}
    \label{def:image_space}
    Whenever $P \in \End{\R^{\adim}}$ we define the linear spaces
    \begin{displaymath}
        \imsp(P) = \End{\R^{\adim}} \cap \bigl\{ A : \im A \subseteq \im P \bigr\}
        \quad \text{and} \quad
        \transpose{\imsp(P)} = \bigl\{ \transpose{A} : A \in \imsp(P) \ \bigr\} \,.
    \end{displaymath}
\end{definition}

\begin{remark}
    \label{rem:basic_props_of_IT}
    Observe that if $T \in \pgrass{\adim}{\vdim}$, then
    \begin{gather}
        \imsp(T) = \End{\R^{\adim}} \cap \bigl\{ A : A = T \circ A \bigr\} \,,
        \quad
        \imsp(T)^{\perp} = \imsp(T^{\perp}) \,,
        \quad
        \bigl( \transpose{\imsp(T)} \bigr)^{\perp} = \transpose{\imsp(T^{\perp})} \,.
    \end{gather}
\end{remark}

\subsection*{Convex geometry}
\addcontentsline{toc}{subsection}{Convex geometry}

\begin{definition}
    \label{def:cone}
    Let $A \subseteq X$. We say that $A$ is a \emph{cone} if for each $a \in A$
    and $0 < t < \infty$ we have $at \in A$.
\end{definition}

\begin{definition}[\protect{\cite[p.~10]{Rockafellar1970}}]
    We say that $H \subseteq X$ is a~\emph{half-space} if there exist
    $\alpha \in \adjoint{X}$ and $c \in \R$ such that
    \begin{displaymath}
        H = X \cap \{ x : \alpha(x) < c \}
        \quad \text{or} \quad
        H = X \cap \{ x : \alpha(x) \le c \} \,.
    \end{displaymath}
\end{definition}

\begin{definition}
    A~\emph{hyperplane} is any set of the form $X \cap \{ x : \alpha(x) = c \}$
    for some $\alpha \in \adjoint{X}$ and $c \in \R$.
\end{definition}

\begin{definition}[\protect{\cite[p.~15]{Rockafellar1970}}]
    Let $A \subseteq X$. The \emph{cone generated by $A$} is the set
    \begin{displaymath}
        \ray A = \bigl\{ ta : a \in A ,\, 0 \le t < \infty \bigr\} \,.
    \end{displaymath}
\end{definition}

\begin{definition}[\protect{\cite[p.~12]{Rockafellar1970}}]
    Let $A \subseteq X$. We define the \emph{convex hull} of~$A$ to be the set
    \begin{displaymath}
        \conv A = {\textstyle \bigcap} \bigl\{
        C : A \subseteq C ,\, C \text{ is convex}
        \bigr\} \,,
    \end{displaymath}
\end{definition}

\begin{definition}[\protect{\cite[pp.~99--100]{Rockafellar1970}}]
    Let $C \subseteq X$ be convex. We say that $H$ is a \emph{supporting
      half-space} of~$C$ if $H$ is a~half-space in~$X$, $C \subseteq H$, and
    $\Bdry H \cap C \ne \varnothing$.
\end{definition}

\begin{definition}[\protect{\cite[p.~14]{Rockafellar1970}}]
    Let $A \subseteq X$. The \emph{convex cone generated by $A$} is the set
    \begin{displaymath}
        \cone A = \conv ( \ray  A ) \,.
    \end{displaymath}
\end{definition}

\begin{definition}
    Let $A \subseteq X$. The \emph{affine space generated by $A$} is the
    smallest affine subspace of~$X$ containing $A$ and is denoted $\aff A$.
\end{definition}

\begin{definition}[\protect{\cite[p.~44]{Rockafellar1970}}]
    Let $A \subseteq X$. We define the \emph{relative interior of~$A$} to be the
    biggest open subset of~$\aff A$ contained in~$A$ and we denote it by $\rInt
    A$. The \emph{relative boundary of $A$} is defined as
    \begin{displaymath}
        \rBdry A = \Clos A \without \rInt A \,.
    \end{displaymath}
\end{definition}

\begin{definition}[\protect{\cite[\S{18}]{Rockafellar1970}}]
    Let $C \subseteq X$ be a convex set, and $F \subseteq C$.
    \begin{enumerate}
    \item We say that $F$ is a~\emph{face} of~$C$ if $F$ is convex and any closed line segment $J
        \subseteq C$ such that $F \cap \rInt J \ne \varnothing$ is contained in~$F$.

    \item In case $C$ is a cone, $x \in C \without \{0\}$, and $R = \ray \{ x \}$ is a face of $C$,
        we say that $R$ is an \emph{extreme ray} of $C$.

    \item A face $F$ of $C$ is called \emph{exposed} if there exists a~hyperplane~$H \subseteq X$
        such that $F = C \cap H$.

    \item In case $C$ is a cone, $x \in C \without \{0\}$, and $R = \ray \{ x \}$ is an exposed face
        of $C$, we say that $R$ is an \emph{exposed ray} of $C$.

    \item If $F$ is a single point, then we call it an~\emph{extreme point} of~$C$ and if $F$ is
        additionally exposed, we call it an~\emph{exposed point} of~$C$. The sets of extreme and
        exposed points of $C$ are denoted
        \begin{displaymath}
            \extreme C
            \quad \text{and} \quad
            \exposed C
            \quad \text{respectively} \,.
        \end{displaymath}
    \end{enumerate}
\end{definition}

\begin{theorem}[\protect{The Straszewicz Theorem~\cite[18.6]{Rockafellar1970}}]
    \label{thm:Straszewicz}
    Assume $C \subseteq X$ is a closed convex set. Then $\exposed C$ is dense in
    $\extreme C$.
\end{theorem}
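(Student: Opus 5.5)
The plan is the classical ``farthest point'' argument. Exposed points are always extreme, so $\exposed C \subseteq \extreme C$, and only density is at issue. Fix $x \in \extreme C$ and $\varepsilon > 0$; we must find an exposed point of~$C$ within distance~$\varepsilon$ of~$x$. The natural source of exposed points is this: if $z \in X$ and $p \in C$ maximises $|p - z|$ over $C$ uniquely, then $p$ is exposed. Indeed, the closed ball $B(z,|p-z|)$ contains~$C$, and its tangent hyperplane $H$ at~$p$ meets the ball only at~$p$, so $C \cap H = \{p\}$. The task is therefore to place $z$ so that $C$ has a farthest point from $z$ which lies near~$x$.

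\textbf{Bounded case.} Assume first that $C$ is compact. By Krein--Milman (Carathéodory in finite dimensions), $C = \conv(\extreme C)$. Since $x$ is extreme, the compact convex set $D = \conv\bigl(C \without B(x,\varepsilon)\bigr)$ (closed, as the convex hull of a compact set in finite dimensions) does not contain~$x$. Otherwise $x$ would be a convex combination of points of $C$ at distance at least $\varepsilon$ from $x$, and an extreme point cannot be written as a proper combination of points other than itself. Separate $x$ strictly from $D$ by a linear functional $\alpha$ with $\alpha(x) > \sup_D \alpha$. Now take $z = x - t u$, where $u$ represents $\alpha$ and $t$ is large. The squared distance is $|p-z|^2 = |p-x|^2 + 2t\,u\bullet(p-x) + t^2|u|^2$. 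For $p \in D$ this is at most $\operatorname{diam}(C)^2 - 2t\delta + t^2|u|^2$, where $\delta = \alpha(x) - \sup_D \alpha > 0$. At $p = x$ it equals $t^2|u|^2$. So for $t$ large, every farthest point of $C$ from $z$ lies in $C \cap B(x,\varepsilon)$.

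The set of farthest points from $z$ is compact and nonempty. We need one that is exposed, i.e.\ a unique maximiser, or at least some exposed point among them. Any farthest point $p$ from $z$ is exposed by a strictly convex function, and the sphere argument above still yields $C \cap H = \{p\}$: any $q \in C \cap H$ with $q \neq p$ satisfies $|q - z| > |p - z|$, because $H$ is tangent to the sphere at $p$ and all other points of $H$ lie strictly outside the ball. That contradicts maximality. Hence every farthest point is exposed, and we obtain an exposed point within $\varepsilon$ of~$x$.

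\textbf{Unbounded case.} This is the main obstacle. I would reduce to the bounded case by truncation: replace $C$ by $C' = C \cap \bar B(x,R)$ for large $R$. An extreme point of~$C$ remains extreme in $C'$, so the compact case produces an exposed point $p$ of $C'$ with $|p - x| < \varepsilon < R$. Since $p$ lies in the interior of $\bar B(x,R)$, it is a local and hence (by convexity) global exposed point of $C$. To verify this, let $H$ be the exposing hyperplane for $C'$ and $\beta$ its functional, maximised on $C'$ only at $p$. If some $q \in C \without C'$ had $\beta(q) \ge \beta(p)$, then the points of the segment $[p,q]$ near $p$ would lie in $C'$ and satisfy $\beta \ge \beta(p)$, contradicting uniqueness. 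Hence $p \in \exposed C$, completing the density proof.
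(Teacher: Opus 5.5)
Your proof is correct. The paper gives no proof of this statement: it is quoted from \cite[18.6]{Rockafellar1970} and used once, in the proof of~\ref{prop:EC_in_AC}. So the comparison is with the standard textbook argument, as I recall it.

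That argument rests on the same two ingredients you use. First, every point of $C$ at maximal distance from a centre $z$ is exposed, via the tangent hyperplane to the enclosing sphere. Second, the unbounded case reduces to $C \cap \bar B(x,R)$, since exposed points of the truncation lying in the open ball are exposed in~$C$.

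Where the routes differ is the bounded case. The textbook argument works globally. Let $S$ be the closure of the set of exposed points. One shows $\conv S = C$ by separating a hypothetical point of $C \without \conv S$ and sending the centre of the ball to infinity. Then one invokes the separate fact that the extreme points of $\conv S$ lie in~$S$.

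You work locally instead, separating the given extreme point $x$ directly from $\conv\bigl(C \without B(x,\varepsilon)\bigr)$. This folds the ``extreme points of $\conv S$ lie in $S$'' step into a single use of extremality of~$x$, so you never need it as a lemma. The global route, in exchange, also yields the intermediate statement that a compact convex set is the convex hull of the closure of its exposed points.

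Three minor points:
\begin{itemize}
\item Your appeal to Krein--Milman is never actually used. All you need is that an extreme point is not a proper convex combination of other points of~$C$.
\item The uniqueness of the maximiser mentioned in your first paragraph is unnecessary, as you later notice yourself.
\item If $C \without B(x,\varepsilon) = \varnothing$, the separation step is vacuous. This causes no harm: every farthest point of~$C$ from any centre then already lies within $\varepsilon$ of~$x$.
\end{itemize}
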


\begin{definition}
    \label{def:exposed}
    Let $A \subseteq X$. We say that the set $A$ is \emph{extreme}
    [\emph{exposed}] if
    \begin{displaymath}
        A = \extreme \bigl( \conv A \bigr)
        \quad \bigl[\ 
        A = \exposed \bigl( \conv A \bigr)
        \ \bigr] \,.
    \end{displaymath}
\end{definition}

\begin{remark}
    Note that if $A$ is extreme and $B \subseteq A$, then $B$ is extreme as
    well. Similarly, if $A$ is exposed and $B \subseteq A$, then $B$ is also exposed.
\end{remark}

\begin{example}
    Let $S = \R^{\adim} \cap \{ x : |x| = 1 \}$ be the Euclidean sphere
    and~$E \subseteq S$ be arbitrary. Clearly $S$ is exposed so $E$ is exposed as well.
\end{example}

\begin{lemma}
    \label{lem:base_of_cone}
    Assume $\alpha \in \adjoint{X}$, $A \subseteq X \cap \{ x : \alpha(x) = 1 \}$, $K = \conv A$,
    and $x \in K$. Then
    \begin{enumerate}
    \item
        \label{i:boc:extreme}
        $x \in \extreme K$ if and only if $\ray \{ x \}$ is an extreme ray of $\cone A$;
    \item
        \label{i:boc:exposed}
        $x \in \exposed K$ if and only if $\ray \{ x \}$ is an exposed ray of $\cone A$;
    \item
        \label{i:boc:rint}
        $\rInt \cone A = \bigl\{ tz : 0 < t < \infty ,\, z \in \rInt K \bigr\}$; in~particular,
        $\rInt K \subseteq \rInt \cone A$.
    \end{enumerate}
\end{lemma}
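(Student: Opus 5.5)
The whole argument rests on one structural fact: the cone over $A$ is a union of rays, each crossing the affine hyperplane $H_1 = \{ y : \alpha(y) = 1 \}$ exactly once. Write $C = \cone A$. Since $\alpha = 1$ on $A$, every nonzero $y \in C$ satisfies $\alpha(y) > 0$. Moreover $C = \{ t z : t \ge 0,\, z \in K \}$: a conic combination $\sum_i t_i a_i$ with $s = \sum_i t_i > 0$ equals $s \cdot \sum_i (t_i/s) a_i$, and the second factor lies in $K$. Hence $C \cap H_1 = K$, and the map $y \mapsto y/\alpha(y)$ sends $C \without \{0\}$ onto $K$. All three items should follow by transporting faces, exposing hyperplanes and relative interiors along this correspondence.

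For~\ref{i:boc:extreme}, suppose $x \in \extreme K$ and a segment $J \subseteq C$ has a relative interior point in $\ray\{x\}$. If that point is $0$, then $\alpha > 0$ on $C \without \{0\}$ and $\alpha(0) = 0$ force $J = \{0\}$. Otherwise write the point as $s x$ with $s > 0$, as $\lambda u + (1-\lambda) v$ with $u, v \in C$. Normalising by $\alpha$ exhibits $x$ as a convex combination of $u/\alpha(u)$ and $v/\alpha(v)$ (after handling the case where an endpoint is $0$). Extremality of $x$ then makes both points proportional to $x$, so $J \subseteq \ray\{x\}$. Conversely, if $\ray\{x\}$ is a face of $C$ and $x$ lies in the relative interior of a segment in $K \subseteq C$, that segment lies in $\ray\{x\} \cap H_1 = \{x\}$.

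For~\ref{i:boc:exposed}, start from an exposed point $x$ of $K$: there are $\beta \in \adjoint{X}$ and $c$ with $K \cap \{\beta = c\} = \{x\}$ and, after possibly negating $\beta$ (the hyperplane must be supporting), $\beta \le c$ on $K$. Set $\gamma = \beta - c\alpha$. Then $\gamma \le 0$ on $C$ by homogeneity, and for $y = t z$ with $t > 0$ and $z \in K$ we have $\gamma(y) = 0$ iff $z = x$. So $C \cap \{\gamma = 0\} = \ray\{x\}$. Conversely, if $C \cap \{\gamma = d\} = \ray\{x\}$, then $0 \in \ray\{x\}$ forces $d = 0$. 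Intersecting with $H_1$ gives $K \cap \{\gamma = 0\} = \{x\}$.

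For~\ref{i:boc:rint}, note first that $\aff C = \lin A$ and $\aff K \subseteq H_1$. The map $\Phi(t,z) = tz$ from $(0,\infty) \times \aff K$ onto $\lin A \cap \{\alpha > 0\}$ is a homeomorphism, with inverse $y \mapsto (\alpha(y), y/\alpha(y))$. Since $\{\alpha > 0\}$ is open in $\lin A$, relative interiors can be computed inside it. The set $C \cap \{\alpha > 0\}$ corresponds under $\Phi$ to $(0,\infty) \times K$, whose relative interior is $(0,\infty) \times \rInt K$. It remains to check that $0 \notin \rInt C$ unless $C = \{0\}$, which holds because $\alpha \ge 0$ on $C$ and $\alpha \ne 0$ on $\lin A$. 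The main obstacle is this last item: one has to verify carefully that $\aff C = \lin A$ and that $\Phi$ really identifies the relative topologies. Once the chart $\Phi$ is set up, the "in particular" is the case $t = 1$.
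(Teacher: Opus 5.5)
Your proof is correct and follows essentially the same route as the paper: the description $\cone A = \{ tz : t \ge 0,\, z \in K \}$ with $\alpha(tz) = t$, normalisation by $\alpha$ for (a), the functional $\beta - c\alpha$ for (b), and for (c) the chart $(t,z) \mapsto tz$ from $(0,\infty) \times \aff K$ onto $\lin A \cap \{ \alpha > 0 \}$ together with the observation $0 \notin \rInt \cone A$. You leave one step unverified, namely $\lin A \cap \{ \alpha = 1 \} = \aff K$, which is what makes $\Phi$ onto; the paper settles it by noting that a linear combination of points of $A$ with $\alpha$-value one has coefficients summing to one, and is therefore an affine combination.
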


\begin{proof}
    Since $\cone A = \{ tz : 0 \le t < \infty ,\, z \in K \}$ and $\alpha(tz) = t$, the~point $0$ is
    the only member of $\cone A$ annihilated by~$\alpha$, every other $y \in \cone A$ satisfies
    $y = \alpha(y) z$ with $z = y/\alpha(y) \in K$, and $\ray \{ x \} \cap K = \{ x \}$.

    Assume $x \in \extreme K$ and let $J = [a,b] \subseteq \cone A$ be such that
    $\ray \{ x \} \cap \rInt J \ne \varnothing$; say $y = (1-\lambda)a + \lambda b \in \ray \{x\}$
    with $0 < \lambda < 1$. Applying~$\alpha$ gives
    $\alpha(y) = (1-\lambda)\alpha(a) + \lambda\alpha(b)$. If $\alpha(y) = 0$, then
    $\alpha(a) = \alpha(b) = 0$ and $J = \{0\} \subseteq \ray \{x\}$. Otherwise division
    by~$\alpha(y)$ presents $x = y/\alpha(y)$ as a~convex combination of those of the points
    $a/\alpha(a)$, $b/\alpha(b) \in K$ which are defined; as $x \in \extreme K$ each of them
    equals~$x$, so $a,b \in \ray \{x\}$. Hence $\ray \{x\}$ is a~face of~$\cone A$. Conversely, if
    $\ray \{x\}$ is a~face and $x = (1-\lambda)u + \lambda v$ with $u,v \in K$ and
    $0 < \lambda < 1$, then $[u,v] \subseteq \ray \{x\}$, so $u,v \in \ray \{x\} \cap K = \{x\}$.
    This proves~\ref{i:boc:extreme}.

    If $\{x\} = K \cap H$ for a~hyperplane $H = \{ z : \beta(z) = c \}$ with $\beta \le c$ on~$K$,
    then $\beta - c\alpha$ vanishes on $\ray \{x\}$ and is negative on $\cone A \without \ray\{x\}$,
    so $\ray \{x\} = \cone A \cap \ker(\beta - c\alpha)$. If in turn $\ray\{x\} = \cone A \cap H$
    for a~hyperplane~$H$, then, since $K \subseteq \cone A$, we get
    $K \cap H = \ray \{x\} \cap K = \{x\}$. Together with~\ref{i:boc:extreme}
    this proves~\ref{i:boc:exposed}.

    Finally, $\aff \cone A$ is a~linear subspace containing~$K$; hence, it equals $V = \lin \aff K$.
    If $v \in V$ and $\alpha(v) = 1$, then $v$ is an~affine combination of points of $\aff K$, so
    $V \cap \{ z : \alpha(z) = 1 \} = \aff K$ and the map $[ (t,z) \mapsto tz ]$ is a~homeomorphism
    of $\{ t : 0 < t < \infty \} \times \aff K$ onto $V \cap \{ z : \alpha(z) > 0 \}$. It~carries
    the open set $\{ t : 0 < t < \infty \} \times \rInt K$ onto a~subset of $\cone A$ which is open
    in~$V$, whence ``$\supseteq$'' in~\ref{i:boc:rint}. Since $\alpha$ takes negative values
    on~$V$, no neighbourhood of~$0$ in~$V$ lies in $\cone A$; thus, $0 \notin \rInt \cone A$ and the
    preimage of $\rInt \cone A$ is an~open subset of $\{ t : 0 < t < \infty \} \times \aff K$
    contained in $\{ t : 0 < t < \infty \} \times K$, hence in
    $\{ t : 0 < t < \infty \} \times \rInt K$. This gives ``$\subseteq$''.
\end{proof}

\begin{definition}[\protect{\cite[p.~28]{Rockafellar1970}}]
    Let $A \subseteq X$. The \emph{indicator function}
    $\delta(\ph \mathbin| A) : X \to \overline{\R}$ is 
    \begin{displaymath}
        \delta(x \mathbin| A) = 0 \quad \text{if $x \in A$} \,,
        \qquad
        \delta(x \mathbin| A) = \infty \quad \text{if $x \in X \without A$} \,.
    \end{displaymath}
\end{definition}

\begin{definition}[\protect{\cite[\S{12}]{Rockafellar1970}}]
    \label{def:convex_conjugate}
    Let $f : X \to \overline{\R}$. We~define the~\emph{convex conjugate}
    $\cconj{f} : X \to \overline{\R}$ (a.k.a. \emph{Fenchel-Legandre transform})
    of~$f$ by the formula
    \begin{displaymath}
        \cconj{f}(\xi) = \sup \bigl\{ x \bullet \xi - f(x) : x \in X  \bigr\}
        \quad \text{for $\xi \in X$} \,.
    \end{displaymath}
\end{definition}

\begin{definition}[\protect{\cite[p.~28]{Rockafellar1970}}]
    Let $C \subseteq X$ be convex. The convex conjugate of the indicator
    function $\delta(\ph \mathbin| C)$ is called the \emph{support function} of~$C$
    \begin{displaymath}
        \cconj{\delta}(\xi \mathbin| C) = \sup \bigl\{ x \bullet \xi : x \in C \bigr\}
        \quad \text{for $\xi \in X$} \,.
    \end{displaymath}
\end{definition}

\begin{definition}[\protect{\cite[p.~28]{Rockafellar1970}}]
    Let $C \subseteq X$ be convex. The \emph{gauge function} of~$C$ is
    \begin{displaymath}
        \gamma(x \mathbin| C) = \inf \bigl\{ \lambda : 0 \le \lambda < \infty ,\, x \in \scale{\lambda} \lIm C \rIm \bigr\}
        \quad \text{for $x \in X$} \,.
    \end{displaymath}
\end{definition}

\begin{definition}[\protect{\cite[p.~23]{Rockafellar1970}}]
    Let $f : X \to \overline{\R}$. The \emph{effective domain} of $f$ is the set
    \begin{displaymath}
        \dom f = X \cap \{ x : f(x) < \infty \} \,.
    \end{displaymath}
\end{definition}

\begin{definition}[\protect{\cite[\S{23}]{Rockafellar1970}}]
    \label{def:subdifferential}
    Let $X$ be a Euclidean space and $f : X \to \R$ be convex. The \emph{subdifferential} of~$f$ at
    a~point $x \in X$ is the set $\subgrad f(x)$ of all vectors $v \in X$ satisfying
    \begin{displaymath}
        f(z) \ge f(x) + x \bullet (z-x)
        \quad \text{for all $z \in X$} \,.
    \end{displaymath}
\end{definition}

\begin{definition}
    Let $f : X \to \R$ and $0 \le l < \infty$. We say that $f$ is
    \emph{$l$-homogeneous} if
    \begin{displaymath}
        f(t x) = |t|^l f(x) \quad \text{for $t \in \R$ and $x \in X$} \,.
    \end{displaymath}
    We say that $f$ is \emph{positively $l$-homogeneous} if 
    \begin{displaymath}
        f(t x) = t^l f(x) \quad \text{for $0 \le t < \infty$ and $x \in X$} \,.
    \end{displaymath}
\end{definition}

\begin{definition}[\protect{\cite[\S{14}]{Rockafellar1970}}]
    \label{def:polar_function}
    Let $f : X \to \overline{\R}$. be a positively $1$-homogeneous function. We~define the
    \emph{polar} function $\polar{f} : X \to \overline{\R}$ by the formula
    \begin{displaymath}
        \polar{f}(\xi) = \sup \bigl\{ x \bullet \xi : x \in X ,\, f(x) \le 1 \bigr\}
        \quad \text{for $\xi \in X$} \,.
    \end{displaymath}
\end{definition}

\begin{definition}[\protect{\cite[\S{14}, p.~125]{Rockafellar1970}}]
    \label{def:polar_body}
    Let $C \subseteq X$ be a closed convex set with $0 \in C$. We define
    the~\emph{polar} body so that $\gamma(\ph \mathbin| C) = \cconj{\delta}(\ph \mathbin| \polar{C})$, i.e.,
    \begin{displaymath}
        \polar{C} = X \cap \bigl\{ \xi : x \bullet \xi \le 1 \text{ for } x \in C \bigr\} \,.
    \end{displaymath}
\end{definition}

\begin{remark}[\protect{\cite[Theorem~14.1]{Rockafellar1970}}]
    \label{rem:polar_cone}
    Let $K \subseteq X$ be a closed convex cone. Then the polar body $\polar{K}$
    is also a~closed convex cone and is given by
    \begin{displaymath}
        \polar{K} = X \cap \bigl\{ \xi : x  \bullet \xi \le 0 \text{ for } x \in K \bigr\} \,;
        \quad \text{moreover,} \quad
        \delta(\xi \mathbin| \polar{K}) = \cconj{\delta}(\xi \mathbin| K)
        \,.
    \end{displaymath}
\end{remark}

\begin{remark}
    \label{rem:polar_and_polar}
    Assume $C \subseteq X$ is a convex set with $0 \in \rInt C$. If the convex
    cone $K \subseteq X \times \R$ is generated by $C \times \{1\}$, then
    $\polar{K}$ is generated by $\polar{C} \times \{-1\}$ in the following
    sense. Set
    \begin{displaymath}
        K = \cone \bigl( C \times \{1\} \bigr) \subseteq X \times \R \,.
    \end{displaymath}
    Then
    \begin{multline}
        \polar{K} \cap \bigl( X \times \{-1\} \bigr)
        = \bigl( X \cap \{ \xi : (z,t) \bullet (\xi,-1) \le 0 \text{ for } (z,t) \in K \} \bigr) \times \{-1\}
        \\
        = \bigl( X \cap \{ \xi : (tx,t) \bullet (\xi,-1) \le 0 \text{ for } x \in C \} \bigr) \times \{-1\}
        \\
        = \bigl( X \cap \{ \xi : x \bullet \xi - 1 \le 0 \text{ for } x \in C \} \bigr) \times \{-1\}
        = \polar{C} \times \{-1\} \,.
    \end{multline}
\end{remark}

\subsection*{Multilinear algebra}
\addcontentsline{toc}{subsection}{Multilinear algebra}

\begin{miniremark}
    In sections~\ref{sec:ext_power}, \ref{sec:Psi_adjoint}, and~\ref{sec:polyconvex_integrands}
    we~study integrands constructed from norms on the exterior power~$\extpower{\vdim} \R^{\adim}$
    and derive a sufficient condition for~\EC{} in terms of accretivity of certain linear
    operators. To achieve this goal we need to recall a~few notions from multi-linear algebra.
    Recall~\cite[1.3.2 and 1.9.2]{Federer1969} that $\Choice{\adim}{\vdim}$ denotes the set of
    increasing maps $\lambda : \{1,2,\ldots, \vdim\} \to \{1,2,\ldots,\adim\}$ and
    $\Partition{\vdim}{\adim}$ is the set of all maps $\alpha : \{1,2,\ldots,\vdim\} \to \nat$ such
    that $\textsum{i=1}{\vdim} \alpha(i) = \adim$. Also if $t \in \R^{\vdim}$ and $\alpha \in
    \Partition{\vdim}{\adim}$ we write $t^{\alpha} = \textprod{i=1}{\vdim} t_i^{\alpha(i)}$ and
    $\binom{\adim}{\alpha} = \frac{\adim!}{\alpha(1)! \cdots \alpha(\vdim)!}$.
\end{miniremark}

\begin{definition}
    \label{def:Gamma}
    Let $j \in \{0,1,2,\ldots,\adim\}$ and
    $\caniso{j} : \End{\extpower{j} \R^{\adim}} \to \aforms{j} \R^{\adim} \otimes \extpower{j}
    \R^{\adim}$ be the canonical isomorphism described in~\cite[1.1.4 and 1.4.5]{Federer1969}, i.e.,
    $\caniso{j}$ is characterised by the requirement
    \begin{displaymath}
        \bigl\langle \zeta ,\, \caniso{j}^{-1}(\phi \otimes \xi) \bigr\rangle
        = \phi(\zeta) \xi
        \quad \text{for $\xi, \zeta \in \extpower{j} \R^{\adim}$ and $\phi \in \aforms{j} \R^{\adim}$} \,.
    \end{displaymath}
\end{definition}

\begin{definition}
    Define the polynomial map
    \begin{displaymath}
        \Upsilon_{\!j} : \End{\R^{\adim}} \to \aforms{j} \R^{\adim} \otimes \extpower{j} \R^{\adim}
        \quad \text{by} \quad
        \Upsilon_{\!j} = \caniso{j} \circ \extpower{j}
        \quad \text{for $1 \le j \le \adim$} \,.
    \end{displaymath}
\end{definition}

\begin{miniremark}
    \label{mr:Gamma_exterior_power}
    Recalling~\cite[1.4.5]{Federer1969} we have
    \begin{displaymath}
        \caniso{1}(f)^j/j! = \caniso{j}(\extpower{j} f) = \Upsilon_{j}(f)
        \quad \text{for $1 \le j \le \adim$ and $f \in \End{\R^{\adim}}$} \,,
    \end{displaymath}
    where the power on left-hand side is taken in the commutative algebra
    $\bigoplus_{m=0}^{\infty} \aforms{m} \R^{\adim} \otimes \extpower{m} \R^{\adim}$ which is a
    subalgebra of~the commutative tensor product
    $\aforms{*} \R^{\adim} \otimes \extpower{*} \R^{\adim}$.
\end{miniremark}

\begin{miniremark}
    \label{mr:cauchy_binet}
    Let $A_1, \dots, A_l\in \End{\R^{\adim}}$ and $t = (t_1,\dots,t_l) \in \R^{l}$. Using the
    multinomial expansion, valid in any commutative ring (cf.~\cite[1.9.3]{Federer1969}), and
    twice~\ref{mr:Gamma_exterior_power} we derive the \emph{Cauchy-Binet formula} in the following
    form
    \begin{multline}
        \label{eq:cb}
        \adim! \det \bigl(\textsum{i=1}{l} t_iA_i\bigr)\caniso{\adim}(\id{\R^{\adim}})
        = \adim! \Upsilon_{\adim} \bigl(\textsum{i=1}{l} t_iA_i\bigr)
        = \bigl(\textsum{i=1}{l} t_i \caniso{1}(A_i)\bigr)^{\!\adim}
        \\
        = \textsum{\alpha\in\Partition{l}{\adim}}{} {\textstyle \binom{\adim}{\alpha}} t^{\alpha}
        \textprod{i=1}{l}\caniso{1}(A_i)^{\alpha(i)}
        = \adim! \textsum{\alpha\in\Partition{l}{\adim}}{}
        t^{\alpha} \textprod{i=1}{l} \Upsilon_{\alpha(i)} ( A_i ) \,.
    \end{multline}
\end{miniremark}

\begin{miniremark}
    Whenever $e_{1}, \ldots, e_{\adim}$ is a basis of some vectorspace~$X$, $1 \le j \le \adim$ is
    an integer, and $\lambda \in \Choice{\adim}{j}$ we employ the following conventional notation
    \begin{displaymath}
        e_{\lambda} = e_{\lambda(1)} \wedge \cdots \wedge e_{\lambda(j)} \in \extpower{j} X \,.
    \end{displaymath}
\end{miniremark}

\begin{miniremark}
    \label{rem:caniso_explicit_formula}
    Let $e_{1}, \ldots, e_{\adim}$ be a basis of $\R^{\adim}$ and $\omega_{1}, \ldots, \omega_{\adim}$
    be the dual basis. Then
    \begin{displaymath}
        \caniso{j}(A) = \textsum{\lambda \in \Choice{\adim}{j}}{} \omega_{\lambda} \otimes A e_{\lambda}
        \quad \text{whenever $A \in \End{\extpower{j} \R^{\adim}}$} \,.
    \end{displaymath}
\end{miniremark}

\section{Integrands and related objects}
\label{sec:integrands}

\begin{miniremark}
    Here we introduce the objects attached to an~integrand~$F$ which are used in the whole paper:
    the~map~$B_F$ governing the first variation of the functional~$\Phi_F$
    (cf.~\ref{rem:BF_variational}), the~associated maps $P_F$ and~$Q_F$ with values in projections,
    and the~compact set $\GF{F} = \transpose{P}_F \lIm \pgrass{\adim}{\vdim} \rIm$ whose convex
    geometry shall turn out to encode the atomic condition; see~\ref{def:aux-objects}. We~also
    observe that $\GF{F}$ lies on an~affine hyperplane missing the origin and deduce that the
    properties of being extreme or exposed are not affected by passing to adjoints or to~$B_F$;
    see~\ref{rem:GF_on_hyperplane} and~\ref{cor:extreme-equivalence}.
\end{miniremark}

\begin{miniremark}
    In the rest of this paper, unless stated otherwise, we shall assume $\vdim$ and $\adim$ are
    positive integers such that $1 \le \vdim \le \adim-1$.
\end{miniremark}

\begin{definition}
    \label{def:integrand}
    By a~\emph{$\vdim$-integrand} we mean a~continuous function $F : \End{\R^{\adim}} \to \R$ which
    is of class~$\cnt{1}$ on $\End{\R^{\adim}} \without \{0\}$ and satisfies
    \begin{gather}
        \inf F \lIm \pgrass{\adim}{\vdim} \rIm > 0 
        \quad \text{and} \quad
        F(\lambda A) = |\lambda|^\vdim F(A)
        \quad \text{for $\lambda \in \R$ and $A \in \End{\R^{\adim}}$} \,.
    \end{gather}
\end{definition}

\begin{remark}
    \label{rem:integrand_defined_by_values_on_Gnk}
    For our purposes, only the values of~$F$ on~$\pgrass{\adim}{\vdim}$ are relevant.  Using
    standard extension techniques one~readily verifies that any $F_0 : \pgrass{\adim}{\vdim} \to \R
    \cap \{ t : t > 0 \}$ of class~$\cnt{1}$ may be extended to a~$\vdim$-integrand. Moreover, since
    $\pgrass{\adim}{\vdim}$ is a~compact smooth submanifold of the affine space~$\End{\R^{\adim}}
    \cap \{ A : \trace A = \vdim \}$, in~case~$F_0$ is of class~$\cnt{l}$ for some $l \in \natp$,
    one can extend it to a~$\vdim$-integrand of the same smoothness class away from the origin (and
    through the origin if $\vdim > l$).
\end{remark}

\begin{remark}
    \label{rem:integrand-k-homogeneous}
    Let $F$ be a $\vdim$-integrand and $T \in \pgrass{\adim}{\vdim}$. The~conditions imposed on~$F$
    in~\ref{def:integrand} imply that
    \begin{displaymath}
        \grad F(T) \bullet T = \vdim F(T) \,.
    \end{displaymath}
\end{remark}

\begin{definition}
    \label{def:aux-objects}
    Let $F$ be a~$\vdim$-integrand. Recalling~\ref{def:perp} define
    \begin{gather}
        \pi(T)L = T^{\perp} \circ L \circ T + \transpose{\bigl( T^{\perp} \circ L \circ T \bigr)}
        \quad \text{for $L,T \in \End{\R^{\adim}}$} \,,
        \\
        B_F(T) \bullet L = F(T) T \bullet L + \langle \pi(T)L ,\, \uD F(T) \rangle
        \quad \text{for $L \in \End{\R^{\adim}}$ and $T \in \dmn \uD F$} \,,
        \\
        P_F(T) = \transpose{B_F(T)} / F(T) 
        \quad \text{and} \quad
        Q_F(T) = P_F(T)^{\perp}
        \quad \text{for $T \in \dmn B_F \without F^{-1} \{0\}$} \,,
        \\
        \GF{F} = \transpose{P}_F \lIm \pgrass{\adim}{\vdim} \rIm \subseteq \End{\R^{\adim}} \,,
        \quad
        \transpose{\GF{F}} = \bigl\{ \transpose{P} : P \in \GF{F} \bigr\} \,,
        \\
        \sigma(T)L = \tfrac 1k (L \bullet T) T + \pi(T)L
        \quad \text{for $L,T \in \End{\R^{\adim}}$}
        \,.
    \end{gather}
\end{definition}

\begin{remark}
    Since $\uD F$ is continuous in a neighbourhood of $\pgrass{\adim}{\vdim}$ we see that $\GF{F}$
    is a compact set.
\end{remark}

\begin{remark}
    \label{rem:BF_variational}
    Referring to~\cite[Appendix~A]{DePhilippis2018} we recall the following fact. Given
    $L : \R \to \End{\R^{\adim}}$ of class~$\cnt{1}$ such that $L(0) = \id{\R^{\adim}}$ there holds
    \begin{displaymath}
        B_F(T) \bullet L'(0)
        = \left. \tfrac{\ud}{\ud t} \right|_{t=0}
        F( \project{ \im(L(t) \circ T)} )\, \| \extpower{\vdim} L(t) \circ T \| \,.
    \end{displaymath}
\end{remark}

\begin{remark}
    \label{rem:sigma_props}
    Let $T \in \pgrass{\adim}{\vdim}$ and $L \in \End{\R^{\adim}}$. There holds
    \begin{gather}
        \label{i:im_pi}
        \im \pi(T) = \Tan(\pgrass{\adim}{\vdim},T)
        \,,
        \\
        \label{i:im_sigma}
        \im \sigma(T) = \Tan(\ray \pgrass{\adim}{\vdim},T)
        \,,
        \\
        \label{i:pi_projection}
        \pi(T) \circ \pi(T) = \pi(T)
        \,,
        \\
        \label{i:sigma_projection}
        \sigma(T) \circ \sigma(T) = \sigma(T)
        \,,
        \\
        \label{i:pi_transpose}
        \transpose{\pi(T)}L = T^{\perp} \circ (L + \transpose{L}) \circ T
        \,,
        \\
        \label{i:sigma_transpose}
        \transpose{(\transpose{\sigma(T)}L)}
        = \tfrac 1k (L \bullet T) T + T \circ (L + \transpose{L}) \circ T^{\perp}
        \,,
        \\
        \label{i:projections}
        \text{if} \quad
        P = \transpose{(\transpose{\sigma(T)}L)}
        \quad \text{and} \quad
        L \bullet T = \vdim \,,
        \quad \text{then} \quad
        P \circ P = P
        \quad \text{and} \quad
        \im P = \im T
        \,,
        \\
        \label{i:pi_self_adjoint}
        \pi(T) X \bullet Y = X \bullet \pi(T) Y
        \quad \text{for $X,Y \in \End{\R^{\adim}} \cap \{ A : A = \transpose{A} \}$}
        \,.
    \end{gather}

    Indeed referring, e.g., to~\cite[Appendix~A]{DePhilippis2018} we see that
    \begin{equation}
        \label{eq:tan_grass}
        \Tan(\pgrass{\adim}{\vdim},T)
        = \End{\R^{\adim}} \cap \bigl\{
        X :
        \transpose{X} = X ,\,
        T \circ X \circ T = 0 ,\,
        T^{\perp} \circ X \circ T^{\perp} = 0
        \bigr\} \,.
    \end{equation}
    To get~\eqref{i:im_pi} first let $P \in \im \pi(T)$. Then for some
    $M \in \End{\R^{\adim}}$
    \begin{gather}
        P = T^{\perp} \circ M \circ T + \transpose{\bigl( T^{\perp} 
          \circ M \circ T \bigr)} \,;
        \\
        \text{hence,} \quad
        \transpose{P} = P \,,
        \quad
        T \circ P \circ T = 0 \,,
        \quad
        T^{\perp} \circ P \circ T^{\perp} = 0 \,;
    \end{gather}
    thus, $P \in \Tan(\pgrass{\adim}{\vdim},T)$. Now, assume
    $P \in \Tan(\pgrass{\adim}{\vdim},T)$ then, having in mind~\eqref{eq:tan_grass},
    \begin{multline}
        P = T^{\perp} \circ P \circ T + T^{\perp} \circ P \circ T^{\perp}
        + T \circ P \circ T + T \circ P \circ T^{\perp}
        \\
        = T^{\perp} \circ P \circ T + T \circ P \circ T^{\perp}
        = T^{\perp} \circ P \circ T + \transpose{(T^{\perp} \circ P \circ T)}
        = \pi(T)P \in \im \pi(T) \,.
    \end{multline}

    To get~\eqref{i:im_sigma} note that
    \begin{displaymath}
        \lin \{T\} \perp \Tan(\pgrass{\adim}{\vdim},T) \,,
        \quad
        \pi(T)T = 0 \,,
        \quad \text{and} \quad
        \sigma(T)T = T \,;
    \end{displaymath}
    hence,
    \begin{displaymath}
        \Tan(\ray \pgrass{\adim}{\vdim},T)
        = \lin\{T\} + \Tan(\pgrass{\adim}{\vdim},T)
        = \im \sigma(T) \,.
    \end{displaymath}

    To show~\eqref{i:pi_projection} compute
    \begin{multline}
        \bigl( \pi(T) \circ \pi(T) \bigr) L
        = \pi(T)\bigl(
        T^{\perp} \circ L \circ T + \transpose{
          \bigl( T^{\perp} \circ L \circ T \bigr)}
        \bigr) \\
        =T^{\perp} \circ \bigl(
        T^{\perp} \circ L \circ T + \transpose{
          \bigl( T^{\perp} \circ L \circ T \bigr)}
        \bigr) \circ T + \transpose{
          \bigl( T^{\perp} \circ \bigl(
          T^{\perp} \circ L \circ T + \transpose{
            \bigl( T^{\perp} \circ L \circ T \bigr)}
          \bigr) \circ T \bigr)} \\
        =T^{\perp} \circ L \circ T + \transpose{
          \bigl( T^{\perp} \circ L \circ T \bigr)} = \pi(T)L \,.
    \end{multline}

    To get~\eqref{i:sigma_projection} observe that
    $T \bullet T = \trace \bigl( T \circ \transpose{T} \bigr) = \vdim$,
    $\pi(T)L \bullet T = 0$, and $\pi(T)T = 0$; thus,
    \begin{multline}
        \bigl( \sigma(T) \circ \sigma(T) \bigr) L =
        \sigma(T) \bigl(\sigma(T)L\bigr) =
        \sigma(T)\bigl( \tfrac 1k (L \bullet T) T + \pi(T)L \bigr) \\
        =\tfrac 1k \bigl(\bigl(
        \tfrac 1k (L \bullet T) T + \pi(T)L
        \bigr) \bullet T\bigr) T + \pi(T)\bigl( 
        \tfrac 1k (L \bullet T) T + \pi(T)L
        \bigr) \\
        =\tfrac{1}{\vdim^2}(L \bullet T)(T \bullet T)T +
        \tfrac{1}{\vdim}(\pi(T)L \bullet T)T +
        \tfrac{1}{\vdim}(L \bullet T)\pi(T)T +
        \pi(T)\bigl( \pi(T)L \bigr) \\
        =\tfrac{1}{\vdim}(L \bullet T)T + \pi(T)L = \sigma(T)L \,.
    \end{multline}

    For the proof of~\eqref{i:pi_transpose} let $S
    \in \End{\R^{\adim}}$. By~the~definition of the adjoint operator we have
    \begin{multline}
        \pi(T)L \bullet S
        = \bigl(T^{\perp} \circ L \circ T\bigr) \bullet S + 
        \transpose{\bigl( T^{\perp} \circ L \circ T \bigr)} \bullet S
        \\
        = \trace\bigl(\transpose{S} \circ T^{\perp} \circ L \circ T\bigr)+
        \trace\bigl(T^{\perp} \circ L \circ T \circ S\bigr)
        \\
        = \trace\bigl(L \circ T \circ \transpose{S} \circ T^{\perp}\bigr)+
        \trace\bigl(L \circ T \circ S \circ T^{\perp}\bigr)
        \\
        = \trace\bigl(L \circ \bigl(
        T \circ (S + \transpose{S}) \circ T^{\perp}
        \bigr)\bigr)
        = L \bullet \bigl(
        T^{\perp} \circ (S + \transpose{S}) \circ T
        \bigr)
        = L \bullet \transpose{\pi(T)}S \,.
    \end{multline}
    Since $S$ was arbitrary we obtain $\transpose{\pi(T)}L = 
    T^{\perp} \circ (L + \transpose{L}) \circ T$.

    To show~\eqref{i:sigma_transpose} let $S \in \End{\R^{\adim}}$. Again, by the
    definition of the adjoint operator we have
    \begin{multline}
        \sigma(T)L \bullet S
        = \tfrac{1}{\vdim}(L \bullet T)(T \bullet S) +
        \pi(T)L \bullet S
        = \tfrac{1}{\vdim}(L \bullet T)(T \bullet S) +
        L \bullet \transpose{\pi(T)}S
        \\
        = L \bullet \bigl( \tfrac{1}{\vdim}(S \bullet T)T + \transpose{\pi(T)}S \bigr)
        = L \bullet \transpose{\sigma(T)}S \,.
    \end{multline}
    Finally,
    \begin{displaymath}
        \transpose{(\transpose{\sigma(T)}L)}
        = \transpose{\bigl(
          \tfrac{1}{\vdim}(L \bullet T)T + T^{\perp} \circ (L + \transpose{L}) \circ T
          \bigr)}
        = \tfrac{1}{\vdim}(L \bullet T)T + T \circ (L + \transpose{L}) \circ T^{\perp} \,.
    \end{displaymath}

    Assume $L \bullet T = \vdim$, $X = L + \transpose{L}$,
    and $P = \transpose{(\transpose{\sigma(T)}L)}$. We get~\eqref{i:projections}
    from~\eqref{i:sigma_transpose},
    \begin{displaymath}
        P \circ P =
        \bigl( T + T \circ X \circ T^{\perp} \bigr)
        \circ \bigl( T + T \circ X \circ T^{\perp} \bigr)
        = T + T \circ X \circ T^{\perp}
        = P \,,
        \quad \text{and} \quad
        P \circ T = T \,.
    \end{displaymath}

    Item~\eqref{i:pi_self_adjoint} is now evident and shows that $\pi(T)$ is an orthogonal
    projection when restricted to self-adjoint endomorphisms.
\end{remark}

\begin{remark}
    \label{rem:BF_gradient}
    Let $T \in \pgrass{\adim}{\vdim}$.
    From~\ref{rem:sigma_props}\eqref{i:sigma_projection}\,\eqref{i:im_sigma} it follows that
    $\sigma(T)$ is a~projection onto $\Tan(\ray \pgrass{\adim}{\vdim},T)$ and that
    \begin{displaymath}
        B_F(T) = \transpose{\sigma(T)} \grad F(T) \,.
    \end{displaymath}
    In~particular, $B_F(T)$ does not depend on the values of~$F$ outside
    $\ray \pgrass{\adim}{\vdim}$. Moreover, employing the Whitney extension
    theorem~\cite[3.1.14]{Federer1969} (cf. the proof of existence of a~tubular neighbourhood
    in~\cite[Chapter~4, proof of~\S{5.1}, p.~109]{Hirsch1976}, where the appropriate map is obtained
    by also employing the implicit function theorem), there exist an open neighbourhood~$U$
    of~$\ray \pgrass{\adim}{\vdim} \without \{0\}$ and a~smooth map
    $\xi : U \to \ray \pgrass{\adim}{\vdim}$ such that
    \begin{displaymath}
        \xi \circ \xi = \xi \,,
        \quad
        \xi(\lambda T) = \lambda T \,,
        \quad \text{and} \quad
        \uD \xi(\lambda T) = \sigma(T)
        \quad \text{for $T \in \pgrass{\adim}{\vdim}$ and $0 < \lambda < \infty$} \,;
    \end{displaymath}
    hence, if $T \in \pgrass{\adim}{\vdim}$, then
    \begin{displaymath}
        B_F(T) = \grad (F \circ \xi)(T)
        \quad \text{and} \quad
        \transpose{P_F}(T) = \grad (\log \circ F \circ \xi)(T) 
    \end{displaymath}
    so that $B_F$ and $\transpose{P_F}$ are~\emph{gradient} vectorfields defined on a~neighbourhood
    of~$\pgrass{\adim}{\vdim}$.
\end{remark}

\begin{remark}
    \label{rem:se-cse}
    Recalling~\ref{rem:sigma_props}~\eqref{i:projections} and~\ref{rem:integrand-k-homogeneous} we
    observe that for $T \in \pgrass{\adim}{\vdim}$ there holds
    \begin{gather}
        \grad (\log \circ F)(T) \bullet T = \vdim \,,
        \quad
        P_F(T) \circ P_F(T) = P_F(T) = T \circ P_F(T) \,,
        \quad
        P_F(T) \circ T = T \,;
        \\
        \transpose{P}_F(T) = P_F(T) \quad \iff \quad \uD F(T)|\Tan(\pgrass{\adim}{\vdim},T) = 0 \,,
        \quad
        \trace P_F(T) = P_F(T) \bullet T = \vdim \,;
    \end{gather}
    thus, $P_F(T)$ is a projection onto~$\im T$; furthermore, it follows that $P_F(T)$ is orthogonal
    if and only if $\uD F(T)|\Tan(\pgrass{\adim}{\vdim},T) = 0$.
\end{remark}

\begin{remark}
    \label{rem:GF_on_hyperplane}
    From~\ref{rem:se-cse} we conclude that $\GF{F} = \transpose{P}_F \lIm \pgrass{\adim}{\vdim}
    \rIm$ lies on a~hyperplane not passing through the origin; namely,
    \begin{displaymath}
        \GF{F} \subseteq \End{\R^{\adim}}
        \cap \bigl\{ A : A \bullet \id{\R^{\adim}} = \vdim \bigr\} \,.
    \end{displaymath}
    Therefore, applying~\ref{lem:base_of_cone}\ref{i:boc:extreme}\ref{i:boc:exposed} with
    $\alpha = [ \End{\R^{\adim}} \ni A \mapsto (A \bullet \id{\R^{\adim}})/\vdim ]$ and $\GF{F}$ in
    place of~$A$, we see that $\GF{F}$ is extreme [exposed] if and only if $\ray \{A\}$ is
    an~extreme [exposed] ray of $\cone \GF{F}$ for all $A \in \GF{F}$. Moreover, we clearly have
    \begin{displaymath}
        \ray \transpose{P}_F \lIm \pgrass{\adim}{\vdim} \rIm = \ray B_F \lIm \pgrass{\adim}{\vdim} \rIm = \ray \GF{F} \,;
    \end{displaymath}
    thus, $\GF{F}$ is extreme [exposed] if and only if $\ray \{ B_F(T) \}$ is an extreme [exposed]
    ray of $\cone B_F\lIm \pgrass{\adim}{\vdim} \rIm$ for each $T \in \pgrass{\adim}{\vdim}$. Since
    taking adjoints is a~linear transformation of~$\End{\R^{\adim}}$ (actually a symmetry) we get
    also that $\GF{F}$ is extreme [exposed] if and only if $P_F \lIm \pgrass{\adim}{\vdim} \rIm$ is
    extreme [exposed].
\end{remark}

\begin{corollary}
    \label{cor:extreme-equivalence}
    The following are equivalent
    \begin{enumerate}
    \item $P_F \lIm \pgrass{\adim}{\vdim} \rIm$ is extreme [exposed];
    \item $\GF{F}$ is extreme [exposed];
    \item for each $T \in \pgrass{\adim}{\vdim}$ the set $\ray \{ B_F(T) \}$ is an extreme [exposed]
        ray of $\cone B_F\lIm \pgrass{\adim}{\vdim} \rIm$.
    \end{enumerate}
\end{corollary}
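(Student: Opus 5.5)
The corollary follows from \ref{rem:GF_on_hyperplane} together with \ref{lem:base_of_cone}. The plan is to prove (b)$\iff$(c) using the hyperplane structure, and then (a)$\iff$(b) using the transpose symmetry.

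For (b)$\iff$(c), first use \ref{rem:se-cse}, which gives $\transpose{P_F}(T) \bullet \id{\R^{\adim}} = \trace P_F(T) = \vdim$ for every $T \in \pgrass{\adim}{\vdim}$. Hence $\GF{F}$ lies in the affine hyperplane $\{ A : \alpha(A) = 1 \}$ with $\alpha(A) = (A \bullet \id{\R^{\adim}})/\vdim$. We then apply \ref{lem:base_of_cone}\ref{i:boc:extreme} [resp.~\ref{i:boc:exposed}] with $K = \conv \GF{F}$. It says that a point $x \in \GF{F} \subseteq K$ is extreme [exposed] in $K$ if and only if $\ray\{x\}$ is an extreme [exposed] ray of $\cone \GF{F}$.

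Next, $\GF{F} = \extreme \conv \GF{F}$ holds exactly when every point of $\GF{F}$ is extreme, because the inclusion $\extreme \conv \GF{F} \subseteq \GF{F}$ is automatic. The same holds for exposed points, since $\exposed \subseteq \extreme$. To pass from $\GF{F}$ to $B_F$, note that $B_F(T) = F(T)\,\transpose{P_F}(T)$ with $F(T) > 0$. Therefore $\ray\{B_F(T)\} = \ray\{\transpose{P_F}(T)\}$, and $\cone B_F \lIm \pgrass{\adim}{\vdim} \rIm = \cone \GF{F}$, since both are the convex hull of the same ray set. Together these give (b)$\iff$(c).

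For (a)$\iff$(b), observe that $A \mapsto \transpose{A}$ is a linear isometry of $\End{\R^{\adim}}$ and maps $\GF{F}$ onto $P_F \lIm \pgrass{\adim}{\vdim} \rIm$. Linear isomorphisms commute with $\conv$ and send faces to faces and exposing hyperplanes to exposing hyperplanes. Hence they preserve extreme and exposed points, and the two sets are extreme [exposed] simultaneously.

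I do not expect a genuine obstacle. The only point needing care is the claim that extremality of the set $A$ is equivalent to extremality of each point of $A$ in $\conv A$. This requires the standard fact $\extreme \conv A \subseteq A$, which holds because any point of $\conv A$ outside $A$ is a nontrivial convex combination of points of $A$ and hence not extreme.
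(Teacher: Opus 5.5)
Your proposal is correct and follows essentially the same route as the paper, whose argument is the content of~\ref{rem:GF_on_hyperplane}: the trace hyperplane from~\ref{rem:se-cse} combined with~\ref{lem:base_of_cone}, the equality $\ray B_F \lIm \pgrass{\adim}{\vdim} \rIm = \ray \GF{F}$, and invariance under the linear symmetry $A \mapsto \transpose{A}$. Your explicit check that $\extreme \conv \GF{F} \subseteq \GF{F}$ only spells out a step the paper leaves implicit; this inclusion is what reduces extremality of the set to extremality of each of its points.
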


\section{\AC{} in terms of convex geometry}
\label{sec:ac_by_convex_geometry}

\begin{miniremark}
    This section contains the reformulation of the atomic condition on which the rest of the paper
    rests: $F \in \AC{}$ if and only if $\GF{F}$ is precisely the set of extreme points
    of~$\conv \GF{F}$ and no~other point of that hull has rank at most~$\vdim$;
    see~\ref{thm:AC_extreme}. Replacing here ``extreme'' by ``exposed'' leads to
    the~\emph{exposed condition}~\EC{} of~\ref{def:EC}, which we compare with the~\emph{scalar
      atomic condition}~\SAC{} of De~Rosa and Tione. We~prove the inclusions
    $\SAC{} \subseteq \EC{} \subseteq \AC{}$ (see~\ref{prop:SAC_in_EC} and~\ref{prop:EC_in_AC}) and
    that the last one is an~equality in codimension~one (see~\ref{thm:AC=EC-codim1}). As
    a~by-product we obtain a~rigidity property of \AC{} integrands in~\ref{prop:AC_imPF_homeo}.
\end{miniremark}

\begin{miniremark}
    In this section $F$ is always a $\vdim$-integrand of class~$\cnt{1}$ away from the
    origin. By a~\emph{probability measure} over some space~$X$ we mean a~Radon measure
    $\mu$ over $X$ satisfying $\mu(X) = 1$.
\end{miniremark}

\begin{definition}[\protect{cf.~\cite[Definition~1.1]{DePhilippis2018}}]
    \label{def:AC}
    We say that a~$\vdim$-integrand $F$ satisfies the \emph{atomic condition}, and write
    $F \in \AC{}$, if given any probability measure $\mu$ over $\pgrass{\adim}{\vdim}$ and
    setting
    \begin{displaymath}
        A(\mu) = \textint{}{} \transpose{P}_F \ud \mu \in \End{\R^{\adim}} \,,
    \end{displaymath}
    there holds
    \begin{enumerate}
    \item
        \label{i:AC:dim}
        $\dim \ker A(\mu) \le \adim - \vdim$,
    \item
        \label{i:AC:dirac}
        if $\dim \ker A(\mu) = \adim - \vdim$, then $\mu = \Dirac{T}$ for some
        $T \in \pgrass{\adim}{\vdim}$.
    \end{enumerate}
\end{definition}

\begin{remark}
    \label{rem:AC_orig}
    In~\cite{DePhilippis2018} the authors use $\bar{A}(\mu) = \textint{}{} B_F \ud \mu$ in place of
    $A(\mu)$ as above but this makes no difference because
    \begin{displaymath}
        \bar{A}(\mu)
        = \textint{}{} B_F \ud \mu
        = \textint{}{} \transpose{P}_F(T) F(T) \ud \mu(T)
        = \Lpnorm{\mu}{1}{F} \textint{}{} \transpose{P}_F(T) \ud \nu(T)
        = A(\nu) \,,
    \end{displaymath}
    where $\nu$ is the probability measure satisfying
    $\nu(A) = \Lpnorm{(\mu \restrict A)}{1}{F} / \Lpnorm{\mu}{1}{F}$ whenever
    $A \subseteq \pgrass{\adim}{\vdim}$ is Borel.\footnote{As in~\cite[2.4.12]{Federer1969} we use
      the notation $\Lpnorm{\mu}{1}{f} = \int |f| \ud \mu$.}
\end{remark}

\begin{theorem}
    \label{thm:AC_extreme}
    Let $E = \extreme{(\conv \GF{F})}$. There holds
    \begin{displaymath}
        F \in \AC{}
        \quad \iff \quad
        \conv \GF{F} \cap \bigl\{ A : \dim \im A \le \vdim \bigr\}
        = \GF{F} = E \,.
    \end{displaymath}
\end{theorem}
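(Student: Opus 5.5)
The starting point is that $\{ A(\mu) : \mu \text{ probability over } \pgrass{\adim}{\vdim} \} = \conv \GF{F}$. Here $\GF{F}$ is compact and lies in a finite dimensional space, so the closed convex hull coincides with $\conv \GF{F}$. Moreover, by Carathéodory every point of $\conv \GF{F}$ equals $A(\mu)$ for some finitely supported $\mu$. Conversely, $A(\mu)$ lies in the closed convex hull, since $\transpose{P_F}$ is continuous and $\mu$ is a probability measure.

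Next I record the rank facts. Each element of $\GF{F}$ is the adjoint of a projection of rank $\vdim$, so it has rank exactly $\vdim$ and $\dim \ker = \adim - \vdim$. Thus $\dim \ker A \ge \adim - \vdim$ is equivalent to $\dim \im A \le \vdim$. Condition \ref{i:AC:dim} together with \ref{i:AC:dirac} then says the following: whenever $A(\mu)$ has rank $\le \vdim$, the measure $\mu$ is a Dirac mass. In that case $A(\mu) \in \GF{F}$, and in particular the rank is exactly $\vdim$.

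($\Rightarrow$) Assume $F \in \AC{}$ and let $A \in \conv \GF{F}$ with $\dim \im A \le \vdim$. Write $A = A(\mu)$. Then \AC{} forces $\mu = \Dirac{T}$, so $A = \transpose{P_F}(T) \in \GF{F}$. Since $\GF{F}$ consists of rank-$\vdim$ elements, the inclusion $\GF{F} \subseteq \conv \GF{F} \cap \{\dim \im \le \vdim\}$ is trivial, and equality follows. For $\GF{F} = E$, the inclusion $E \subseteq \GF{F}$ is standard for compact sets: extreme points of $\conv K$ lie in $K$ (Rockafellar, Cor.~18.3.1).

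For $\GF{F} \subseteq E$, suppose $P = \transpose{P_F}(T)$ is not extreme. Then $P = \tfrac12(B+C)$ with $B \ne C$ in $\conv \GF{F}$, say $B = A(\mu_1)$ and $C = A(\mu_2)$. The measure $\mu = \tfrac12(\mu_1+\mu_2)$ satisfies $A(\mu) = P$, which has rank $\vdim$. Hence \AC{} gives $\mu = \Dirac{S}$, which forces $\mu_1 = \mu_2 = \Dirac{S}$ and therefore $B = C$, a contradiction. Note that the argument does not need injectivity of $\transpose{P_F}$.

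($\Leftarrow$) Let $\mu$ be a probability measure and assume $\dim \im A(\mu) \le \vdim$. By hypothesis $A(\mu) \in \GF{F} = E$, so $A(\mu)$ is an extreme point, equal to $\transpose{P_F}(T)$ say. This already gives \ref{i:AC:dim}, since the rank is then exactly $\vdim$. For \ref{i:AC:dirac} I must show $\mu = \Dirac{T}$, and this is the main obstacle. Two things have to be established: that an extreme point which is a barycenter forces $\transpose{P_F} = A(\mu)$ $\mu$-almost everywhere, and that $\transpose{P_F}$ is injective.

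For the first point I use the standard fact that the pushforward $\nu = \transpose{P_F}{}_{\#}\mu$ has barycenter an extreme point $e$ only if $\nu = \Dirac{e}$. To prove it, suppose $\nu(\{e\}) < 1$. Then I can pick a Borel set $B$ with $0 < \nu(B) < 1$ and split $\nu$ into its normalized restrictions to $B$ and to the complement. These have barycenters $b_1, b_2$ in the closed convex hull $\conv \GF{F}$, with $e = \nu(B)\,b_1 + (1-\nu(B))\,b_2$. Extremality gives $b_1 = b_2 = e$. Choosing $B$ as a small ball around some $x \ne e$ in $\spt \nu$, the barycenter $b_1$ lies in the closed convex hull of that ball. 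Since $e$ is not in that ball, separation yields a contradiction.

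For the second point, suppose $\transpose{P_F}(S) = \transpose{P_F}(T)$. Then $P_F(S) = P_F(T)$, and since $\im P_F(S) = \im S$ by \ref{rem:se-cse}, we get $\im S = \im T$ and hence $S = T$. Consequently $\mu$ is concentrated on $\{T\}$, that is, $\mu = \Dirac{T}$.
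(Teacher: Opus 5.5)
Your proof is correct and follows the paper's route: you identify $\{A(\mu)\}$ with $\conv \GF{F}$, prove ($\Rightarrow$) by splitting a measure into two, and prove ($\Leftarrow$) from extremality together with injectivity of $\transpose{P_F}$. You also supply an actual argument for the ``barycentre at an extreme point forces a Dirac mass'' step, which the paper only asserts. One detail needs tidying: a small ball $B$ about $x \ne e$ need not satisfy $\nu(B) < 1$. However, if $\nu(B) = 1$, then $e$, being the barycentre of $\nu$, already lies in $B$, which gives the same contradiction.
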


\begin{proof}
    Since any probability measure over $\pgrass{\adim}{\vdim}$ is a (weak) limit
    of convex combinations of atomic measures and $B_F$ is continuous we see
    that
    \begin{displaymath}
        \bigl\{ A(\nu) : \nu \text{ a probability measure over $\pgrass{\adim}{\vdim}$} \bigr\}
        = \conv \GF{F} \,.
    \end{displaymath}

    \emph{Proof of $\Rightarrow$:} Let $\mu$ be a probability measure over
    $\pgrass{\adim}{\vdim}$ such that
    $A(\mu) \in \bigl\{ A : \dim \im A \le \vdim \bigr\}$. Then
    $\dim \im A(\mu) \le \vdim$ but $F \in \AC{}$ so $\dim \im A(\mu) = \vdim$
    and $A(\mu) = \transpose{P}_F(T)$ for some $T \in \pgrass{\adim}{\vdim}$ so
    $A(\mu) \in \GF{F}$. This shows that
    $\conv \GF{F} \cap \bigl\{ A : \dim \im A \le \vdim \bigr\} \subseteq
    \GF{F}$. The reverse inclusion is trivial since $\dim \im P = \vdim$ for any
    $P \in \GF{F}$ and $\GF{F} \subseteq \conv \GF{F}$.

    Using~\cite[18.3.1]{Rockafellar1970} we see that $E \subseteq \GF{F}$.
    We~will show the reverse inclusion, i.e., that every point of $\GF{F}$ is an
    extreme point of $\conv \GF{F}$. Assume $X,Y \in \conv \GF{F}$,
    $0 < \lambda < 1$, $T \in \pgrass{\adim}{\vdim}$, and
    $\transpose{P}_F(T) = \lambda X + (1-\lambda)Y$. Since
    $X,Y \in \conv \GF{F}$, there exist probability measures $\nu_X$ and $\nu_Y$
    over $\pgrass{\adim}{\vdim}$ such that $A(\nu_X) = X$ and $A(\nu_Y) = Y$. Set
    $\eta = \lambda \nu_X + (1-\lambda)\nu_Y$. Observe that
    \begin{enumerate}
    \item $\spt \nu_X \cup \spt \nu_Y = \spt \eta$,
    \item $\eta$ is a probability measure over $\pgrass{\adim}{\vdim}$,
    \item and $A(\eta) = \transpose{P}_F(T)$.
    \end{enumerate}
    Since $F \in \AC{}$ we see that $\eta = \Dirac{T}$ which means that
    $\nu_X = \nu_Y = \Dirac{T}$; hence, $X = Y = \transpose{P}_F(T)$ is an
    extreme point of $\conv \GF{F}$.

    \emph{Proof of $\Leftarrow$:} Let $\mu$ be a probability measure over
    $\pgrass{\adim}{\vdim}$. If $\dim \ker A(\mu) \ge \adim - \vdim$, then
    $\dim \im A(\mu) \le \vdim$ so $A(\mu) \in \GF{F}$ and $A(\mu) = \transpose{P}_F(T)$ for some
    $T \in \pgrass{\adim}{\vdim}$, which shows $\dim \ker A(\mu) = \adim - \vdim$; moreover, since
    $\transpose{P}_F$ is injective (by~\ref{rem:se-cse} we have $\eqproject{\im P_F(T)} = T$) and
    $\transpose{P}_F(T)$ is an extreme point of $\conv \GF{F}$, we get $\mu = \Dirac{T}$.
\end{proof}

\begin{proposition}
    \label{prop:AC_imPF_homeo}
    Assume $F \in \AC$. Then the map
    $\varphi : \pgrass{\adim}{\vdim} \to \pgrass{\adim}{\vdim}$ given by
    \begin{displaymath}
        \varphi(S) = \project{(\im \transpose{P_F}(S))}
        \quad \text{for $S \in \pgrass{\adim}{\vdim}$}
    \end{displaymath}
    is injective; hence, a~homeomorphism.
\end{proposition}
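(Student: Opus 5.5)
The plan is to derive injectivity directly from the convex-geometric characterisation~\ref{thm:AC_extreme}, and then to obtain the homeomorphism property from topology.

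\emph{Injectivity.} Suppose $S, T \in \pgrass{\adim}{\vdim}$ satisfy $\varphi(S) = \varphi(T)$, and put $W = \im \transpose{P_F}(S) = \im \transpose{P_F}(T)$. Consider the midpoint
\begin{displaymath}
    A = \tfrac12 \bigl( \transpose{P_F}(S) + \transpose{P_F}(T) \bigr) \in \conv \GF{F} \,.
\end{displaymath}
Since $\im A \subseteq W$ and $\dim W = \vdim$, we get $\dim \im A \le \vdim$. By~\ref{thm:AC_extreme}, $A \in \GF{F}$ and $A$ is an extreme point of $\conv \GF{F}$. An extreme point cannot be the midpoint of two distinct points of the set, so $\transpose{P_F}(S) = \transpose{P_F}(T)$. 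Finally, $\transpose{P_F}$ is injective: by~\ref{rem:se-cse}, $P_F(S)$ is a projection onto $\im S$. Hence $\ker \transpose{P_F}(S) = (\im S)^{\perp}$, and this recovers~$S$. Therefore $S = T$.

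\emph{Continuity.} The map $\transpose{P_F}$ is continuous on $\pgrass{\adim}{\vdim}$, since $F$ is of class~$\cnt{1}$ and positive there. Its values are projections of constant rank~$\vdim$. The map sending a rank-$\vdim$ projection $P$ to $\project{(\im P)}$ is continuous; for instance, locally one may write $\im P = P[V]$ for a fixed $\vdim$-plane $V$ and use Gram--Schmidt. Hence $\varphi$ is continuous.

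\emph{Homeomorphism.} Since $\pgrass{\adim}{\vdim}$ is a compact Hausdorff space, the continuous injection $\varphi$ is a homeomorphism onto its image. It remains to prove surjectivity, which is the only step needing more than formal manipulation. Because $\pgrass{\adim}{\vdim}$ is a closed topological manifold, invariance of domain applied in charts shows that $\varphi$ is an open map. Thus its image is open; it is also compact, hence closed. Since the Grassmannian is connected, the image is all of $\pgrass{\adim}{\vdim}$, and $\varphi$ is a homeomorphism.
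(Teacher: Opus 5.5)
Your proof is correct and follows the same route as the paper. You use \ref{thm:AC_extreme} to see that a convex combination of $\transpose{P_F}(S)$ and $\transpose{P_F}(T)$ has rank at most $\vdim$, so it lies in $\GF{F} = \extreme (\conv \GF{F})$; this forces $\transpose{P_F}(S) = \transpose{P_F}(T)$ and hence $S = T$, and you finish with compactness and invariance of domain. Your midpoint formulation, and your explicit appeal to connectedness of the Grassmannian to get surjectivity, are only slightly more detailed versions of the paper's own steps.
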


\begin{proof}
    Recall~\ref{def:image_space}. Assume $S,R,T \in \pgrass{\adim}{\vdim}$ are such that
    $\varphi(S) = \varphi(R) = T$.  Then
    \begin{displaymath}
        \transpose{P_F}(S) \in \GF{F} \cap \imsp(T)
        \quad \text{and} \quad
        \transpose{P_F}(R) \in \GF{F} \cap \imsp(T) \,;
    \end{displaymath}
    hence
    \begin{displaymath}
        \conv \bigl\{ \transpose{P_F}(S) ,\, \transpose{P_F}(R) \bigr\}
        \subseteq \conv \GF{F} \cap \imsp(T)
        \subseteq \conv \GF{F} \cap \{ A : \dim \im A \le \vdim\}
        = \GF{F} \,.
    \end{displaymath}
    Consequently, for each
    $Z \in \conv \bigl\{ \transpose{P_F}(S) ,\, \transpose{P_F}(R) \bigr\}$ we~have
    $Z \in \GF{F} = \extreme ( \conv \GF{F} )$ so
    $\rInt \conv \bigl\{ \transpose{P_F}(S) ,\, \transpose{P_F}(R) \bigr\} = \varnothing$
    and we~conclude that $\transpose{P_F}(S) = \transpose{P_F}(R)$. Recall however that
    $\im P_F(S) = \im S$ and $\im P_F(R) = \im R$; hence, $S = R$ and $\varphi$
    is~injective.

    Observe that $\varphi$ is continuous because $F$ is of class~$\cnt{1}$. Moreover,
    since $\pgrass{\adim}{\vdim}$ is compact we get that $\varphi$ is a~homeomorphism onto
    $\im \varphi$ which is compact. Finally, employing the Domain Invariance
    Theorem~(e.g.~\cite[4.2.11]{Engelking1992}) we get that
    $\im \varphi = \pgrass{\adim}{\vdim}$.
\end{proof}

\begin{corollary}
    Since $\ker P_F(S) = \im \transpose{P_F}(S)^{\perp}$ for $S \in \pgrass{\adim}{\vdim}$ we see
    that also
    \begin{displaymath}
        \bigl[ S \mapsto  \ker P_F(S)  \bigr] : \pgrass{\adim}{\vdim}  \to \grass{\adim}{\codim}
    \end{displaymath}
    is a homeomorphism. 
\end{corollary}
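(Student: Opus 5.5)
The plan is to factor the map $S \mapsto \ker P_F(S)$ through the homeomorphism of~\ref{prop:AC_imPF_homeo}, composed with the orthogonal-complement map $\grass{\adim}{\vdim} \to \grass{\adim}{\codim}$.

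\emph{Step 1: the kernel identity.} For any $A \in \End{\R^{\adim}}$ one has $\ker A = (\im \transpose{A})^{\perp}$. Applying this with $A = P_F(S)$ gives $\ker P_F(S) = (\im \transpose{P_F}(S))^{\perp}$. By~\ref{rem:se-cse}, $P_F(S)$ is a projection of rank~$\vdim$, so $\im \transpose{P_F}(S)$ is a $\vdim$-dimensional subspace and $\ker P_F(S) \in \grass{\adim}{\codim}$.

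\emph{Step 2: the complement map.} In the projection model of the Grassmannians, the subspace $\ker P_F(S)$ corresponds to
\[
\project{(\ker P_F(S))} = \id{\R^{\adim}} - \varphi(S) = \varphi(S)^{\perp},
\]
where $\varphi$ is the map of~\ref{prop:AC_imPF_homeo}. The map $T \mapsto T^{\perp}$ sends $\pgrass{\adim}{\vdim}$ into $\pgrass{\adim}{\codim}$ by~\ref{rem:perp}. It is an isometry for the metric $\|\project{S} - \project{T}\|$, because $\|T^{\perp} - R^{\perp}\| = \|T - R\|$. It is also bijective, being its own inverse. Hence it is a homeomorphism $\pgrass{\adim}{\vdim} \to \pgrass{\adim}{\codim}$.

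\emph{Step 3: composition.} The map $S \mapsto \ker P_F(S)$ equals the composition of $\varphi$ with $T \mapsto T^{\perp}$, followed by the identification $\pgrass{\adim}{\codim} \cong \grass{\adim}{\codim}$. Here $\varphi$ is a homeomorphism of $\pgrass{\adim}{\vdim}$ onto itself by~\ref{prop:AC_imPF_homeo}, and the other two maps are homeomorphisms by Step~2 and by definition of the metric on $\grass{\adim}{\codim}$. So the composition is a homeomorphism.

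The argument has no real obstacle: all the substance is in~\ref{prop:AC_imPF_homeo}, in particular its surjectivity, which came from domain invariance. The only point needing care is consistency of conventions: one must check that $\ker P_F(S)$ is the complement of $\im \transpose{P_F}(S)$, and not of $\im P_F(S) = \im S$. This follows directly from $\ker A = (\im \transpose{A})^{\perp}$.
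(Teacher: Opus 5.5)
Your argument is correct and matches the paper's reasoning: the corollary rests exactly on the identity $\ker P_F(S) = (\im \transpose{P_F}(S))^{\perp}$ combined with the homeomorphism of \ref{prop:AC_imPF_homeo}, under that proposition's standing hypothesis $F \in \AC{}$. Your Step~2 simply makes explicit that $T \mapsto T^{\perp}$ is an isometric involution of $\pgrass{\adim}{\vdim}$ onto $\pgrass{\adim}{\codim}$.
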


\begin{miniremark}
    \label{mr:reason_for_SAC}
    De Rosa and Tione devised the following definition while studying the regularity of graphs with
    $p$-integrable mean $F$-curvature; see~\cite{DeRosa2022}.
\end{miniremark}

\begin{definition}[\protect{cf.~\cite[Definition~3.3]{DeRosa2022}}]
    \label{def:SAC}
    A~$\vdim$-integrand~$F$ is said to satisfy the \emph{scalar atomic condition}
    (denoted $F \in \SAC{}$) if
    \begin{displaymath}
        \transpose{P_F}(T) \bullet Q_F(S) > 0
        \quad \text{for $S,T \in \pgrass{\adim}{\vdim}$ with $S \ne T$} \,.
    \end{displaymath}
\end{definition}

\begin{miniremark}
    \label{mr:SAC_not_natural}
    This means that any point $P_F(T) \in \transpose{\GF{F}}$ is exposed in $\conv
    \transpose{\GF{F}}$ by the vector~$Q_F(T)$. It~seems that the choice of~$Q_F(T)$ is rather
    arbitrary; hence, we propose a more general class of integrands.
\end{miniremark}

\begin{definition}
    \label{def:EC}
    We say that a~$\vdim$-integrand~$F$ satisfies the \emph{exposed condition}, and write
    $F \in \EC{}$, if
    \begin{displaymath}
        \conv \GF{F} \cap \bigl\{ A : \dim \im A \le \vdim \bigr\}
        = \GF{F} = \exposed{(\conv \GF{F})} \,.
    \end{displaymath}
\end{definition}

\begin{lemma}
    \label{lem:EC_normals}
    Let $F$ be an integrand satisfying at least one of the following conditions.
    \begin{enumerate}
    \item
        \label{i:ECn:normal}
        There exists $N_F : \pgrass{\adim}{\vdim} \to \End{\R^{\adim}}$ such that
        \begin{gather}
            T \circ N_F(T) = 0
            \quad \text{for $T \in \pgrass{\adim}{\vdim}$}
            \\
            \text{and} \quad
            N_F(T) \bullet P_F(S) > 0
            \quad \text{for $S,T \in \pgrass{\adim}{\vdim}$ with $S \ne T$} \,.
        \end{gather}
    \item
        \label{i:ECn:normal2}
        There exists $M_F : \pgrass{\adim}{\vdim} \to \End{\R^{\adim}}$ such that
        \begin{gather}
            M_F(S) \circ S = 0
            \quad \text{for $S \in \pgrass{\adim}{\vdim}$}
            \\
            \text{and} \quad
            M_F(S) \bullet P_F(T) > 0
            \quad \text{for $S,T \in \pgrass{\adim}{\vdim}$ with $S \ne T$} \,.
        \end{gather}
    \end{enumerate}
    Then $F \in \EC{}$.
\end{lemma}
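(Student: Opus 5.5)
We treat alternative~\ref{i:ECn:normal} in detail and then explain how~\ref{i:ECn:normal2} is meant to follow the same scheme. By~\ref{cor:extreme-equivalence}, applied with ``exposed'', it suffices to work with the set $K = P_F \lIm \pgrass{\adim}{\vdim} \rIm$ of transposes instead of~$\GF{F}$. Transposition is a linear isometry of $\End{\R^{\adim}}$ which preserves $\dim \im$, so the rank part of~\ref{def:EC} may be checked on $\conv K$ as well. For each $T$ we use the linear functional $\ell_T(X) = N_F(T) \bullet X$.

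\emph{Step 1: $\ell_T$ vanishes on $\imsp(T)$.} Let $Y \in \End{\R^{\adim}}$ with $\im Y \subseteq \im T$, so that $Y = T \circ Y$. Since $T$ is symmetric and $T \circ N_F(T) = 0$,
\begin{displaymath}
    N_F(T) \bullet Y = N_F(T) \bullet (T \circ Y) = \bigl( T \circ N_F(T) \bigr) \bullet Y = 0 \,.
\end{displaymath}
By~\ref{rem:se-cse} we have $P_F(T) = T \circ P_F(T)$, hence $\ell_T(P_F(T)) = 0$. The hypothesis gives $\ell_T(P_F(S)) > 0$ for $S \ne T$, so $\ell_T \ge 0$ on $\conv K$.

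\emph{Step 2: every point of $K$ is exposed.} Since $K$ is compact, every point of $\conv K$ is a finite convex combination $\sum_i \lambda_i P_F(S_i)$ with all $\lambda_i > 0$. If such a point lies in $\conv K \cap \ker \ell_T$, then every $S_i$ equals~$T$. Hence $\conv K \cap \ker \ell_T = \{ P_F(T) \}$, and $P_F(T)$ is exposed by the hyperplane $\ker \ell_T$. The reverse inclusion $\exposed(\conv K) \subseteq \extreme(\conv K) \subseteq K$ follows from~\cite[18.3.1]{Rockafellar1970}. Thus $K = \exposed(\conv K)$.

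\emph{Step 3: the rank condition.} Let $Y = \sum_i \lambda_i P_F(S_i) \in \conv K$ with $\dim \im Y \le \vdim$, again with all $\lambda_i > 0$. Choose $T \in \pgrass{\adim}{\vdim}$ with $\im T \supseteq \im Y$. By Step~1, $0 = \ell_T(Y) = \sum_i \lambda_i \ell_T(P_F(S_i))$. Every term is non-negative, so all terms vanish, and by hypothesis every $S_i$ equals~$T$. Hence $Y = P_F(T) \in K$. The reverse inclusion holds because each $P_F(T)$ has rank~$\vdim$.

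For~\ref{i:ECn:normal2} the plan is the same with the roles of image and transpose exchanged. If $M \circ S = 0$ and $X = S \circ X$, then
\begin{displaymath}
    M \bullet \transpose{X} = \trace(M \circ X) = \trace(M \circ S \circ X) = 0 \,.
\end{displaymath}
So the functional $M_F(S) \bullet (\ph)$ vanishes on $\transpose{\imsp(S)}$, and the rank argument of Step~3 goes through for points $\transpose{X}$ with $\im X \subseteq \im S$.

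The main obstacle is the diagonal identity $M_F(S) \bullet P_F(S) = 0$, which is needed to run Steps~1 and~2 in this case. It would follow if $\im \transpose{P_F}(S) \subseteq \im S$, but this fails for oblique projections in general. Continuity in $T \to S$ of the strict hypothesis only yields $M_F(S) \bullet P_F(S) \ge 0$, which does not suffice. So the first thing to check is whether the intended pairing in~\ref{i:ECn:normal2} is with the transposes $\transpose{P_F}(T) \in \GF{F}$, or with $M_F(S)$ acting on the right on the kernel side, i.e.\ whether the correct normalisation is $S \circ M_F(S) = 0$ rather than $M_F(S) \circ S = 0$. I would settle this convention before completing case~\ref{i:ECn:normal2}. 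After that, Steps~1--3 transfer verbatim, with~\ref{cor:extreme-equivalence} again used to pass between $K$ and~$\GF{F}$.
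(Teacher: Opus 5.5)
Your treatment of alternative~\ref{i:ECn:normal} is correct and follows the paper's argument. The paper uses the same hyperplane $\{A : A \bullet N_F(T) = 0\}$, shows that it meets $\conv \transpose{\GF{F}}$ only in $P_F(T)$, and shows that it contains every rank-$\vdim$ map $A = T \circ A$. You identify the contact set through finite convex combinations, where the paper goes through extreme points of the face; this is a harmless simplification.

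The gap is alternative~\ref{i:ECn:normal2}, which you leave unproved and propose to settle by changing the hypothesis. Your diagnosis is accurate: $M_F(S) \circ S = 0$ alone does not give $M_F(S) \bullet P_F(S) = 0$, and the paper's ``along the same lines'' passes over exactly this point. However, the strict inequality in~\ref{i:ECn:normal2} itself supplies the missing identity.

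Your own computation $M \bullet \transpose{X} = \trace(M \circ S \circ X)$ gives it once you apply it off the diagonal.
\begin{itemize}
\item Fix $T$ and put $S = \eqproject{\im \transpose{P_F}(T)}$. Then $\transpose{P_F}(T) = S \circ \transpose{P_F}(T)$, so $M_F(S) \bullet P_F(T) = \trace\bigl( M_F(S) \circ S \circ \transpose{P_F}(T) \bigr) = 0$.
\item The hypothesis gives $M_F(S) \bullet P_F(T) > 0$ whenever $S \ne T$, so it forces $S = T$.
\item Hence $\ker P_F(T) = (\im \transpose{P_F}(T))^{\perp} = \im T^{\perp}$, i.e.\ $P_F(T) = T$ for every $T$.
\item In particular $M_F(S) \bullet P_F(S) = \trace(M_F(S) \circ S) = 0$, which is the diagonal identity you were missing.
\end{itemize}
Your Steps~1--3 now run with the functional $M_F(S) \bullet (\cdot)$, which vanishes on $\transpose{\imsp(S)}$. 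In the rank step, choose $S$ with $\im S \supseteq \im \transpose{Y}$.

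Two consequences are worth noting. First, under the literal hypothesis, \ref{i:ECn:normal2} is very restrictive: by~\ref{rem:se-cse} it forces $F$ to be constant on $\pgrass{\adim}{\vdim}$. Second, neither of your proposed re-readings is needed, and neither would add anything. Requiring $S \circ M_F(S) = 0$ turns~\ref{i:ECn:normal2} into~\ref{i:ECn:normal} verbatim. Pairing with $\transpose{P_F}(T)$ does the same after replacing $M_F$ by $\transpose{M_F}$.
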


\begin{proof}
    Assume~\ref{i:ECn:normal} holds. Set
    $H(T) = \End{\R^{\adim}} \cap \{ A : A \bullet N_F(T) \ge 0 \}$ for
    $T \in \pgrass{\adim}{\vdim}$. Then
    \begin{equation}
        \label{eq:ECn:NF_props}
        \transpose{\GF{F}} \subseteq H(T)
        \quad \text{and} \quad
        \transpose{\GF{F}} \cap \Bdry{H(T)} = \bigl\{ P_F(T) \bigr\}
        \quad \text{for $T \in \pgrass{\adim}{\vdim}$} \,.
    \end{equation}
    Let $T \in \pgrass{\adim}{\vdim}$ and set $K = \conv \transpose{\GF{F}} \cap \Bdry{H(T)}$. Note
    that $K$ is a closed face of $\conv \transpose{\GF{F}}$ and
    \begin{displaymath}
        \extreme K \subseteq ( \extreme \conv \transpose{\GF{F}} ) \cap \Bdry{H(T)}
        \subseteq \transpose{\GF{F}} \cap \Bdry{H(T)}
        = \bigl\{ P_F(T) \bigr\} \,;
    \end{displaymath}
    hence, $\Bdry{H(T)} \cap \conv \transpose{\GF{F}} = \{ P_F(T) \}$ and we see that
    $P_F(T) \in \exposed \conv \transpose{\GF{F}}$.

    Let $A \in \conv \transpose{\GF{F}}$ be any rank~$\vdim$ map and assume $A = T \circ A$ (this
    does not restrict the choice of $A$ since $T$ could be arbitrary). Since $T \circ N_F(T) = 0$ we
    get
    \begin{displaymath}
        A \bullet N_F(T) = \trace \bigl( \transpose{A} \circ N_F(T) \bigr)
        = \trace \bigl( \transpose{A} \circ T \circ N_F(T) \bigr) = 0 \,,
    \end{displaymath}
    which shows that $A \in \Bdry{H(T)} \cap \conv \transpose{\GF{F}} = \{ P_F(T) \}$;
    thus, $F \in \EC{}$.

    The proof that~\ref{i:ECn:normal2} implies $F \in \EC{}$ goes along the same lines.
\end{proof}

\begin{proposition}
    \label{prop:SAC_in_EC}
    $\SAC{} \subseteq \EC{}$
\end{proposition}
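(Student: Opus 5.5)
The plan is to verify hypothesis~\ref{i:ECn:normal2} of Lemma~\ref{lem:EC_normals} with an explicit choice of the field~$M_F$, and then conclude $F \in \EC{}$ from that lemma. The \SAC{} inequality reads $\transpose{P_F}(T) \bullet Q_F(S) > 0$ for $S \ne T$. Since $A \bullet B = \transpose{A} \bullet \transpose{B}$, it is equivalent to
\begin{displaymath}
    P_F(T) \bullet \transpose{Q_F(S)} > 0
    \quad \text{for $S,T \in \pgrass{\adim}{\vdim}$ with $S \ne T$} \,.
\end{displaymath}
I therefore set $M_F(S) = \transpose{Q_F(S)}$ and check the two requirements of~\ref{i:ECn:normal2} in turn.

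The positivity requirement $M_F(S) \bullet P_F(T) > 0$ for $S \ne T$ is exactly the displayed reformulation of \SAC{}. For the annihilation requirement $M_F(S) \circ S = 0$, write $Q_F(S) = \id{\R^{\adim}} - P_F(S)$. By~\ref{rem:se-cse}, $P_F(S)$ is a projection with $S \circ P_F(S) = P_F(S)$ and $P_F(S) \circ S = S$. The needed identity is
\begin{displaymath}
    \transpose{Q_F(S)} \circ S = \transpose{\bigl( S \circ Q_F(S) \bigr)}
    = \transpose{\bigl( S - S \circ P_F(S) \bigr)}
    = \transpose{\bigl( S - P_F(S) \bigr)} \,,
\end{displaymath}
and this is in general \emph{not} zero, because $S \circ P_F(S) = P_F(S)$, not $S$. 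So the naive choice fails the annihilation condition, and this is the point needing care.

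Two remedies are available; I would try the first and fall back on the second. The first is to use alternative~\ref{i:ECn:normal} instead, with $N_F(T) = \transpose{Q_F(T)}$. Then $T \circ N_F(T) = \transpose{( Q_F(T) \circ T )} = \transpose{( T - P_F(T) \circ T )} = 0$ by $P_F(T) \circ T = T$. The remaining condition becomes $N_F(T) \bullet P_F(S) = Q_F(T) \bullet \transpose{P_F(S)} > 0$ for $S \ne T$, which is \SAC{} with the roles of $S$ and $T$ interchanged. Since \SAC{} quantifies over all ordered pairs of distinct planes, this interchange is harmless, and Lemma~\ref{lem:EC_normals}\ref{i:ECn:normal} yields $F \in \EC{}$.

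The second remedy, if the index bookkeeping turns out to be mismatched, is to argue directly as in the proof of Lemma~\ref{lem:EC_normals}. In that case one takes the half-spaces $\{A : A \bullet Q_F(T) \ge 0\}$ acting on $\GF{F}$ rather than $\transpose{\GF{F}}$, and uses $\ker Q_F(T) \supseteq \im T$ to show that every rank-$\vdim$ element $A$ of the hull with $\im \transpose{A}$ suitably placed lies on the boundary hyperplane. The main obstacle is thus purely this matching of transposes and of the side ($T \circ N$ versus $M \circ S$) on which the normal field annihilates. Once it is matched with $P_F(T) \circ T = T$ from~\ref{rem:se-cse}, the proof is immediate.
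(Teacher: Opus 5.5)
Your first remedy is correct and is exactly the paper's proof: take $N_F(T) = \transpose{Q_F(T)}$ in alternative~\ref{i:ECn:normal} of~\ref{lem:EC_normals}. Then $T \circ N_F(T) = \transpose{(Q_F(T) \circ T)} = 0$ because $P_F(T) \circ T = T$, and $N_F(T) \bullet P_F(S) > 0$ for $S \ne T$ is just \SAC{} with $S$ and $T$ interchanged. The detour through alternative~\ref{i:ECn:normal2}, which you rightly discard, and the vaguer second remedy are not needed.
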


\begin{proof}
    Let $F \in \SAC{}$. From~\ref{def:aux-objects} and~\ref{rem:sigma_props}\eqref{i:projections}
    deduce that $Q_F(T) = Q_F(T) \circ T^{\perp}$ for $T \in \pgrass{\adim}{\vdim}$; hence, setting
    $N_F(T) = \transpose{Q_F}(T)$ for $T \in \pgrass{\adim}{\vdim}$
    gives~\ref{lem:EC_normals}\ref{i:ECn:normal} and~\ref{lem:EC_normals} yields the claim.
\end{proof}

\begin{proposition}
    \label{prop:existence_of_normals}
    Assume that $F$ is an integrand which satisfies
    \begin{equation}
        \label{eq:EoN:half_AC}
        \conv \GF{F} \cap \bigl\{ A : \dim \im A \le \vdim \bigr\} = \GF{F}
        \,.
    \end{equation}
    Let $T \in \pgrass{\adim}{\vdim}$ and set $S = \eqproject{\im \transpose{P_F}(T)}$. Then there
    exist unit vectors $N_F(T) \in \imsp(T^{\perp})$ and $M_F(S) \in \transpose{\imsp(S^{\perp})}$
    such that the hyperplanes $\End{\R^{\adim}} \cap \{ A : A \bullet N_F(T) = 0 \}$ and
    $\End{\R^{\adim}} \cap \{ A : A \bullet M_F(S) = 0 \}$ contain $\imsp(T)$
    and~$\transpose{\imsp(S)}$ respectively, are disjoint from $\rInt \cone \transpose{\GF{F}}$,
    and satisfy
    \begin{displaymath}
        \cone \transpose{\GF{F}} \subseteq
        \bigl\{ A : A \bullet M_F(S) \ge 0 \bigr\}
        \cap \bigl\{ A : A \bullet N_F(T) \ge 0 \bigr\} \,.
    \end{displaymath}
\end{proposition}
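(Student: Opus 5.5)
The plan is to get both hyperplanes from the separation theorem for a convex cone and a linear subspace~\cite[11.3]{Rockafellar1970}. The second one follows from the first by transposition. Passing to adjoints (a linear isometry of $\End{\R^{\adim}}$, as in~\ref{rem:GF_on_hyperplane}), hypothesis~\eqref{eq:EoN:half_AC} is equivalent to $\conv \transpose{\GF{F}} \cap \{ A : \dim \im A \le \vdim \} = \transpose{\GF{F}}$. Throughout, write $C = \cone \transpose{\GF{F}}$ and $L = \imsp(T)$.

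\emph{Step 1: $L \cap C = \ray \{ P_F(T) \}$.} Every element of $L$ has rank at most~$\vdim$. Any nonzero $A \in C$ is $tZ$ with $t > 0$ and $Z \in \conv \transpose{\GF{F}}$; this uses~\ref{lem:base_of_cone} with $\alpha = (\ph \bullet \id{\R^{\adim}})/\vdim$. If moreover $A \in L$, the hypothesis forces $Z = P_F(S)$ for some~$S$. Then $\im S = \im P_F(S) \subseteq \im T$ by~\ref{rem:se-cse}, so $S = T$.

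\emph{Step 2: $L \cap \rInt C = \varnothing$.} By Step~1 it suffices to show $P_F(T) \notin \rInt C$, and I expect this to be the main obstacle. Suppose $P_F(T) \in \rInt C$. By~\ref{lem:base_of_cone}\ref{i:boc:rint} and the trace normalisation, $P_F(T)$ then lies in $\rInt \conv \transpose{\GF{F}}$ relative to $\aff \transpose{\GF{F}}$. Hence $P_F(T) + sD \in \conv \transpose{\GF{F}}$ for every $D$ in the direction space $V = \lin(\transpose{\GF{F}} - P_F(T))$ and all small $|s|$. If $D \in V \cap L$, these points have rank at most~$\vdim$, so by the hypothesis they lie in $\transpose{\GF{F}} \cap L = \{ P_F(T) \}$, which forces $D = 0$. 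So a contradiction needs a nonzero element of $V \cap \imsp(T)$. I would look for it among differences $P_F(S) - P_F(T)$ and their combinations, using the structure $P_F(S) = S \circ P_F(S)$ from~\ref{rem:se-cse}. If that fails, I would try second-order expansions along curves $S_t \to T$ in $\pgrass{\adim}{\vdim}$, or a dimension count showing $\dim V + \dim L > \dim \End{\R^{\adim}}$, possibly after restricting to $\{ A : A \bullet \id{\R^{\adim}} = \vdim \}$. Note that $\GF{F}$ is infinite, since $S \mapsto P_F(S)$ is injective, so $V \ne 0$.

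\emph{Step 3: separation and normalisation.} With $L \cap \rInt C = \varnothing$, there is a hyperplane $H$ properly separating $L$ and~$C$. Because $L$ is a linear subspace, the separating functional is bounded on~$L$, so it vanishes there. Thus $H = \{ A : A \bullet N = 0 \} \supseteq L$ and $C \subseteq \{ A : A \bullet N \ge 0 \}$, after possibly replacing $N$ by $-N$. Proper separation rules out $C \subseteq H$. A supporting hyperplane that meets $\rInt C$ must contain~$C$, so $H \cap \rInt C = \varnothing$.

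Since $N \perp \imsp(T)$, \ref{rem:basic_props_of_IT} gives $N \in \imsp(T^{\perp})$; we set $N_F(T) = N/|N|$. For $M_F(S)$ we repeat Steps~1--3 with $\transpose{\imsp(S)}$ in place of $\imsp(T)$. The analogue of Step~1 now reads: a transposed element of $\conv \transpose{\GF{F}}$ of rank at most~$\vdim$ whose image lies in $\im S$ must be $\transpose{P_F}(R)$ with $\im \transpose{P_F}(R) = \im S$. In the homeomorphism situation of~\ref{prop:AC_imPF_homeo}, where injectivity holds, this gives $R = T$. Here injectivity is not available, so Step~1 must be rephrased as: $\transpose{\imsp(S)} \cap C \subseteq \ray \bigl( \transpose{\GF{F}} \cap \transpose{\imsp(S)} \bigr)$. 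The separation step then only needs $\transpose{\imsp(S)} \cap \rInt C = \varnothing$, and this is handled exactly as in Step~2. Finally, \ref{rem:basic_props_of_IT} gives $M_F(S) \in \transpose{\imsp(S^{\perp})}$.
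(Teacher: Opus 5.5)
Steps~1 and~3 are sound. Step~1 is the first equality in \eqref{eq:two_normals_for_GF}, and proper separation combined with the fact that a supporting hyperplane meeting $\rInt C$ contains~$C$ is a valid substitute for \cite[Theorem~11.2]{Rockafellar1970}. The gap is Step~2: you never show $P_F(T) \notin \rInt \cone \transpose{\GF{F}}$, and the routes you sketch cannot show it. None of them uses the assumption that $P_F(T)$ is a relative interior point. They only examine $\aff \transpose{\GF{F}}$ and ranks near $P_F(T)$, so they would have to produce a nonzero $D \in V \cap \imsp(T)$ for every $F$ satisfying \eqref{eq:EoN:half_AC}.

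The area integrand with $\vdim = 1$ satisfies \eqref{eq:EoN:half_AC}, yet no such $D$ exists for it. Here $\transpose{\GF{F}} = \pgrass{\adim}{1}$ and $V$ consists of the symmetric traceless maps. A symmetric map with image in the line $\im T$ is a multiple of~$T$, hence zero if it is traceless. Accordingly $\dim V + \dim \imsp(T) = \adim(\adim+1)/2 - 1 + \adim \le \adim^2$, so the dimension count fails too. More strongly, the elements of rank at most one in $\aff \transpose{\GF{F}}$ are exactly the orthogonal projections onto lines, i.e.\ $\transpose{\GF{F}}$ itself. The local rank structure of the affine hull therefore carries no information, and the fact that $P_F(T)$ is not a relative interior point is a global convexity fact.

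What closes the gap is extremality. Suppose $P_F(T) \in \extreme \conv \transpose{\GF{F}}$. Since $S \mapsto P_F(S)$ is injective, $\conv \transpose{\GF{F}}$ is not a singleton, so $P_F(T) \notin \rInt \conv \transpose{\GF{F}}$. Then \ref{lem:base_of_cone}\ref{i:boc:rint} gives $\ray\{P_F(T)\} \cap \rInt \cone \transpose{\GF{F}} = \varnothing$.

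The same argument, applied to every point of $\transpose{\GF{F}} \cap \transpose{\imsp(S)}$, also repairs your treatment of $M_F(S)$. There ``exactly as in Step~2'' does not work, because Step~2 needed $\transpose{\GF{F}} \cap L$ to be a single point.

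Extremality holds for $F \in \AC{}$ by \ref{thm:AC_extreme} and for $F \in \EC{}$ by \ref{def:EC}, which is where the proposition is actually applied (\ref{rem:who_satisfies_half_AC}, \ref{cor:Sigma_necessary_for_AC}). It is not among the hypotheses as stated, however, and it is not clear how to derive it from \eqref{eq:EoN:half_AC} alone. The paper's proof does not address this point either: it passes from \eqref{eq:two_normals_for_GF} directly to \cite[Theorem~11.2]{Rockafellar1970}, which requires exactly this disjointness.

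Your observation that the second equality in \eqref{eq:two_normals_for_GF} needs injectivity of $R \mapsto \ker P_F(R)$ is correct. For \AC{} this injectivity is available through \ref{prop:AC_imPF_homeo} and its corollary, and your remark is sharper than the paper's text.
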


\begin{proof}
    Note that $P_F(T) = T \circ P_F(T) \circ S$ and
    recall~\ref{def:image_space} and~\ref{rem:GF_on_hyperplane}. Since elements of
    $\imsp(T) \cup \transpose{\imsp(S)}$ are of rank at most~$\vdim$, we~get
    from~\eqref{eq:EoN:half_AC}
    \begin{equation}
        \label{eq:two_normals_for_GF}
        \cone \transpose{\GF{F}} \cap \imsp(T) = \ray \bigl\{ P_F(T) \bigr\}
        = \cone \transpose{\GF{F}} \cap \transpose{\imsp(S)} \,.
    \end{equation}
    Employ~\cite[Theorem~11.2]{Rockafellar1970} to extend $\imsp(T)$ and $\transpose{\imsp(S)}$ to
    hyperplanes which are disjoint from $\rInt \cone \transpose{\GF{F}}$ and let $N_F(T)$
    and~$M_F(S)$ be the unit normal vectors of these hyperplanes, oriented so that the displayed
    inclusion holds. From~\ref{rem:basic_props_of_IT} we conclude that
    $N_F(T) \in \imsp(T^{\perp})$ and $M_F(S) \in \transpose{\imsp(S^{\perp})}$.
\end{proof}

\begin{remark}
    \label{rem:who_satisfies_half_AC}
    Every $F \in \AC{}$ satisfies~\ref{prop:existence_of_normals}\eqref{eq:EoN:half_AC}
    by~\ref{thm:AC_extreme}, and so does every $F \in \EC{}$ by~\ref{def:EC}. In~particular,
    \eqref{eq:EoN:half_AC} implies that~\ref{lem:EC_normals}\ref{i:ECn:normal}\ref{i:ECn:normal2}
    hold with weak inequalities.
\end{remark}

\begin{remark}
    \label{rem:AC1_not_half_of_AC}
    In~\cite{DeRosa2022} the authors call the condition~\ref{def:AC}\ref{i:AC:dim} by the
    name~''$\mathrm{AC1}$''. Note that~\ref{def:AC}\ref{i:AC:dim} can be restated as
    \begin{displaymath}
        \conv \GF{F} \cap \bigl\{ A : \dim \im A \le \vdim-1 \bigr\} = \varnothing \,,
    \end{displaymath}
    which is clearly weaker than~\ref{prop:existence_of_normals}\eqref{eq:EoN:half_AC} but still
    implies rectifiability of the \emph{weight measure} of any varifold whose total variation
    measure is Radon and with density bounded from below; cf.~\cite[p.~656
    item~(b)]{ArroyoRabasa2019}. Later, De~Rosa and Tione showed that for any $1 < p < \infty$ the
    $p$-norm on $\extpower{2}\R^{4}$ gives rise to an integrand
    satisfying~\ref{def:AC}\ref{i:AC:dim}; see~\cite[Theorem~6.1]{DeRosa2022}.
\end{remark}

\begin{remark}
    \label{rem:specific_hyperplane_for_SAC}
    Compare~\ref{def:SAC} and~\ref{def:EC}. According to~\ref{def:SAC} $F$ satisfies \SAC{} if for
    each $T \in \pgrass{\adim}{\vdim}$ the very specific hyperplane $\End{\R^\adim} \cap \{ A : A
    \bullet Q_F(T) = 0 \}$ touches $\GF{F}$ at exactly one point; namely, at $\transpose{P_F}(T)$.
    It seems that the choice of~the vector~$Q_F(T)$ is completely arbitrary
    (cf.~\ref{rem:not_polar}) and we see no reason to restrict the definition in this
    way. In~section~\ref{sec:uniform_AC} we obtain some analogous results to~\cite{DeRosa2022} for
    integrands satisfying a uniform version of~\EC.
\end{remark}

\begin{proposition}
    \label{prop:EC_in_AC}
    $\EC{} \subseteq \AC{}$
\end{proposition}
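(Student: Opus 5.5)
We argue through the characterisation in~\ref{thm:AC_extreme}. The first equality there, $\conv \GF{F} \cap \{ A : \dim \im A \le \vdim \} = \GF{F}$, is part of~\ref{def:EC}, so for $F \in \EC{}$ it only remains to show $\GF{F} = \extreme(\conv \GF{F})$.

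The inclusion $\GF{F} \subseteq \extreme(\conv \GF{F})$ is immediate, since every exposed point is extreme and $\GF{F} = \exposed(\conv \GF{F})$ by~\ref{def:EC}.

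For the reverse inclusion we use that $\GF{F}$ is compact (it is the continuous image of the compact set $\pgrass{\adim}{\vdim}$). Hence $\conv \GF{F}$ is compact, and in particular closed, by~\cite[17.2]{Rockafellar1970}. The Straszewicz theorem~\ref{thm:Straszewicz} then says that $\exposed(\conv \GF{F}) = \GF{F}$ is dense in $\extreme(\conv \GF{F})$. Since $\GF{F}$ is closed, this gives $\extreme(\conv \GF{F}) \subseteq \Clos \GF{F} = \GF{F}$. Alternatively, one can simply cite~\cite[18.3.1]{Rockafellar1970}, as in the proof of~\ref{thm:AC_extreme}, which gives $\extreme \conv \GF{F} \subseteq \GF{F}$ directly.

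Combining the two inclusions yields $\GF{F} = \extreme(\conv \GF{F})$, and~\ref{thm:AC_extreme} then gives $F \in \AC{}$.

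The only point that needs care is the closedness of $\GF{F}$, and hence of $\conv \GF{F}$. This follows from the continuity of $\transpose{P_F}$ on $\pgrass{\adim}{\vdim}$, which holds because $F$ is of class~$\cnt{1}$ near $\pgrass{\adim}{\vdim}$ and $\inf F \lIm \pgrass{\adim}{\vdim} \rIm > 0$. Beyond this there is no real obstacle.
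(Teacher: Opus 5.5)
Your proof is correct and follows the paper's argument: compactness of $\GF{F}$ (hence of $\conv \GF{F}$) together with the Straszewicz theorem~\ref{thm:Straszewicz} gives $\GF{F} = \exposed \conv \GF{F} \subseteq \extreme \conv \GF{F} \subseteq \Clos{\GF{F}} = \GF{F}$, and~\ref{thm:AC_extreme} then yields $F \in \AC{}$. Your alternative via~\cite[18.3.1]{Rockafellar1970} is also valid and slightly more economical: $\extreme \conv A \subseteq A$ holds for every set~$A$, so neither Straszewicz nor compactness is actually needed.
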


\begin{proof}
    Note that $\GF{F}$ is compact as the image of a~compact set~$\pgrass{\adim}{\vdim}$ under
    a~continuous map~$\transpose{P_F}$. Recalling~\ref{thm:Straszewicz} one obtains
    \begin{displaymath}
        \GF{F} = \exposed \conv \GF{F}
        \subseteq \extreme \conv \GF{F}
        \subseteq \Clos{(\exposed \conv \GF{F})}
        = \GF{F} \,.
        \qedhere
    \end{displaymath}
\end{proof}

\begin{remark}
    \label{rem:EC=AC?}
    For a generic convex set, not all extreme points are exposed (as can be seen by considering,
    e.g., the convex hull of a~\emph{stadium curve} in~$\R^2$) so, a~priori, condition \EC{} seems
    strictly stronger than \AC. Recall, however, that exposed points are always dense in the set of
    extreme points; cf.~\ref{thm:Straszewicz}. It is not clear to the authors whether \EC{} differs
    from \AC{} in general; however, we can prove that the two notions agree in case
    $\adim = \vdim+1$.
\end{remark}

\begin{miniremark}
    \label{mr:rank_one_maps}
    Let $X$ be a vectorspace, $\omega \in X^*$, and $\zeta \in X$. We adopt Allard's convention and
    write $\omega \cdot \zeta$ for the rank-one map such that $(\omega \cdot \zeta)(\xi) =
    \omega(\xi) \zeta$ for $\xi \in X$; cf.~\cite[2.2, 2.3]{Allard1972}.
\end{miniremark}

\begin{theorem}
    \label{thm:AC=EC-codim1}
    If $\adim = \vdim+1$, then $\EC{} = \AC{}$.
\end{theorem}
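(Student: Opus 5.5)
Since $\EC{} \subseteq \AC{}$ is already \ref{prop:EC_in_AC}, only $\AC{} \subseteq \EC{}$ remains to be proved when $\adim = \vdim + 1$. The plan is to follow~\cite[\S{5}]{DePhilippis2018}. First translate everything into a~statement about a~norm on~$\R^{\adim}$, and then produce exposing hyperplanes explicitly through \ref{lem:EC_normals}\ref{i:ECn:normal}. Indeed, once the normals are found, that lemma gives both halves of~\ref{def:EC} at once, namely exposedness of every point and the rank condition.

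\emph{Step 1 (reduction to a~norm).} Identify $T \in \pgrass{\adim}{\adim-1}$ with $\pm\nu_T$, where $T = \id{\R^{\adim}} - \nu_T \cdot \nu_T$ in the sense of~\ref{mr:rank_one_maps} and $|\nu_T| = 1$. Define $G : \R^{\adim} \to \R$ as the even positively $1$-homogeneous function with $G(\nu_T) = F(T)$. By~\cite[Theorem~1.3]{DePhilippis2018}, $F \in \AC{}$ implies that $G$ is strictly convex; it is of class~$\cnt{1}$ away from~$0$ because $F$ is. Using~\ref{rem:BF_variational} (differentiate $F$ along $L(t) = \id{\R^{\adim}} + tL'(0)$), compute $B_F(T)$ and hence $P_F(T)$ in terms of~$\nu = \nu_T$ and $u = \grad G(\nu)$. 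We expect the result
\begin{displaymath}
    P_F(T) x = x - \frac{\nu \bullet x}{\nu \bullet u} \, u ,
\end{displaymath}
possibly with the roles of $u$ and~$\nu$ in the two slots interchanged; we will fix this against \ref{rem:se-cse}, i.e., $\im P_F(T) = \im T$ and $P_F(T) \circ T = T$. Note that $\nu \bullet u = G(\nu) > 0$ by Euler's identity.

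\emph{Step 2 (choice of normals).} The condition $T \circ N(T) = 0$ forces $N(T) = a_T \cdot \nu_T$ for some vector~$a_T$. Then $N(T) \bullet P_F(S) = a_T \bullet \transpose{P_F(S)}\,\nu_T$, which by Step~1 is an explicit expression in $\nu_S$, $\nu_T$, $\grad G(\nu_S)$ and~$a_T$. The natural first attempt is to take $a_T$ built from the dual norm: normalise $G(\nu_S) = 1$, and use the support inequality $\grad G(\nu_S) \bullet y \le G(y)$, with equality only for $y \in \ray\{\nu_S\}$ by strict convexity. We then aim to show that the expression equals a~strictly positive multiple of a~quantity such as $G(\nu_T)G(\nu_S) - (\grad G(\nu_S) \bullet \nu_T)(\ldots)$. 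That quantity is positive exactly when $\nu_S \ne \pm \nu_T$, where evenness of~$G$ takes care of the sign~$\pm$.

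\emph{Main obstacle.} The main obstacle is this algebraic choice of~$a_T$ making the pairing strictly positive for all $S \ne T$. This is precisely where the strict convexity inequality must enter with the correct sign, and a~naive choice such as $a_T = \nu_T$ does not obviously work. If no closed formula emerges, the fallback is more abstract. Proposition~\ref{prop:existence_of_normals} already provides $N_F(T)$ satisfying the weak inequalities (see~\ref{rem:who_satisfies_half_AC}), and in codimension one the admissible normals $a_T \cdot \nu_T$ form an $\adim$-dimensional family. We would then show that $\cone \transpose{\GF{F}} \cap \{ A : A \bullet N = 0 \}$ reduces to $\ray\{P_F(T)\}$ for a~suitable $N$ in this family, by~identifying that face with a~face of the convex body $\{G \le 1\}$ (or of its polar). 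There, strict convexity together with differentiability of~$G$ makes every boundary point exposed, which is exactly the mechanism behind~\cite[\S{5}]{DePhilippis2018}.
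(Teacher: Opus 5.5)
You have the same framework as the paper: reduce via \cite[Theorem~1.3]{DePhilippis2018} to a strictly convex even one-homogeneous $G$, write $P_F$ explicitly, and expose each point with a normal fed into~\ref{lem:EC_normals}\ref{i:ECn:normal}. Your Step~1 formula is correct as written: $P_F(T)x = x - G(\nu)^{-1}(\nu \bullet x)\,u$ with $u = \grad G(\nu)$, the projection onto $\nu^{\perp}$ along~$u$.

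However, the proposal stops at the one step the proof consists of. You never fix $a_T$ and never verify the strict inequality; the ``quantity such as \dots'' is only a guess at the target. The fallback is not an argument either. Exposedness of boundary points of $\{G \le 1\}$ does not transfer by itself to $\conv \transpose{\GF{F}} \subseteq \End{\R^{\adim}}$, because $S \mapsto P_F(S)$ is nonlinear and you give no mechanism identifying faces of one set with faces of the other. As it stands, this is a genuine gap.

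The missing choice is $a_T = \grad G(\nu_T)$, i.e., $N(T)x = (\grad G(\nu_T) \bullet x)\,\nu_T$. This clearly satisfies $T \circ N(T) = 0$. Write $\nu = \nu_T$ and $w = \nu_S$. Using $N(T) \bullet \id{\R^{\adim}} = \grad G(\nu) \bullet \nu = G(\nu)$ together with your formula for $P_F(S)$, one gets
\begin{displaymath}
    G(w)\, N(T) \bullet P_F(S) = G(\nu) G(w) - \bigl( \grad G(\nu) \bullet w \bigr) \bigl( \grad G(w) \bullet \nu \bigr) \,.
\end{displaymath}
The factors may be negative, so apply the support inequality to $\pm y$ and use evenness to obtain $|\grad G(x) \bullet y| \le G(y)$. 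If equality holds for $y' = \pm y$, then $G(x + y') \ge \grad G(x) \bullet (x + y') = G(x) + G(y')$. Strict convexity then forces $y' \in \ray\{x\}$.

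Hence the product is strictly smaller than $G(\nu)G(w)$ unless $w = \pm \nu$, i.e., unless $S = T$. This is exactly the paper's functional $\varphi_{v_0}(L) = \grad G(v_0) \bullet L v_0$, evaluated on $B_F(S) = F(S) \transpose{P_F}(S)$. With this $N$, \ref{lem:EC_normals} yields both halves of~\ref{def:EC} at once, and the face-transfer fallback becomes unnecessary.
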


\begin{proof}
    The inclusion ``$\subseteq$'' is proven in~\ref{prop:EC_in_AC}. Let us prove the inclusion
    ``$\supseteq$''.

    Assume $F \in \AC{}$. Let $P$ be the set of all even probability measures $\nu$ over the sphere
    $\sphere{\adim-1}$, i.e., we assume $\nu(E) = \nu(\scale{-1} \lIm E \rIm)$ for
    $E \subseteq \sphere{\adim-1}$, where $\scale{-1}$ was defined in~\ref{def:scale_translate}.
    We~shall identify measures over $\pgrass{\adim}{\adim-1}$ with elements of $P$ as
    in~\cite[\S{5}]{DePhilippis2018}.
    
    Set $\bar{A}(\nu) = 2 \textint{}{} B_F(\perpproject{\lin\{v\}}) \ud \nu(v)$ whenever $\nu \in P$
    and $C = \bigl\{ \bar{A}(\nu) : \nu \in P \bigr\}$.  In view of~\ref{cor:extreme-equivalence},
    \ref{rem:AC_orig}, and~\ref{thm:AC_extreme} the condition $F \in \AC$ is equivalent to
    \begin{align*}
      \extreme{ C } = \bigl\{ \bar{A}( \tfrac 12 (\Dirac{v} + \Dirac{-v}) )
      : v \in \sphere{\adim-1} \bigr\}
    \end{align*}
    and to prove $F \in \EC$ we only need to show that all the extreme points above are exposed.

    Define $G(\lambda v) = |\lambda|F ( \perpproject{\lin\{v\}} )$ whenever $v \in \sphere{\adim-1}$ and
    $\lambda \in \R$.  Write
    \begin{displaymath}
        \bar{A}(\nu) = \textint{\sphere{\adim-1}} {} \bigl(
        G(v) \id{\R^{\adim}} - \uD G(v) \cdot v
        \bigr) \ud \nu(v) \quad \text{for $\nu \in P$}\,,
    \end{displaymath}
    as the proof of~\cite[Theorem 1.3]{DePhilippis2018}. By the mentioned theorem the condition
    $F \in \AC$ implies that the function $G$ is strictly convex. Fix a vector
    $v_0 \in \sphere{\adim-1}$ and define a linear functional
    $\varphi_{v_0}(L) = \grad G(v_0) \bullet Lv_0$ for $L \in \End{\R^{\adim}}$.  We have
    \begin{displaymath}
        \varphi_{v_0}(\bar{A}(\nu))
        = \textint{\sphere{\adim-1}} {} \bigl(
        G(v_0)G(v) - ( \grad G(v_0) \bullet v ) ( \grad G(v) \bullet v_0 )
        \bigr) \ud \nu(v)
        \quad \text{for $\nu \in P$}\,,
    \end{displaymath}
    which, by strict convexity of $G$, is strictly positive unless
    $\spt \nu \subseteq \{v_0, -v_0\}$, in which case it is zero. We deduce that
    $\ker \varphi_{v_0}$ is a supporting hyperplane touching the set $C$ only in the point
    $\bar{A}\left(\frac 1 2\left(\Dirac{v_0} + \Dirac{-v_0}\right)\right)$, which ends the~proof.
\end{proof}

\begin{remark}
    The idea of the proof was taken from~\cite[\S{5}]{DePhilippis2018}.
\end{remark}

\begin{remark}
    Assume $F \in \AC$ and let $N_F$ and $M_F$ be defined as in~\ref{prop:existence_of_normals}.
    In~case
    \begin{gather}
        \label{eq:additional_condition_on_N}
        N(T) = M(T) \in \imsp(T^{\perp}) \cap \transpose{\imsp(S^{\perp})}
        \perp \imsp(T) + \transpose{\imsp(S)}
        \\
        \text{we get} \quad
        \label{eq:more_restrictive_SAC}
        \cone \transpose{\GF{F}} \cap \bigl( \imsp(T) + \transpose{\imsp(S)} \bigr)
        = \ray \bigl\{ P_F(T) \bigr\} \,,
    \end{gather}
    which is clearly more restrictive
    than~\ref{prop:existence_of_normals}\eqref{eq:two_normals_for_GF}. Only under the additional
    condition~\eqref{eq:additional_condition_on_N} the traditional proof of the Caccioppoli-type
    inequality works; cf.\ref{def:QEC} and~\ref{prop:tilt-height}.  Note also that
    $Q_F(T) = S^{\perp} \circ Q_F(T) \circ T^{\perp}$ so the original~\SAC{} of~\cite{DeRosa2022}
    (see~\ref{def:SAC}) imposes the more restrictive condition~\eqref{eq:more_restrictive_SAC}
    rather than~\ref{prop:existence_of_normals}\eqref{eq:two_normals_for_GF}.
\end{remark}

\begin{remark}
    \label{rem:not_polar}
    Assume $F \in \EC{}$ and $N$ is a function whose existence is guaranteed
    by~\ref{prop:existence_of_normals}. Then
    \begin{equation}
        \label{eq:duality_of_P_and_N}
        P_F(T) \bullet N(S) > 0
        \quad \text{for $S,T \in \pgrass{\adim}{\vdim}$ with $S \ne T$} \,.
    \end{equation}
    Define
    \begin{displaymath}
        K = \cone P_F \lIm \pgrass{\adim}{\vdim} \rIm = \cone \transpose{\GF{F}}
        \quad \text{and} \quad
        L = \cone \im (-N) \,.
    \end{displaymath}
    From~\eqref{eq:duality_of_P_and_N} deduce
    \begin{displaymath}
        K \subseteq \polar{L}
        \quad \text{and} \quad
        L \subseteq \polar{K} \,.
    \end{displaymath}
    Note $\dim \GF{F} = \vdim (\codim) \le \dim \End{\R^{\adim}} - 3$ so
    \begin{gather}
        \dim \polar{K} \cap \bigl\{ X : P_F(T) \bullet X = 0 \bigr\}
        = \dim \Nor(K,P_F(T)) 
        \ge 2
        \\
        \text{while} \quad
        L \cap \bigl\{ X : P_F(T) \bullet X = 0 \bigr\} = \ray \bigl\{ - N(T) \bigr\}
        \quad \text{for $T \in \pgrass{\adim}{\vdim}$} \,;
    \end{gather}
    hence, $\polar{K} \ne L$. Observe also that for $T \in \pgrass{\adim}{\vdim}$
    \begin{displaymath}
        \dim \Nor(K,P_F(T)) + \dim \imsp(T^{\perp})
        = \adim^2 - \vdim(\codim) + \adim(\codim) = \adim^2 + (\codim)^2
    \end{displaymath}
    so the choice of $N(T)$ might, in general, be \emph{non-unique} if $\codim > 1$.
\end{remark}

\begin{remark}
    \label{rem:EC_implies_NP}
    Assume $F \in \EC{}$ is of class~$\cnt{2}$ away from the origin and
    $N : \pgrass{\adim}{\vdim} \to \End{\R^{\adim}} \cap \{ A : |A| = 1 \}$ is a~function satisfying
    \begin{equation}
        \label{eq:NT_image_in_Tperp}
        N(T) = T^{\perp} \circ N(T)
        \quad \text{and} \quad
        \cone \transpose{\GF{F}} \subseteq \bigl\{ A : A \bullet N(T) \ge 0 \bigr\}
        \quad \text{for $T \in \pgrass{\adim}{\vdim}$} \,,
    \end{equation}
    existence of which is guaranteed by~\ref{prop:existence_of_normals}. Since
    $P_F(T) = T \circ P_F(T)$ we see that~\eqref{eq:NT_image_in_Tperp} implies
    \begin{displaymath}
        \transpose{P_F}(T) \circ N(T) = 0
        \quad \text{for $T \in \pgrass{\adim}{\vdim}$} \,;
    \end{displaymath}
    however, $N(T) \circ \transpose{P_F}(T) = 0$ is \emph{not implied};
    cf.~\ref{prop:existence_of_normals}. Fix~$T \in \pgrass{\adim}{\vdim}$ and define $f(A) = P_F(A)
    \bullet N(T)$ for $A \in \End{\R^{\adim}} \without \{0\}$. It follows from $F \in \EC{}$ that
    \begin{displaymath}
        f(T) = 0 
        \quad \text{and} \quad
        \uD f(T) = 0 
        \quad \text{for $X \in \Tan(\pgrass{\adim}{\vdim},T)$}\,.
    \end{displaymath}
    Since $P_F(T) \circ P_F(T) = P_F(T)$ for $X \in \Tan(\pgrass{\adim}{\vdim},T)$ there holds
    \begin{equation}
        \label{eq:DPF}
        Q_F(T) \circ \uD P_F(T)X \circ P_F(T) + P_F(T) \circ \uD P_F(T)X \circ Q_F(T) = \uD P_F(T)X \,.
    \end{equation}
    Let $X \in \Tan(\pgrass{\adim}{\vdim},T)$ and compute
    \begin{multline}
        0 = \uD f(T) X = \trace\bigl( \uD P_F(T)X \circ \transpose{N}(T) \circ Q_F(T) \bigr)
        \\
        = \trace\bigl( Q_F(T) \circ \uD P_F(T)X \circ P_F(T) \circ \transpose{N}(T) \bigr)
        = \uD P_F(T)X \bullet \bigl( N(T) \circ \transpose{P_F}(T) \bigr) \,.
    \end{multline}
    Therefore if $F \in \EC{}$, then
    \begin{displaymath}
        N(T) \circ \transpose{P_F}(T) \perp \Tan(\transpose{\GF{F}},P_F(T)) \,,
    \end{displaymath}
    which is clearly satisfied if $N(T) \circ \transpose{P_F}(T) = 0$; in~particular, if
    $N(T) = \transpose{Q_F}(T)$ as in~\ref{def:SAC}.
\end{remark}

\section{Necessary condition for $\GF{F}$}
\label{sec:necessary_condition_on_GF}

\begin{miniremark}
    We~single out a~set $\Sigma \subseteq \End{\R^{\adim}}$ (see~\ref{def:master_set}) together
    with its twisted variants~$\Sigma_J$ (see~\ref{def:master_set_twisted}) and prove that the
    inclusion $\conv \transpose{\GF{F}} \subseteq \Sigma_J$, for some full rank~$J$, is necessary
    for $F \in \AC{}$; see~\ref{cor:Sigma_necessary_for_AC}. The~main tool
    is~\ref{prop:master_criterion} -- a~min-max criterion characterising the inclusion
    $\conv \im P \subseteq \Sigma_J$ for a~field of projections~$P$ over the Grassmannian. Combined
    with~\ref{lem:conv_im_P_has_full_rank_map} it shows in~\ref{cor:Sigma_exactly_half_of_AC} that
    this inclusion is equivalent to the absence of maps of rank at most~$\vdim$ in the relative
    interior of~$\conv \im P$. The~section closes with~\ref{ex:J_fields}, where we~attach to each
    $J \in \End{\R^{\adim}}$ satisfying $Jx \bullet x > 0$ for $x \ne 0$ a~field of
    projections~$P^{J}$ with $\conv \im P^{J} \subseteq \Sigma_J$ and prove that $P^{J} = P_F$ for
    some integrand~$F$ if and only if $J$ is symmetric -- in which case $F$ is the area integrand
    of the scalar product induced by~$J$. The~fields to which~\ref{prop:master_criterion} applies
    are therefore strictly more numerous than those of variational origin.
\end{miniremark}

\begin{definition}
    \label{def:master_set}
    \begin{displaymath}
        \Sigma = \End{\R^{\adim}} \cap \left\{ R
        :
        \begin{gathered}
            R \bullet T^{\perp} \ge 0
            \text{ whenever } T^{\perp} \circ R = \lambda T^{\perp}
            \\
            \text{ for some } \lambda \in \R \text{ and } T \in \pgrass{\adim}{\vdim}
        \end{gathered}
        \right\} \,.
    \end{displaymath}
\end{definition}

\begin{remark}
    \label{rem:Sigma_eigenspace}
    For $A \in \End{\R^{\adim}}$ set $\sigma(A) = \R \cap \{ \lambda : \det(A - \lambda
    \id{\R^{\adim}}) = 0 \}$ and for $\lambda \in \sigma(A)$ define $\deg_A \lambda = \dim \ker (A -
    \lambda \id{\R^{\adim}})$. Let $R \in \End{\R^{\adim}}$ and $T \in \pgrass{\adim}{\vdim}$ and
    assume $T^{\perp} \circ R = \lambda T^{\perp}$. Note that
    \begin{displaymath}
        R \bullet T^{\perp} = (\codim) \lambda
        \quad \text{and} \quad
        \transpose{R} \circ T^{\perp}
        = \transpose{(T^{\perp} \circ R)}
        = \lambda T^{\perp} \,;
    \end{displaymath}
    in~particular, $\lambda$ is an eigenvalue of $\transpose{R}$ with eigenspace containing
    $T^{\perp}$ so $\deg_{\transpose{R}} \lambda \ge \codim$. Clearly $\sigma(R)=
    \sigma(\transpose{R})$ and $\deg_R \lambda = \deg_{\transpose{R}} \lambda$ for $\lambda \in
    \sigma(R)$; thus,
    \begin{displaymath}
        \Sigma = \End{\R^{\adim}} \cap
        \bigl\{ A : \forall \lambda \in \sigma(A) \ \deg_{A} \lambda \ge \codim \implies \lambda \ge 0 \bigr\}
    \end{displaymath}
    In other words $\Sigma$ contains exactly those $A \in \End{\R^{\adim}}$ whose every real
    eigenspace of dimension at least $\codim$ has non-negative eigenvalue.
\end{remark}

\begin{lemma}
    \label{lem:Sigma_props}
    The set $\Sigma$ is a non-convex cone which contains $- \polar{ ( \cone \pgrass{\adim}{\codim} )
    }$ and $\Proj{\adim}{\vdim}$. Moreover, if $\codim \ge 2$, then $\Sigma$ is dense in
    $\End{\R^{\adim}}$.
\end{lemma}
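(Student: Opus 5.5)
The plan is to use the spectral description of~$\Sigma$ from~\ref{rem:Sigma_eigenspace} for every part except the inclusion of the polar cone, which follows directly from~\ref{def:master_set}.

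\emph{Cone property.} If $A \in \Sigma$ and $0 < t < \infty$, then $\sigma(tA) = t\,\sigma(A)$ and $\ker(tA - t\lambda \id{\R^{\adim}}) = \ker(A - \lambda\id{\R^{\adim}})$. Hence degrees are preserved and signs of eigenvalues are preserved, so $tA \in \Sigma$.

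\emph{The two inclusions.} By~\ref{rem:polar_cone}, $-\polar{(\cone \pgrass{\adim}{\codim})}$ consists of those $R$ with $R \bullet U \ge 0$ for every $U \in \pgrass{\adim}{\codim}$. For $T \in \pgrass{\adim}{\vdim}$ we have $T^{\perp} \in \pgrass{\adim}{\codim}$ by~\ref{rem:perp}. Therefore $R \bullet T^{\perp} \ge 0$ holds unconditionally, and in particular whenever $T^{\perp}\circ R = \lambda T^{\perp}$. So $R \in \Sigma$ directly by~\ref{def:master_set}. If $P \in \Proj{\adim}{\vdim}$, then $P$ is diagonalisable with $\sigma(P) \subseteq \{0,1\}$, so all its real eigenvalues are non-negative and $P \in \Sigma$ by~\ref{rem:Sigma_eigenspace}.

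\emph{Density when $\codim \ge 2$.} Matrices with $\adim$ distinct complex eigenvalues form a dense set, namely the complement of the zero set of the discriminant of the characteristic polynomial, which is a nonzero polynomial. For such $A$ every real eigenvalue has $\deg_A \lambda = 1 < \codim$. The condition in~\ref{rem:Sigma_eigenspace} is then vacuous, so $A \in \Sigma$.

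\emph{Non-convexity.} This is the only step that requires a construction, and it must also cover $\codim = 1$. Let $R(\theta)$ denote the rotation of $\R^2$ by angle $\theta$, and fix $\pi/2 < \theta < \pi$. Set $m = \lceil (\codim)/2 \rceil$; then $2m \le \codim + 1 \le \adim$ because $\vdim \ge 1$. Define the block-diagonal maps
\[
A = \mathrm{diag}\bigl(R(\theta),\ldots,R(\theta),\id{\R^{\adim-2m}}\bigr),
\qquad
B = \mathrm{diag}\bigl(R(-\theta),\ldots,R(-\theta),\id{\R^{\adim-2m}}\bigr),
\]
each with $m$ rotation blocks. The only possible real eigenvalue of $A$ and of $B$ is $1 \ge 0$, so $A, B \in \Sigma$. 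However,
\[
\tfrac12(A+B) = \mathrm{diag}\bigl(\cos\theta\,\id{\R^{2m}},\,\id{\R^{\adim-2m}}\bigr),
\]
which has the negative eigenvalue $\cos\theta$ with eigenspace of dimension $2m \ge \codim$. Thus $\tfrac12(A+B) \notin \Sigma$, and $\Sigma$ is not convex.

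The only point that needs care is this non-convexity construction, specifically the parity adjustment via $m$ and the check that $2m \le \adim$; the remaining steps are direct verifications.
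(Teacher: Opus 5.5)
Your proof is correct and follows the paper's argument essentially step by step. That means scaling for the cone property, the definition for the inclusion of $-\polar{(\cone \pgrass{\adim}{\codim})}$, and the discriminant for density. For non-convexity you take a pair of rotation-type maps on a subspace of even dimension $2\lceil(\codim)/2\rceil \ge \codim$ whose average is a negative multiple of the identity there. The differences are cosmetic: you check $\Proj{\adim}{\vdim} \subseteq \Sigma$ via the spectral description of~\ref{rem:Sigma_eigenspace} rather than directly from $T^{\perp} \circ P = \lambda T^{\perp} \Rightarrow \lambda^{2} = \lambda$, and you use $R(\pm\theta)$ padded by the identity where the paper uses $-\id{\R^{m}} \pm J$ with $J \circ J = -\id{\R^{m}}$ padded by zero.
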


\begin{proof}
    Clearly if $0 < t < \infty$ and $R \in \Sigma$, then $tR$ has the same eigenspaces as~$R$ with
    eigenvalues multiplied by~$t$ so $tR \in \Sigma$; thus, $\Sigma$ is a cone. Note also
    \begin{displaymath}
        - \polar{ ( \cone \pgrass{\adim}{\codim} ) }
        = \End{\R^{\adim}} \cap \bigl\{ R : R \bullet T^{\perp} \ge 0 \text{ for } T \in \pgrass{\adim}{\vdim} \bigr\}
        \subseteq \Sigma \,.
    \end{displaymath}

    Let now $P \in \Proj{\adim}{\vdim}$, $\lambda \in \R$, and $T \in \pgrass{\adim}{\vdim}$ be such
    that $T^{\perp} \circ P = \lambda T^{\perp}$. Then
    \begin{displaymath}
        \lambda T^{\perp}
        = T^{\perp} \circ P
        = T^{\perp} \circ P \circ P
        = \lambda \, T^{\perp} \circ P
        = \lambda^{2} T^{\perp} \,;
    \end{displaymath}
    hence, $\lambda \in \{0,1\}$. Clearly $P \bullet T^{\perp} = \lambda (\codim) \in \{ 0, \codim \}$,
    which is non-negative; thus, $P \in \Sigma$. Note that in case $\lambda = 0$ there holds $\im
    T^{\perp} = \ker \transpose{P}$, whereas in case $\lambda = 1$ it must be $\im T^{\perp}
    \subseteq \im \transpose{P}$.

    \emph{Proof that $\Sigma$ is not convex.} Let $m$ be the smallest \emph{even} integer satisfying
    $\max \{ 2, \codim \} \le m \le \adim$. Since $m$ is even there exists $J \in \End{\R^{m}}$ with
    $J \circ J = - \id{\R^{m}}$; e.g., one may construct $J$ by decomposing $\R^m$ as $m/2$ copies
    of~$\R^2$ and letting $J$ equal the rotation by $\pi/2$ on each copy of~$\R^{2}$. Let $\sigma$
    and $\deg$ be defined as in~\ref{rem:Sigma_eigenspace}. The (complex) eigenvalues of $J$ are
    $\pm \sqrt{-1}$; consequently, the eigenvalues of $- \id{\R^{m}} \pm J$ are $-1 \pm \sqrt{-1}$
    and, in particular, $\sigma \bigl( - \id{\R^{m}} + J \bigr) = \sigma \bigl( - \id{\R^{m}} - J
    \bigr) = \varnothing$, i.e., neither of these two endomorphisms of $\R^{m}$ has a real
    eigenvalue. Let $p \in \orthproj{\adim}{m}$ and $q \in \orthproj{\adim}{\adim-m}$ be such that
    $\im \transpose{p} = \ker q$ and define
    \begin{displaymath}
        A = \transpose{p} \circ \bigl( - \id{\R^{m}} + J \bigr) \circ p \,,
        \quad
        B = \transpose{p} \circ \bigl( - \id{\R^{m}} - J \bigr) \circ p \,,
        \quad \text{and} \quad
        C = \tfrac 12 (A+B) = - \transpose{p} \circ p \,,
    \end{displaymath}
    Observe that any real eigenvalue of~$A$ as well as~$B$ is zero and $\ker A = \ker B = \ker p \in
    \grass{\adim}{\adim-m}$ so $A,B \in \Sigma$. On~the other hand,
    \begin{displaymath}
        -1 \in \sigma(C) 
        \quad \text{and} \quad
        \dim \ker \bigl( C + \id{\R^{\adim}} \bigr) = \dim \im p  = m \ge \codim \,,
    \end{displaymath}
    whence $C \notin \Sigma$, by~\ref{rem:Sigma_eigenspace}, and $\Sigma$ is not convex.

    \emph{Proof that $\Sigma$ is dense provided $\codim \ge 2$.} For $R \in \End{\R^{\adim}}$ define
    the \emph{characteristic polynomial} of~$R$ as
    \begin{displaymath}
        \chi_{R}(\lambda) = \det ( \lambda \id{\R^{\adim}} - R ) \,.
    \end{displaymath}
    Let $\lambda_{1}(R), \ldots, \lambda_{\adim}(R)$ be all complex roots of~$\chi_{R}$ and define
    the \emph{discriminant} of~$R$ by
    \begin{displaymath}
        \Delta(R) = \textprod{1 \le i < j \le \adim}{} (\lambda_{i}(R) - \lambda_{j}(R))^{2}
        \quad \text{for $R \in \End{\R^{\adim}}$} \,.
    \end{displaymath}
    Note that the (real) coefficients of~$\chi_R$ are expressed as symmetric polynomials of the
    roots $\lambda_1(R), \ldots, \lambda_{\adim}(R)$ and $\Delta$ is symmetric in the roots, hence,
    (fundamental theorem of symmetric polynomials) a polynomial in the coefficients of~$\chi_R$;
    thus, a polynomial function on~$\End{\R^{\adim}}$ in the sense of~\cite[1.10.4]{Federer1969}.
    Choosing $R_{0}$ to be the diagonal map with entries $1, 2, \ldots, \adim$ we get $\Delta(R_{0})
    \ne 0$; therefore $\Delta$ is not the zero polynomial and, since a real polynomial vanishing on
    a non-empty open subset of $\End{\R^{\adim}}$ vanishes identically, the set $D
    = \End{\R^{\adim}} \cap \{ R : \Delta(R) \ne 0 \}$ is open and dense in $\End{\R^{\adim}}$.

    Let $R \in D$. Then $\chi_{R}$ has $\adim$ pairwise distinct complex roots, so any eigenvalue of~$R$
    has degree~$1$. Since $\codim \ge 2$, no $\lambda \in \sigma(R)$ satisfies $\deg_R \lambda \ge
    \codim$, and~\ref{rem:Sigma_eigenspace} gives $R \in \Sigma$ vacuously. Hence $D \subseteq
    \Sigma$ and $\Sigma$ is dense in $\End{\R^{\adim}}$.
\end{proof}

\begin{remark}
    \label{rem:density_sharp}
    The hypothesis $\codim \ge 2$ in~\ref{lem:Sigma_props} is sharp: if $\codim = 1$, then
    \begin{displaymath}
        \Sigma = \End{\R^{\adim}} \cap \bigl\{ R : \sigma(R) \subseteq [0,\infty) \bigr\}
    \end{displaymath}
    by~\ref{rem:Sigma_eigenspace} (every eigenvalue has $\deg_R \lambda \ge 1 = \codim$), and this
    set is not dense. Indeed, let $R_{0} \in \End{\R^{\adim}}$ be diagonal with pairwise distinct
    entries $-1, 1, 2, \ldots, \adim-1$, so that $-1$ is a simple root of $\chi_{R_{0}}$. Fix $0 <
    \varepsilon < 1$ such that $-1$ is the only root of $\chi_{R_{0}}$ in the closed disc $\bar{D} =
    \C \cap \{ z : |z+1| \le \varepsilon \}$. Since the coefficients of $\chi_{R}$ depend
    continuously on $R$, Rouch\'e's theorem provides $\delta > 0$ such that for $\| R - R_{0} \| <
    \delta$ the polynomial $\chi_{R}$ has exactly one root $\lambda(R)$ in $\bar{D}$, counted with
    multiplicity.  As $\chi_{R}$ has real coefficients, its root set is invariant under complex
    conjugation, and $\bar{D}$ is conjugation invariant; uniqueness therefore forces
    $\overline{\lambda(R)} = \lambda(R)$, i.e. $\lambda(R) \in \R$. Consequently $\lambda(R) \in
    \sigma(R)$ and $\lambda(R) \le -1 + \varepsilon < 0$, so $R \notin \Sigma$. Thus the ball of
    radius $\delta$ about $R_{0}$ is disjoint from $\Sigma$, and $\Sigma$ is not dense. In
    particular, $\Sigma \ne \End{\R^{\adim}}$ for every $\vdim$, since $- \id{\R^{\adim}} \notin
    \Sigma$.
\end{remark}

\begin{lemma}
    \label{lem:min_max_interchange}
    Suppose $X$ and $I$ are non-empty sets, $\mathcal{F} : I \to \powerset{X} \without \{ \varnothing \}$, and
    $\phi_i : \mathcal{F}(i) \to \R\cup\{ \infty \}$ for $i \in I$. Set
    \begin{displaymath}
        \mathcal{S} = X^{I} \cap \bigl\{ s : s(i) \in \mathcal{F}(i) \text{ for } i \in I \bigr\} \,.
    \end{displaymath}
    Then
    \begin{displaymath}
        \inf \bigl\{ \sup \bigl\{ \phi_i( s(i) ) : i \in I \bigr\} : s \in \mathcal{S} \bigr\}
        = \sup \bigl\{ \inf \im \phi_i : i \in I \bigr\} \,.
    \end{displaymath}
\end{lemma}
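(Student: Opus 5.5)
The plan is to prove the two inequalities ``$\ge$'' and ``$\le$'' separately. The key observation is that, since the constraint $s(i) \in \mathcal{F}(i)$ is imposed independently for each $i \in I$, the functions $s \in \mathcal{S}$ can be optimised coordinatewise. Write $m_i = \inf \im \phi_i \in \R \cup \{-\infty,\infty\}$, with the usual conventions for suprema and infima in the extended reals. Note that $\mathcal{S} \ne \varnothing$ by the axiom of choice, because every $\mathcal{F}(i)$ is non-empty.

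For ``$\ge$'', fix $s \in \mathcal{S}$ and $j \in I$. Then $\sup \{ \phi_i(s(i)) : i \in I \} \ge \phi_j(s(j)) \ge m_j$. Taking the supremum over $j$ gives $\sup_i \phi_i(s(i)) \ge \sup_j m_j$ for every $s$. Taking the infimum over $s \in \mathcal{S}$ then yields the inequality.

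For ``$\le$'', set $M = \sup_i m_i$. If $M = \infty$ there is nothing to prove. Otherwise, fix any real $t > M$. For each $i$ we have $m_i \le M < t$, so the set $\mathcal{F}(i) \cap \{ x : \phi_i(x) < t \}$ is non-empty. Using the axiom of choice, pick $s(i)$ in this set. Then $s \in \mathcal{S}$ and $\sup_i \phi_i(s(i)) \le t$, so the left-hand side is at most $t$. Letting $t \downarrow M$ (or $t \to -\infty$ in case $M = -\infty$) gives the inequality.

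The only delicate point is the bookkeeping with extended values. The case $M = -\infty$ can occur only if every $m_i = -\infty$, and the argument above with arbitrary real $t$ covers it. The value $\infty$ taken by $\phi_i$ causes no trouble, since we only ever select points where $\phi_i < t$. The choice of $s$ requires the axiom of choice when $I$ is infinite, which is harmless here.
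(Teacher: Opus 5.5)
Your proposal is correct and follows the paper's proof: ``$\ge$'' comes from the pointwise bound $\sup_i \phi_i(s(i)) \ge \phi_j(s(j)) \ge \inf \im \phi_j$, and ``$\le$'' from choosing near-minimisers coordinatewise via the axiom of choice. The difference is minor: your single real threshold $t > M$ handles the cases $M \in \R$ and $M = -\infty$ together, which the paper treats separately. It also avoids the paper's choice $\phi_i(a) \le \inf \im \phi_i + \varepsilon$, which cannot be made when some $\inf \im \phi_i = -\infty$ while $M$ is finite.
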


\begin{proof}
    Since all considered sets are non-empty we clearly have $\sup \{ \phi_i(s(i)) : i \in I \} \ge
    \inf \im \phi_j$ for $s \in \mathcal{S}$ and $j \in I$; thus, taking first supremum over $j \in
    I$ and then infimum over $s \in \mathcal{S}$ on both sides proves the inequality~``$\ge$''.

    For the proof of ``$\le$'' we consider only the case when $m = \sup \bigl\{ \inf \im \phi_i : i
    \in I \bigr\} < \infty$ since for the other case this inequality is immediate. If $-\infty < m <
    \infty$ given $0 < \varepsilon < \infty$ for each $i \in I$ there is $a_{i,\varepsilon} \in
    \mathcal{F}(i)$ with $\phi_i(a_{i,\varepsilon}) \le \inf \im \phi_i + \varepsilon \le m + \varepsilon$;
    then $s \in \mathcal{S}$ defined by $s(i) = a_{i,\varepsilon}$ for $i \in I$ satisfies $\sup \{
    \phi_i(s(i)) : i \in I \} \le m + \varepsilon$ so taking $\varepsilon \to 0^+$ finishes the
    proof for $m \in \R$. In~case $m = -\infty$, given $-\infty < r < 0$ for each $i \in I$ there is
    $a_{i,r} \in \mathcal{F}(i)$ with $\phi_i(a_{i,r}) \le r$; then $s \in \mathcal{S}$ defined by $s(i) =
    a_{i,r}$ for $i \in I$ satisfies $\sup \{ \phi_i(s(i)) : i \in I \} \le r$ and the claim is
    proven since $-r$ could be arbitrary large.
\end{proof}

\begin{definition}
    \label{def:master_set_twisted}
    For $J \in \End{\R^{\adim}}$ define
    \begin{displaymath}
        \Sigma_J = \End{\R^{\adim}} \cap \left\{ R
        : 
        \begin{gathered}
            R \bullet T^{\perp} \circ J \ge 0 \text{ whenever }
            T^{\perp} \circ R = \lambda T^{\perp} \circ J
            \\
            \text{ for some } \lambda \in \R \text{ and } T \in \pgrass{\adim}{\vdim}
        \end{gathered}
        \right\} \,.
    \end{displaymath}
\end{definition}

\begin{lemma}
    \label{lem:twisted_master_is_easy}
    Let $J \in \End{\R^{\adim}}$. Then
    \begin{displaymath}
        \Sigma_J = \End{\R^{\adim}} \cap \left\{ R
            :
            \begin{gathered}
                \lambda > 0
                \text{ whenever }
                T^{\perp} \circ R = \lambda T^{\perp} \circ J \ne 0
                \\
                \text{ for some }
                \lambda \in \R
                \text{ and } T \in \pgrass{\adim}{\vdim} 
            \end{gathered}
        \right\} \,.
    \end{displaymath}
    If, additionally, $\dim \im J = \adim$, then
    \begin{displaymath}
        \Sigma_J =  \bigl\{ A \circ J : A \in \Sigma \bigr\} \,;
    \end{displaymath}
    in~particular, if $\codim \ge 2$, then $\Sigma_J$ is a non-convex dense cone in
    $\End{\R^{\adim}}$.
\end{lemma}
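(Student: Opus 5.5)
The first description of~$\Sigma_J$ follows from a direct computation modelled on~\ref{rem:Sigma_eigenspace}. Suppose $T^{\perp} \circ R = \lambda T^{\perp} \circ J$. Then
\begin{displaymath}
    R \bullet T^{\perp} \circ J = \trace\bigl( \transpose{R} \circ T^{\perp} \circ J \bigr) = \trace\bigl( \transpose{(T^{\perp} \circ R)} \circ J \bigr) = \lambda \, (T^{\perp} \circ J) \bullet (T^{\perp} \circ J) = \lambda \, | T^{\perp} \circ J |^{2} \,.
\end{displaymath}
Hence the defining inequality $R \bullet T^{\perp} \circ J \ge 0$ holds automatically when $T^{\perp} \circ J = 0$. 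When $T^{\perp} \circ J \ne 0$ it is equivalent to $\lambda \ge 0$. The statement, however, asks for $\lambda > 0$. The extra case is $\lambda = 0$ with $T^{\perp} \circ J \ne 0$, which means $T^{\perp} \circ R = 0$. I read the formula as requiring that $T^{\perp} \circ R = \lambda T^{\perp} \circ J$ be a \emph{non-zero} map. With that reading, $\lambda = 0$ is excluded, and $\lambda \ge 0$ becomes $\lambda > 0$. This is the step where I must be careful to match the paper's convention. If I instead read the condition $\ne 0$ as attached only to $T^{\perp} \circ J$, I would need to check that the case $\lambda = 0$ is handled consistently, that is, that both descriptions treat $R$ with $T^{\perp} \circ R = 0$ in the same way.

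For the second claim, assume $J$ is invertible and put $A = R \circ J^{-1}$, so that $R = A \circ J$. The relation $T^{\perp} \circ R = \lambda T^{\perp} \circ J$ is then equivalent to $T^{\perp} \circ A = \lambda T^{\perp}$, obtained by composing on the right with $J^{\mp 1}$. Moreover $T^{\perp} \circ J \ne 0$ holds automatically, because $T^{\perp} \ne 0$ and $J$ is invertible. Applying the first characterisation both to $\Sigma_J$ and to $\Sigma = \Sigma_{\id{\R^{\adim}}}$ shows that both sets are cut out by the same constraints on the pairs $(\lambda,T)$. Therefore $R \in \Sigma_J$ if and only if $A \in \Sigma$. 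Here I also note that the first characterisation with $J = \id{\R^{\adim}}$ agrees with~\ref{def:master_set}, by the same computation.

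Finally, right composition with $J$ is a linear automorphism of $\End{\R^{\adim}}$. It therefore preserves cones, density, and non-convexity: if $\Sigma$ were mapped onto a convex set, then pulling back by the inverse automorphism would make $\Sigma$ convex. So when $\codim \ge 2$, the properties of~$\Sigma$ established in~\ref{lem:Sigma_props} transfer to $\Sigma_J$.
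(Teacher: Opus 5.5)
Your proposal is correct and follows essentially the paper's route. The paper uses the same identity $R \bullet T^{\perp} \circ J = \lambda \, |T^{\perp} \circ J|^{2}$, reads the side condition as $T^{\perp} \circ R \ne 0$ (it splits into the cases $T^{\perp} \circ R \ne 0$ and $T^{\perp} \circ R = 0$, the latter giving $R \bullet T^{\perp} \circ J = 0$), and handles invertible~$J$ by the substitution $A = R \circ J^{-1}$.

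Your hedge about the alternative reading can be dropped: under that reading the statement already fails at $R = 0 \in \Sigma_J$ whenever $J \ne 0$. Your transfer of the properties of~$\Sigma$ from~\ref{lem:Sigma_props} through the linear automorphism $[ A \mapsto A \circ J ]$ of $\End{\R^{\adim}}$ is a detail the paper leaves implicit.
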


\begin{proof}
    Assume $R \in \End{\R^{\adim}}$, $T \in \pgrass{\adim}{\vdim}$, and $\lambda \in \R$ are such
    that $T^{\perp} \circ R = \lambda T^{\perp} \circ J$. In~case $T^{\perp} \circ R \ne 0$, we see
    that
    \begin{multline}
        R \bullet T^{\perp} \circ J
        = \trace( R \circ \transpose{J} \circ T^{\perp} )
        = \trace( T^{\perp} \circ R \circ \transpose{J} \circ T^{\perp} )
        \\
        = \lambda \trace( T^{\perp} \circ J \circ \transpose{J} \circ T^{\perp} )
        = \lambda \trace( T^{\perp} \circ J \circ \transpose{(T^{\perp} \circ J)} )
        = \lambda |T^{\perp} \circ J|^2 
    \end{multline}
    has the same sign as~$\lambda$ and $\lambda \ne 0$. If $T^{\perp} \circ R = 0$, then $R \bullet
    T^{\perp} \circ J = 0$.  This shows the first equation. If $J$ is invertible, we write
    \begin{displaymath}
        \Sigma_J = \End{\R^{\adim}} \cap \left\{ R
            :
            \begin{gathered}
                \lambda \ge 0
                \text{ whenever }
                T^{\perp} \circ (R \circ J^{-1}) = \lambda T^{\perp}
                \\
                \text{ for some } \lambda \in \R
                \text{ and } T \in \pgrass{\adim}{\vdim}
            \end{gathered}
        \right\}
        = \bigl\{ R  : R \circ J^{-1} \in \Sigma \bigr\}
        \,. \qedhere
    \end{displaymath}
\end{proof}

\begin{lemma}
    Let $J \in \End{\R^{\adim}}$ satisfy $\dim \im J > \vdim$. Then
    \begin{displaymath}
        \End{\R^{\adim}} \without \Sigma_J
        = \tbcup_{T \in \pgrass{\adim}{\vdim}}{} \imsp(T)
        + \bigl\{ s T^{\perp} \circ J : -\infty < s < 0 \bigr\} \,.
    \end{displaymath}
\end{lemma}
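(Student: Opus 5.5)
The plan is to prove both inclusions using the description of $\Sigma_J$ from~\ref{lem:twisted_master_is_easy}. By that lemma, $R \notin \Sigma_J$ if and only if there exist $T \in \pgrass{\adim}{\vdim}$ and $\lambda \le 0$ with $T^{\perp} \circ R = \lambda T^{\perp} \circ J \ne 0$. The hypothesis $\dim \im J > \vdim$ will be used only to guarantee $T^{\perp} \circ J \ne 0$ for every $T \in \pgrass{\adim}{\vdim}$. Indeed, if $T^{\perp} \circ J = 0$, then $\im J \subseteq \im T$, so $\dim \im J \le \vdim$.

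\emph{Inclusion ``$\subseteq$''.} Let $R \notin \Sigma_J$ and pick $T$, $\lambda \le 0$ as above. Since $T^{\perp} \circ R \ne 0$, the case $\lambda = 0$ is impossible, so $\lambda < 0$. Write $R = T \circ R + T^{\perp} \circ R = T \circ R + \lambda T^{\perp} \circ J$. By~\ref{rem:basic_props_of_IT}, $T \circ R \in \imsp(T)$, and we may take $s = \lambda$. Hence $R$ lies in the union on the right-hand side.

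\emph{Inclusion ``$\supseteq$''.} Let $R = A + s T^{\perp} \circ J$ with $A \in \imsp(T)$ and $s < 0$. Then $T^{\perp} \circ A = 0$, so $T^{\perp} \circ R = s T^{\perp} \circ (T^{\perp} \circ J) = s T^{\perp} \circ J$. This is nonzero because $s \ne 0$ and $T^{\perp} \circ J \ne 0$ by the observation in the first paragraph. So the defining implication in the first description of $\Sigma_J$ in~\ref{lem:twisted_master_is_easy} fails with $\lambda = s < 0$, and therefore $R \notin \Sigma_J$.

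There is no serious obstacle. The only delicate point is making sure that the nondegeneracy $T^{\perp} \circ J \ne 0$ holds uniformly in $T$, which is exactly where the rank assumption $\dim \im J > \vdim$ enters. Without it, some $T$ would contribute the whole space $\imsp(T)$ to the right-hand side, even though elements of $\imsp(T)$ need not lie outside $\Sigma_J$.
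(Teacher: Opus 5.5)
Your proof is correct and follows essentially the same route as the paper. Both directions rest on the decomposition $R = T \circ R + T^{\perp} \circ R$, together with the observation that $\dim \im J > \vdim$ forces $T^{\perp} \circ J \ne 0$ for every $T$. Your version routes the membership test explicitly through the first description in~\ref{lem:twisted_master_is_easy}, which the paper uses only implicitly.
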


\begin{proof}
    For brevity of the notation define $W_T = \imsp(T) + \bigl\{ s T^{\perp} \circ J : -\infty < s <
    0 \bigr\}$. Note that $\dim \im J > \vdim$ ensures that $T^{\perp} \circ J \ne 0$ for all $T \in
    \pgrass{\adim}{\vdim}$.

    Assume first that $R \in \End{\R^{\adim}} \without \Sigma_J$. Then there exists $T \in
    \pgrass{\adim}{\vdim}$ and $-\infty < \lambda < 0$ such that $T^{\perp} \circ R = \lambda
    T^{\perp} \circ J$; hence, $R = T \circ R + T^{\perp} \circ R = T \circ R + \lambda T^{\perp}
    \circ J$ with $-\infty < \lambda < 0$ which means that $R \in W_T$.

    Assume now that $R \in W_T$ for some $T \in \pgrass{\adim}{\vdim}$. Then $R = T \circ R +
    T^{\perp} \circ R$ and since $\imsp(T) \perp \imsp(T^{\perp})$ we see that $T^{\perp} \circ R =
    s T^{\perp} \circ J$ for some $-\infty < s < 0$ which implies that $R \notin \Sigma_J$.
\end{proof}

\begin{miniremark}
    \label{mr:sections_of_tan_Gnk}
    Let $P : \pgrass{\adim}{\vdim} \to \Proj{\adim}{\vdim}$ be a field of projections on the
    Grassmannian such that $\im P(T) = \im T$ for $T \in \pgrass{\adim}{\vdim}$. This is, of~course,
    supposed to model the map $P_F$ induced by an integrand~$F$.
    Recalling~\ref{rem:sigma_props}\eqref{eq:tan_grass} we observe that any such $P$ can be viewed
    as a~\emph{section of the tangent bundle}; namely, given $\tau \in \VF(\pgrass{\adim}{\vdim})$
    one can set $P_{\tau}(T) = T + T \circ \tau(T)$ and given $P$ one can define $\tau_{P}(T) = P(T)
    + \transpose{P(T)} - 2T$ for $T \in \pgrass{\adim}{\vdim}$; the maps $[ \tau \mapsto P_{\tau} ]$
    and $[ P \mapsto \tau_{P}]$ are inverse to each other. In the next lemma we find a condition
    on~$P$ equivalent to the inclusion $\conv \im P \subseteq \Sigma_J$; combined
    with~\ref{cor:Sigma_exactly_half_of_AC} it yields a~criterion guaranteeing that
    $\rInt \conv \im P$ does not contain any map of rank at most~$\vdim$.
\end{miniremark}

\begin{proposition}
    \label{prop:master_criterion}
    Suppose 
    \begin{gather}
        P : \pgrass{\adim}{\vdim} \to \Proj{\adim}{\vdim} \,,
        \quad
        \im P \text{ is compact}\,,
        \quad
        J \in \End{\R^{\adim}} \,,
        \\
        T^{\perp} \circ J \ne 0 \quad \text{and} \quad
        T^{\perp} \circ P(T)  = 0 \quad \text{for } T \in \pgrass{\adim}{\vdim} \,,
        \\
        \mathcal{S} = \End{\R^{\adim}}^{\pgrass{\adim}{\vdim}} \cap
        \bigl\{ N : N(T) = T^{\perp} \circ N(T) \text{ and } N(T) \bullet J = -1 \text{ for } T \in \pgrass{\adim}{\vdim} \bigr\} \,,
        \\
        \text{and} \quad
        m(P) = \inf \bigl\{ \sup \bigl\{ P(S) \bullet N(T) : S,T \in \pgrass{\adim}{\vdim} \bigr\} : N \in \mathcal{S} \bigr\}
        \,.
    \end{gather}
    Then $m(P) \ge 0$ and
    \begin{displaymath}
        m(P) = 0 \quad \iff \quad \conv \im P \subseteq \Sigma_J \,.
    \end{displaymath}
    Moreover, if $m(P) = 0$, then there exists $N \in \mathcal{S}$ such that $P(S) \bullet N(T) \le
    0$ for all $S,T \in \pgrass{\adim}{\vdim}$.
\end{proposition}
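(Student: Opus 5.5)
The plan is to reduce $m(P)$ to a family of one-variable-in-$T$ minimax problems via~\ref{lem:min_max_interchange}, and then to evaluate each of these by the Kneser--Fan (Sion) theorem~\cite[4.2]{Sion1958}.

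\emph{Reduction.} Put $C = \conv \im P$, which is compact since $\im P$ is compact. For each $T$ let
\begin{displaymath}
    \mathcal{F}(T) = \imsp(T^{\perp}) \cap \{ N : N \bullet J = -1 \} \,.
\end{displaymath}
This set is a non-empty affine subspace, because $N \bullet J = N \bullet T^{\perp} \circ J$ for $N \in \imsp(T^{\perp})$ and $T^{\perp} \circ J \ne 0$. Define $\phi_T(N) = \sup \{ R \bullet N : R \in C \}$, which equals $\sup_S P(S) \bullet N$. Then~\ref{lem:min_max_interchange} gives
\begin{displaymath}
    m(P) = \sup_T g(T) \,, \quad g(T) = \inf \bigl\{ \phi_T(N) : N \in \mathcal{F}(T) \bigr\} \,.
\end{displaymath}

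\emph{Evaluating $g(T)$.} The pairing $(R,N) \mapsto R \bullet N$ is bilinear and continuous, $C$ is compact and convex, and $\mathcal{F}(T)$ is convex. Hence Sion's theorem yields
\begin{displaymath}
    g(T) = \sup_{R \in C} \, \inf_{N \in \mathcal{F}(T)} R \bullet N \,.
\end{displaymath}
For $N \in \mathcal{F}(T)$ we have $R \bullet N = (T^{\perp} \circ R) \bullet N$. Next I will compute the inner infimum. It is $-\infty$ unless $T^{\perp} \circ R$ is orthogonal, within $\imsp(T^{\perp})$, to the direction space $\imsp(T^{\perp}) \cap \{ J \}^{\perp}$, that is, unless $T^{\perp} \circ R = \lambda T^{\perp} \circ J$ for some $\lambda \in \R$. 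In that case the infimum equals $\lambda\, N \bullet J = -\lambda$.

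Thus $g(T)$ is the supremum of $-\lambda$ over $R \in C$ with $T^{\perp} \circ R = \lambda T^{\perp} \circ J$. The choice $R = P(T)$ is admissible with $\lambda = 0$, because $T^{\perp} \circ P(T) = 0$. Hence $g(T) \ge 0$, and therefore $m(P) \ge 0$.

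Moreover, $m(P) = 0$ if and only if every such $R \in C$ has $\lambda \ge 0$, for every $T$. By the computation in the proof of~\ref{lem:twisted_master_is_easy}, namely $R \bullet T^{\perp} \circ J = \lambda |T^{\perp} \circ J|^2$, this is exactly the condition $C \subseteq \Sigma_J$.

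\emph{Attainment.} This is the step I expect to be the main obstacle, since Sion's theorem gives equality of values but not a minimiser $N(T)$ of $\phi_T$ with $\phi_T(N(T)) \le 0$. My first attempt will be a separation argument. Assuming $C \subseteq \Sigma_J$, the convex cone
\begin{displaymath}
    W_T = \imsp(T) + \bigl\{ s T^{\perp} \circ J : s < 0 \bigr\}
\end{displaymath}
is disjoint from $C$, by the argument of the lemma preceding~\ref{mr:sections_of_tan_Gnk}, which only uses $T^{\perp} \circ J \ne 0$.

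I will separate $C$ from $W_T$ using~\cite[11.3]{Rockafellar1970}. A separating functional $\ell = \bullet\, N$ bounded on the subspace $\imsp(T)$ must vanish there, which forces $N \in \imsp(T^{\perp})$. The sign convention gives $\sup_C R \bullet N \le 0 \le \inf_{W_T} R \bullet N$, and the latter forces $N \bullet T^{\perp} \circ J \le 0$.

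The delicate point is excluding $N \bullet J = 0$, so that $N$ can be normalised to $N \bullet J = -1$. For this I will use that $C$ and $W_T$ can be properly separated with the hyperplane not containing the ray $\R_+ T^{\perp} \circ J$. If that fails, I will fall back on a recession analysis of the convex function $\phi_T$ on the affine space $\mathcal{F}(T)$. Its recession directions $D$ satisfy $\sup_C R \bullet D \le 0$ and $D \bullet J = 0$. I will show that, modulo its lineality space, $\phi_T$ has no non-trivial recession direction, since $C$ contains $P(T)$ and spans enough in $\imsp(T^{\perp})$ directions; then~\cite[27.1]{Rockafellar1970} provides a minimiser, whose value is $g(T) \le m(P) = 0$. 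Choosing such an $N(T)$ for each $T$ gives the required $N \in \mathcal{S}$.
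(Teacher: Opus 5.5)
Your reduction through \ref{lem:min_max_interchange} and the Kneser--Fan theorem coincides with the paper's argument. So does your evaluation of the inner infimum: it is $-\infty$ unless $T^{\perp}\circ R=\lambda T^{\perp}\circ J$, and $-\lambda$ otherwise. Hence $m(P)\ge 0$ and $m(P)=0\iff\conv\im P\subseteq\Sigma_J$ are fine.

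The attainment step, however, is a genuine gap, and no argument can fill it under the stated hypotheses: the ``moreover'' clause is false as stated. Take $\adim=2$, $\vdim=1$, $P(S)=S$, and let $J$ be the rotation with $Je_1=e_2$, $Je_2=-e_1$.
Every $R\in\conv\im P$ is symmetric positive semi-definite. Suppose $T=\project{(\lin\{u\})}$ with $|u|=1$ and $T^{\perp}\circ R=\lambda T^{\perp}\circ J$. Applying the adjoints of both sides to $Ju$ gives $RJu=\lambda u$, so $Ju\bullet RJu=0$, whence $RJu=0$ and $\lambda=0$. Thus $\conv\im P\subseteq\Sigma_J$ and $m(P)=0$.
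Yet at $T=\project{(\lin\{e_1\})}$ every admissible $N(T)$ has the form $x\mapsto(w\bullet x)e_2$ with $w\bullet e_1=N(T)\bullet J=-1$. For $S_\theta=\project{(\lin\{\cos\theta\,e_1+\sin\theta\,e_2\})}$ one gets
\begin{displaymath}
    S_\theta\bullet N(T)=-\sin\theta\cos\theta+(w\bullet e_2)\sin^2\theta \,,
    \qquad
    \sup_{\theta} S_\theta\bullet N(T)=\tfrac12\bigl(\sqrt{1+(w\bullet e_2)^2}+w\bullet e_2\bigr)>0 \,.
\end{displaymath}
So the value $0$ is approached as $w\bullet e_2\to-\infty$ but is never attained.

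Geometrically, $T^{\perp}\circ\conv\im P$ is a disc tangent at $0$ to the line through $T^{\perp}\circ J$, so its cone is not closed. The inclusion $\conv\im P\subseteq\Sigma_J$ only gives $-T^{\perp}\circ J\notin\cone(T^{\perp}\circ\conv\im P)$. A minimiser would need $-T^{\perp}\circ J\notin\Clos{\cone(T^{\perp}\circ\conv\im P)}$.
Both of your routes break here. Every functional separating $C$ from $W_T$ vanishes on $T^{\perp}\circ J$. Also, $D=-T^{\perp}$ is a recession direction of $\phi_T$ on $\mathcal F(T)$ along which $\phi_T$ strictly decreases, even though $T^{\perp}\circ C$ spans $\imsp(T^{\perp})$.

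The missing hypothesis is $J\in\rInt\conv\im P$. The paper's proof uses it tacitly: its assertion $\project{E}(J)\in\rInt C$ via~\cite[Theorem~6.6]{Rockafellar1970} requires it. It does hold in the only place the clause is invoked, \ref{cor:Sigma_exactly_half_of_AC}.
Under this hypothesis your first route closes directly. The separating functional $R\mapsto R\bullet N$ is $\le 0$ on $\conv\im P$, vanishes on $\imsp(T)$, and has $N\bullet J\le 0$. If $N\bullet J=0$, it would vanish at the relative interior point $J$ of $\conv\im P$, hence on all of $\conv\im P$. It would also vanish on $W_T$, since $N\bullet T^{\perp}\circ J=N\bullet J$. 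This contradicts properness of the separation, so you may normalise to $N(T)\bullet J=-1$.
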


\begin{proof}
    Set
    \begin{displaymath}
        N_0(T) = - T^{\perp} \circ J |T^{\perp} \circ J|^{-2}
        \quad \text{for $T \in \pgrass{\adim}{\vdim}$} \,.
    \end{displaymath}
    Note that $N_0 \in \mathcal{S}$; in~particular, $\mathcal{S} \ne \varnothing$. For any fixed
    $T_0 \in \pgrass{\adim}{\vdim}$ there holds
    \begin{multline}
        \sup \bigl\{ P(S) \bullet N(T) : S,T \in \pgrass{\adim}{\vdim} \bigr\}
        \ge P(T_0) \bullet N(T_0)
        \\
        = \trace(\transpose{N(T_0)} \circ T_0^{\perp} \circ P(T_0) ) = 0
        \quad \text{for all $N \in \mathcal{S}$} \,;
    \end{multline}
    hence, $m(P) \ge 0$.

    For $T \in \pgrass{\adim}{\vdim}$ define 
    \begin{gather}
        \mathcal{F}(T) = \imsp(T^{\perp}) \cap \bigl\{ Q : J \bullet Q = -1 \bigr\} \,,
        \\
        \text{and} \quad
        V(T) = \imsp(T^{\perp}) \cap \bigl\{ M : J \bullet M = 0 \bigr\} 
        \,.
    \end{gather}
    Set $K = \conv \im P$. Then $K$ is compact because $\im P$ is;
    cf.~\cite[Theorem~17.2]{Rockafellar1970}. Define
    \begin{gather}
        h(Q) = \sup \bigl\{ P(T) \bullet Q : T \in \pgrass{\adim}{\vdim} \bigr\} = \sup \bigl\{ R \bullet Q : R \in K \bigr\}
        \quad \text{for $Q \in \End{\R^{\adim}}$} 
        \,.
    \end{gather}
    Applying~\ref{lem:min_max_interchange} with $\pgrass{\adim}{\vdim}$, $\End{\R^{\adim}}$,
    $\mathcal{F}$, $\{ h|\mathcal{F}(T) : T \in \pgrass{\adim}{\vdim} \}$ in place of $I$, $X$,
    $\mathcal{F}$, $\{ \phi_i : i \in I \}$ respectively we get
    \begin{displaymath}
        m(P) = \inf \bigl\{ \sup \bigl\{ h(N(T)) : T \in \pgrass{\adim}{\vdim} \bigr\} : N \in \mathcal{S} \bigr\}
        = \sup \bigl\{ \inf h \lIm \mathcal{F}(T) \rIm : T \in \pgrass{\adim}{\vdim} \bigr\} \,.
    \end{displaymath}
    Classical min-max theorem of Kneser and Fan (e.g.~\cite[4.2]{Sion1958}) applied to the bilinear
    map $\bigl[ \End{\R^{\adim}}^2 \ni (R,Q) \mapsto R \bullet Q \bigr]$ restricted to $K \times
    \mathcal{F}(T)$ yields
    \begin{multline}
        \label{eq:mc:psi}
        \inf h \lIm \mathcal{F}(T) \rIm
        = \inf \bigl\{ \sup \bigl\{ R \bullet Q : R \in K \bigr\} : Q \in \mathcal{F}(T) \bigr\}
        \\
        = \sup \bigl\{ \inf \bigl\{ R \bullet Q : Q \in \mathcal{F}(T) \bigr\} : R \in K \bigr\} 
        \quad \text{for $T \in \pgrass{\adim}{\vdim}$} \,.
    \end{multline}
    Note that $V(T)$ is a linear subspace of~$\End{\R^{\adim}}$, $N_0(T) \in \mathcal{F}(T)$, and
    $\mathcal{F}(T) = N_0(T) + V(T)$; hence,
    \begin{multline}
        \inf \bigl\{ R \bullet Q : Q \in \mathcal{F}(T) \bigr\}
        = \inf \bigl\{ R \bullet N_0(T) + R \bullet M : M \in V(T) \bigr\}
        \\
        = \left\{
            \begin{aligned}
              &R \bullet N_0(T) &&\text{if $R \in V(T)^{\perp}$}
              \\
              &- \infty &&\text{if $R \notin V(T)^{\perp}$}
            \end{aligned}
        \right.
        \quad \text{for $R \in K$} \,.
    \end{multline}
    Consequently, (recalling the convention $\sup \varnothing = -\infty$)
    \begin{displaymath}
        m(P) = \sup \bigl\{ R \bullet N_0(T) : R \in K \cap V(T)^{\perp} ,\, T \in \pgrass{\adim}{\vdim} \bigr\} \,.
    \end{displaymath}
    Observe
    \begin{multline}
        V(T)^{\perp}
        = \bigl\{ T \circ B + \lambda J : \lambda \in \R ,\, B \in \End{\R^{\adim}} \bigr\} 
        = \bigl\{ T \circ A + \lambda T^{\perp} \circ J : \lambda \in \R ,\, A \in \End{\R^{\adim}} \bigr\}
        \\
        = \End{\R^{\adim}} \cap \bigl\{ M : T^{\perp} \circ (M - \lambda J) = 0 \text{ for some } \lambda \in \R \bigr\} \,.
    \end{multline}
    Recalling $N_0(T) |T^{\perp} \circ J|^2 = - T^{\perp} \circ J$ we see that $m(P) \le 0$ if and only if 
    \begin{displaymath}
        R \bullet T^{\perp} \circ J \ge 0
        \text{ whenever }
        R \in K \text{ and } T^{\perp} \circ R = \lambda T^{\perp} \circ J
        \text{ for some } \lambda \in \R
        \,.
        \qedhere
    \end{displaymath}

    Assume now that $m(P) = 0$ so that $\conv \im P \subseteq \Sigma_J$. Let $T \in
    \pgrass{\adim}{\vdim}$ and set
    \begin{displaymath}
        E = \imsp(T^{\perp}) \,,
        \quad
        C = \project{E} \lIm \conv \im P \rIm \,,
        \quad
        c = \project{E}(J) \,.
    \end{displaymath}
    By~\cite[Theorem~6.6]{Rockafellar1970} we get $c \in \rInt C$ and since $\conv \im P \subseteq
    \Sigma_J$ we also know that
    \begin{displaymath}
        C \cap \bigl\{ \lambda c : -\infty < \lambda < 0 \bigr\} = \varnothing \,.
    \end{displaymath}
    Thus, $C$ is a compact convex set in $E$ which does not contain the ray $\bigl\{ \lambda c :
    -\infty < \lambda < 0 \bigr\}$. Since $-c \notin C$ and $C$ and $\{-c\}$ are compact,
    employing~\cite[Corollary~11.4.2]{Rockafellar1970} one finds a non-zero vector $N(T) \in E$ such
    that the hyperplane $\{ A : A \bullet N(T) = 0 \}$ separates $C$ from $\{-c\}$ and such that
    $N(T) \bullet c = N(T) \bullet \project{E}(J) = \project{E}(N(T)) \bullet J = N(T) \bullet J =
    -1$. Since $T \in \pgrass{\adim}{\vdim}$ was chosen arbitrarily, this gives a construction of $N
    \in \mathcal{S}$ which realises the infimum in the definition of $m(P)$.
\end{proof}

\begin{remark}
    \label{rem:J_condition_on_rank}
    Let $J \in \End{\R^{\adim}}$. If $T^{\perp} \circ J = 0$ for some $T \in \pgrass{\adim}{\vdim}$,
    then $\im J \subseteq \ker T^{\perp} = \im T$; hence, $T^{\perp} \circ J \ne 0$ for $T \in
    \pgrass{\adim}{\vdim}$ means exactly that $\dim \im J > \vdim$. In the next lemma we show that
    $\conv \im P$ always contains a map of full rank.
\end{remark}

\begin{lemma}
    \label{lem:conv_im_P_has_full_rank_map}
    Suppose
    \begin{gather}
        P : \pgrass{\adim}{\vdim} \to \Proj{\adim}{\vdim} 
        \quad \text{and} \quad
        T^{\perp} \circ P(T)  = 0 \quad \text{for } T \in \pgrass{\adim}{\vdim} \,.
    \end{gather}
    Then $\conv \im P \cap \{ A : \dim \im A = \adim \}$ is relatively open and dense in $\conv
    \im{P}$. In~particular, $\rInt \conv \im P$ contains a full rank map.
\end{lemma}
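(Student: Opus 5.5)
The first assertion is cheap; the work is in exhibiting one full-rank element of $\conv \im P$. \emph{Relative openness} is immediate: $\End{\R^{\adim}} \cap \{ A : \det A \ne 0 \}$ is open in $\End{\R^{\adim}}$, so its trace on $\conv \im P$ is relatively open.

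\emph{Density} I would reduce to a polynomial argument. Let $W = \aff \conv \im P$. The restriction $\det|W$ is a polynomial function on the affine space~$W$. If it is not identically zero, its zero set is closed and nowhere dense in~$W$. Since $\conv \im P$ is convex, every point of it is a limit of points of $\rInt \conv \im P$, which is a non-empty open subset of~$W$. Hence the points of non-zero determinant are dense in $\conv \im P$, and in particular some of them lie in $\rInt \conv \im P$, which gives the last claim. Everything therefore reduces to finding one full-rank element of $\cone \im P$; after normalising by $\trace$, which equals $\vdim$ on $\im P$, this lies in $\conv \im P$. Recall that $T^{\perp} \circ P(T) = 0$ together with $\trace P(T) = \vdim$ gives $\im P(T) = \im T$, and that $P(T)$ restricts to the identity on $\im T$.

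\emph{Rank-increasing induction.} I would build $A \in \cone \im P$ of increasing rank, starting from $A = P(T_0)$ of rank~$\vdim$. Suppose $A \in \cone \im P$ has rank $r < \adim$, with image $W_A = \im A$ and kernel $K = \ker A$. For $B = P(T)$ the rank of $A + sB$ is at least~$r$ for all small $s > 0$, and equals its generic value for all but finitely many~$s$. If $\operatorname{rank}(A + sB) \le r$ for all small $s > 0$, then a first-order expansion on a complement of~$K$ gives $B \lIm K \rIm \subseteq W_A$. So it suffices to find $T \in \pgrass{\adim}{\vdim}$ with $P(T) \lIm K \rIm \not\subseteq W_A$. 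Then $A + sP(T)$ has rank $> r$ for some $s > 0$ and still lies in $\cone \im P$.

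\emph{Choosing $T$.} I would try two cases using $P(T)|\im T = \id{\im T}$.
\begin{enumerate}
\item If $\dim K \ge \vdim$, choose $\im T \subseteq K$. Then $P(T) \lIm K \rIm \supseteq \im T$, and it suffices that $\im T \not\subseteq W_A$, which is possible as soon as $K \not\subseteq W_A$.
\item If $\dim K < \vdim$, choose $\im T \supseteq K$. Then $P(T) \lIm K \rIm = K$, and again one needs $K \not\subseteq W_A$.
\end{enumerate}

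\emph{Main obstacle.} The difficulty is the degenerate case $K \subseteq W_A$, where $A$ behaves nilpotently on part of the space. There I would first modify $A$ within $\cone \im P$ to restore $K \cap W_A = \{0\}$. One option is to replace $A$ by $A + sP(T)$ with $\im T \supseteq W_A$, when $r \le \vdim$, so that $P(T)$ acts as the identity on~$W_A$; this shifts the eigenvalues and can split off the nilpotent part. If the two-case argument resists, I would fall back on a Cauchy--Binet expansion~\eqref{eq:cb} of $\det\bigl(\textsum{i}{} t_i P(T_i)\bigr)$ over a large family of coordinate planes $T_i$. The aim would be to identify a monomial whose coefficient is a product of minors equal to $1$, using $P(T_i) e_j = e_j$ for $e_j \in \im T_i$, and to show that no other term cancels it. This would prove directly that the determinant polynomial is not identically zero.
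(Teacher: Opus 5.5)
Your reduction is sound. Relative openness is trivial. Density follows because the zero set of $\det|W$ is nowhere dense in $W$ and $\conv \im P$ lies in the closure of its relative interior. So it does suffice to exhibit one invertible element of $\cone \im P$. The perturbation criterion is also correct: if $\dim \im (A + sB) \le r$ for all small $s > 0$, then $B \lIm K \rIm \subseteq W_A$. Your cases (a) and (b) are correct whenever $K \not\subseteq W_A$.

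But the argument stops exactly where the work lies. The case $K \subseteq W_A$, which you yourself single out as the main obstacle, is treated only by the hope that a perturbation ``can split off the nilpotent part'' and by a fallback. The fallback as described does not work. For coordinate planes of a fixed basis you only know $P(T_i) e_j = e_j$ for $e_j \in \im T_i$. The values $P(T_i) e_j$ for $e_j \notin \im T_i$ are unconstrained, and the cross terms of the Cauchy--Binet expansion can cancel completely. For instance, take $\adim = 2$, $\vdim = 1$, and let $P(T_1)$, $P(T_2)$ project onto $\lin \{e_1\}$ and $\lin \{e_2\}$ along the common line $\lin \{ e_1 - e_2 \}$. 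Then $\det \bigl( t_1 P(T_1) + t_2 P(T_2) \bigr) = 0$ for all $t_1, t_2$. The planes must be chosen adapted to the kernels of the projections already used, and this idea is missing from the proposal.

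Concretely, carry the invariant $W_A \cap K = \{0\}$ through the induction; it holds for $A = P(T_0)$.
\begin{itemize}
\item If it holds and $\dim K \ge \vdim$, take $\im T \subseteq K$ as in your case (a). For every $s > 0$ the two summands of $(A + sP(T))x$ lie in $W_A$ and in $\im T \subseteq K$, so they vanish separately. Hence $\ker (A + sP(T)) = K \cap \ker P(T)$, which has dimension $\dim K - \vdim$ because $P(T)$ maps $K$ onto $\im T$. Counting dimensions gives $\im (A + sP(T)) = W_A \oplus \im T$.
\item These are again complementary. If $x = w + t \in K \cap \ker P(T)$ with $w \in W_A$ and $t \in \im T$, then $w = x - t \in W_A \cap K = \{0\}$, so $x = t = P(T) t = P(T) x = 0$.
\item Once $\dim K < \vdim$, take $K \subseteq \im T$ as in your case (b). With respect to $\R^{\adim} = W_A \oplus K$, the map $A + sP(T)$ is block lower triangular with diagonal blocks $X(s)$ and $s \, \id{K}$. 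Here $X(0) = A | W_A$ is an automorphism of $W_A$, so $A + sP(T)$ is invertible for small $s > 0$.
\end{itemize}
Thus the degenerate case never arises.

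This is precisely the paper's construction: $\im T_i \subseteq K_{i-1}$, $K_i = K_{i-1} \cap \ker P(T_i)$, and $K_q \subseteq \im T_{q+1}$ with $q = \lfloor \adim / \vdim \rfloor$. The paper then certifies $\det \bigl( \sum_{i=1}^{q+1} t_i P(T_i) \bigr) \not\equiv 0$ via Cauchy--Binet. The coefficient of $t_1^{\vdim} \cdots t_q^{\vdim} t_{q+1}^{\adim - q\vdim}$ equals $1$ in a basis adapted to $\im T_1 \oplus \cdots \oplus \im T_q \oplus K_q$. With the invariant in place, your rank-increment argument reaches an invertible element of $\cone \im P$ directly and avoids that multilinear bookkeeping.
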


\begin{proof}
    Clearly if $R \in \Proj{\adim}{\vdim}$, then $\im R + \ker R = \R^{\adim}$ and $\im R \cap \ker
    R = \{0\}$. Note also that $T^{\perp} \circ P(T) = 0$ means that $\im P(T) = \im T$ for $T \in
    \pgrass{\adim}{\vdim}$. Set $q = \lfloor \adim / \vdim \rfloor$ and $r = \adim - q
    \vdim$. Choose $T_1, \ldots, T_{q+1} \in \pgrass{\adim}{\vdim}$ and linear subspaces $K_0
    \supseteq \cdots \supseteq K_q$ of $\R^{\adim}$ inductively so that for $i \in \{1,2,\ldots,
    q\}$ there holds
    \begin{displaymath}
        K_0 = \R^{\adim} \,,
        \quad
        \im T_i \subseteq K_{i-1} \,,
        \quad
        K_i = K_{i-1} \cap \ker P(T_i) \,.
    \end{displaymath}
    Next choose $T_{q+1} \in \pgrass{\adim}{\vdim}$ so that $K_q \subseteq \im T_{q+1}$. Since
    $\im T_i \subseteq K_{i-1}$ and $\im P(T_i) = \im T_i$, the~map $P(T_i)$ carries $K_{i-1}$
    onto~$\im T_i$ and $K_{i-1} \cap \ker P(T_i) = K_i$; hence, $P(T_i) | K_{i-1}$ is a~projection
    of $K_{i-1}$ onto $\im T_i$ with kernel~$K_i$ and
    \begin{displaymath}
        K_{i-1} = \im T_i \oplus K_i
        \quad \text{for $i \in \{1,2,\ldots,q\}$} \,.
    \end{displaymath}
    Iterating, $\R^{\adim} = \im T_1 \oplus \cdots \oplus \im T_q \oplus K_q$ is a~direct sum
    decomposition and $\dim K_i = \adim - i\vdim$, so that $\dim K_q = r$. Choose a basis
    $e_1, \ldots, e_{\adim}$ of~$\R^{\adim}$ so that $e_{(i-1) \vdim + 1}, e_{(i-1) \vdim + 2},
    \ldots, e_{i\vdim} \in \im T_{i}$ for $i \in \{1,2,\ldots,q\}$ and $e_{q\vdim+1}, e_{q\vdim+2},
    \ldots, e_{\adim} \in K_{q}$. Let $\omega_1, \ldots, \omega_{\adim}$ be the dual basis. For
    brevity set $A_i = P(T_i)$ for $i \in \{1,2,\ldots,q+1\}$.  Employ the
    Cauchy-Binet~\ref{mr:cauchy_binet} formula and write
    \begin{displaymath}
        f(t) = \det\bigl( \textsum{i=1}{q+1} t_i A_i \bigr)
        = \textsum{\alpha\in\Partition{q+1}{\adim}}{}
        t^{\alpha} \textprod{i=1}{q+1} \Upsilon_{\alpha(i)} ( A_i )
        \quad \text{for $t \in \R^{q+1}$} \,.
    \end{displaymath}
    We claim that the polynomial function $f : \R^{q+1} \to \R$ is not identically zero. It suffices
    to show that at least one of the coefficients is non zero. Let $\alpha \in
    \Partition{q+1}{\adim}$ be such that
    \begin{displaymath}
        \alpha(i) = \vdim
        \quad \text{for $i \in \{1,2,\ldots,q\}$}
        \quad \text{and} \quad
        \alpha(q+1) = r \,.
    \end{displaymath}
    For $i \in \{1,2,\ldots,q\}$ let $\lambda_i \in \Choice{\adim}{\vdim}$ be such that $\im
    \lambda_i = \{ (i-1)\vdim+1, \ldots, i\vdim\}$ and let $\lambda_{q+1} \in \Choice{\adim}{r}$
    satisfy $\im \lambda_{q+1} = \{ q\vdim+1, \ldots ,\adim\}$. Let also $\lambda_0$ be the unique
    element of~$\Choice{\adim}{\adim}$. The~construction of $T_1, \ldots, T_{q+1}$ guarantees that
    for $i \in \{1,2,\ldots,q+1\}$ and $j \in \{(i-1) \vdim + 1, \ldots, \adim\}$ there holds
    \begin{gather}
        P(T_i) e_j = e_j \text{ if } j \le i \vdim 
        \quad \text{and} \quad
        P(T_i) e_j = 0  \text{ if } j > i \vdim \,;
    \end{gather}
    hence,
    \begin{gather}
        \Upsilon_{\alpha(1)} ( A_1 )
        = \caniso{\vdim} \extpower{\vdim} P(T_1)
        = \omega_{\lambda_1} \otimes e_{\lambda_1} \,,
        \\
        \Upsilon_{\alpha(1)} ( A_1 ) \Upsilon_{\alpha(2)} ( A_2 )
        = ( \omega_{\lambda_1} \otimes e_{\lambda_1}) \caniso{\vdim} \extpower{\vdim} P(T_2)
        = ( \omega_{\lambda_1} \otimes e_{\lambda_1}) ( \omega_{\lambda_2} \otimes e_{\lambda_2})
        \\
        \vdots
        \\
        \textprod{i=1}{q+1} \Upsilon_{\alpha(i)} ( A_i )
        = \textprod{i=1}{q+1} ( \omega_{\lambda_i} \otimes e_{\lambda_i})
        = \omega_{\lambda_0} \otimes e_{\lambda_0} \ne 0 \,.
    \end{gather}
    We now see that $f$ is not identically zero. Since it is a polynomial function we get that the
    set $\R^{q+1} \cap \{ t : f(t) \ne 0 \}$ is open and dense in $\R^{q+1}$, and since it is
    $\adim$-homogeneous we conclude that $\conv \{ P(T_1), \ldots, P(T_{q+1}) \} \cap \{ A : \dim
    \im A = \adim \}$ is open and dense in $\conv \{ P(T_1), \ldots, P(T_{q+1}) \}$.

    Let $C = \conv \im P \subseteq \{ A : \trace A = \vdim \}$ and $E = \lin C$. Since $\det|E$ is
    not identically zero on~$E$ it follows that $E \cap \{ A : \det A \ne 0 \}$ is open and dense
    in~$E$. Clearly $\det|E$ is $\adim$-homogeneous so it follows that $C \cap \{ A : \det A \ne 0
    \}$ is open and dense in~$C$. By~\cite[Theorem~6.3]{Rockafellar1970} every relatively open
    subset of~$C$ meets $\rInt C$ so the relative interior of $C$ contains a full rank map.
\end{proof}

\begin{corollary}
    \label{cor:Sigma_exactly_half_of_AC}
    Let $P : \pgrass{\adim}{\vdim} \to \Proj{\adim}{\vdim}$ be continuous and such that $T^{\perp}
    \circ P(T) = 0$ for $T \in \pgrass{\adim}{\vdim}$. Assume $\GF{} = \im P \subseteq
    \End{\R^{\adim}}$. The following are equivalent
    \begin{enumerate}
    \item
        \label{i:ShAC:Sigma_J}
        $\conv \GF{} \subseteq \Sigma_J$ for some $J \in \rInt \conv \GF{}$ with $\dim \im J = \adim$;
    \item
        \label{i:ShAC:rank_k_maps}
        $\rInt \conv \GF{} \cap \{ A : \dim \im A \le \vdim \} = \varnothing$.
    \end{enumerate}
\end{corollary}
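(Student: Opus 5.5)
We prove both implications directly from the description of $\End{\R^{\adim}} \without \Sigma_J$ in the lemma preceding~\ref{mr:sections_of_tan_Gnk}, using~\ref{lem:conv_im_P_has_full_rank_map} to supply an admissible~$J$. Throughout set $C = \conv \GF{}$; it is compact because $P$ is continuous on the compact set $\pgrass{\adim}{\vdim}$. We will use repeatedly that every $R \in C$ satisfies $T \circ R = R$ for no particular $T$, but $R$ has $\trace R = \vdim$. The basic observation is this: for $J$ of full rank and $T \in \pgrass{\adim}{\vdim}$, a map $R$ with $T^{\perp} \circ R = \lambda T^{\perp} \circ J$ satisfies
\[
R - \lambda J = T \circ (R - \lambda J),
\]
so $\dim \im (R - \lambda J) \le \vdim$.

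\emph{Proof of \ref{i:ShAC:Sigma_J}$\Rightarrow$\ref{i:ShAC:rank_k_maps}.} Suppose $A \in \rInt C$ has $\dim \im A \le \vdim$. Choose $T \in \pgrass{\adim}{\vdim}$ with $\im A \subseteq \im T$, so that $T^{\perp} \circ A = 0$. Since $J, A \in \rInt C$ and $J \ne A$ (their ranks differ), the point $R = A + \varepsilon(A - J)$ lies in $C$ for small $\varepsilon > 0$. Then
\[
T^{\perp} \circ R = -\varepsilon \, T^{\perp} \circ J ,
\]
and this is nonzero because $J$ is invertible, so $\dim \im J = \adim > \vdim$ (cf.~\ref{rem:J_condition_on_rank}). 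By~\ref{lem:twisted_master_is_easy} this means $R \notin \Sigma_J$, contradicting $C \subseteq \Sigma_J$.

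\emph{Proof of \ref{i:ShAC:rank_k_maps}$\Rightarrow$\ref{i:ShAC:Sigma_J}.} By~\ref{lem:conv_im_P_has_full_rank_map}, $\rInt C$ contains a full-rank map $J$; we show this $J$ works. Suppose not. Then there are $R \in C$, $T \in \pgrass{\adim}{\vdim}$ and $\lambda < 0$ with $T^{\perp} \circ R = \lambda T^{\perp} \circ J$ (the case $\lambda = 0$ is allowed by the first formula of~\ref{lem:twisted_master_is_easy}, since there the requirement is only $\lambda > 0$ when the product is nonzero). Put $s = -\lambda > 0$ and
\[
A = \tfrac{1}{1+s} R + \tfrac{s}{1+s} J .
\]
This is a convex combination of $R \in C$ and $J \in \rInt C$ with positive weight on $J$, so $A \in \rInt C$ by~\cite[Theorem~6.1]{Rockafellar1970}. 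On the other hand $T^{\perp} \circ A = \tfrac{1}{1+s}\bigl(T^{\perp} \circ R - \lambda T^{\perp} \circ J\bigr) = 0$, so $\im A \subseteq \im T$ and $\dim \im A \le \vdim$. This contradicts~\ref{i:ShAC:rank_k_maps}.

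The only delicate points are the sign conventions, namely that $\Sigma_J$ fails exactly when $\lambda < 0$ with $T^{\perp} \circ J \ne 0$, and checking that the extension point in the first direction stays inside $C$. Both are handled by the relative-interior facts from Rockafellar together with the invertibility of~$J$.
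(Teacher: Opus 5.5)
Your proof is correct, and it takes a genuinely different, more elementary route than the paper. The paper derives both implications from the duality statement~\ref{prop:master_criterion}. For \ref{i:ShAC:rank_k_maps}$\Rightarrow$\ref{i:ShAC:Sigma_J} it uses~\ref{lem:base_of_cone}\ref{i:boc:rint} and the separation theorem \cite[Theorem~11.2]{Rockafellar1970} to produce, for every $T$, a hyperplane through $\imsp(T)$ supporting $\cone\GF{}$. This gives a field $N$ with $P(S)\bullet N(T)\le 0$, and the paper concludes via $m(P)=0$, whose equivalence with $\conv\GF{}\subseteq\Sigma_J$ rests on~\ref{lem:min_max_interchange} and the Kneser--Fan theorem. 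For the converse it takes the field $N\in\mathcal{S}$ supplied by the last part of~\ref{prop:master_criterion}. It then shows that a point of rank at most $\vdim$ in $\rInt\conv\GF{}$ would force $N(T)$ to vanish on $\conv\GF{}$, contradicting $N(T)\bullet J=-1$.

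You bypass all of this by unwinding $\Sigma_J$ directly. The condition $T^{\perp}\circ R=\lambda T^{\perp}\circ J$ says $R-\lambda J\in\imsp(T)$, and $R\bullet T^{\perp}\circ J=\lambda|T^{\perp}\circ J|^{2}$. So, for $J$ of rank $>\vdim$, a point $R\in C$ lies outside $\Sigma_J$ exactly when $\tfrac{1}{1+s}R+\tfrac{s}{1+s}J$ has rank at most $\vdim$ for some $s>0$. For $J\in\rInt C$, such points are precisely the points of $\rInt C$ by \cite[Theorems~6.1 and~6.4]{Rockafellar1970}.

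Your route needs no separation or min--max theorem. It also avoids the normalisation $N(T)\bullet J=-1$ needed to land in $\mathcal{S}$, which the paper leaves implicit. It even gives slightly more: under~\ref{i:ShAC:rank_k_maps} the inclusion $C\subseteq\Sigma_J$ holds for \emph{every} $J\in\rInt C$. Conversely, \ref{i:ShAC:Sigma_J}$\Rightarrow$\ref{i:ShAC:rank_k_maps} only uses $J\in C$ and $\dim\im J>\vdim$. The paper's route, in exchange, exhibits the rank condition as equivalent to the existence of a field of supporting normals as in~\ref{prop:existence_of_normals} and~\ref{lem:EC_normals}. That is the structure the surrounding results, e.g.~\ref{cor:Sigma_necessary_for_AC}, are built on.

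A few cosmetic points. Delete the meaningless sentence claiming every $R\in C$ ``satisfies $T\circ R=R$ for no particular $T$'' together with the trace remark; neither is used. Replace the parenthetical on $\lambda=0$ by the identity $R\bullet T^{\perp}\circ J=\lambda|T^{\perp}\circ J|^{2}$, which shows directly that a violation of $\Sigma_J$ forces $\lambda<0$. Finally, the extension step $A+\varepsilon(A-J)\in C$ needs only $J\in C$, not $J\in\rInt C$ or $J\neq A$.
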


\begin{proof}
    Assume~\ref{i:ShAC:rank_k_maps} holds. Let $J \in \rInt \conv \GF{}$ be a full rank map whose
    existence is guaranteed by~\ref{lem:conv_im_P_has_full_rank_map}. Recalling~\ref{def:image_space}
    note that each $A \in \imsp(T)$ satisfies $\dim \im A \le \vdim$. Since the rank is unchanged
    under multiplication by a~positive scalar, \ref{lem:base_of_cone}\ref{i:boc:rint} together
    with~\ref{i:ShAC:rank_k_maps} yields
    \begin{displaymath}
        \imsp(T) \cap \rInt \cone \GF{} = \varnothing
        \quad \text{for $T \in \pgrass{\adim}{\vdim}$} \,.
    \end{displaymath}
    Employing~\cite[Theorem~11.2]{Rockafellar1970} extend $\imsp(T)$ to a~hyperplane disjoint
    from~$\rInt \cone \GF{}$. It~contains~$0$, so it equals $\End{\R^{\adim}} \cap \{ A : A \bullet
    N(T) = 0 \}$ for some unit vector $N(T) \perp \imsp(T)$, i.e.,
    $N(T) \in \imsp(T)^{\perp} = \imsp(T^{\perp})$ by~\ref{rem:basic_props_of_IT}; replacing $N(T)$
    by $-N(T)$ if necessary we may assume $A \bullet N(T) \le 0$ for $A \in \cone \GF{}$, so that
    $N(T) \bullet P(S) \le 0$ for $S,T \in \pgrass{\adim}{\vdim}$. Clearly $N(T) \bullet P(T) = 0$
    and we get $m(P) = 0$ so $\conv \GF{} \subseteq \Sigma_J$ by~\ref{prop:master_criterion}.

    Assume~\ref{i:ShAC:Sigma_J} holds with some $J \in \rInt \conv \GF{}$ of full rank and let $N
    \in \mathcal{S}$ be the map whose existence is guaranteed by~\ref{prop:master_criterion}; in
    particular, $P(S) \bullet N(T) \le 0$ for $S,T \in \pgrass{\adim}{\vdim}$ and, by~linearity,
    \begin{equation}
        \label{eq:ShAC:N_nonpositive}
        R \bullet N(T) \le 0
        \quad \text{for $R \in \conv \GF{}$ and $T \in \pgrass{\adim}{\vdim}$} \,.
    \end{equation}
    Let $A \in \End{\R^{\adim}}$ satisfy $\dim \im A \le \vdim$ and choose $T \in
    \pgrass{\adim}{\vdim}$ with $\im A \subseteq \im T$. Then $A = T \circ A$ and, since $N(T) =
    T^{\perp} \circ N(T)$ and $T^{\perp} \circ T = 0$, we get
    \begin{displaymath}
        A \bullet N(T)
        = \trace \bigl( \transpose{N(T)} \circ T^{\perp} \circ T \circ A \bigr)
        = 0 \,.
    \end{displaymath}
    Suppose $A \in \rInt \conv \GF{}$ and let $R \in \conv \GF{}$. By~\cite[Theorem~6.4]{Rockafellar1970}
    there exists $0 < t < \infty$ such that $A + t(A-R) \in \conv \GF{}$; hence,
    using~\eqref{eq:ShAC:N_nonpositive} twice,
    \begin{displaymath}
        0 \ge \bigl( A + t(A-R) \bigr) \bullet N(T) = - t \, R \bullet N(T) \ge 0
    \end{displaymath}
    and $R \bullet N(T) = 0$. Since $J \in \conv \GF{}$ this contradicts $J \bullet N(T) = -1$;
    therefore $A \notin \rInt \conv \GF{}$.
\end{proof}

\begin{corollary}
    \label{cor:Sigma_necessary_for_AC}
    Assume $F \in \AC{}$. Then $\conv \transpose{\GF{F}} \subseteq \Sigma_J$ for some full rank $J
    \in \rInt \conv \transpose{\GF{F}}$.
\end{corollary}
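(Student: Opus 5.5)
The plan is to apply \ref{cor:Sigma_exactly_half_of_AC} to the field $P = P_F$. Then $\im P = \transpose{\GF{F}}$, and it remains to verify condition~\ref{cor:Sigma_exactly_half_of_AC}\ref{i:ShAC:rank_k_maps} for this set.

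The hypotheses of the corollary are routine. First, $P_F : \pgrass{\adim}{\vdim} \to \Proj{\adim}{\vdim}$ is continuous, because $F$ is of class~$\cnt{1}$ away from the origin. Second, by~\ref{rem:se-cse} each $P_F(T)$ is a projection of trace~$\vdim$ with $P_F(T) = T \circ P_F(T)$, so $T^{\perp} \circ P_F(T) = 0$.

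Next I transfer the characterisation of~\AC{} to the transposed set. Since $F \in \AC{}$, \ref{thm:AC_extreme} gives
\begin{displaymath}
    \conv \GF{F} \cap \{ A : \dim \im A \le \vdim \} = \GF{F} = \extreme(\conv \GF{F}) \,.
\end{displaymath}
Transposition is a linear isometry of $\End{\R^{\adim}}$ that preserves rank. Hence it carries convex hulls to convex hulls and extreme points to extreme points, and it preserves relative interiors. So the same identity holds with $\transpose{\GF{F}}$ in place of $\GF{F}$.

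Now suppose some $A \in \rInt \conv \transpose{\GF{F}}$ had $\dim \im A \le \vdim$. By the identity above, $A \in \transpose{\GF{F}}$, so $A$ is an extreme point of $\conv \transpose{\GF{F}}$. An extreme point of a convex set $C$ can lie in $\rInt C$ only when $C$ is a single point. In that case $\transpose{\GF{F}}$ is a singleton, which contradicts the injectivity of $T \mapsto P_F(T)$: by~\ref{rem:se-cse}, $\im P_F(T) = \im T$, and $\pgrass{\adim}{\vdim}$ has more than one point since $1 \le \vdim \le \adim - 1$. This establishes~\ref{i:ShAC:rank_k_maps}.

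The corollary then yields a full rank $J \in \rInt \conv \transpose{\GF{F}}$ with $\conv \transpose{\GF{F}} \subseteq \Sigma_J$, as claimed. The only delicate step is the extreme-point-versus-relative-interior argument. It relies on the fact that a point $x \in \rInt C$ with $C \neq \{x\}$ lies in the relative interior of a segment in~$C$, which follows from~\cite[Theorem~6.4]{Rockafellar1970}. Everything else is bookkeeping.
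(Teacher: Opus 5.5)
Your proof is correct, but it takes a different route from the paper.

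The paper does not invoke \ref{cor:Sigma_exactly_half_of_AC}. It proceeds directly:
it takes the normals $N_F(T) \in \imsp(T^{\perp})$ supplied by \ref{prop:existence_of_normals};
it picks a full rank $J \in \rInt \conv \transpose{\GF{F}}$ via \ref{lem:conv_im_P_has_full_rank_map};
it shows $N_F(T) \bullet J > 0$, since otherwise $\cdot \bullet N_F(T)$ would vanish on $\conv \transpose{\GF{F}}$ and the supporting hyperplane would meet $\rInt \cone \transpose{\GF{F}}$;
it rescales to $N(T) = -N_F(T)/|N_F(T) \bullet J| \in \mathcal{S}$;
and it reads off $m(P_F) = 0$ from \ref{prop:master_criterion}.
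In effect, this re-runs the implication \ref{i:ShAC:rank_k_maps}$\Rightarrow$\ref{i:ShAC:Sigma_J} of \ref{cor:Sigma_exactly_half_of_AC} with ready-made normals.

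You instead use that corollary as a black box. You verify its hypothesis \ref{i:ShAC:rank_k_maps} through \ref{thm:AC_extreme}, plus the elementary fact that an extreme point cannot lie in the relative interior of a non-singleton convex set.

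Your version is shorter. It also makes transparent that the conclusion amounts to the absence of maps of rank at most $\vdim$ in $\rInt \conv \transpose{\GF{F}}$.

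The paper's version has two advantages. First, it uses only the first half \ref{prop:existence_of_normals}\eqref{eq:EoN:half_AC} of the characterisation, whereas you additionally need $\GF{F} = \extreme(\conv \GF{F})$. Second, it makes explicit the normalisation $N(T) \bullet J \ne 0$ needed for membership in $\mathcal{S}$. The proof of \ref{cor:Sigma_exactly_half_of_AC} that you rely on leaves this step implicit. It does hold there, because $J \in \rInt \conv \transpose{\GF{F}} \subseteq \rInt \cone \transpose{\GF{F}}$ by \ref{lem:base_of_cone}\ref{i:boc:rint}.
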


\begin{proof}
    By~\ref{prop:existence_of_normals} there exists $N_F : \pgrass{\adim}{\vdim}
    \to \End{\R^{\adim}}$ such that
    \begin{displaymath}
        N_F(T) = T^{\perp} \circ N_F(T) 
        \quad \text{and} \quad
        P_F(S) \bullet N_F(T) \ge 0 \quad \text{for $S,T \in \pgrass{\adim}{\vdim}$} \,.
    \end{displaymath}
    This shows that $-N_F(T) \in \polar{(\cone \transpose{\GF{F}})}$ for $T \in
    \pgrass{\adim}{\vdim}$. Note $\transpose{\GF{F}} = \im P_F$. By~\ref{lem:conv_im_P_has_full_rank_map}
    there exists $J \in \rInt \conv \transpose{\GF{F}}$ with $\dim \im J = \adim$.

    We claim $N_F(T) \bullet J > 0$. Indeed, were $N_F(T) \bullet J = 0$, then, since
    $A \bullet N_F(T) \ge 0$ for $A \in \conv \transpose{\GF{F}}$ and
    $J \in \rInt \conv \transpose{\GF{F}}$, given any $A \in \conv \transpose{\GF{F}}$
    we~could use~\cite[Theorem~6.4]{Rockafellar1970} to find $0 < t < \infty$ with
    $J + t(J-A) \in \conv \transpose{\GF{F}}$ and conclude
    $0 \le (J + t(J-A)) \bullet N_F(T) = -t \, A \bullet N_F(T) \le 0$; thus, $\cdot \bullet N_F(T)$
    would vanish on $\conv \transpose{\GF{F}}$ and the hyperplane
    $\{ A : A \bullet N_F(T) = 0 \}$ would contain $\cone \transpose{\GF{F}}$, hence also meet
    $\rInt \cone \transpose{\GF{F}} \ne \varnothing$, contrary to the choice of~$N_F(T)$
    in~\ref{prop:existence_of_normals}.

    Define $N(T) = - N_F(T) / |N_F(T) \bullet J|$ for $T \in \pgrass{\adim}{\vdim}$. We apply
    \ref{prop:master_criterion} with $P_F$ in place of~$P$; its hypotheses are satisfied because
    $\im P_F = \transpose{\GF{F}}$ is compact, $T^{\perp} \circ P_F(T) = 0$
    by~\ref{rem:se-cse}, and $T^{\perp} \circ J \ne 0$ since $J$ is of full rank
    and $\vdim \le \adim-1$. Then $N \in \mathcal{S}$, where $\mathcal{S}$ is the set defined
    in~\ref{prop:master_criterion}, and $P_F(S) \bullet N(T) \le 0$ for
    $S,T \in \pgrass{\adim}{\vdim}$; since always $m(P_F) \ge 0$, we get $m(P_F) = 0$ and
    therefore $\conv \transpose{\GF{F}} \subseteq \Sigma_J$.
\end{proof}

\begin{example}
    \label{ex:J_fields}
    Let $J \in \End{\R^{\adim}}$ satisfy $J x \bullet x > 0$ for $x \ne 0$; then $J$ is
    invertible. Given $T \in \pgrass{\adim}{\vdim}$ the subspaces $\im T$ and
    $\transpose{J} \lIm \im T^{\perp} \rIm$ are complementary -- if $\transpose{J} z \in \im T$
    with $z \in \im T^{\perp}$, then $0 = z \bullet \transpose{J} z = J z \bullet z$, so $z = 0$,
    and the dimensions add up to~$\adim$ -- so we may define
    \begin{gather}
        \text{$P^{J}(T) \in \Proj{\adim}{\vdim}$ to be the projection onto $\im T$ along
          $\transpose{J} \lIm \im T^{\perp} \rIm$} \,,
        \\
        \text{and} \quad
        N(T) = T^{\perp} \circ J
        \quad \text{for $T \in \pgrass{\adim}{\vdim}$} \,.
    \end{gather}
    Then $P^{J}$ is real analytic, $T^{\perp} \circ P^{J}(T) = 0$, and
    $N(T) = T^{\perp} \circ N(T)$. We claim
    \begin{enumerate}
    \item
        \label{i:Jf:pairing}
        $P^{J}(S) \bullet N(T) \ge 0$ for $S,T \in \pgrass{\adim}{\vdim}$, with equality if and
        only if $S = T$; consequently $m(P^{J}) = 0$ and $\conv \im P^{J} \subseteq \Sigma_J$;
    \item
        \label{i:Jf:dichotomy}
        $P^{J} = P_F$ for some $\vdim$-integrand~$F$ if and only if $\transpose{J} = J$;
    \item
        \label{i:Jf:formula}
        in that case the admissible integrands are exactly those satisfying, for some $c > 0$,
        \begin{displaymath}
            F(T) = c\, (\det J)^{1/2} \bigl| \extpower{\vdim} J^{-1/2} \xi \bigr|
            \quad \text{whenever $\xi \in \ograss{\adim}{\vdim}$ and $\Sp \xi = \im T$} \,;
        \end{displaymath}
        equivalently, $\Phi_F(M) = c\, (\det J)^{1/2} \HM^{\vdim} \bigl( J^{-1/2} \lIm M \rIm \bigr)$
        for $(\HM^{\vdim},\vdim)$~rectifiable~$M$, i.e., $F$ is the area integrand of the scalar
        product $[ (x,y) \mapsto J^{-1} x \bullet y ]$.
    \end{enumerate}
    Thus the fields admitting a~calibrating section as in~\ref{prop:master_criterion} are strictly
    more numerous than those arising from integrands: only the symmetric~$J$ are of variational
    origin.
\end{example}

\begin{proof}
    Fix $S$ and set $A = \tfrac 12 \bigl( J \circ \transpose{P^{J}(S)} + P^{J}(S) \circ
    \transpose{J} \bigr)$. Given $x \in \R^{\adim}$ write $\transpose{J} x = u + \transpose{J} z$
    with $u \in \im S$ and $z \in \im S^{\perp}$; then $P^{J}(S) \transpose{J} x = u$ and
    $x = (\transpose{J})^{-1} u + z$, so
    \begin{displaymath}
        A x \bullet x = x \bullet P^{J}(S) \transpose{J} x
        = ( (\transpose{J})^{-1} u + z ) \bullet u = J^{-1} u \bullet u \ge 0 \,,
    \end{displaymath}
    because $\tfrac 12 (J^{-1} + (\transpose{J})^{-1}) = J^{-1} \circ \tfrac 12 (J + \transpose{J})
    \circ (\transpose{J})^{-1}$ is positive definite. Equality holds if and only if $u = 0$, i.e.,
    $x \in \im S^{\perp}$; hence, $A$ is symmetric and positive semi-definite with
    $\ker A = \im S^{\perp}$ and $\im A = \im S$. Since $T^{\perp}$ is symmetric,
    $P^{J}(S) \bullet N(T) = \trace ( T^{\perp} \circ A )$, the trace of a~composition of two
    positive semi-definite symmetric maps; thus, it is non-negative and vanishes if and only if
    $T^{\perp} \circ A = 0$, i.e., $\im S = \im A \subseteq \im T$, i.e., $S = T$. Moreover
    $N(T) \bullet J = \trace ( \transpose{J} \circ T^{\perp} \circ J ) > 0$, so
    $\tilde{N}(T) = - N(T) / (N(T) \bullet J)$ belongs to the set~$\mathcal{S}$
    of~\ref{prop:master_criterion} and $P^{J}(S) \bullet \tilde{N}(T) \le 0$; as $m(P^{J}) \ge 0$
    always, \ref{prop:master_criterion} yields $m(P^{J}) = 0$ and
    $\conv \im P^{J} \subseteq \Sigma_J$. This proves~\ref{i:Jf:pairing}.

    Next, let $F$ be a~$\vdim$-integrand of class~$\cnt{1}$, $F_0 = F | \pgrass{\adim}{\vdim}$,
    $D = \uD F(T)$, and $D_s = D + \transpose{D}$. Computing the adjoint of~$\pi(T)$
    in~\ref{def:aux-objects} gives $B_F(T) = F(T) T + T^{\perp} \circ D_s \circ T$; hence,
    recalling~\ref{mr:sections_of_tan_Gnk},
    \begin{gather}
        P_F(T) = T + T \circ D_s \circ T^{\perp} / F(T) \,,
        \\
        \text{and} \quad
        \tau_{P_F}(T)
        = \bigl( T \circ D_s \circ T^{\perp} + T^{\perp} \circ D_s \circ T \bigr) / F(T)
        = \grad ( 2 \log F_0 )(T) \,,
    \end{gather}
    the last equality because $\tfrac 12 ( T \circ D_s \circ T^{\perp} + T^{\perp} \circ D_s \circ
    T )$ is the orthogonal projection of~$D$ onto $\Tan(\pgrass{\adim}{\vdim},T)$. Conversely, if
    $P$ is a~field of projections of class~$\cnt{1}$ with $T^{\perp} \circ P(T) = 0$ and
    $\tau_{P}^{\flat} = \ud \varphi$, where $\tau_{P}^{\flat}$ denotes the $1$-form
    $[ Y \mapsto \tau_{P}(T) \bullet Y ]$, then $F_0 = e^{\varphi/2}$, extended
    $\vdim$-homogeneously, is an~integrand with $\tau_{P_F} = \tau_{P}$, so $P_F = P$ because
    $[ \tau \mapsto P_{\tau} ]$ is injective by~\ref{mr:sections_of_tan_Gnk}. Therefore
    \begin{equation}
        \label{eq:Jf:criterion}
        \text{$P = P_F$ for some $\vdim$-integrand $F$}
        \quad \iff \quad
        \text{$\tau_{P}^{\flat}$ is exact} \,,
    \end{equation}
    and then $F_0 = c\, e^{\varphi/2}$ with $c > 0$.

    We compute $\tau_{P^{J}}$. For $v \in \im T$ and $w \in \im T^{\perp}$ let $Y_{v,w} \in
    \Tan(\pgrass{\adim}{\vdim},T)$ be given by $Y_{v,w}(x) = (w \bullet x)v + (v \bullet x)w$ for
    $x \in \R^{\adim}$; these span $\Tan(\pgrass{\adim}{\vdim},T)$ and depend bilinearly
    on~$(v,w)$. If $P$ is any field of projections with $T^{\perp} \circ P(T) = 0$ and
    $\delta(T) = P(T) - T$, then $P(T)$ and~$T$ agree on $\im T$ and have image~$\im T$; hence,
    $\delta(T) = T \circ \delta(T) \circ T^{\perp}$, $\tau_{P}(T) = \delta(T) +
    \transpose{\delta(T)}$, and a~short computation gives
    \begin{equation}
        \label{eq:Jf:pairing_formula}
        \tau_{P}(T) \bullet Y_{v,w} = 2\, v \bullet \delta(T) w = 2\, v \bullet P(T) w \,.
    \end{equation}
    Applied to $P^{J}$ this yields, writing $w = P^{J}(T) w + \transpose{J} z$ with
    $z \in \im T^{\perp}$ -- so that $T^{\perp} \circ \transpose{J} z = w$ --
    \begin{equation}
        \label{eq:Jf:tau}
        \tau_{P^{J}}(T) \bullet Y_{v,w} = - 2\, J v \bullet z
        \,,
    \end{equation}
    because $v \bullet w = 0$.

    Assume $\transpose{J} = J$. Then
    $P^{J}(T) = J^{1/2} \circ \project{ ( J^{-1/2} \lIm \im T \rIm ) } \circ J^{-1/2}$, the
    right-hand side being idempotent with image $\im T$ and kernel $J \lIm \im T^{\perp} \rIm$.
    Differentiating along curves of graphs one checks from~\eqref{eq:Jf:tau} that
    $\tau_{P^{J}}^{\flat} = \ud \psi_J$, where
    \begin{displaymath}
        \psi_J(T) = \log \det \bigl( ( T^{\perp} \circ J \circ T^{\perp} ) | \im T^{\perp} \bigr) \,;
    \end{displaymath}
    so $P^{J} = P_F$ by~\eqref{eq:Jf:criterion}, with $F_0 = c\, e^{\psi_J/2}$. Jacobi's identity
    for complementary minors together with self-adjointness of $\extpower{\vdim} J^{-1/2}$ give
    \begin{displaymath}
        \det \bigl( ( T^{\perp} \circ J \circ T^{\perp} ) | \im T^{\perp} \bigr)
        = \det J \cdot \bigl( \extpower{\vdim} J^{-1} \xi \bullet \xi \bigr)
        = \det J \, \bigl| \extpower{\vdim} J^{-1/2} \xi \bigr|^{2} \,,
    \end{displaymath}
    which is~\ref{i:Jf:formula}; the description of~$\Phi_F$ follows by the area formula.

    Before treating the remaining case we record how the construction behaves under a~linear
    change of coordinates. Let $A \in \End{\R^{\adim}}$ be invertible, set $\Phi_A(T) =
    \project{ ( A \lIm \im T \rIm ) }$, and for a~field~$P$ of projections with $T^{\perp} \circ
    P(T) = 0$ put $P'(T) = A \circ P( \Phi_{A^{-1}}(T) ) \circ A^{-1}$. Then $P'$ is again such
    a~field and, since $( A^{-1} \lIm W \rIm )^{\perp} = \transpose{A} \lIm W^{\perp} \rIm$,
    \begin{displaymath}
        \ker (P^{J})'(T)
        = A \lIm \transpose{J} \lIm \transpose{A} \lIm \im T^{\perp} \rIm \rIm \rIm
        = \transpose{ ( A J \transpose{A} ) } \lIm \im T^{\perp} \rIm \,,
        \quad \text{i.e.} \quad
        ( P^{J} )' = P^{A J \transpose{A}} \,,
    \end{displaymath}
    where $A J \transpose{A}$ is again strictly accretive because $A J \transpose{A} x \bullet x =
    J \transpose{A} x \bullet \transpose{A} x$; for $J = \id{\R^{\adim}}$ this reads
    $\mathrm{O}' = P^{A \transpose{A}}$, where $\mathrm{O}(T) = T$. Let now $P$, $Q$ be two such
    fields, $\delta = P - Q$, $S = \Phi_{A^{-1}}(T)$, and $v \in \im T$, $w \in \im T^{\perp}$.
    Since $\delta(S) = S \circ \delta(S) \circ S^{\perp}$, formula~\eqref{eq:Jf:pairing_formula}
    applied to $P'$ and~$Q'$ gives, with $\hat{v} = S \transpose{A} v$ and $\hat{w} = S^{\perp}
    A^{-1} w$,
    \begin{multline}
        \label{eq:Jf:cocycle}
        \bigl( \tau_{P'} - \tau_{Q'} \bigr)(T) \bullet Y_{v,w}
        = 2\, v \bullet A\, \delta(S)\, A^{-1} w
        = 2\, \hat{v} \bullet \delta(S) \hat{w}
        \\
        = \bigl( \tau_{P} - \tau_{Q} \bigr)(S) \bullet Y_{\hat{v},\hat{w}} \,.
    \end{multline}
    Moreover $\uD \Phi_{A^{-1}}(T) Y_{v,w} = Y_{\hat{v},\hat{w}}$: the plane $\im T$ moves as the
    graph of $[ v' \mapsto Y_{v,w} v' ]$ over $\im T$, so $A^{-1} \lIm \im T \rIm$ moves as the
    graph over $\im S$ of $[ u \mapsto S^{\perp} A^{-1} Y_{v,w} A u ] = [ u \mapsto ( \hat{v}
    \bullet u ) \hat{w} ]$. Taking $Q = \mathrm{O}$, for which $\tau_{\mathrm{O}} = 0$,
    \eqref{eq:Jf:cocycle} becomes
    \begin{displaymath}
        \tau_{P'}^{\flat}
        = \Phi_{A^{-1}}^{*} \, \tau_{P}^{\flat} + \tau_{P^{A \transpose{A}}}^{\flat} \,.
    \end{displaymath}
    The second summand is exact by the case already settled and pull-back along a~diffeomorphism
    preserves exactness; hence, $\tau_{P}^{\flat}$ is exact if and only if $\tau_{P'}^{\flat}$ is.

    Assume finally $\transpose{J} \ne J$ and suppose $\tau_{P^{J}}^{\flat}$ were exact. Applying
    the preceding paragraph with $A = ( \tfrac 12 (J + \transpose{J}) )^{-1/2}$ replaces $J$ by
    $A J \transpose{A}$, whose symmetric part is $\id{\R^{\adim}}$; we may therefore assume
    $J = \id{\R^{\adim}} + K$ with $\transpose{K} = -K \ne 0$. Pick $\kappa > 0$ and orthonormal
    $e_1,e_2$ with
    $Ke_1 = \kappa e_2$ and $Ke_2 = -\kappa e_1$, choose a~$(\vdim-1)$~dimensional
    $V' \subseteq ( \lin \{e_1,e_2\} )^{\perp}$ -- possible since $\adim - 2 \ge \vdim - 1$ --
    and set
    \begin{displaymath}
        v_{\theta} = \cos \theta \, e_1 + \sin \theta \, e_2 \,,
        \quad
        w_{\theta} = - \sin \theta \, e_1 + \cos \theta \, e_2 \,,
        \quad
        T_{\theta} = \project{ ( \lin \{ v_{\theta} \} + V' ) } \,.
    \end{displaymath}
    Then $[ \theta \mapsto T_{\theta} ]$ is a~smooth loop, $T_{\pi} = T_{0}$, and its velocity
    at~$\theta$ is $Y_{v_{\theta},w_{\theta}}$, where $v_{\theta} \in \im T_{\theta}$ and
    $w_{\theta} \in \im T_{\theta}^{\perp}$. Let $z_{\theta} \in \im T_{\theta}^{\perp}$ be as
    in~\eqref{eq:Jf:tau}; then $z_{\theta} \ne 0$, $v_{\theta} \bullet z_{\theta} = 0$,
    $K v_{\theta} = \kappa w_{\theta}$, and $w_{\theta} \bullet z_{\theta} = \transpose{J}
    z_{\theta} \bullet z_{\theta} = |z_{\theta}|^{2}$, so that~\eqref{eq:Jf:tau} gives
    \begin{displaymath}
        \tau_{P^{J}}(T_{\theta}) \bullet Y_{v_{\theta},w_{\theta}}
        = - 2 \, ( v_{\theta} + K v_{\theta} ) \bullet z_{\theta}
        = - 2 \kappa \, |z_{\theta}|^{2} < 0 \,.
    \end{displaymath}
    Hence, the integral of $\tau_{P^{J}}^{\flat}$ along this loop is negative, which is impossible
    for an~exact form. Recalling~\eqref{eq:Jf:criterion} this proves~\ref{i:Jf:dichotomy}.
\end{proof}

\section{Exterior power and its left-inverse on projections}
\label{sec:ext_power}

\begin{miniremark}
    The~map $\Upsilon_{\!j} = \caniso{j} \circ \extpower{j}$ of~\ref{def:Gamma} is a~polynomial of
    degree~$j$, which makes it inconvenient in convexity arguments. In this section we~construct
    a~\emph{linear} map~$\Psi_{\!j}$ which inverts~$\Upsilon_{\!j}$ on the set $\Proj{\adim}{j}$ of
    projections of rank~$j$; see~\ref{lem:Psi_gives_projections}
    and~\ref{cor:Psi_inverse_to_Upsilon}. This linearisation is what allows us later, in
    section~\ref{sec:polyconvex_integrands}, to express~$B_F$ for a~polyconvex integrand in terms of
    the gradient of the norm which generates it.
\end{miniremark}

\begin{remark}
    \label{rem:fixed_orientation}
    For the rest of this paper we fix an orientation element
    $0 \ne \mathbf{E} \in \extpower{\adim} \R^{\adim}$ and the corresponding volume
    form~$\boldsymbol{\Omega} \in \aforms{\adim} \R^{\adim}$. Denote by
    $\beta : \R^{\adim} \to \aforms{1}{\R^{\adim}}$ the standard polarity and by
    $\gamma_* : \extpower{*} \R^{\adim} \to \aforms{*}\R^{\adim}$ its unique extension to a~graded
    algebra homomorphism; cf.~\cite[1.7.1 and 1.7.5]{Federer1969}. Note that
    $\gamma_j : \extpower{j} \R^{\adim} \to \aforms{j}\R^{\adim}$ is an isometry for each
    $j \in \{0,1,\ldots,\adim\}$.  Recall also the dualities
    $\mathbf{D}_j = \cdot \contract \boldsymbol{\Omega}$ and
    $\mathbf{D}^j = \mathbf{E} \restrict \cdot$ defined in~\cite[1.5.2]{Federer1969}.
\end{remark}

\begin{definition}
    \label{def:Aj_and_Proj}
    For $j \in \{1,2,\ldots,\adim\} $ define
    \begin{displaymath}
        \mathcal{P}(\adim,j) = 
        \bigl\{
        \phi \otimes \xi :
        \phi \in \aforms{j} \R^{\adim} ,\,
        \xi \in \extpower{j} \R^{\adim} ,\,
        \xi \wedge \mathbf{D}^j \phi = \mathbf{E}
        \bigr\} \subseteq \aforms{j} \R^{\adim} \otimes \extpower{j} \R^{\adim} \,.
    \end{displaymath}
\end{definition}

\begin{remark}
    \label{rem:Aj_properties}
    Observe that if $\phi \otimes \xi \in \mathcal{P}(\adim,j)$, then $\phi$ and $\xi$ are simple since otherwise
    $\dim \Sp \xi \wedge \mathbf{D}^j \phi < \adim = \dim \Sp \mathbf{E}$. Note also that $\mathcal{P}(\adim,j)$ is
    \emph{not} a~vectorspace.
\end{remark}

\begin{remark}
    \label{rem:Upsilon_on_projections}
    For $P \in \Proj{\adim}{j}$ there exists $(\phi,\xi) = \mathcal{P}(\adim,j)$ such that
    \begin{displaymath}
        \Upsilon_{\!j}(P) = \phi \otimes \xi \,.
    \end{displaymath}
    This can be easily seen by considering a~basis $e_{1}, \ldots, e_{\adim}$ of $\R^{\adim}$ such
    that
    \begin{displaymath}
        \mathbf{E} = e_{1} \wedge \cdots \wedge e_{\adim} \,,
        \quad
        \im P = \lin\{ e_{1}, \ldots, e_j \} \,,
        \quad \text{and} \quad
        \ker P = \lin \{ e_{j+1}, \ldots, e_{\adim} \} \,.
    \end{displaymath}
    For $\xi = e_{1} \wedge \cdots \wedge e_j$ we get
    \begin{gather}
        \langle \xi  ,\, \extpower{j} P \rangle = \xi
        \quad \text{and} \quad
        \langle e_{\lambda}  ,\, \extpower{j} P \rangle = 0
        \quad \text{whenever $\lambda \in \Choice{\adim}{j}$ and $\im \lambda \ne \{1,\ldots,j\}$} \,.
    \end{gather}
\end{remark}

\begin{remark}
    \label{rem:Upsilon_derivative}
    We compute the derivative of $\Upsilon_{\!\vdim}$ using~\ref{mr:Gamma_exterior_power}.
    \begin{multline}
        \uD \Upsilon_{\!\vdim}(T) X
        = \tfrac{1}{\vdim !} \uD \bigl[ \End{\R^{\adim}} \ni S \mapsto \caniso{1}(S)^{\vdim} \bigr](T) X
        \\
        = \tfrac{1}{(\vdim-1)!} \caniso{1}(T)^{\vdim-1} \caniso{1}(X)
        = \Upsilon_{\!\vdim-1}(T) \caniso{1}(X) \,.
    \end{multline}
    Next, we shall rewrite the above formula in a suitable basis. Let
    \begin{displaymath}
        T \in \pgrass{\adim}{\vdim}
        \quad \text{and} \quad
        \zeta = \ograss{\adim}{\vdim}
        \quad \text{satisfy} \quad
        \Sp \zeta = \im T \,.
    \end{displaymath}
    Assume $e_{1}, \ldots, e_{\adim}$ is an orthonormal~basis of $\R^{\adim}$ such that
    $\lin \{e_{1}, \ldots, e_{\vdim}\} = \im T$ and $\omega_{1}, \ldots, \omega_{\adim}$ is
    the dual basis. Then
    \begin{multline}
        \Upsilon_{\!\vdim-1}(T) \caniso{1}(X)
        = \textsum{\lambda \in \Choice{\adim}{\vdim-1}}{}
        (\omega_{\lambda} \otimes \extpower{\vdim-1} T e_{\lambda})
        \textsum{i=1}{\adim} ( \omega_i \otimes X e_i)
        \\
        = \textsum{\lambda \in \Choice{\adim}{\vdim-1}}{} \textsum{i=1}{\adim}
        (\omega_{\lambda} \wedge \omega_i) \otimes (\extpower{\vdim-1} T e_{\lambda} \wedge X e_i) \,.
    \end{multline}
    Define $\lambda_0 \in \Choice{\adim}{\vdim}$ so that
    $\im \lambda_0 = \{1,2,\ldots,\vdim\}$ and $\lambda_j \in \Choice{\adim}{ \vdim-1}$ by
    requiring $\im \lambda_j = \{1,2,\ldots,\vdim\} \without \{j\}$ for
    $j \in \{1,2,\ldots, \vdim \}$. We get
    \begin{multline}
        \Upsilon_{\!\vdim-1}(T) \caniso{1}(X)
        = \omega_{\lambda_0} \otimes \textsum{i=1}{\vdim} (-1)^{\vdim-i}(e_{\lambda_i} \wedge X e_i)
        + \textsum{i=\vdim+1}{\adim} \textsum{j=1}{\vdim} 
        (\omega_{\lambda_j} \wedge \omega_i) \otimes (e_{\lambda_j} \wedge X e_i) \,.
    \end{multline}
\end{remark}

\begin{definition}
    Define the linear map
    $\Psi_{\!j} : \aforms{j} \R^{\adim} \otimes \extpower{j} \R^{\adim} \to \End{\R^{\adim}}$ so
    that 
    \begin{multline}
        \bigl\langle u ,\, \Psi_{\!j}(\phi \otimes \xi) \bigr\rangle
        = (-1)^{j(\adim-j)} (\mathbf{D}^j \phi \wedge u) \restrict \mathbf{D}_j \xi
        \\ \text{for $\phi \in \aforms{j} \R^{\adim}$, $\xi \in \extpower{j} \R^{\adim}$, and $u \in \R^{\adim}$} \,.
    \end{multline}
\end{definition}

\begin{lemma}
    \label{lem:Psi_gives_projections}
    Let $e_{1}, \ldots, e_{\adim}$ be a basis of~$\R^{\adim}$ such that
    $\mathbf{E} = e_{1} \wedge \cdots \wedge e_{\adim}$, $\omega_{1}, \ldots, \omega_{\adim}$ be the
    dual basis, and $\lambda \in \Choice{\adim}{j}$. Then
    $\Psi_{\!j}(\omega_{\lambda} \otimes e_{\lambda}) \in \Proj{\adim}{j}$ is the~projection with
    image $\lin \{ e_i : i \in \im \lambda \}$ and kernel
    $\lin \{ e_i : i \in \{1,2,\ldots,\adim \} \without \im \lambda \}$.
\end{lemma}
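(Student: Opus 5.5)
The plan is to compute $\Psi_{\!j}(\omega_{\lambda} \otimes e_{\lambda})$ directly on the basis vectors $e_1, \ldots, e_{\adim}$. The goal is to show that it sends $e_i$ to $e_i$ when $i \in \im \lambda$ and to $0$ otherwise. Such a map is idempotent with trace~$j$, so it lies in $\Proj{\adim}{j}$ and has the stated image and kernel.

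First I would evaluate the two dualities on the relevant simple elements. Let $\mu \in \Choice{\adim}{\adim-j}$ be the complementary map, so that $\im \mu = \{1,\ldots,\adim\} \without \im \lambda$, and let $\varepsilon = \pm 1$ be the sign of the shuffle permutation $(\lambda,\mu)$, so that $e_{\lambda} \wedge e_{\mu} = \varepsilon \mathbf{E}$. Using the definitions in~\cite[1.5.2]{Federer1969}, $\mathbf{D}^j \phi = \mathbf{E} \restrict \phi$ and $\mathbf{D}_j \xi = \xi \contract \boldsymbol{\Omega}$, together with $\boldsymbol{\Omega}(\mathbf{E}) = 1$ (normalising $\boldsymbol{\Omega}$ so that $\boldsymbol{\Omega} = \omega_1 \wedge \cdots \wedge \omega_{\adim}$), I expect to obtain
\begin{displaymath}
    \mathbf{D}^j \omega_{\lambda} = \pm \varepsilon\, e_{\mu}
    \quad \text{and} \quad
    \mathbf{D}_j e_{\lambda} = \pm \varepsilon\, \omega_{\mu} \,.
\end{displaymath}
Here the $\pm$ depends only on the contraction conventions and on $j(\adim-j)$. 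The key structural fact is that $\mathbf{D}^j \omega_\lambda \in \extpower{\adim-j}\R^{\adim}$ is a multiple of $e_{\mu}$, and that $\mathbf{D}_j e_\lambda \in \aforms{\adim-j}\R^{\adim}$ is a multiple of $\omega_{\mu}$.

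Next I would compute $u \mapsto (c\, e_{\mu} \wedge u) \restrict (c' \omega_{\mu})$, which is a vector in $\R^{\adim}$ obtained by contracting an $(\adim-j+1)$-vector with an $(\adim-j)$-form. For $u = e_i$ with $i \in \im \mu$ we have $e_{\mu} \wedge e_i = 0$, so $e_i$ lies in the kernel. For $i \in \im \lambda$ the interior product of $e_{\mu} \wedge e_i$ with $\omega_{\mu}$ reduces to the single surviving term $\pm e_i$: expanding $\langle e_\mu \wedge e_i, \omega_\mu \wedge \theta\rangle$ for $\theta \in \aforms1\R^{\adim}$, only $\theta = \omega_i$ contributes. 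Hence $\Psi_{\!j}(\omega_{\lambda} \otimes e_{\lambda}) e_i = s\, e_i$, where $s = \pm 1$ is independent of $i \in \im\lambda$.

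The main obstacle is the sign bookkeeping, namely showing that $s = +1$. This should be exactly what the factor $(-1)^{j(\adim-j)}$ compensates. I would verify it first in the model case $\lambda = (1,\ldots,j)$, where $\varepsilon = 1$, tracking Federer's conventions for $\restrict$ and $\contract$ carefully. Then I would reduce the general case to it: $\varepsilon$ enters twice, once through $\mathbf{D}^j$ and once through $\mathbf{D}_j$, so it squares away. Alternatively, one can permute the basis, which changes $\mathbf{E}$ and $\boldsymbol{\Omega}$ by the same sign and leaves the bilinear expression invariant. As a sanity check, the case $j = \adim$ should give the identity, and I would confirm this against \ref{rem:Upsilon_on_projections}. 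Once $s = 1$ is established, the map acts as the identity on $\lin\{e_i : i \in \im\lambda\}$ and vanishes on the complementary span, which concludes the proof.
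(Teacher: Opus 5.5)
Your proposal is correct and follows the paper's proof: the paper likewise evaluates $\Psi_{\!j}(\omega_\lambda \otimes e_\lambda)$ on basis vectors, assuming without loss of generality $\im\lambda = \{1,\ldots,j\}$. It records from \cite[1.5.2]{Federer1969} that $\mathbf{D}^j\omega_\lambda = e_\nu$ and $\mathbf{D}_j e_\lambda = (-1)^{j(\adim-j)}\omega_\nu$, where $\im\nu$ is the complement of $\im\lambda$, so that $\langle e_i ,\, \Psi_{\!j}(\omega_\lambda \otimes e_\lambda)\rangle = (e_\nu \wedge e_i) \restrict \omega_\nu \in \{e_i, 0\}$; this confirms your expectation that $(-1)^{j(\adim-j)}$ cancels and $s = +1$, and your observation that the shuffle sign enters squared justifies the paper's reduction to that case.
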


\begin{proof}
    Without loss of generality we may assume $\im \lambda = \{1,2,\ldots,j\}$. Let
    $\nu \in \Choice{\adim}{\adim-j}$ be such that $\im \nu =
    \{j+1,\ldots,\adim\}$. Using~\cite[1.5.2]{Federer1969} we~get
    $\mathbf{D}^j \omega_{\lambda} = e_{\nu}$ and
    $\mathbf{D}_j e_{\lambda} = (-1)^{j(\adim-j)} \omega_{\nu}$; hence,
    \begin{displaymath}
        \bigl\langle e_i ,\, \Psi_{\!j}(\omega_{\lambda} \otimes e_{\lambda}) \bigr\rangle
        = (e_{\nu} \wedge e_{i}) \restrict \omega_{\nu} 
        = \left\{
            \begin{aligned}
              &0 &&\text{if $i \in \im \nu$} \,,
              \\
              & e_i &&\text{if $i \in \im \lambda$} \,. 
            \end{aligned}
        \right. \qedhere
    \end{displaymath}
\end{proof}

\begin{remark}
    In~case $e_{1},\ldots,e_{\adim}$ is an orthonormal basis, there holds
    $\Psi_{\!j}(\omega_{\lambda} \otimes e_{\lambda}) \in \pgrass{\adim}{j}$.
\end{remark}

\begin{corollary}
    \label{cor:Psi_inverse_to_Upsilon}
    Combining~\ref{lem:Psi_gives_projections} and \ref{rem:Upsilon_on_projections} we get
    \begin{gather}
        \Psi_{\!j} \lIm \mathcal{P}(\adim,j) \rIm = \Proj{\adim}{j} \,,
        \quad
        \Upsilon_{\!j} \lIm \Proj{\adim}{j} \rIm = \mathcal{P}(\adim,j) \,,
        \\
        \text{and} \quad \Psi_{\!j} \circ \Upsilon_{\!j}(P) = P
        \quad \text{for $P \in \Proj{\adim}{j}$} \,;
        \\
        \text{hence,} \quad
        \label{eq:linear_inverse}
        (\Psi_{\!j}|\mathcal{P}(\adim,j))^{-1} = \Upsilon_{\!j} | \Proj{\adim}{j} \,.
    \end{gather}
\end{corollary}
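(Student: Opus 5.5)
The plan is to prove the three displayed claims in the order in which they feed into each other. The identity $\Psi_{\!j} \circ \Upsilon_{\!j}(P) = P$ for $P \in \Proj{\adim}{j}$ comes first, and both set equalities will follow from it. The inverse formula \eqref{eq:linear_inverse} is then formal.

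Fix $P \in \Proj{\adim}{j}$. Choose a basis $e_1, \ldots, e_{\adim}$ adapted to $P$ as in~\ref{rem:Upsilon_on_projections}: $\im P = \lin\{e_1,\ldots,e_j\}$, $\ker P = \lin\{e_{j+1},\ldots,e_{\adim}\}$, and $\mathbf{E} = e_1 \wedge \cdots \wedge e_{\adim}$. The last condition can always be arranged by rescaling $e_{\adim}$, or $e_1$ if $j = \adim$. Let $\omega_1,\ldots,\omega_{\adim}$ be the dual basis and $\lambda_0 \in \Choice{\adim}{j}$ with $\im \lambda_0 = \{1,\ldots,j\}$.

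By~\ref{rem:caniso_explicit_formula} and the computation in~\ref{rem:Upsilon_on_projections}, only $e_{\lambda_0}$ survives under $\extpower{j} P$. Hence
\begin{displaymath}
    \Upsilon_{\!j}(P) = \omega_{\lambda_0} \otimes e_{\lambda_0} \,.
\end{displaymath}
Applying~\ref{lem:Psi_gives_projections} to this basis shows that $\Psi_{\!j}(\omega_{\lambda_0} \otimes e_{\lambda_0})$ is the projection with image $\lin\{e_1,\ldots,e_j\}$ and kernel $\lin\{e_{j+1},\ldots,e_{\adim}\}$. That projection is exactly $P$, which proves $\Psi_{\!j} \circ \Upsilon_{\!j}(P) = P$.

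Next I check that $\Upsilon_{\!j}(P) \in \mathcal{P}(\adim,j)$. By~\cite[1.5.2]{Federer1969}, as used in the proof of~\ref{lem:Psi_gives_projections}, we have $\mathbf{D}^j \omega_{\lambda_0} = e_{\nu}$ with $\im \nu = \{j+1,\ldots,\adim\}$. Therefore
\begin{displaymath}
    e_{\lambda_0} \wedge \mathbf{D}^j \omega_{\lambda_0} = \mathbf{E} \,.
\end{displaymath}
This gives $\Upsilon_{\!j}\lIm \Proj{\adim}{j} \rIm \subseteq \mathcal{P}(\adim,j)$. Composing with $\Psi_{\!j}$ and using the identity just proved gives $\Proj{\adim}{j} \subseteq \Psi_{\!j}\lIm \mathcal{P}(\adim,j) \rIm$.

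For the reverse inclusions, take $\phi \otimes \xi \in \mathcal{P}(\adim,j)$. By~\ref{rem:Aj_properties}, both $\phi$ and $\xi$ are simple, and $\Sp \xi$ and $\Sp \mathbf{D}^j\phi$ are complementary, since their wedge is $\mathbf{E} \ne 0$.

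Pick a basis $e_1,\ldots,e_j$ of $\Sp\xi$ with $\xi = e_1\wedge\cdots\wedge e_j$. Then pick a basis $e_{j+1},\ldots,e_{\adim}$ of $\Sp \mathbf{D}^j\phi$ with $\mathbf{D}^j\phi = e_{j+1}\wedge\cdots\wedge e_{\adim}$. This is possible because $\mathbf{D}^j\phi$ is simple, and the normalisation can be absorbed into $e_{\adim}$. By construction $\mathbf{E} = e_1\wedge\cdots\wedge e_{\adim}$.

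Since $\mathbf{D}^j$ is an isomorphism and $\mathbf{D}^j \omega_{\lambda_0} = e_{\nu}$ in this basis, we get $\phi = \omega_{\lambda_0}$. Thus $\phi \otimes \xi = \omega_{\lambda_0} \otimes e_{\lambda_0}$. By~\ref{lem:Psi_gives_projections}, $\Psi_{\!j}$ maps it to the projection $P$ onto $\Sp\xi$ along $\Sp\mathbf{D}^j\phi$, and by the first step $\Upsilon_{\!j}(P) = \phi\otimes\xi$. This yields $\Psi_{\!j}\lIm\mathcal{P}(\adim,j)\rIm \subseteq \Proj{\adim}{j}$ and $\mathcal{P}(\adim,j) \subseteq \Upsilon_{\!j}\lIm\Proj{\adim}{j}\rIm$.

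Finally, the two maps $\Psi_{\!j}|\mathcal{P}(\adim,j)$ and $\Upsilon_{\!j}|\Proj{\adim}{j}$ are mutually inverse bijections. Indeed, $\Psi_{\!j}\circ\Upsilon_{\!j} = \id{}$ on $\Proj{\adim}{j}$, and $\Upsilon_{\!j}\circ\Psi_{\!j} = \id{}$ on $\mathcal{P}(\adim,j)$ by the previous paragraph. This gives \eqref{eq:linear_inverse}.

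The main obstacle is the step identifying $\phi$ with $\omega_{\lambda_0}$ in the second part. It rests on the sign and normalisation conventions of $\mathbf{D}^j$ in~\cite[1.5.2]{Federer1969} holding for non-orthonormal bases; I would verify that $\mathbf{D}^j\omega_{\lambda_0} = e_\nu$ is purely algebraic, relying only on $\mathbf{E} = e_1\wedge\cdots\wedge e_{\adim}$.
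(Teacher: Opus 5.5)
You follow the route the paper intends; the corollary is justified there only by ``combining'' \ref{lem:Psi_gives_projections} and \ref{rem:Upsilon_on_projections}. Your forward half is correct. In a basis adapted to $P$ with $\mathbf{E} = e_1\wedge\cdots\wedge e_{\adim}$ one has $\Upsilon_{\!j}(P) = \omega_{\lambda_0}\otimes e_{\lambda_0} \in \mathcal{P}(\adim,j)$ and $\Psi_{\!j}\circ\Upsilon_{\!j}(P) = P$. This gives $\Upsilon_{\!j}\lIm\Proj{\adim}{j}\rIm \subseteq \mathcal{P}(\adim,j)$ and $\Proj{\adim}{j}\subseteq\Psi_{\!j}\lIm\mathcal{P}(\adim,j)\rIm$. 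Your closing worry is harmless: $\mathbf{D}^j\omega_{\lambda_0} = e_\nu$ uses only $\mathbf{E} = e_1\wedge\cdots\wedge e_{\adim}$ and duality of the bases, not orthonormality.

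The genuine gap is the step ``by \ref{rem:Aj_properties}, both $\phi$ and $\xi$ are simple''. That remark is false whenever $2\le j\le\adim-2$. One only has $\Sp\xi+\Sp\zeta\subseteq\Sp(\xi\wedge\zeta)$, which gives no upper bound on $\dim \Sp(\xi\wedge\zeta)$. So a non-simple $\xi$ can still satisfy $\xi\wedge\mathbf{D}^j\phi=\mathbf{E}$. The paper's one-line justification silently relies on the same remark for the reverse inclusions.

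Concretely, take $\adim=4$, $j=2$, a basis with $\mathbf{E}=e_1\wedge e_2\wedge e_3\wedge e_4$, and the dual basis $\omega_i$. Then $\mathbf{D}^2(\omega_1\wedge\omega_2)=e_3\wedge e_4$ and $\mathbf{D}^2(\omega_3\wedge\omega_4)=e_1\wedge e_2$.

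\emph{Failure of injectivity.} The $2$-vector $\xi=e_1\wedge e_2+e_3\wedge e_4$ has $\Sp\xi=\{0\}$, yet $\xi\wedge\mathbf{D}^2(\omega_1\wedge\omega_2)=\mathbf{E}$. So $(\omega_1\wedge\omega_2)\otimes\xi\in\mathcal{P}(4,2)$. Directly from the definition of $\Psi_2$, both $\Psi_2\bigl((\omega_1\wedge\omega_2)\otimes(e_3\wedge e_4)\bigr)$ and $\Psi_2\bigl((\omega_3\wedge\omega_4)\otimes(e_1\wedge e_2)\bigr)$ vanish. Indeed, paired with a covector $\psi$, each value is a $3\times3$ determinant with two linearly dependent rows. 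Let $P$ be the projection onto $\lin\{e_1,e_2\}$ along $\lin\{e_3,e_4\}$, so $\Upsilon_2(P)=(\omega_1\wedge\omega_2)\otimes(e_1\wedge e_2)$. Then $(\omega_1\wedge\omega_2)\otimes\xi$ and $\Upsilon_2(P)$ have the same image $P$. Hence $\Psi_2|\mathcal{P}(4,2)$ is not injective, and $\mathcal{P}(4,2)\not\subseteq\Upsilon_2\lIm\Proj{4}{2}\rIm$.

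\emph{Failure of the image inclusion.} Take $\phi=\omega_1\wedge\omega_2+\omega_3\wedge\omega_4$ and $\xi=\tfrac12(e_1\wedge e_2+e_3\wedge e_4)$. This gives an element of $\mathcal{P}(4,2)$ with $\Psi_2(\phi\otimes\xi)=\tfrac12\id{\R^{4}}$, which is not idempotent. So even $\Psi_{\!j}\lIm\mathcal{P}(\adim,j)\rIm\subseteq\Proj{\adim}{j}$ fails.

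Thus, with \ref{def:Aj_and_Proj} read literally, the corollary is false for $2\le j\le\adim-2$, and no argument can close your reverse step. What your argument does prove is the statement with $\mathcal{P}(\adim,j)$ restricted to tensors $\phi\otimes\xi$ with both $\phi$ and $\xi$ simple. That restriction is automatic only for $j\in\{1,\adim-1,\adim\}$. Under it your proof is complete, since the later steps are all correct: the adapted basis, simplicity of $\mathbf{D}^j\phi$, and $\phi=\omega_{\lambda_0}$ by injectivity of $\mathbf{D}^j$.
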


\begin{miniremark}
    Note that $\Upsilon_{\!j}$ is a \emph{polynomial function} (see~\cite[1.10.4]{Federer1969}) of
    degree~$j$ while its left-inverse~$\Psi_{\!j}$ is \emph{linear}.
\end{miniremark}

\begin{corollary}
    \label{cor:Psi_on_tensors_with_one_component_fixed}
    Assume $\phi \in \aforms{\vdim} \R^{\adim}$ is simple and $\zeta \in \extpower{\vdim} \R^{\adim}$
    is such that $\bigl( \zeta \wedge \mathbf{D}^{\vdim} \phi \bigr) \bullet \mathbf{E} > 0$. Then
    \begin{displaymath}
        P = \Psi_{\vdim}\bigl( \phi \otimes \zeta \bigr)
        \in \ray \Proj{\adim}{\vdim}
        \quad \text{and} \quad
        \ker P = \Sp(\mathbf{D}^{\vdim} \phi) \,.
    \end{displaymath}
\end{corollary}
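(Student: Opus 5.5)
The plan is to exploit the linearity of $\Psi_{\vdim}$ in the second tensor factor and reduce to \ref{cor:Psi_inverse_to_Upsilon}. Since $\phi$ is simple and $\phi \ne 0$ (otherwise the pairing with $\mathbf{E}$ would vanish), we write $\mathbf{D}^{\vdim}\phi = \eta$, a nonzero simple $(\adim-\vdim)$-vector. We set $K = \Sp \eta \in \grass{\adim}{\codim}$. The map $\xi \mapsto \xi \wedge \eta$ from $\extpower{\vdim}\R^{\adim}$ to $\extpower{\adim}\R^{\adim} = \lin\{\mathbf{E}\}$ is linear and nonzero. Hence
\begin{displaymath}
    \zeta \wedge \eta = c\,\mathbf{E}
    \quad \text{with} \quad
    c = (\zeta\wedge\eta)\bullet\mathbf{E}/|\mathbf{E}|^2 > 0 \,.
\end{displaymath}

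The key step is to decompose $\zeta$ according to the kernel of this map. We choose a simple $\xi_0 \in \extpower{\vdim}\R^{\adim}$ with $\xi_0 \wedge \eta = \mathbf{E}$. For instance, take a basis $e_1,\dots,e_{\adim}$ with $e_{\vdim+1},\dots,e_{\adim}$ spanning $K$ and $\eta = e_{\vdim+1}\wedge\cdots\wedge e_{\adim}$ (after rescaling), and put $\xi_0 = a\, e_1\wedge\cdots\wedge e_{\vdim}$ for a suitable scalar $a$. Then $\zeta = c\,\xi_0 + \rho$ with $\rho \wedge \eta = 0$. Now $\phi\otimes\xi_0 \in \mathcal{P}(\adim,\vdim)$, so by \ref{cor:Psi_inverse_to_Upsilon} the map $\Psi_{\vdim}(\phi\otimes\xi_0)$ lies in $\Proj{\adim}{\vdim}$. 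Its kernel is $K$, which can be read off from \ref{lem:Psi_gives_projections} in the chosen basis, where $\phi$ is a multiple of $\omega_{1}\wedge\cdots\wedge\omega_{\vdim}$ and the kernel is $\lin\{e_{\vdim+1},\dots,e_{\adim}\} = \Sp\mathbf{D}^{\vdim}\phi$.

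It remains to show that the term $\Psi_{\vdim}(\phi\otimes\rho)$ does not spoil the conclusion. By definition,
\begin{displaymath}
    \langle u, \Psi_{\vdim}(\phi\otimes\rho)\rangle = \pm (\eta\wedge u)\restrict \mathbf{D}_{\vdim}\rho \,.
\end{displaymath}
For $u \in K$ we have $\eta \wedge u = 0$, so $K \subseteq \ker \Psi_{\vdim}(\phi\otimes\rho)$. For general $u$, the value $(\eta\wedge u)\restrict\mathbf{D}_{\vdim}\rho$ is the vector $v$ satisfying $\alpha(v) = (\eta\wedge u\wedge \gamma^{-1}\ldots)$-type pairings. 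Using $\mathbf{D}_{\vdim}\rho = \rho\contract\boldsymbol{\Omega}$, one gets $\langle \eta\wedge u\wedge w, \ldots\rangle$ which equals $\pm\langle \rho \wedge \eta \wedge u, \ldots\rangle$-type expressions. Since $\rho\wedge\eta = 0$, I expect $\Psi_{\vdim}(\phi\otimes\rho)$ to vanish on all of $\R^{\adim}$, or at least to satisfy $\Psi(\phi\otimes\rho) = Z$ with $\ker Z \supseteq K$ and $\im Z \subseteq \ker$-complement such that $P_0 + Z$ remains idempotent.

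I expect this to be the main obstacle: identifying $\Psi_{\vdim}(\phi\otimes\rho)$ precisely. My first attempt will be a direct basis computation. Expand $\rho = \sum_\lambda \rho_\lambda e_\lambda$. The condition $\rho\wedge\eta=0$ says exactly that the coefficient of $e_1\wedge\cdots\wedge e_{\vdim}$ vanishes. Then apply the formula for $\mathbf{D}_{\vdim}e_\lambda$ from the proof of \ref{lem:Psi_gives_projections}: for $\lambda \ne \lambda_0$ and $u = e_i$ with $i\le\vdim$, the element $\eta\wedge e_i$ is $\pm e_{\nu\cup\{i\}}$ while $\mathbf{D}_{\vdim}e_\lambda = \pm\omega_{\mu}$ with $\im\mu$ the complement of $\im\lambda$. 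This contraction yields a multiple of $e_j$ with $j \le \vdim$, nonzero only when $\im\lambda = \{1,\dots,\vdim\}\setminus\{i\}\cup\{j'\}$-type configurations. So $Z$ maps $\im P_0$ into $\im P_0$ and kills $K$, meaning $Z = P_0\circ Z\circ P_0$ is an endomorphism of $\im P_0$. Then $P = cP_0 + Z$, and I would need $Z = 0$ on $\im P_0$. I expect the contraction to produce a vector outside $\im\lambda$ only when paired with $\omega_\mu$ of length $\adim-\vdim$, forcing $\lambda = \lambda_0$, so the off-diagonal terms vanish and $Z=0$. Consequently $P = c\,\Psi_{\vdim}(\phi\otimes\xi_0) \in \ray\Proj{\adim}{\vdim}$ with $\ker P = K = \Sp(\mathbf{D}^{\vdim}\phi)$.
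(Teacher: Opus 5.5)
Your decomposition $\zeta = c\,\xi_0 + \rho$ with $\rho \wedge \mathbf{D}^{\vdim}\phi = 0$ is fine. So is the identification of $P_0 = \Psi_{\vdim}(\phi\otimes\xi_0)$ as a projection with kernel $K = \Sp \mathbf{D}^{\vdim}\phi$, and so is your observation that $Z = \Psi_{\vdim}(\phi\otimes\rho)$ vanishes on $K$. The step that fails is the one you flagged as the main obstacle. $Z$ is \emph{not} zero in general, and it does not map $\im P_0$ into $\im P_0$.

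Redo the basis computation. For $u = e_i$ with $i \le \vdim$ one has $e_\nu \wedge e_i = \pm e_{\nu\cup\{i\}}$, where $\im\nu = \{\vdim+1,\ldots,\adim\}$. Contracting with $\mathbf{D}_{\vdim} e_\lambda = \pm\omega_{\lambda^c}$ gives something non-zero exactly when $\im\lambda^c \subseteq \im\nu \cup \{i\}$. For $\lambda \ne \lambda_0$ this forces $\im\lambda^c = (\im\nu \without \{j\}) \cup \{i\}$ for some $j \in \im\nu$, and the output is $\pm e_j \in K$, not a vector of $\im P_0$. Thus $\Psi_{\vdim}(\phi\otimes e_\lambda) = \pm\,\omega_i\cdot e_j \ne 0$ whenever $\im\lambda = (\{1,\ldots,\vdim\}\without\{i\})\cup\{j\}$.

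Already for $\adim = 2$, $\vdim = 1$, $\phi = \omega_1$, $\zeta = e_1 + b\,e_2$ you get $P e_1 = e_1 \pm b\,e_2$ and $P e_2 = 0$. So your final formula $P = c\,\Psi_{\vdim}(\phi\otimes\xi_0)$ is false for $b \ne 0$; the image of $P$ genuinely moves with $\zeta$. Your fallback ("$P_0 + Z$ remains idempotent") is the right idea, but you neither proved it nor pursued it.

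The corrected computation gives exactly what is needed: $Z$ vanishes on $K$ and has image in $K = \ker P_0$. Hence $P_0 \circ Z = 0$, and $Z \circ P_0 = Z$ because $Z$ kills $\im(\id{\R^{\adim}} - P_0) = K$. Also $Z \circ Z = 0$. Therefore $(P_0 + Z/c)^2 = P_0 + Z/c$.

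This idempotent kills $K$, and its kernel is exactly $K$: apply $P_0$ to $P_0 x + Zx/c = 0$ to get $P_0 x = 0$. Its trace is $\vdim$ since $Z$ is nilpotent. So $P = c\,(P_0 + Z/c) \in \ray\Proj{\adim}{\vdim}$ with $\ker P = K$.

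This is the same mechanism as in the paper's proof, organised differently. The paper writes $\zeta = \sum_i \lambda_i \xi_i$ with simple $\xi_i$ normalised by $\xi_i \wedge \mathbf{D}^{\vdim}\phi = \mathbf{E}$, so that $\sum_i \lambda_i = c > 0$. Each $\Psi_{\vdim}(\phi\otimes\xi_i)$ is then a projection with kernel $K$. The paper concludes using the fact that projections with a common kernel form an affine space, expressed via adjoints as $S + S \circ C_i \circ S^{\perp}$.
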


\begin{proof}
    Set $\eta = \phi |\phi|^{-1}$. Choose $\xi_{1}, \ldots, \xi_N \in \ograss{\adim}{\vdim}$ and
    $\lambda_{1}, \ldots, \lambda_N \in \R \without \{0\}$ so that
    \begin{displaymath}
        \phi \otimes \zeta = \eta \otimes \bigl( \textsum{i=1}{N} \lambda_i \xi_i \bigr)
        \quad \text{and} \quad
        \bigl( \xi_i \wedge \mathbf{D}^{\vdim} \eta \bigr) \bullet \mathbf{E} > 0
        \,.
    \end{displaymath}
    Let $S = \perpproject{\Sp(\mathbf{D}^{\vdim} \phi)}$. From~\ref{lem:Psi_gives_projections} we
    know that
    \begin{displaymath}
        P_{i} = \Psi_{\vdim}\bigl( \eta \otimes \xi_i \bigr) \in \Proj{\adim}{\vdim}
        \quad \text{and} \quad
        \ker P_{i} = \im S^{\perp} 
        \quad \text{for $i \in \{1,2,\ldots,N\}$} \,.
    \end{displaymath}
    Therefore, for $i \in \{ 1,2,\ldots,N \}$, setting
    $C_{i} = \transpose{P_{i}} \circ S^{\perp} \in \End{\R^{\adim}}$, we get
    \begin{displaymath}
        \transpose{P_{i}} = S + S \circ C_{i} \circ S^{\perp} \,.
    \end{displaymath}
    Since $\Psi_{\vdim}$ is linear it follows that
    \begin{displaymath}
        \transpose{P}
        = \textsum{i=1}{N} \lambda_{i} \transpose{P_i}
        = \bigl( \textsum{i=1}{N} \lambda_{i} \bigr) S
        + S \circ \bigl( \textsum{i=1}{N} \lambda_{i} C_{i} \bigr) \circ S^{\perp} \,.
    \end{displaymath}
    Observe that, since $\xi_i \wedge \mathbf{D}^{\vdim}\eta = \mathbf{E}$ for
    $i \in \{1,2,\ldots,N\}$, we get
    \begin{displaymath}
        \textsum{i=1}{N} \lambda_{i} = \bigl( \zeta \wedge \mathbf{D}^{\vdim} \phi \bigr) \bullet \mathbf{E} > 0 \,;
    \end{displaymath}
    hence, $\transpose{P}$ is a positive multiple of a projection onto~$\im S$.
\end{proof}

\section{Adjoint of $\Psi_{\vdim}$ and the derivative of the exterior power at identity}
\label{sec:Psi_adjoint}

\begin{miniremark}
    We~compute the adjoint of the map~$\Psi_{\vdim}$ constructed in
    section~\ref{sec:ext_power}. It~turns out to be governed by the~derivative $\varkappa$ of the
    exterior power at the identity, so that $\transpose{\Psi_{\vdim}}(L) =
    \caniso{\vdim}(\varkappa(L))$; see~\ref{def:varkappa} and~\ref{lem:Psi_adjoint}. This is the
    identity which, in section~\ref{sec:polyconvex_integrands}, converts the representation of~$B_F$
    into the~formula $B_F(T) \bullet L = \grad \varphi(\xi) \bullet \varkappa(L)\xi$. At~the end of
    the section we~recall Busemann's notion of a~\emph{totally convex} norm;
    see~\ref{def:totally_convex_norm} and~\ref{lem:totally_convex_decreasing_projections}.
\end{miniremark}

\begin{lemma}
    \label{lem:Psi_of_full_rank}
    Let $0 < l < \adim$ be an integer. There holds
    \begin{displaymath}
        \lin \pgrass{\adim}{l} = \End{\R^{\adim}} \cap \{ A : \transpose{A} = A \}
        \quad \text{and} \quad
        \lin \Proj{\adim}{l} = \End{\R^{\adim}} \,.
    \end{displaymath}
    In~particular, $\Psi_{l}$ is an~epimorphism and $\transpose{\Psi_{l}}$ is a monomorphism, i.e.,
    $\ker \transpose{\Psi_{l}} = \{0\}$.
\end{lemma}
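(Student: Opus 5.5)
We prove the two spanning statements directly, then read off the consequences for $\Psi_l$ from~\ref{cor:Psi_inverse_to_Upsilon}.

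\emph{Symmetric maps.} Since each element of $\pgrass{\adim}{l}$ is symmetric, $\lin \pgrass{\adim}{l}$ is contained in the space of symmetric endomorphisms. For the reverse inclusion it suffices to obtain every rank-one map $v \cdot v$ with $|v| = 1$, because by the spectral theorem these span the symmetric maps. Choose orthonormal vectors $v, w_1, \ldots, w_l$; this is possible since $l+1 \le \adim$. Put $W = \lin\{w_1,\ldots,w_l\}$ and $W_i = \lin(\{v\} \cup \{w_j : j \ne i\})$. Then
\begin{displaymath}
    \textsum{i=1}{l} \project{W_i} = l \, v \cdot v + (l-1)\project{W} \,,
\end{displaymath}
so $v \cdot v$ is a linear combination of elements of $\pgrass{\adim}{l}$. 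The step $0<l<\adim$ is used exactly here, to have room for the extra vector~$v$.

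\emph{All endomorphisms.} By~\ref{rem:perp}, $P \in \Proj{\adim}{l}$ if and only if $P^{\perp} \in \Proj{\adim}{\adim-l}$, and $P^{\perp} = \id{\R^{\adim}} - P$. The identity lies in $\lin \pgrass{\adim}{l}$ by the first part. Hence it suffices to show that $\lin \Proj{\adim}{l}$ contains every rank-one map $\omega \cdot \zeta$. Fix a projection $P_0$ of rank~$l$ and perturb it along the tangent directions $P_0 \circ X \circ P_0^{\perp}$ and $P_0^{\perp} \circ X \circ P_0$. The curve
\begin{displaymath}
    t \mapsto P_0 + t \, P_0 \circ X \circ P_0^{\perp}
\end{displaymath}
consists of projections, because $(P_0 X P_0^{\perp})^2 = 0$ and the remaining cross terms work out. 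Taking differences therefore gives $P_0 \circ X \circ P_0^{\perp} \in \lin \Proj{\adim}{l}$, and similarly $P_0^{\perp} \circ X \circ P_0 \in \lin \Proj{\adim}{l}$. The diagonal blocks $P_0 \circ X \circ P_0$ and $P_0^{\perp} \circ X \circ P_0^{\perp}$ are handled by conjugating the projections: for invertible~$G$, the map $G P_0 G^{-1}$ is again a projection of rank~$l$. Varying $P_0$ over all rank-$l$ projections, each $\omega \cdot \zeta$ with $\omega(\zeta) = 0$ arises as an off-diagonal block, since we may take $\zeta \in \im P_0$ and $\ker \omega \supseteq \im P_0$ (this needs $l \le \adim-1$). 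Maps of the form $\omega \cdot \zeta$ with $\omega(\zeta) = 0$, together with the symmetric maps, span $\End{\R^{\adim}}$, because the traceless maps together with~$\id{\R^{\adim}}$ span everything.

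\emph{Consequences for $\Psi_l$.} By~\ref{cor:Psi_inverse_to_Upsilon}, $\Proj{\adim}{l} = \Psi_l \lIm \mathcal{P}(\adim,l) \rIm \subseteq \im \Psi_l$. Since $\im \Psi_l$ is a linear space, it contains $\lin \Proj{\adim}{l} = \End{\R^{\adim}}$, so $\Psi_l$ is surjective. Injectivity of $\transpose{\Psi_l}$ is the standard duality fact $\ker \transpose{\Psi_l} = (\im \Psi_l)^{\perp} = \{0\}$.

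The only genuinely delicate step is the spanning of $\End{\R^{\adim}}$. The cleanest route is to note that all $\omega \cdot \zeta$ with $\omega(\zeta) = 0$ are differences of projections of rank~$l$, namely $(P_0 + \omega \cdot \zeta) - P_0$, which is valid whenever $\zeta \in \im P_0$ and $\omega$ vanishes on $\im P_0$.
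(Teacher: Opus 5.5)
Your proof is correct. The second half and the postscript follow the paper. The paper also writes an off-diagonal rank-one map $C$ with $C = T \circ C \circ T^{\perp}$ as a difference of rank-$l$ projections, namely $C = \tfrac12((T+C)-(T-C))$. It lets the symmetric maps from the first part supply the diagonal, and then concludes from $\Proj{\adim}{l} \subseteq \im \Psi_l$.

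Your first half runs the other way round. The paper argues by duality. It takes a symmetric $A$ orthogonal to $\pgrass{\adim}{l}$ and diagonalises it, so that every sum of $l$ eigenvalues vanishes. Comparing two $l$-subsets of an $(l+1)$-subset then forces all eigenvalues to be equal, hence zero. You instead write down $\gamma_1(v)\cdot v = \tfrac1l\bigl(\textstyle\sum_{i} \project{W_i} - (l-1)\project{W}\bigr)$ explicitly. The two are primal and dual versions of the same combinatorics. Your $W_1,\ldots,W_l,W$ are exactly the $l$-subsets of an $(l+1)$-element orthonormal family that the paper compares, and the hypothesis $l+1 \le \adim$ enters at the same place. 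Your version gives explicit coefficients: each rank-one symmetric map is a combination of $l+1$ orthogonal projections onto $l$-planes inside one $(l+1)$-plane. The paper's annihilator argument avoids that bookkeeping.

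Some cleanup points in the second half:
\begin{itemize}
\item Your opening reduction to ``every rank-one map'', and the remark about $P^{\perp} \in \Proj{\adim}{\adim-l}$, are not what you go on to prove. You can drop them.
\item The sentence about handling the diagonal blocks by conjugating $P_0$ with an invertible $G$ is neither justified nor needed, since the symmetric maps already cover the diagonal.
\item Your last step uses that the maps $\omega\cdot\zeta$ with $\omega(\zeta)=0$ span the traceless endomorphisms. That is a standard fact, but you can sidestep it as the paper does: restrict to $\omega_i\cdot e_j$ with $i \ne j$ for an orthonormal basis. These, together with the symmetric maps, visibly span $\End{\R^{\adim}}$.
\end{itemize}
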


\begin{proof}
    Define
    \begin{displaymath}
        S = \End{\R^{\adim}} \cap \{ A : \transpose{A} = A \} \,.
    \end{displaymath}
    Assume the first equality does not hold so that there exists
    $A \in S \cap \bigl( \lin \pgrass{\adim}{l} \bigr)^{\perp}$. This means that
    $A \bullet T = \trace(A \circ T) = 0$ for all $T \in \pgrass{\adim}{l}$ and $A =
    \transpose{A}$. Since $A = \transpose{A}$ there exists on orthonormal basis
    $u_{1},\ldots,u_{\adim} \in \R^{\adim}$ of eigenvectors of~$A$; cf.~\cite[1.7.3]{Federer1969}. Let
    $\lambda_{1}, \ldots, \lambda_{\adim} \in \R$ be the corresponding eigenvalues. For
    $\sigma \in \Choice{\adim}{l}$ define $T_{\sigma} = \project{\lin \{ u_j : j \in \im \sigma \}}$
    and note that $A \bullet T_{\sigma} = \textsum{j \in \im \sigma}{} \lambda_j = 0$. Recall that
    $0 < l < \adim$. Consider $\kappa \in \Choice{\adim}{l+1}$; for each $a,b \in \im \kappa$ with
    $a \ne b$ we have
    \begin{displaymath}
        0 - 0
        = \textsum{j \in \im \kappa \without \{a\}}{} \lambda_j
        - \textsum{j \in \im \kappa \without \{b\}}{} \lambda_j
        = \lambda_b - \lambda_a \,;
    \end{displaymath}
    hence, $\lambda_a = \lambda_b$ for all $a,b \in \{1,2,\ldots,\adim\}$; thus,
    $\lambda_i = 0$ for $i \in \{1,2,\ldots,\adim\}$ and, consequently, $A = 0$.

    To prove the second claim first observe that
    \begin{displaymath}
        \Proj{\adim}{l} = \bigl\{ T + T \circ C \circ T^{\perp} : T \in \pgrass{\adim}{l} ,\, C \in \End{\R^{\adim}} \bigr\} \,.
    \end{displaymath}
    Let $e_{1}, \ldots, e_{\adim}$ be an orthonormal basis of~$\R^{\adim}$ and
    $\omega_{1}, \ldots, \omega_{\adim}$ be the dual basis. Clearly
    \begin{displaymath}
        \lin \bigl( S \cup \bigl\{ \omega_i \cdot e_j : i,j \in \{ 1, 2, \ldots, \adim \} ,\, i \ne j \bigr\} \bigr)
        = \End{\R^{\adim}} \,;
    \end{displaymath}
    hence, since $\pgrass{\adim}{l} \subseteq \Proj{\adim}{l}$, it suffices to prove that
    $\omega_i \cdot e_j \in \lin \Proj{\adim}{l}$ whenever $i \ne j$ and
    $i,j \in \{ 1, 2, \ldots, \adim \}$. Fix $i$ and $j$ satisfying these conditions. Choose
    $T \in \pgrass{\adim}{l}$ so that $e_j \in \im T$ and $e_i \in \im T^{\perp}$ -- this is possible
    since $0 < l < \adim$. Define $C = \omega_i \cdot e_j$. Clearly $C = T \circ C \circ T^{\perp}$
    so $T + C \in \Proj{\adim}{l}$ and also $T - C \in \Proj{\adim}{l}$; hence,
    $C = \frac 12 ((T+C) - (T-C)) \in \lin \Proj{\adim}{l}$.

    To prove the postscript recall~\ref{cor:Psi_inverse_to_Upsilon} to see that $\im \Psi_{l}$ is
    a~linear subspace of $\End{\R^{\adim}}$ which contains $\Proj{\adim}{l}$.
\end{proof}

\begin{remark}
    \label{rem:projections_as_subspace_of_tensors}
    For each $l \in \{1,\ldots,\adim-1\}$ we get from~\ref{lem:Psi_of_full_rank} that
    $\transpose{\Psi_{l}}$ embeds the space of endomorphisms $\End{\R^{\adim}}$ onto some
    subspace~$E = \im \transpose{\Psi_{l}}$ of $\aforms{l} \R^{\adim} \otimes \extpower{l} \R^{\adim}$. Let
    $M = \transpose{\Psi_{l}} \lIm \Proj{\adim}{l} \rIm$. Observe that
    \begin{gather}
        E^{\perp} = \ker \Psi_{l} \quad \text{and} \quad
        \project{E} = \transpose{\Psi_{l}} \circ ( \Psi_{l} \circ \transpose{\Psi_{l}} )^{-1} \circ \Psi_{l}
        \in \End{\aforms{l} \R^{\adim} \otimes \extpower{l} \R^{\adim}} \,;
        \\
        \text{hence, recalling~\ref{cor:Psi_inverse_to_Upsilon},} \quad
        \project{E} \circ ( \Upsilon_{l} \circ \Psi_{l} ) x = x
        \quad \text{whenever $x \in M$} \,.
    \end{gather}
    Consequently $\mathcal{P}(\adim,l)$ is the \emph{graph} of the map
    $(\perpproject{E} \circ \Upsilon_{l} \circ \Psi_{l}) | M$ with respect to the decomposition
    $\aforms{l} \R^{\adim} \otimes \extpower{l} \R^{\adim} = E + E^{\perp}$.
\end{remark}

\begin{definition}
    \label{def:varkappa}
    Define the linear map $\varkappa : \End{\R^{\adim}} \to \End{\extpower{\vdim} \R^{\adim}}$ so that
    \begin{multline}
        \varkappa(L)\xi
        = \uD \bigl[
        \End{\R^{\adim}} \ni A \mapsto \langle \xi ,\, \extpower{\vdim}A \rangle
        \bigr](\id{\R^{\adim}})L
        \\
        = \textsum{i=1}{\vdim} u_{1} \wedge \cdots \wedge L u_i \wedge \cdots \wedge u_{\vdim}
        \quad \text{whenever $\xi = u_{1} \wedge \cdots \wedge u_{\vdim} \in \extpower{\vdim} \R^{\adim}$} \,.
    \end{multline}
\end{definition}

\begin{remark}
    \label{rem:transpose_of_varkappa}
    Let $L \in \End{\R^{\adim}}$ and $\xi,\zeta \in \extpower{\vdim}\R^{\adim}$. Then
    \begin{multline}
        \zeta \bullet \varkappa(L)\xi 
        = \uD \bigl[ \End{\R^{\adim}} \ni
        A \mapsto \zeta \bullet \langle \xi ,\, \extpower{\vdim}A \rangle
        \bigr](\id{\R^{\adim}})L
        \\
        = \uD \bigl[ \End{\R^{\adim}} \ni
        A \mapsto \langle \zeta ,\, \extpower{\vdim}\transpose{A} \rangle \bullet \xi
        \bigr](\id{\R^{\adim}})L
        = \varkappa(\transpose{L})\zeta \bullet \xi \,;
    \end{multline}
    hence,
    \begin{displaymath}
        \transpose{\varkappa(L)} = \varkappa(\transpose{L})
        \quad \text{for $L \in \End{\R^{\adim}}$ } \,.
    \end{displaymath}
\end{remark}

\begin{remark}
    \label{rem:adjoint_of_trace}
    Recall~\cite[1.4.5]{Federer1969}, \ref{rem:Gamma_isometry}, and that $\caniso{\vdim}$ is
    an~isometry so $\caniso{\vdim}^{-1} = \transpose{\caniso{\vdim}}$. We have
    \begin{multline}
        \langle \xi ,\, \phi \rangle
        = \trace (\phi \otimes \xi)
        = \trace \caniso{j}^{-1} (\phi \otimes \xi)
        = \caniso{j}^{*} (\phi \otimes \xi) \bullet \id{\extpower{j} \R^{\adim}}
        \\
        = (\phi \otimes \xi) \bullet \caniso{j}(\id{\extpower{j} \R^{\adim}})
        \quad \text{whenever $\xi \in \extpower{j} \R^{\adim}$ and $\phi \in \aforms{j} \R^{\adim}$} \,.
    \end{multline}
\end{remark}

\begin{remark}
    \label{rem:Hodge_star}
    Recall that $\gamma_{\vdim} : \extpower{\vdim} \R^{\adim} \to \aforms{\vdim} \R^{\adim}$ is an
    isometry with $\gamma_{\vdim}^{-1} = \transpose{\gamma_{\vdim}}$ and the \emph{Hodge star} acting on
    $\vdim$-vectors equals $\ast = \mathbf{D}^{\vdim} \circ \gamma_{\vdim}$ and on $\vdim$-forms
    equals $\ast = \gamma_{\codim} \circ \mathbf{D}^{\vdim}$; cf.~\cite[1.7.8 and
    1.5.2]{Federer1969}.
\end{remark}

\begin{lemma}
    \label{lem:Psi_adjoint}
    There holds
    \begin{displaymath}
        \Psi_{\!\vdim}^*(L) = \caniso{\vdim}( \varkappa(L) )  
        \quad \text{for $L \in \End{\R^{\adim}}$}\,.
    \end{displaymath}
\end{lemma}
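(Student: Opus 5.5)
By linearity in $L$ and in the tensor argument, it suffices to verify
\begin{displaymath}
    \Psi_{\!\vdim}(\phi \otimes \xi) \bullet L = (\phi \otimes \xi) \bullet \caniso{\vdim}(\varkappa(L))
\end{displaymath}
for $L$ and $\phi \otimes \xi$ running over bases. Fix an orthonormal basis $e_1, \ldots, e_\adim$ with $\mathbf{E} = e_1 \wedge \cdots \wedge e_\adim$ and dual basis $\omega_1, \ldots, \omega_\adim$. We take $L = \omega_a \cdot e_b$ and $\phi \otimes \xi = \omega_\lambda \otimes e_\mu$ for $\lambda, \mu \in \Choice{\adim}{\vdim}$.

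For the right-hand side we use~\ref{rem:caniso_explicit_formula}, which gives $\caniso{\vdim}(\varkappa(L)) = \sum_{\nu} \omega_\nu \otimes \varkappa(L) e_\nu$. Since $\caniso{\vdim}$ is an isometry, as recalled in~\ref{rem:adjoint_of_trace}, and the basis $\omega_\nu \otimes e_\rho$ is orthonormal, the right-hand side equals $e_\mu \bullet \varkappa(L) e_\lambda$. By~\ref{def:varkappa} we have $\varkappa(\omega_a \cdot e_b) e_\lambda = \sum_i e_{\lambda(1)} \wedge \cdots \wedge \omega_a(e_{\lambda(i)}) e_b \wedge \cdots$. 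This is nonzero only if $a \in \im \lambda$, and then it equals $e_\lambda$ with $e_a$ replaced by $e_b$ in the same slot. Hence the right-hand side is
\begin{itemize}
\item $1$ if $a = b \in \im \lambda$ and $\mu = \lambda$;
\item $\pm 1$ if $a \in \im \lambda$, $b \notin \im \lambda$, and $\im \mu = (\im \lambda \without \{a\}) \cup \{b\}$, with the sign of the permutation reordering the slots;
\item $0$ otherwise.
\end{itemize}

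For the left-hand side, $\Psi_{\!\vdim}(\omega_\lambda \otimes e_\mu) \bullet (\omega_a \cdot e_b) = e_b \bullet \langle e_a, \Psi_{\!\vdim}(\omega_\lambda \otimes e_\mu) \rangle$. We compute this as in~\ref{lem:Psi_gives_projections}, using $\mathbf{D}^\vdim \omega_\lambda = \pm e_{\bar\lambda}$ and $\mathbf{D}_\vdim e_\mu = \pm \omega_{\bar\mu}$, where $\bar\lambda, \bar\mu$ are the complementary multi-indices and the signs come from~\cite[1.5.2]{Federer1969}. Then $\langle e_a, \Psi(\omega_\lambda \otimes e_\mu) \rangle = \pm (e_{\bar\lambda} \wedge e_a) \restrict \omega_{\bar\mu}$. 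This vanishes if $a \in \im \bar\lambda$. Otherwise it is, up to sign, $e_c$, where $c$ is the unique index of $\im \bar\lambda \cup \{a\}$ not in $\im \bar\mu$; this requires $\im \bar\mu \subseteq \im \bar\lambda \cup \{a\}$. Translating to complements, the left-hand side is nonzero exactly when $a \in \im \lambda$ and $\im \mu = (\im \lambda \without \{a\}) \cup \{b\}$, with $c = b$. The diagonal case $\mu = \lambda$, $a = b$ gives $1$ by~\ref{lem:Psi_gives_projections}. So the supports of the two sides agree, and it remains to match signs.

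The main obstacle is this sign bookkeeping in the off-diagonal case. I expect the factor $(-1)^{\vdim(\adim-\vdim)}$ in the definition of $\Psi_{\!\vdim}$, combined with the signs of $\mathbf{D}^\vdim$ and $\mathbf{D}_\vdim$, to reduce the left-hand sign to the sign of moving $e_b$ into the slot previously occupied by $e_a$, which is exactly the right-hand sign.

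To avoid tracking permutations explicitly, I would first use permutation invariance: both sides transform the same way under reordering the basis by an orientation-preserving permutation, possibly composed with replacing some $e_i$ by $-e_i$. This lets us assume $\lambda = (1, \ldots, \vdim)$, $a = \vdim$ and $b = \vdim + 1$. In that normal form a single direct computation with~\cite[1.5.2]{Federer1969} settles the sign. Alternatively, one can avoid the sign check altogether by noting that both sides are continuous and linear. Off-diagonal rank-one $L$ can be written as differences of projections $T \pm C$, as in~\ref{lem:Psi_of_full_rank}. The identity $\Psi \circ \Upsilon = \mathrm{id}$ on $\Proj{\adim}{\vdim}$ from~\ref{cor:Psi_inverse_to_Upsilon} would then be differentiated at $T$ in direction $C$, using~\ref{rem:Upsilon_derivative}, to fix the sign consistently.
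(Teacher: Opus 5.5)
Your proposal is correct and takes a genuinely different route from the paper. However, the sign check you single out as the main obstacle is only anticipated, not carried out. It does close, and here it is.

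\textbf{How your route differs from the paper's.} The paper never passes to matrix entries. It introduces $f(\phi \otimes \xi) = \mathbf{D}^{\vdim}\phi \otimes \mathbf{D}_{\vdim}\xi$ and computes its adjoint. It then rewrites $(\phi \otimes \xi) \bullet \Psi_{\!\vdim}^{*}(L)$ using the adjunctions between $\wedge$, $\restrict$, $\contract$ and the trace identity of~\ref{rem:adjoint_of_trace}. After identifying the duals with Hodge stars and using two Hodge-star identities, it reaches $\sum_{\lambda \in \Choice{\adim}{\vdim}} \omega_{\lambda} \otimes \varkappa(L) e_{\lambda}$ in one chain.

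You instead compare entries against the bases $\omega_\lambda \otimes e_\mu$ and $\omega_a \cdot e_b$. Your support analysis is right. The reduction to a normal form is also legitimate: the normal-form computation uses only that $e_1, \ldots, e_{\adim}$ is orthonormal with $\mathbf{E} = e_1 \wedge \cdots \wedge e_{\adim}$. Both sides are bilinear in $(\phi \otimes \xi, L)$, so a signed relabelling of the basis multiplies both by the same sign.

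\textbf{The normal-form sign.} Take $\lambda = (1, \ldots, \vdim)$, $\mu = (1, \ldots, \vdim-1, \vdim+1)$, $a = \vdim$, $b = \vdim+1$. Then:
\begin{itemize}
\item $\mathbf{D}^{\vdim}\omega_\lambda = e_{\vdim+1} \wedge \cdots \wedge e_{\adim}$;
\item $\mathbf{D}_{\vdim} e_\mu = -(-1)^{\vdim(\adim-\vdim)}\, \omega_{\vdim} \wedge \omega_{\vdim+2} \wedge \cdots \wedge \omega_{\adim}$, since $e_\mu \wedge e_{\vdim} \wedge e_{\vdim+2} \wedge \cdots \wedge e_{\adim} = -\mathbf{E}$;
\item $(e_{\vdim+1} \wedge \cdots \wedge e_{\adim} \wedge e_{\vdim}) \restrict (\omega_{\vdim} \wedge \omega_{\vdim+2} \wedge \cdots \wedge \omega_{\adim}) = -e_{\vdim+1}$, since the index sequences differ by one transposition.
\end{itemize}
Hence $\langle e_{\vdim} ,\, \Psi_{\!\vdim}(\omega_\lambda \otimes e_\mu) \rangle = e_{\vdim+1}$, and the left side equals $1$. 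This matches $e_\mu \bullet \varkappa(\omega_{\vdim} \cdot e_{\vdim+1}) e_\lambda = 1$. Write this out explicitly; beyond bookkeeping, it is the whole content of the lemma.

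\textbf{Your alternative route.} It is sign-free and arguably the cleanest proof, once organised correctly: what must be tangent to $\Proj{\adim}{\vdim}$ is the direction $X$, not $L$.
\begin{itemize}
\item Take $\sigma \in \Choice{\adim}{\vdim-1}$ and $a, b \notin \im \sigma$ with $a \ne b$. Let $T$ be the orthogonal projection onto $\lin \{ e_i : i \in \im \sigma \cup \{b\} \}$ and $X = \omega_a \cdot e_b$. Then $T + tX \in \Proj{\adim}{\vdim}$ for all $t$.
\item By~\ref{rem:Upsilon_derivative}, $\uD \Upsilon_{\!\vdim}(T) X = (\omega_\sigma \wedge \omega_a) \otimes (e_\sigma \wedge e_b)$.
\item Differentiating $\Psi_{\!\vdim}(\Upsilon_{\!\vdim}(P)) = P$ from~\ref{cor:Psi_inverse_to_Upsilon} shows that $\Psi_{\!\vdim}$ maps this tensor to $X$. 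Its pairing with $L$ is $L e_a \bullet e_b = \varkappa(L)(e_\sigma \wedge e_a) \bullet (e_\sigma \wedge e_b)$.
\item For the diagonal tensors use $\Psi_{\!\vdim}(\Upsilon_{\!\vdim}(T)) = T$. For $\lambda, \mu$ differing in at least two indices, your support computation shows both sides vanish.
\end{itemize}
This proves the lemma from results stated before it. Do not use~\ref{lem:Psi_and_DUpsilon}, whose proof relies on the statement.

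\textbf{Comparison.} The paper's route produces the closed form directly, with no case distinction. The price is signs spread over many steps, with two Hodge-star identities left to the reader. Your route replaces this with an entrywise check that isolates the sign in a single computation. In the derivative version it removes the sign issue entirely and ties the formula to the linearisation of $\Psi_{\!\vdim} \circ \Upsilon_{\!\vdim} = \mathrm{id}$ on $\Proj{\adim}{\vdim}$.
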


\begin{proof}
    Assume $e_{1},\ldots, e_{\adim}$ is an orthonormal basis of $\R^{\adim}$ and
    $\omega_{1}, \ldots, \omega_{\adim}$ is the dual basis. Let
    $f : \aforms{\vdim} \R^{\adim} \otimes \extpower{\vdim} \R^{\adim} \to \extpower{\codim}
    \R^{\adim} \otimes \aforms{\codim} \R^{\adim} $ be defined so that
    \begin{displaymath}
        f(\phi \otimes \xi) = \mathbf{D}^{\vdim} \phi \otimes \mathbf{D}_{\vdim} \xi
        \quad \text{whenever $\xi \in \extpower{\vdim} \R^{\adim}$ and $\phi \in \aforms{\vdim} \R^{\adim}$} \,.
    \end{displaymath}
    For $\xi \in \extpower{\vdim} \R^{\adim}$, $\eta \in \extpower{\codim} \R^{\adim}$,
    $\phi \in \aforms{\vdim} \R^{\adim}$, and $\psi \in \aforms{\codim} \R^{\adim}$ we get
    \begin{multline}
        f(\phi \otimes \xi) \bullet (\eta \otimes \psi)
        = (\mathbf{D}^{\vdim} \phi \bullet \eta) (\mathbf{D}_{\vdim} \xi \bullet \psi)
        = \bigl\langle \mathbf{E} \restrict \phi ,\, \gamma_{\codim} (\eta) \bigr\rangle
        \bigl\langle \gamma_{\codim}^{*}(\psi) ,\, \xi \contract \boldsymbol{\Omega} \bigr\rangle
        \\
        = \bigl\langle \mathbf{E} ,\, \phi \wedge \gamma_{\codim} (\eta) \bigr\rangle
        \bigl\langle \gamma_{\codim}^{*}(\psi) \wedge \xi ,\, \boldsymbol{\Omega} \bigr\rangle
        = \bigl\langle \mathbf{E} ,\, \gamma_{\codim} (\eta) \wedge \phi \bigr\rangle
        \bigl\langle \xi \wedge \gamma_{\codim}^{*}(\psi) ,\, \boldsymbol{\Omega} \bigr\rangle
        \\
        = \bigl\langle \mathbf{E} \contract \gamma_{\codim} (\eta) ,\, \phi \bigr\rangle
        \bigl\langle \xi ,\, \gamma_{\codim}^{*}(\psi) \restrict \boldsymbol{\Omega} \bigr\rangle
        \\
        = \bigl( \phi \bullet \gamma_{\vdim} \circ \mathbf{D}^{\codim} \circ \gamma_{\codim} (\eta) \bigr)
        \bigl( \xi \bullet \gamma_{\vdim}^* \circ \mathbf{D}_{\codim} \circ \gamma_{\codim}^{*}(\psi) \bigr) \,.
    \end{multline}
    It follows that
    \begin{multline}
        \label{eq:adjoint_of_duals}
        f^*(\eta \otimes \psi)
        = \bigl( \gamma_{\vdim} \circ \mathbf{D}^{\codim} \circ \gamma_{\codim} (\eta) \bigr)
        \otimes \bigl( \gamma_{\vdim}^* \circ \mathbf{D}_{\codim} \circ \gamma_{\codim}^{*}(\psi) \bigr)
        \\ \text{for $\eta \in \extpower{\codim} \R^{\adim}$ and $\psi \in \aforms{\codim} \R^{\adim}$} \,.
    \end{multline}
    
    Let $L \in \End{\R^{\adim}}$, $\phi \in \aforms{\vdim} \R^{\adim}$, and
    $\xi \in \extpower{\vdim} \R^{\adim}$ and set $\alpha_j = \beta(L e_j)$ for
    $j \in \{1,2,\ldots,\adim\}$. Define and compute 
    \begin{multline}
        \mathrm{LHS} = (-1)^{\vdim(\codim)} (\phi \otimes \xi) \bullet \Psi_{\!\vdim}^*(L)
        = (-1)^{\vdim(\codim)} \textsum{j=1}{\adim} \langle e_j ,\, \Psi_{\!\vdim}(\phi \otimes \xi) \rangle \bullet \langle e_j ,\, L \rangle
        \\
        = \textsum{j=1}{\adim} 
        \bigl\langle (\mathbf{D}^{\vdim} \phi \wedge e_j) \restrict \mathbf{D}_{\vdim} \xi ,\, \alpha_j \bigr\rangle
        = \textsum{j=1}{\adim} 
        \bigl\langle \mathbf{D}^{\vdim} \phi \wedge e_j ,\, \mathbf{D}_{\vdim} \xi \wedge \alpha_j \bigr\rangle \,.
    \end{multline}
    Using~\ref{rem:adjoint_of_trace} we further write
    \begin{multline}
        \mathrm{LHS} = \textsum{j=1}{\adim} 
        \bigl( \mathbf{D}_{\vdim} \xi \wedge \alpha_j \otimes \mathbf{D}^{\vdim} \phi \wedge e_j \bigr)
        \bullet \caniso{\vdim}(\id{\extpower{\codim+1} \R^{\adim}})
        \\
        = \textsum{j=1}{\adim} \textsum{\lambda \in \Choice{\adim}{\codim+1}}{}
        \bigl( \mathbf{D}_{\vdim} \xi \wedge \alpha_j \bullet \omega_{\lambda} \bigr)
        \bigl( \mathbf{D}^{\vdim} \phi \wedge e_j \bullet e_{\lambda} \bigr) \,.
    \end{multline}
    Since $\{ e_{\lambda} : \lambda \in \Choice{\adim}{\codim+1} \}$ and
    $\{ \omega_{\lambda} : \lambda \in \Choice{\adim}{\codim+1} \}$ are dual orthonormal bases
    \begin{multline}
        \mathrm{LHS} = (-1)^{\codim} \textsum{j=1}{\adim} \textsum{\lambda \in \Choice{\adim}{\codim+1}}{}
        \bigl\langle e_{\lambda} \restrict \alpha_j ,\, \mathbf{D}_{\vdim} \xi \bigr\rangle
        \bigl\langle \mathbf{D}^{\vdim} \phi ,\,  e_j \contract \omega_{\lambda} \bigr\rangle
        \\
        = (-1)^{\codim} \bigl( \mathbf{D}^{\vdim} \phi \otimes \mathbf{D}_{\vdim} \xi \bigr)
        \bullet \textsum{j=1}{\adim} \textsum{\lambda \in \Choice{\adim}{\codim+1}}{}
        \gamma_{\codim}^* (e_j \contract \omega_{\lambda}) \otimes \gamma_{\codim}(e_{\lambda} \restrict \alpha_j) \,.
    \end{multline}
    Employing~\eqref{eq:adjoint_of_duals} yields
    \begin{displaymath}
        \mathrm{LHS} = \bigl( \phi \otimes \xi \bigr)
        \bullet (-1)^{\codim} \textsum{j=1}{\adim} \textsum{\lambda \in \Choice{\adim}{\codim+1}}{}
        \bigl( \gamma_{\vdim} \circ \mathbf{D}^{\codim} (e_j \contract \omega_{\lambda}) \bigr)
        \otimes \bigl( \gamma_{\vdim}^* \circ \mathbf{D}_{\codim} (e_{\lambda} \restrict \alpha_j) \bigr)
        \,.
    \end{displaymath}
    Recall~\ref{rem:Hodge_star} and observe that
    \begin{gather}
        \gamma_{\vdim} \circ \mathbf{D}^{\codim} (e_j \contract \omega_{\lambda})
        = \ast (e_j \contract \omega_{\lambda})
        \\
        \text{and} \quad
        \gamma_{\vdim}^* \circ \mathbf{D}_{\codim} (e_{\lambda} \restrict \alpha_j)
        = (-1)^{\vdim(\codim)} \ast (e_{\lambda} \restrict \alpha_j) \,,
    \end{gather}
    whenever $j \in \{1,2,\ldots,\adim \}$ and $\lambda \in  \Choice{\adim}{\codim+1}$; hence,
    \begin{displaymath}
        (-1)^{\vdim(\codim)} \Psi_{\!\vdim}^*(L)
        = (-1)^{\codim + \vdim(\codim)} \textsum{\lambda \in \Choice{\adim}{\codim+1}}{} \textsum{j \in \im \lambda}{}
        \ast \bigl( e_j \contract \omega_{\lambda} \bigr)
        \otimes \ast \bigl( e_{\lambda} \restrict \beta(L e_j) \bigr) \,.
    \end{displaymath}
    One readily verifies that
    \begin{align}
        &&\ast (u \contract \phi) &= \beta(u) \wedge \ast \phi
        &&\text{for $u \in \R^{\adim}$ and $\phi \in \aforms{\codim+1} \R^{\adim}$} \,,
        \\
        \text{and} 
        &&\ast (\xi \restrict \beta(u)) &= (-1)^{\adim-1} \ast \xi \wedge u
        &&\text{for $u \in \R^{\adim}$ and $\xi \in \extpower{\codim+1} \R^{\adim}$} \,.
    \end{align}
    Therefore
    \begin{multline}
        \Psi_{\!\vdim}^*(L)
        = (-1)^{\vdim+1} \textsum{\lambda \in \Choice{\adim}{\codim+1}}{} \textsum{j=1}{\adim}
        \bigl( \omega_j \wedge \ast \omega_{\lambda} \bigr)
        \otimes \bigl( \ast e_{\lambda} \wedge L e_j \bigr)
        \\
        = \textsum{\sigma \in \Choice{\adim}{\vdim-1}}{} \textsum{j=1}{\adim}
        \bigl( \omega_{\sigma} \wedge \omega_{j} \bigr)
        \otimes \bigl( e_{\sigma} \wedge L e_j \bigr)
        = \textsum{\lambda \in \Choice{\adim}{\vdim}}{} 
        \omega_{\lambda} \otimes \varkappa(L) e_{\lambda}
        \,. \qedhere
    \end{multline}
\end{proof}

\section{Polyconvex integrands and EC{}}
\label{sec:polyconvex_integrands}

\begin{miniremark}
    In this section we study integrands induced by norms on $\extpower{\vdim} \R^{\adim}$;
    see~\ref{def:polyconvex}. After deciding which norms on the tensor product
    $\aforms{\vdim} \R^{\adim} \otimes \extpower{\vdim} \R^{\adim}$ are suitable for our purposes
    (see~\ref{def:cross-norm} and~\ref{def:reasonable_cross-norm}) we~represent~$B_F$ by means of
    the~map~$\Psi_{\vdim}$ of section~\ref{sec:ext_power} in~\ref{prop:BF_repr_with_Psi}; for
    a~polyconvex integrand associated to a~norm~$\varphi$ this reads
    $B_F(T) = \Psi_{\!\vdim} \bigl( \gamma_{\vdim}(\xi) \otimes \grad \varphi(\xi) \bigr)$ whenever
    $\Sp \xi = \im T$. Together with~\ref{lem:Psi_adjoint} it yields the~formula
    of~\ref{cor:BF_in_terms_of_pi} and, in~consequence, a~condition sufficient for~\EC{} phrased in
    terms of accretivity of the~operators $\varkappa(\transpose{N}(S))$; see~\ref{thm:F_accretive}.
\end{miniremark}

\begin{definition}
    \label{def:polyconvex}
    We say that an integrand $F$ is \emph{polyconvex} if there exists a norm $\varphi :
    \extpower{\vdim} \R^{\adim} \to \R$ such that
    \begin{displaymath}
        F(T) = \varphi(\xi)
        \quad \text{whenever $T \in \pgrass{\adim}{\vdim}$, $\xi \in \ograss{\adim}{\vdim}$, and $\Sp \xi = \im T$} \,.
    \end{displaymath}
    In this case we say that \emph{$F$ is associated to $\varphi$}. If $\varphi$ is strictly convex,
    then we say that $F$ is \emph{strictly polyconvex}.
\end{definition}

\begin{remark}
    These type of integrands are called \emph{extendably convex} by Busemann
    in~\cite[p.~37]{Busemann1960} and also \emph{convex on the Grassmann cone} in his later works.
    We adopt here a~convention inspired by~\cite[Definition~1.3]{DeRosa2025} (see
    also~\cite[\S{3}]{Lesniak2025}) since the name ``polyconvex'' seems more deeply rooted in
    calculus of variations.
\end{remark}

\begin{miniremark}
    Strictly polyconvex integrands are the only ones that we want to consider because of the
    following result of De Rosa, Lei, and Young.
\end{miniremark}

\begin{theorem}[\protect{see~\cite[Theorem~1.16]{DeRosa2025}}]
    Assume $F \in \AC{}$. Then $F$ is strictly polyconvex.
\end{theorem}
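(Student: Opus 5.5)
The approach is to build the candidate norm as the largest convex extension of~$F$ from the cone of simple $\vdim$-vectors, prove that this extension agrees with~$F$ there, and then prove strict convexity by showing that a flat piece of its unit sphere would produce a measure violating~\ref{def:AC}. Set $F(t\xi) = |t| F(T)$ for $\xi \in \ograss{\adim}{\vdim}$ with $\Sp \xi = \im T$, and define
\begin{displaymath}
    \varphi(\zeta) = \inf \bigl\{ \textsum{i=1}{N} F(\xi_i) : \zeta = \textsum{i=1}{N} \xi_i ,\ \xi_i \text{ simple} \bigr\}
    \quad \text{for $\zeta \in \extpower{\vdim} \R^{\adim}$} \,.
\end{displaymath}
Equivalently, the unit ball of~$\varphi$ is $C = \conv \bigl\{ \xi / F(\xi) : \xi \in \ograss{\adim}{\vdim} \bigr\}$. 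Since simple vectors span $\extpower{\vdim}\R^{\adim}$ and $F$ is bounded above and below on $\ograss{\adim}{\vdim}$, $\varphi$ is a norm and $\varphi \le F$ on simple vectors. It then remains to show two things: (1)~$\varphi(\xi) = F(\xi)$ for simple~$\xi$, so that $F$ is polyconvex and associated to~$\varphi$ as in~\ref{def:polyconvex}; and (2)~$\varphi$ is strictly convex.

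For (1) I would use calibrations. Fix $T$ and $\xi$ with $\Sp \xi = \im T$. It suffices to find a covector $\omega \in \aforms{\vdim}\R^{\adim}$ with $\langle \xi, \omega \rangle = F(\xi)$ and $\langle \eta, \omega \rangle \le F(\eta)$ for every simple~$\eta$. To produce~$\omega$, I would invoke that $\AC{}$ implies atomic (Almgren) ellipticity, proved by De~Rosa and the second author, which is taken as given here. Testing ellipticity against polyhedral competitors whose boundary is the boundary of a flat disc in~$\im T$ yields the strict convexity inequality
\begin{displaymath}
    F(\textsum{i}{} \xi_i) < \textsum{i}{} F(\xi_i)
\end{displaymath}
whenever $\sum_i \xi_i$ is simple and the $\xi_i$ are not all positive multiples of one simple vector. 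This is exactly~(1) and, moreover, says that every $\xi/F(\xi)$ is an extreme point of~$C$. A Hahn--Banach separation of $\xi/F(\xi)$ from $C$ then supplies~$\omega$.

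For (2), suppose the unit sphere of~$\varphi$ contains a non-trivial segment. Then some exposed face $K = C \cap \{ \langle \cdot, \omega \rangle = 1 \}$ is not a point. By Krein--Milman its extreme points are of the form $\xi_i / F(\xi_i)$ with simple~$\xi_i$, and at least two of them are distinct; so every such~$\xi_i$ is calibrated by the same~$\omega$. Let $T_i$ be the corresponding planes. For each $i$, the covector $\omega$ is then a subgradient of~$\varphi$ at~$\xi_i$, so by the representation announced in the introduction (\ref{prop:BF_repr_with_Psi}, with~\ref{lem:Psi_adjoint}) we get $B_F(T_i) \bullet L = \langle \varkappa(L)\xi_i , \omega\rangle$ for all $L \in \End{\R^{\adim}}$. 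Taking a probability measure $\mu$ on the~$T_i$ and using~\ref{rem:AC_orig}, this gives
\begin{displaymath}
    \bar A(\mu) \bullet L = \langle \varkappa(L)\zeta, \omega \rangle
    \quad \text{for all } L \in \End{\R^{\adim}} \,,
\end{displaymath}
where $\zeta = \int \xi \ud\mu$ lies in the relative interior of a face of the calibrated cone. I would then show that $\bar A(\mu)$ has rank at most~$\vdim$: I expect every $L$ with $\im L$ contained in the span of the $\Sp \xi_i$ and vanishing on a complementary $(\adim-\vdim)$-dimensional space (chosen via $\omega$) to be annihilated. Granting this, \ref{thm:AC_extreme} forces $\bar A(\mu) \in \ray \GF{F}$, and part~\ref{i:AC:dirac} of~\ref{def:AC} forces $\mu$ to be a Dirac mass, contradicting that $K$ has two distinct extreme points.

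The main obstacle is this rank computation in~(2). When $\zeta$ is non-simple it is unclear that the kernel of $\bar A(\mu)$ has dimension at least $\adim-\vdim$. If the direct argument fails, I would fall back on deriving strict convexity directly from the strict inequality in~(1) and the ellipticity with respect to non-flat boundaries used in the De~Rosa--Lei--Young argument, accepting that this step leans on the cited theorem rather than on the convex geometry developed here.
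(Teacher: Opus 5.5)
The paper gives no argument of its own here; it quotes \cite[Theorem~1.16]{DeRosa2025}. Your proposal has a genuine gap, and it lies in the choice of norm, not merely in the rank computation you flag. The largest convex extension $\varphi$ of an \AC{} integrand need not be strictly convex, so step~(2) tries to prove something false.

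The area integrand belongs to \AC{}, and its largest convex extension is the mass norm. Take $\vdim = 2$, $\adim = 4$, and let $\omega_i$ be dual to $e_i$. The covector $\omega = \omega_1 \wedge \omega_2 + \omega_3 \wedge \omega_4$ has comass one (Wirtinger) and calibrates both $e_1 \wedge e_2$ and $e_3 \wedge e_4$. Hence the segment joining them lies on the unit mass sphere. This is exactly the configuration of your step~(2), with $T_1, T_2$ the two coordinate planes and $\mu = \frac12 ( \Dirac{T_1} + \Dirac{T_2} )$. But for area $B_F(T) = T$, so $\bar A(\mu) = \frac12 \id{\R^4}$ has full rank. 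This agrees with your own formula, which gives $\bar A(\mu) \bullet L = \langle \varkappa(L)\zeta, \omega \rangle = \frac12 \trace L$. So no rank bound, and no contradiction with \AC{}, can be extracted from a non-trivial face.

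Your fallback fails for the same reason. The strict inequality $F(\sum_i \xi_i) < \sum_i F(\xi_i)$ for simple sums also holds for area, yet the mass norm is not strictly convex. Strictness on the Grassmann cone never forces strict convexity of the convex envelope.

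What works instead is a different extension fed by a quantitative inequality. Suppose there is $c > 0$ with $\sum_i F(\xi_i) - F(\xi) \ge c \bigl( \sum_i |\xi_i| - |\xi| \bigr)$ whenever $\xi = \sum_i \xi_i$ with $\xi$ and all $\xi_i$ simple (a uniform version of Almgren ellipticity). Take $c$ below $\inf F$ on unit simple vectors. Then $F - c|\cdot|$ coincides on simple vectors with its convex envelope $\varphi_c$. Setting $\varphi = \varphi_c + c|\cdot|$, with $|\cdot|$ the Euclidean norm of $\extpower{\vdim}\R^{\adim}$, gives a norm extending $F$ that is strictly convex because $|\cdot|$ is.

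Step~(1) also hides the real difficulty. For an arbitrary decomposition $\xi = \sum_i \xi_i$ into simple $\vdim$-vectors, you must actually build competitors bounded by $\partial D$ whose Gaussian image approximates $\sum_i |\xi_i| \Dirac{\xi_i/|\xi_i|}$. That is a genuine construction problem (cf.~\cite{Burago2004,DeRosa2025}), not a routine test. Since such fillings are only approximate, strictness survives the limit only in the uniform form above.

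A minor point: the identity $B_F(T_i) \bullet L = \langle \varkappa(L)\xi_i, \omega \rangle$ should come from the first-order condition at $t = 0$ for the non-negative function $t \mapsto F(\zeta_t) - \langle \zeta_t, \omega \rangle$, where $\zeta_t = \extpower{\vdim}(\id{\R^{\adim}} + tL)\xi_i$. It cannot come from~\ref{prop:BF_repr_with_Psi}, which presupposes a $\cnt{1}$ norm, while your $\varphi$ need not be differentiable.
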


\begin{remark}
    \label{rem:inner_product_on_tensors}
    Given two Euclidean spaces $X$ and $Y$ with associated polarities $\beta_X$ and $\beta_Y$ there
    is a natural polarity $\beta_{X \otimes Y}$ on the tensor product $X \otimes Y$ induced by the
    bilinear function mapping $(x,y) \in X \times Y$ to
    $\nu \circ ( \beta_X(x) \otimes \beta_Y(y) ) \in \aforms{1} (X \otimes Y)$, where
    $\nu : \R \otimes \R \to \R$ is the canonical isomorphism. This polarity induces a Euclidean
    structure on $X \otimes Y$.
\end{remark}

\begin{remark}
    \label{rem:Gamma_isometry}
    In the sequel we shall assume $\extpower{\vdim} \R^{\adim}$ and $\aforms{\vdim} \R^{\adim}$
    are equipped with the standard Euclidean structures defined in~\cite[1.7.5]{Federer1969} so that
    $\gamma_{\vdim}$ is an isometry. Using~\ref{rem:inner_product_on_tensors} we consider also the
    induced Euclidean structure on the tensor product
    $\aforms{\vdim} \R^{\adim} \otimes \extpower{\vdim} \R^{\adim}$ so that
    \begin{displaymath}
        (\phi_{1} \otimes \xi_{1}) \bullet (\phi_{2} \otimes \xi_{2})
        = (\phi_{1} \bullet \phi_{2}) \cdot (\xi_{1} \bullet \xi_{2})
        \quad \text{for $\phi_{1},\phi_{2} \in \aforms{\vdim}\R^{\adim}$ and $\xi_{1},\xi_{2} \in \extpower{\vdim} \R^{\adim}$} \,.
    \end{displaymath}
    Recall~\ref{def:Gamma} and note that $\caniso{\vdim}$ is an isometry with respect to
    this Euclidean structure.
\end{remark}

\begin{definition}[\protect{\cite[Chap.~II, \S{2}]{Schatten1950}}]
    \label{def:cross-norm}
    Let $X$ and $Y$ be normed spaces and set $Z = X \otimes Y$. We say that a norm $\psi$ on $Z$ is
    a~\emph{cross-norm} if
    \begin{displaymath}
        \psi(x \otimes y) = |x|_X \, |y|_Y
        \quad \text{for $x \in X$ and $y \in Y$} \,.
    \end{displaymath}
    We shall also say that $\psi$ is a \emph{cross-norm of $(|\cdot|_X, |\cdot|_Y)$}.
\end{definition}

\begin{definition}[\protect{\cite[3.2 and 4.1]{Defant1993}}]
    \label{def:projective_injective}
    Let $X$ and $Y$ be normed spaces and set $Z = X \otimes Y$.  Set
    $B_X^* = X^* \cap \{ \omega : \|\omega\|_X \le 1 \}$ and
    $B_Y^* = Y^* \cap \{ \eta : \|\eta\|_Y \le 1 \}$.
    \begin{enumerate}
    \item The \emph{projective norm} $\pi$ on $Z$ is defined by
        \begin{displaymath}
            \pi(z) = \inf \bigl\{
            \textsum{i=1}{N} |x_i|_X \cdot |y_i|_Y
            : z = \textsum{i=1}{N} x_i \otimes y_i ,\,
            x_i \in X ,\, y_i \in Y
            \bigr\}
            \quad \text{for $z \in Z$} \,.
        \end{displaymath}
    \item The \emph{injective norm} $\varepsilon$ on $Z$ is defined by
        \begin{displaymath}
            \varepsilon(z) = \sup \bigl\{
            \langle z ,\, \omega \otimes \eta \rangle
            : \omega \in B_X^* ,\, \eta \in B_Y^*
            \bigr\} 
            \quad \text{for $z \in Z$} \,.
        \end{displaymath}
    \end{enumerate}
\end{definition}

\begin{remark}
    The norms $\pi$ and $\varepsilon$ defined above are cross-norms.
\end{remark}

\begin{definition}[\protect{\cite[\S{1}, D{\'e}finition~1 and Th{\'e}or{\`e}me~1]{Grothendieck1996}}]
    \label{def:reasonable_cross-norm}
    Let $X$, $Y$, $Z$, $\pi$, and $\varepsilon$ be as in~\ref{def:projective_injective}.  A norm
    $\psi$ on~$Z$ is called \emph{reasonable} if
    \begin{displaymath}
        \varepsilon \le \psi \le \pi \,.
    \end{displaymath}
\end{definition}

\begin{remark}
    Assume $\varphi$ is a $\cnt{1}$~norm on $\extpower{\vdim}\R^{\adim}$ and $\psi$ is any $\cnt{1}$
    cross-norm on the tensor product of~$\aforms{\vdim}\R^{\adim}$ equipped with the Euclidean norm
    and $\extpower{\vdim}\R^{\adim}$ equipped with~$\varphi$. Then any $\vdim$-homogeneous
    $\cnt{1}$~extension of
    \begin{displaymath}
        F(T) = \psi(\Upsilon_{\!\vdim}(T)) \quad \text{for $T \in \pgrass{\adim}{\vdim}$} 
    \end{displaymath}
    to the whole of~$\End{\R^{\adim}}$ is an integrand associated to~$\varphi$. Clearly,
    $F|\pgrass{\adim}{\vdim}$ does not depend on the particular choice of~$\psi$. Assuming $\varphi$
    is strictly convex and~$\cnt{1}$, we show, in~\ref{lem:norm_on_tensors},
    \ref{lem:Psi_and_DUpsilon}, and~\ref{prop:BF_repr_with_Psi}, that one can choose $\psi$ so
    that~$B_F$ corresponds to $\grad \psi$ via the~linear transformation~$\Psi_{\vdim}$.
\end{remark}

\begin{lemma}
    \label{lem:norm_on_tensors}
    Assume $\varphi : \extpower{\vdim} \R^{\adim} \to \R$ is a strictly convex norm of
    class~$\cnt{2}$. There exists a~strictly convex norm
    $\psi: \aforms{\vdim} \R^{\adim} \otimes \extpower{\vdim} \R^{\adim} \to \R$
    of class~$\cnt{2}$ such that
    \begin{multline}
        \psi(\phi \otimes \xi) = |\phi| \varphi(\xi) \quad \text{and} \quad
        \grad \psi(\phi \otimes \xi) = \tfrac{\phi}{|\phi|} \otimes \grad \varphi(\xi)
        \\
        \text{for non-zero $\phi \in \aforms{\vdim}\R^{\adim}$ and $\xi \in \extpower{\vdim} \R^{\adim}$} \,.
    \end{multline}
\end{lemma}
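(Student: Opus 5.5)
Construct $\psi$ as a Hilbert–Schmidt-type norm built from $\varphi$: identify $\aforms{\vdim}\R^{\adim} \otimes \extpower{\vdim}\R^{\adim}$ with $\Hom(\extpower{\vdim}\R^{\adim}, \extpower{\vdim}\R^{\adim})$, or more conveniently with $L^2$-type maps $\sphere{} \to \extpower{\vdim}\R^{\adim}$. For $z \in \aforms{\vdim}\R^{\adim}\otimes\extpower{\vdim}\R^{\adim}$ and a unit $\vdim$-form $\omega$ in the Euclidean unit sphere $S$ of $\aforms{\vdim}\R^{\adim}$, write $z\omega \in \extpower{\vdim}\R^{\adim}$ for the contraction of the first factor against $\omega$ (so $(\phi\otimes\xi)\omega = (\phi\bullet\omega)\xi$). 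Then set
\begin{displaymath}
    \psi(z) = \Bigl( c \textint{S}{} \varphi(z\omega)^2 \ud \HM(\omega) \Bigr)^{1/2} \,,
    \qquad c^{-1} = \textint{S}{} (e\bullet\omega)^2 \ud \HM(\omega) \,,
\end{displaymath}
where $e$ is any fixed unit vector; $c$ is independent of $e$ by rotation invariance.

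Cross-norm property: $\psi(\phi\otimes\xi)^2 = c\,\varphi(\xi)^2 \int (\phi\bullet\omega)^2 = |\phi|^2\varphi(\xi)^2$. Norm axioms follow since $z\mapsto \varphi(z\,\cdot)$ is linear into $L^2(S;\R)$-composed, i.e.\ $\psi$ is the $L^2$ norm of the function $\omega\mapsto\varphi(z\omega)$ and the triangle inequality follows from Minkowski plus subadditivity of $\varphi$; definiteness holds because $z\omega=0$ for all $\omega$ forces $z=0$. Strict convexity: if $\psi(z_1+z_2)=\psi(z_1)+\psi(z_2)$ with $z_i\ne0$, equality in Minkowski gives $\varphi(z_1\omega)=t\varphi(z_2\omega)$ for a fixed $t>0$, and equality in $\varphi$'s triangle inequality plus strict convexity of $\varphi$ gives $z_1\omega = t z_2\omega$ for all $\omega$, so $z_1=tz_2$. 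Smoothness: $\varphi^2$ is $\cnt{1}$ everywhere and, using $\cnt{2}$ of $\varphi$ away from $0$, $\varphi^2$ is $\cnt{1,1}$; the main obstacle is $\cnt{2}$ of $\psi$ away from $0$, since $\varphi^2$ is generally not $\cnt{2}$ at $0$ and $z\omega$ vanishes on a subset of $S$ for low-rank $z$. To handle this, I would first try replacing $\varphi^2$ by a smoothing $\varphi^p$ with large $p$ (taking an $L^p$ average with $c_p^{-1}=\int|e\bullet\omega|^p$); $\varphi^p$ is $\cnt{2}$ everywhere for $p\ge 3$ by homogeneity, and then $\psi=(c_p\int\varphi(z\omega)^p)^{1/p}$ is $\cnt{2}$ on $z\ne0$ as the integral is positive there. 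The cross-norm and strict convexity arguments go through unchanged with $L^p$.

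Gradient formula: differentiate under the integral. For $z=\phi\otimes\xi$ and a direction $w$,
\begin{displaymath}
    \uD\psi(z)w = \psi(z)^{1-p} c_p \textint{S}{} \varphi(z\omega)^{p-1}\,\grad\varphi(z\omega)\bullet w\omega \ud\HM(\omega) \,.
\end{displaymath}
Since $z\omega=(\phi\bullet\omega)\xi$, and $\grad\varphi$ is $0$-homogeneous and odd, we have $\varphi(z\omega)^{p-1}\grad\varphi(z\omega) = |\phi\bullet\omega|^{p-1}\operatorname{sgn}(\phi\bullet\omega)\varphi(\xi)^{p-1}\grad\varphi(\xi)$. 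Writing $w\omega \bullet \grad\varphi(\xi) = (w\bullet(\omega\otimes\grad\varphi(\xi)))$, the integral becomes $\varphi(\xi)^{p-1}\, w \bullet (v\otimes\grad\varphi(\xi))$ with $v=c_p\int |\phi\bullet\omega|^{p-2}(\phi\bullet\omega)\,\omega$. By rotation symmetry about the axis $\phi$, $v$ is parallel to $\phi$, and $v\bullet\phi = c_p\int|\phi\bullet\omega|^p = |\phi|^p$, so $v=|\phi|^{p-2}\phi$. Multiplying by $\psi(z)^{1-p}=|\phi|^{1-p}\varphi(\xi)^{1-p}$ yields $\grad\psi(\phi\otimes\xi) = \tfrac{\phi}{|\phi|}\otimes\grad\varphi(\xi)$, as claimed.
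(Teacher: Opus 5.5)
Your construction is correct, and it takes a genuinely different route from the paper.

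The paper starts from the projective norm $\tilde\psi$ of $(|\cdot|,\varphi)$. It uses that the dual of $\tilde\psi$ is the injective norm to see that $\frac{\phi}{|\phi|}\otimes\grad\varphi(\xi)$ is a subgradient of $\tilde\psi$ at $\phi\otimes\xi$. Since $\tilde\psi$ is neither smooth nor strictly convex, it then invokes a smoothing theorem of Ghomi to obtain a strictly convex $\cnt{2}$ norm with these prescribed gradients along the cone of decomposable tensors.

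You instead write the norm down explicitly, $\psi(z)=\bigl(c_p\int_S\varphi(z\omega)^p\,\mathrm{d}\HM(\omega)\bigr)^{1/p}$, and verify everything by hand:
the cross-norm identity comes from rotation invariance;
strict convexity comes from the equality cases in Minkowski's inequality and in the triangle inequality for $\varphi$;
regularity comes from $\varphi^p\in\cnt{2}$ for $p>2$, together with positivity of the integral for $z\neq0$;
the gradient formula comes from differentiation under the integral, oddness and $0$-homogeneity of $\grad\varphi$, and the reflection symmetry forcing $v=|\phi|^{p-2}\phi$.
Your retreat from $p=2$ is harmless but not necessary: for $z\neq0$ the set $S\cap\ker z$ is $\HM$-null, so dominated convergence already gives $\cnt{2}$ away from the origin.

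Your route buys self-containedness and exact inheritance of regularity. For a strictly convex $\varphi$ of class $\cnt{1}$, the same formula with any $p>1$ yields a strictly convex $\cnt{1}$ cross-norm with the required gradient. That is all that \ref{prop:BF_repr_with_Psi} and \ref{rem:BF_repr_with_Psi} consume. So your construction removes the $\cnt{2}$ hypothesis, which the paper needs only to apply the external theorem. It also replaces a black-box appeal, whose hypotheses must be matched on the cone of decomposable tensors, by a direct computation.

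The paper's subgradient computation, on the other hand, works verbatim for any smooth norm on the first factor. Your averaging instead relies on the rotation invariance of the Euclidean unit sphere of $\aforms{\vdim}\R^{\adim}$.
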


\begin{proof}
    Consider $\aforms{\vdim} \R^{\adim}$ equipped with the Euclidean metric and
    $\extpower{\vdim} \R^{\adim}$ equipped with the metric induced by~$\varphi$. Next,
    recall~\ref{def:projective_injective} and let
    $\tilde{\psi} : \aforms{\vdim} \R^{\adim} \otimes \extpower{\vdim} \R^{\adim} \to \R$ be the
    \emph{projective norm}, i.e.,
    \begin{displaymath}
        \tilde{\psi}(\zeta) = \inf \bigl\{
        \textsum{i=1}{N} |\phi_i| \varphi(\xi_i)
        : \zeta = \textsum{i=1}{N} \phi_i \otimes \xi_i ,\,
        \phi_i \in \aforms{\vdim} \R^{\adim} ,\,
        \xi_i \in \extpower{\vdim} \R^{\adim} 
        \bigr\} \,.
    \end{displaymath}
    By~\cite[3.2(3)]{Defant1993} we get
    \begin{displaymath}
        \tilde{\psi}(\phi \otimes \xi) = |\phi| \varphi(\xi)
        \quad \text{whenever $\phi \in \aforms{\vdim} \R^{\adim}$ and $\xi \in \extpower{\vdim} \R^{\adim}$} \,.
    \end{displaymath}
    Let $\eta : \aforms{\vdim} \R^{\adim} \to \R$ be given by $\eta(\phi) = |\phi|$ for
    $\phi \in \aforms{\vdim} \R^{\adim}$. Referring to~\cite[4.1 and 6.4]{Defant1993} we see that
    the dual norm $\polar{\tilde{\psi}}$ is the \emph{injective norm} and satisfies
    \begin{displaymath}
        \polar{\tilde{\psi}}(\phi \otimes \xi) =  \polar{\eta}(\phi) \polar{\varphi}(\xi)
        \quad \text{whenever $\phi \in \aforms{\vdim} \R^{\adim}$ and $\xi \in \extpower{\vdim} \R^{\adim}$} \,.
    \end{displaymath}
    Whenever $\phi \in \aforms{\vdim}\R^{\adim}$ and
    $\xi \in \extpower{\vdim} \R^{\adim}$ we have
    \begin{multline}
        \tilde{\psi}(\phi \otimes \xi)
        = \eta(\phi) \varphi(\xi)
        = \bigl( \phi \bullet \grad \eta(\phi) \bigr) \cdot \bigl( \xi \bullet \grad \varphi (\xi) \bigr)
        \\
        = \bigl( \phi \otimes \xi \bigr) \bullet \bigl( \grad \eta(\phi) \otimes \grad \varphi(\xi) \bigr)
    \end{multline}
    and
    \begin{displaymath}
        \polar{\tilde{\psi}}( \grad \eta(\phi) \otimes \grad \varphi(\xi) )
        = \polar{\eta} \bigl( \grad \eta(\phi) \bigr) \polar{\varphi}\bigl( \grad \varphi(\xi) \bigr)
        = 1 \,;
    \end{displaymath}
    hence, 
    \begin{displaymath}
        \grad \eta(\phi) \otimes \grad \varphi(\xi) \in \subgrad \tilde{\psi} \bigl( \phi \otimes \xi \bigr) \,,
    \end{displaymath}
    where $\subgrad f$ denotes the \emph{subdifferential}; cf.~\ref{def:subdifferential}.
    Employing~\cite[Theorem~1.1.1]{Ghomi2001} we find a~strictly convex norm $\psi$ of
    class~$\cnt{2}$ such that
    \begin{displaymath}
        \grad \psi(\phi \otimes \xi) = \grad \eta(\phi) \otimes \grad \varphi(\xi)
        \quad \text{for $\phi \in \aforms{\vdim}\R^{\adim}$ and $\xi \in \extpower{\vdim} \R^{\adim}$} \,.
        \qedhere
    \end{displaymath}
\end{proof}

\begin{remark}
    Note that the assumption that $\varphi$ is of class~$\cnt{2}$ is only needed to
    employ~\cite{Ghomi2001}. 
\end{remark}

\begin{lemma}
    \label{lem:Psi_and_DUpsilon}
    Suppose
    \begin{displaymath}
        \xi \in \ograss{\adim}{\vdim} \,,
        \quad
        T = \project{(\Sp \xi)} \in \pgrass{\adim}{\vdim} \,,
        \quad \text{and} \quad
        \phi = \gamma_{\vdim}(\xi) \in \aforms{\vdim} \R^{\adim} \,.
    \end{displaymath}
    Then
    \begin{displaymath}
        \Psi_{\!\vdim}^*(X) \bullet (\phi \otimes \zeta)
        = \uD \Upsilon_{\!\vdim}(T)X \bullet (\phi \otimes \zeta)
        \quad \text{for $\zeta \in \extpower{\vdim} \R^{\adim}$ and $X \in \End{\R^{\adim}}$} \,.
    \end{displaymath}
\end{lemma}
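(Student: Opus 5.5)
The plan is to compute both sides explicitly in an orthonormal basis adapted to~$T$ and compare them. This uses \ref{lem:Psi_adjoint} for the left-hand side and \ref{rem:Upsilon_derivative} for the right-hand side. Both sides are linear in~$\zeta$ and in~$X$, so it suffices to fix $X$ and~$\zeta$ and verify equality of two real numbers.

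\emph{Setup.} Choose an orthonormal basis $e_{1}, \ldots, e_{\adim}$ of~$\R^{\adim}$ with $\lin\{e_1,\ldots,e_\vdim\} = \im T$, oriented within $\im T$ so that $\xi = e_1 \wedge \cdots \wedge e_\vdim = e_{\lambda_0}$. Here $\lambda_0$ and $\lambda_1, \ldots, \lambda_\vdim$ are as in~\ref{rem:Upsilon_derivative}. Let $\omega_1, \ldots, \omega_\adim$ be the dual basis. Then $\phi = \gamma_\vdim(\xi) = \omega_{\lambda_0}$, since $\gamma_\vdim$ is the algebra extension of the standard polarity~$\beta$, and $\beta(e_i) = \omega_i$ for an orthonormal basis. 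By~\ref{rem:caniso_explicit_formula} we have $\caniso{\vdim}(A) = \sum_{\lambda} \omega_\lambda \otimes A e_\lambda$ for $A \in \End{\extpower{\vdim}\R^{\adim}}$. Since $\{\omega_\lambda\}$ is orthonormal in $\aforms{\vdim}\R^{\adim}$, the tensor inner product of~\ref{rem:Gamma_isometry} gives
\begin{displaymath}
    \caniso{\vdim}(A) \bullet (\omega_{\lambda_0} \otimes \zeta) = A e_{\lambda_0} \bullet \zeta \,.
\end{displaymath}

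\emph{Left-hand side.} By~\ref{lem:Psi_adjoint}, $\Psi_{\!\vdim}^*(X) = \caniso{\vdim}(\varkappa(X))$. Hence the left-hand side equals $\zeta \bullet \varkappa(X)\xi$, which by~\ref{def:varkappa} is
\begin{displaymath}
    \zeta \bullet \textsum{i=1}{\vdim} e_1 \wedge \cdots \wedge X e_i \wedge \cdots \wedge e_\vdim \,.
\end{displaymath}

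\emph{Right-hand side.} \ref{rem:Upsilon_derivative} expresses $\uD \Upsilon_{\!\vdim}(T)X$ as the sum of two pieces. The first is $\omega_{\lambda_0} \otimes \sum_{i=1}^{\vdim} (-1)^{\vdim-i} e_{\lambda_i} \wedge X e_i$. The second consists of terms $(\omega_{\lambda_j} \wedge \omega_i) \otimes (\cdots)$ with $i > \vdim$. Each form $\omega_{\lambda_j} \wedge \omega_i$ with $i > \vdim$ equals $\pm\omega_\mu$ for some $\mu \in \Choice{\adim}{\vdim}$ with $i \in \im \mu$, so $\mu \ne \lambda_0$. It is therefore orthogonal to $\omega_{\lambda_0}$, and these terms contribute nothing to the pairing with $\omega_{\lambda_0} \otimes \zeta$. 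The right-hand side thus equals $\zeta \bullet \sum_{i} (-1)^{\vdim-i} e_{\lambda_i} \wedge X e_i$.

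\emph{Sign check.} It remains to see that
\begin{displaymath}
    (-1)^{\vdim-i} e_{\lambda_i} \wedge X e_i = e_1 \wedge \cdots \wedge X e_i \wedge \cdots \wedge e_\vdim \,.
\end{displaymath}
This holds because moving the vector $X e_i$ from the last slot to the $i$-th slot passes it over $\vdim - i$ vectors. The main obstacle is purely bookkeeping of conventions. One must confirm that $\gamma_\vdim(e_{\lambda_0}) = \omega_{\lambda_0}$ with no sign, and that the expansion in~\ref{rem:Upsilon_derivative} uses the same ordering as $\varkappa$. I would verify both directly from~\cite[1.7.5]{Federer1969}, and check the case $\vdim = 1$ as a sanity test: there both sides reduce to $\zeta \bullet X e_1$.
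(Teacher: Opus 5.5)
Your proof is correct and follows essentially the paper's route. You use the same orthonormal basis adapted to $T$ with $\phi = \omega_{\lambda_0}$, then \ref{lem:Psi_adjoint} and \ref{rem:caniso_explicit_formula} to reduce the left-hand side to $\zeta \bullet \varkappa(X)\xi$, then \ref{rem:Upsilon_derivative} for the right-hand side. The only difference is that the paper reads off the $\omega_{\lambda_0}$-component of $\uD \Upsilon_{\!\vdim}(T)X$ directly as $\uD\bigl[A \mapsto \langle \xi ,\, \extpower{\vdim} A \rangle\bigr](T)X = \varkappa(X)\xi$ (because $T$ fixes $\im T$), which spares the off-diagonal and sign bookkeeping you carry out by hand.
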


\begin{proof}
    Let $\zeta \in \extpower{\vdim} \R^{\adim}$, $X \in \End{\R^{\adim}}$, and
    $e_{1}, \ldots, e_{\adim} \in \R^{\adim}$ be an orthonormal basis such that
    $\xi = e_{1} \wedge \cdots \wedge e_{\vdim}$. Set $\omega_i = \beta(e_i)$ for
    $i \in \{1,2,\ldots,\adim\}$. Recall~\ref{def:varkappa} and note that
    \begin{displaymath}
        \uD \bigl[
        \End{\R^{\adim}} \ni A \mapsto \langle \xi ,\, \extpower{\vdim}A \rangle
        \bigr](T)X
        = \varkappa(X) \xi \,;
    \end{displaymath}
    therefore, using~\ref{lem:Psi_adjoint}, \ref{rem:Upsilon_derivative},
    and~\ref{rem:caniso_explicit_formula}, we get
    \begin{multline}
        \Psi_{\!\vdim}^*(X) \bullet (\phi \otimes \zeta)
        = \textsum{\lambda \in \Choice{\adim}{\vdim}}{}
        \bigl( \omega_{\lambda} \otimes \varkappa(X) e_{\lambda} \bigr) \bullet \bigl( \phi \otimes \zeta \bigr)
        \\
        = \bigl( \phi \otimes \varkappa(X) \xi \bigr) \bullet \bigl( \phi \otimes \zeta \bigr)
        = \uD \Upsilon_{\!\vdim}(T)X \bullet (\phi \otimes \zeta) \,.
        \qedhere
    \end{multline}
\end{proof}

\begin{proposition}
    \label{prop:BF_repr_with_Psi}
    Suppose
    \begin{gather}
        \psi : \aforms{\vdim} \R^{\adim} \otimes \extpower{\vdim} \R^{\adim} \to \R 
        \quad \text{is of class~$\cnt{1}$} \,,
        \quad
        \eta : \ograss{\adim}{\vdim} \to \extpower{\vdim} \R^{\adim} \,,
        \\
        \quad
        \grad \psi(\gamma_{\vdim}(\xi) \otimes \xi) = \gamma_{\vdim}(\xi) \otimes \eta(\xi)
        \quad \text{for $\xi \in \ograss{\adim}{\vdim}$} \,,
        \\
        (\eta(\xi) \wedge \ast \xi) \bullet \mathbf{E} > 0
        \quad \text{for $\xi \in \ograss{\adim}{\vdim}$} \,,
        \quad
        F = \psi \circ \Upsilon_{\!\vdim} \,,
        \\
        \Delta_T(A) = \transpose{\project{\imsp(T)}}A = A \circ T
        \quad \text{for $A \in \End{\R^{\adim}}$ and $T \in \pgrass{\adim}{\vdim}$} \,.
    \end{gather}
    Then
    \begin{displaymath}
        B_F(T)
        = \Delta_T \circ \uD \Upsilon_{\!\vdim}(T)^* \grad \psi( \Upsilon_{\!\vdim}(T) )
        = \Psi_{\!\vdim} \grad \psi( \Upsilon_{\!\vdim}(T) )
        \quad
        \text{for $T \in \pgrass{\adim}{\vdim}$} \,.
    \end{displaymath}
\end{proposition}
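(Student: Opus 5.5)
Fix $T \in \pgrass{\adim}{\vdim}$, choose $\xi \in \ograss{\adim}{\vdim}$ with $\Sp \xi = \im T$, and put $\phi = \gamma_{\vdim}(\xi)$. Then $\Upsilon_{\!\vdim}(T) = \phi \otimes \xi$; this follows from \ref{rem:caniso_explicit_formula} in an orthonormal basis adapted to $\im T$. The hypothesis therefore reads $G := \grad \psi(\Upsilon_{\!\vdim}(T)) = \phi \otimes \eta(\xi)$.

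The plan is to prove the two equalities separately, starting with the second. Testing against $L \in \End{\R^{\adim}}$ gives $\Psi_{\!\vdim}(G) \bullet L = G \bullet \Psi_{\!\vdim}^*(L)$. Since $G$ has the form $\phi \otimes \zeta$ with $\zeta = \eta(\xi)$, \ref{lem:Psi_and_DUpsilon} gives
\begin{displaymath}
    G \bullet \Psi_{\!\vdim}^*(L) = G \bullet \uD \Upsilon_{\!\vdim}(T) L = \uD \Upsilon_{\!\vdim}(T)^* G \bullet L \,.
\end{displaymath}
Hence $\Psi_{\!\vdim}(G) = \uD \Upsilon_{\!\vdim}(T)^* G = \grad(\psi \circ \Upsilon_{\!\vdim})(T) = \grad F(T)$ by the chain rule. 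With this reading, the second equality in the statement reduces to the assertion that $\Delta_T$ fixes $\grad F(T)$.

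The main step is to identify $B_F(T)$ with this gradient. By \ref{rem:BF_gradient}, $B_F(T) = \transpose{\sigma(T)} \grad F(T)$. It therefore suffices to show that $\grad F(T) = \Psi_{\!\vdim}(G)$ is already fixed by $\transpose{\sigma(T)}$ and by $\Delta_T$. For this I use \ref{cor:Psi_on_tensors_with_one_component_fixed} with $\phi$ and $\zeta = \eta(\xi)$. The positivity hypothesis $(\eta(\xi) \wedge \ast\xi) \bullet \mathbf{E} > 0$ should match $(\zeta \wedge \mathbf{D}^{\vdim}\phi) \bullet \mathbf{E} > 0$, because $\mathbf{D}^{\vdim} \gamma_{\vdim} = \ast$ by \ref{rem:Hodge_star}. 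The corollary then shows that $P := \Psi_{\!\vdim}(G)$ is a positive multiple $c$ of a projection with kernel $\Sp(\ast\xi) = \im T^{\perp}$. Hence $\transpose{P} = c\,T + T \circ C \circ T^{\perp}$ for some $C$, which gives $P \circ T = P$, i.e. $\Delta_T P = P$.

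Next I check $\transpose{\sigma(T)} P = P$ using \ref{rem:sigma_props}\eqref{i:sigma_transpose}:
\begin{displaymath}
    \transpose{(\transpose{\sigma(T)} P)} = \tfrac1\vdim (P \bullet T)\, T + T \circ (P + \transpose{P}) \circ T^{\perp} \,.
\end{displaymath}
The right-hand side should equal $c\,T + T \circ C \circ T^{\perp} = \transpose{P}$. For this I need $P \bullet T = c\vdim$ and $T \circ P \circ T^{\perp} = 0$, and both follow from the structure of $\transpose{P}$. This yields $B_F(T) = \grad F(T) = \Psi_{\!\vdim} G$, and applying $\Delta_T$ to $\uD \Upsilon_{\!\vdim}(T)^* G = P$ gives the first displayed expression as well.

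The main obstacle is bookkeeping. I must confirm that \ref{cor:Psi_on_tensors_with_one_component_fixed} gives the kernel of $P$ and not of $\transpose{P}$, since this decides whether $\Delta_T$ (right composition with $T$) is the correct idempotent. I must also check the sign and orientation conventions relating $\ast\xi$ to $\mathbf{D}^{\vdim}\gamma_{\vdim}\xi$. If the transposition turns out reversed, I would instead compare directly with $B_F(T) = F(T)\,T + T^{\perp} \circ D_s \circ T$, computed in the proof of \ref{ex:J_fields}, and adjust.
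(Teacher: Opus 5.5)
Your proof is correct, but it links $B_F(T)$ to $\grad\psi$ by a different route than the paper.

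The paper plugs $F = \psi\circ\Upsilon_{\!\vdim}$ into the first-variation formula of~\ref{rem:BF_variational}. Differentiating $t\mapsto\psi(\Upsilon_{\!\vdim}(L(t)\circ T))$ produces the factor $\Delta_T$ directly, because the $t$-derivative of $L(t)\circ T$ at $t=0$ is $L'(0)\circ T = \Delta_T L'(0)$. So the first equality comes out without the positivity hypothesis. Only for the second equality does the paper invoke \ref{lem:Psi_and_DUpsilon} and \ref{cor:Psi_on_tensors_with_one_component_fixed}, exactly as you do, to see that $\Delta_T$ fixes $\Psi_{\!\vdim}(G)$ because $\ker\Psi_{\!\vdim}(G) = \im T^{\perp}$.

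You instead start from $B_F(T) = \transpose{\sigma(T)}\grad F(T)$ of~\ref{rem:BF_gradient} and the chain rule. This gives $\grad F(T) = \uD\Upsilon_{\!\vdim}(T)^{*}G = \Psi_{\!\vdim}(G)$ at once. You then need the additional check, which you carry out correctly from $\transpose{P} = cT + T\circ C\circ T^{\perp}$, that $\transpose{\sigma(T)}$ fixes $P$.

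Your route is purely algebraic and avoids the variational formula. It also makes explicit that, for this particular extension, $\grad F(T)$ already lies in the range of the projection $\transpose{\sigma(T)}$. The paper's route obtains the first equality under weaker assumptions, with positivity entering only in the second.

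The bookkeeping issues you flag are harmless. \ref{cor:Psi_on_tensors_with_one_component_fixed} asserts $\ker P = \Sp(\mathbf{D}^{\vdim}\phi)$ for $P$ itself, not for $\transpose{P}$. Also $\mathbf{D}^{\vdim}\circ\gamma_{\vdim} = \ast$ by~\ref{rem:Hodge_star}, so no sign or orientation adjustment is needed.

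One assumption should be stated explicitly. \ref{rem:BF_gradient} rests on the Euler identity $\grad F(T)\bullet T = \vdim F(T)$ of~\ref{rem:integrand-k-homogeneous}, i.e.\ on $F$ being a $\vdim$-integrand ($\psi$ positively $1$-homogeneous). This does not follow from ``$\psi$ of class $\cnt{1}$''. It is genuinely needed, since $B_F(T)\bullet T = \vdim F(T)$ whereas $\Psi_{\!\vdim}(G)\bullet T = \vdim\,\eta(\xi)\bullet\xi$. The paper's argument relies on it implicitly as well, through~\ref{rem:BF_variational}.
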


\begin{proof}
    We have
    \begin{displaymath}
        ( \eta(\xi) \wedge \mathbf{D}^{\vdim} \gamma_{\vdim}(\xi) ) \bullet \mathbf{E}
        = (\eta(\xi) \wedge \ast \xi) \bullet \mathbf{E} > 0
        \quad \text{for $\xi \in \ograss{\adim}{\vdim}$} \,.
    \end{displaymath}
    Consequently, employing~\ref{cor:Psi_on_tensors_with_one_component_fixed}, for
    $\xi \in \ograss{\adim}{\vdim}$ we get
    \begin{displaymath}
        \Psi_{\!\vdim}\bigl( \gamma_{\vdim}(\xi) \otimes \eta(\xi) \bigr)
        \in \ray \Proj{\adim}{\vdim}
        \quad \text{and} \quad
        \ker \Psi_{\!\vdim}\bigl( \gamma_{\vdim}(\xi) \otimes \eta(\xi) \bigr) = \Sp \ast \xi
        \,.
    \end{displaymath}
    Let $\xi \in \ograss{\adim}{\vdim}$ and $T = \eqproject{\Sp \xi}$.
    Note that $\Delta_T$ is linear and $\Delta_T^* = \Delta_T$. Given $L : \R \to \End{\R^{\adim}}$
    of class~$\cnt{1}$ such that $L(0) = \id{\R^{\adim}}$ there holds
    \begin{multline}
        \left. \tfrac{\ud}{\ud t} \right|_{t=0}
        F( \project{ \im(L(t) \circ T)} )\, \| \extpower{\vdim} L(t) \circ T \|
        =  \left. \tfrac{\ud}{\ud t} \right|_{t=0} \psi(\Upsilon_{\!\vdim}( L(t) \circ T ))
        \\
        = \uD \psi( \Upsilon_{\!\vdim}(T) ) \uD \Upsilon_{\!\vdim}(T) \Delta_T L'(0) \,;
    \end{multline}
    hence, recalling~\ref{rem:BF_variational},
    \begin{displaymath}
        B_F(T) = \Delta_T \circ \uD \Upsilon_{\!\vdim}(T)^* \grad \psi( \Upsilon_{\!\vdim}(T) ) \,,
    \end{displaymath}
    which shows the first part of the lemma. Corollary~\ref{lem:Psi_and_DUpsilon} yields
    \begin{multline}
        B_F(T) \bullet Y
        = \Delta_T \circ \uD \Upsilon_{\!\vdim}(T)^* \grad \psi( \Upsilon_{\!\vdim}(T) ) \bullet Y
        \\
        = \uD \Upsilon_{\!\vdim}(T) (\Delta_T Y) \bullet (\gamma_{\vdim}(\xi) \otimes \eta(\xi))
        = Y \bullet \Psi_{\!\vdim}(\gamma_{\vdim}(\xi) \otimes \eta(\xi))
        \quad \text{for $Y \in \End{\R^{\adim}}$} \,,
    \end{multline}
    where the last equality holds because
    $\Sp \ast \xi = \im T^{\perp} = \ker \Psi_{\!\vdim}(\gamma_{\vdim}(\xi) \otimes \eta(\xi))$;
    thus,
    \begin{displaymath}
        \Delta_T \circ \uD \Upsilon_{\!\vdim}(T)^* (\gamma_{\vdim}(\xi) \otimes \eta(\xi))
        = \Psi_{\!\vdim}(\gamma_{\vdim}(\xi) \otimes \eta(\xi))
    \end{displaymath}
    and we obtain the conclusion $B_F(T) = \Psi_{\!\vdim} \grad \psi( \Upsilon_{\!\vdim}(T) )$.
\end{proof}

\begin{remark}
    \label{rem:BF_repr_with_Psi}
    Assume $\varphi : \extpower{\vdim} \R^{\adim} \to \R$ is a $\cnt{1}$~norm and set $\eta(\xi) =
    \grad \varphi(\xi)$ for $\xi \in \ograss{\adim}{\vdim}$.  Recall~\ref{rem:Hodge_star} and
    observe that for non-zero $\xi \in \extpower{\vdim} \R^{\adim}$ there holds
    \begin{displaymath}
        \bigl( \grad \varphi(\xi) \wedge \ast \xi \bigr) \bullet  \mathbf{E}
        \\
        = \bigl( \grad \varphi(\xi) \bullet \xi \bigr) \bigl( \mathbf{E} \bullet \mathbf{E} \bigr)
        = \varphi( \xi ) > 0 \,.
    \end{displaymath}
    In particular, the norm $\psi$ constructed in~\ref{lem:norm_on_tensors} satisfies the hypothesis
    of~\ref{prop:BF_repr_with_Psi}.
\end{remark}

\begin{corollary}
    \label{cor:BF_in_terms_of_pi}
    Whenever $L \in \End{\R^{\adim}}$, $T \in \pgrass{\adim}{\vdim}$, and $\xi \in
    \ograss{\adim}{\vdim}$ satisfy $\Sp \xi = \im T$ there holds
    \begin{displaymath}
        B_F(T) \bullet L = \eta(\xi) \bullet \varkappa(L)\xi \,.
    \end{displaymath}
\end{corollary}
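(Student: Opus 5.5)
The plan is to combine the representation of~\ref{prop:BF_repr_with_Psi} with the adjoint formula of~\ref{lem:Psi_adjoint}. The setting is that of~\ref{prop:BF_repr_with_Psi}: $F = \psi \circ \Upsilon_{\!\vdim}$ with $\grad \psi(\gamma_{\vdim}(\xi) \otimes \xi) = \gamma_{\vdim}(\xi) \otimes \eta(\xi)$. For instance, one may take $\psi$ from~\ref{lem:norm_on_tensors} and $\eta = \grad \varphi$, as justified in~\ref{rem:BF_repr_with_Psi}.

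First I identify $\Upsilon_{\!\vdim}(T)$. Fix $T$ and $\xi \in \ograss{\adim}{\vdim}$ with $\Sp \xi = \im T$. Choose an orthonormal basis $e_1, \ldots, e_{\adim}$ with $\xi = e_1 \wedge \cdots \wedge e_{\vdim}$, and let $\omega_i = \beta(e_i)$. By~\ref{rem:caniso_explicit_formula}, and since $\extpower{\vdim} T$ kills every $e_\lambda$ except $e_{\lambda_0} = \xi$, we get
\begin{displaymath}
    \Upsilon_{\!\vdim}(T) = \omega_{\lambda_0} \otimes \xi = \gamma_{\vdim}(\xi) \otimes \xi \,.
\end{displaymath}
This is exactly the computation in~\ref{rem:Upsilon_on_projections}, using orthonormality. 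Hence
\begin{displaymath}
    \grad \psi(\Upsilon_{\!\vdim}(T)) = \gamma_{\vdim}(\xi) \otimes \eta(\xi) \,.
\end{displaymath}

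Next, by~\ref{prop:BF_repr_with_Psi}, $B_F(T) = \Psi_{\!\vdim}(\gamma_{\vdim}(\xi) \otimes \eta(\xi))$. Therefore, for any $L$,
\begin{displaymath}
    B_F(T) \bullet L = (\gamma_{\vdim}(\xi) \otimes \eta(\xi)) \bullet \Psi_{\!\vdim}^*(L) = (\gamma_{\vdim}(\xi) \otimes \eta(\xi)) \bullet \caniso{\vdim}(\varkappa(L)) \,,
\end{displaymath}
where the last equality is~\ref{lem:Psi_adjoint}. Expanding by~\ref{rem:caniso_explicit_formula},
\begin{displaymath}
    \caniso{\vdim}(\varkappa(L)) = \textsum{\lambda}{} \omega_\lambda \otimes \varkappa(L) e_\lambda \,.
\end{displaymath}
By~\ref{rem:Gamma_isometry} the inner product factorises, so we obtain
\begin{displaymath}
    \textsum{\lambda}{} (\omega_{\lambda_0} \bullet \omega_\lambda)(\eta(\xi) \bullet \varkappa(L) e_\lambda) = \eta(\xi) \bullet \varkappa(L)\xi \,,
\end{displaymath}
because the $\omega_\lambda$ form an orthonormal basis. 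This is the same computation already carried out inside the proof of~\ref{lem:Psi_and_DUpsilon}.

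The only point needing care is the bookkeeping of hypotheses. The statement implicitly refers to $F$ and $\eta$ as in~\ref{prop:BF_repr_with_Psi}, so I would state this explicitly. I would also check that the orientation choice of $\xi$ (versus $-\xi$) is harmless. Both sides are unchanged, since $\eta$ comes from a gradient of an even norm, so $\eta(-\xi) = -\eta(\xi)$, while $\varkappa(L)$ is linear. In the general setting of~\ref{prop:BF_repr_with_Psi}, the identity simply holds for each admissible $\xi$ separately, because the derivation above used only that particular $\xi$. No real obstacle is expected.
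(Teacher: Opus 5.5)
Your proposal is correct and follows the paper's own proof essentially step for step. You identify $\Upsilon_{\!\vdim}(T) = \gamma_{\vdim}(\xi) \otimes \xi$, use~\ref{prop:BF_repr_with_Psi} to write $B_F(T) = \Psi_{\!\vdim}(\gamma_{\vdim}(\xi) \otimes \eta(\xi))$, move $\Psi_{\!\vdim}$ across the inner product, apply~\ref{lem:Psi_adjoint}, and factorise the tensor inner product as in~\ref{lem:Psi_and_DUpsilon}. Your remark on the orientation of~$\xi$ is sound but not needed; the paper omits it, and in fact $\eta(-\xi) = -\eta(\xi)$ already follows from the hypothesis of~\ref{prop:BF_repr_with_Psi}.
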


\begin{proof}
    Recall~\ref{prop:BF_repr_with_Psi} and~\ref{lem:Psi_adjoint} to write
    \begin{multline}
        B_F(T) \bullet L
        = \Psi_{\!\vdim} \grad \psi( \Upsilon_{\!\vdim}(T) ) \bullet L
        = \bigl( \gamma_{\vdim}(\xi) \otimes \eta(\xi) \bigr) \bullet \transpose{\Psi_{\!\vdim}}(L)
        \\
        = \bigl( \gamma_{\vdim}(\xi) \otimes \eta(\xi) \bigr) \bullet ( \gamma_{\vdim}(\xi) \otimes \varkappa(L)\xi )
        = \eta(\xi) \bullet \varkappa(L)\xi \,.
        \qedhere
    \end{multline}
\end{proof}

\begin{remark}
    \label{rem:BF_formula_direct}
    In case $F$ is a~polyconvex integrand, i.e., $\psi$ is a cross-norm of $(|\cdot|,\varphi)$ for
    some norm $\varphi : \extpower{\vdim} \R^{\adim} \to \R$, one can also derive
    \ref{cor:BF_in_terms_of_pi} in a more direct fashion. Fix $L \in \End{\R^{\adim}}$ and set $L(t)
    = \id{\R^{\adim}} + tL$ for $t \in \R$. Write $\xi = u_1 \wedge \cdots \wedge u_{\vdim}$ with
    $u_1,\ldots,u_{\vdim}$ an orthonormal basis of~$\im T$, and set $\zeta(t) =
    \extpower{\vdim}L(t)\, \xi = L(t)u_1 \wedge \cdots \wedge L(t)u_{\vdim}$. For $|t|$ small $L(t)$
    is invertible; hence, $\zeta(t)$ is a~non-zero simple $\vdim$-vector and $\Sp \zeta(t) = \im
    (L(t) \circ T)$.  Since $\extpower{\vdim}T$ is the orthogonal projection onto the line spanned
    by~$\xi$, we get $\extpower{\vdim}(L(t) \circ T) = \gamma_{\vdim}(\zeta(t)) \otimes \xi$, so $\|
    \extpower{\vdim} L(t) \circ T \| = |\zeta(t)|$.  Employing~\ref{def:polyconvex} and homogeneity
    of~$\varphi$,
    \begin{displaymath}
        F \bigl( \project{ \im (L(t) \circ T) } \bigr) \,
        \| \extpower{\vdim} L(t) \circ T \|
        = \varphi \bigl( \zeta(t) / |\zeta(t)| \bigr)\, |\zeta(t)|
        = \varphi(\zeta(t)) \,.
    \end{displaymath}
    Recalling~\ref{rem:BF_variational} and~\ref{def:varkappa}, and noting $\zeta'(0) =
    \varkappa(L)\xi$, we conclude
    \begin{displaymath}
        B_F(T) \bullet L
        = \left. \tfrac{\ud}{\ud t} \right|_{t=0} \varphi(\zeta(t))
        = \grad\varphi(\xi) \bullet \varkappa(L)\xi \,.
        \qedhere
    \end{displaymath}
\end{remark}

\begin{definition}[\protect{\cite[Definition~4.1]{Pazy1983} and~\cite[Chap.~VI, \S{1.1}]{Cioranescu1990}}]
    Let $X$ be a Euclidean space and $\nu : X \to \R$ be a norm. A~linear operator $A \in \End{X}$
    is called \emph{$\nu$-accretive} if
    \begin{displaymath}
        \forall x \in X \  \exists v \in \subgrad \nu(x) \quad v \bullet Ax \ge 0 \,.
    \end{displaymath}
    We shall say that $A$ is \emph{strictly $\nu$-accretive} if, additionally,
    \begin{displaymath}
        \card \bigl\{ x : \nu(x) = 1 ,\, \forall v \in \subgrad \nu(x) \quad v \bullet Ax = 0 \bigr\} \le 1 \,.
    \end{displaymath}
\end{definition}

\begin{remark}
    In case $\nu$ is smooth we get that $A$ is $\nu$-accretive if
    \begin{displaymath}
        \forall x \in X \quad \grad \nu(x) \bullet Ax \ge 0 
    \end{displaymath}
    and strictly $\nu$-accretive if, additionally,
    \begin{displaymath}
        \card \bigl\{ x : \nu(x) = 1 ,\, \grad \nu(x) \bullet Ax = 0 \bigr\} \le 1 \,.
    \end{displaymath}
\end{remark}

\begin{lemma}[\protect{\cite[Theorem~4.2]{Pazy1983}}]
    Let $X$ be a Euclidean space, $\nu : X \to \R$ be a norm, and $A \in \End{X}$.
    Then $A$ is $\nu$-accretive if and only if
    \begin{displaymath}
        \nu(tx + Ax) \ge t \nu(x) \quad \text{for $x \in X$ and $0 < t < \infty$} \,.
    \end{displaymath}
\end{lemma}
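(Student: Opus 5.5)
The plan is to prove both implications through the one-dimensional convex function $g_x(s) = \nu(x + sAx)$ for fixed $x \in X$. We use that the one-sided derivative of a norm satisfies
\begin{displaymath}
    \nu'(x;y) = \lim_{s \to 0^+} \bigl(\nu(x+sy) - \nu(x)\bigr)/s = \max \bigl\{ v \bullet y : v \in \subgrad \nu(x) \bigr\},
\end{displaymath}
by~\cite[Theorem~23.4]{Rockafellar1970}, where $\subgrad \nu(x)$ is nonempty and compact. The inequality in the statement is positively homogeneous: dividing by $t$ and putting $s = 1/t$, it reads $\nu(x + sAx) \ge \nu(x)$ for all $0 < s < \infty$. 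So the claim is that accretivity is equivalent to $g_x(s) \ge g_x(0)$ for all $s > 0$ and all $x$.

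\emph{Accretive implies the inequality.} Fix $x$ and choose $v \in \subgrad \nu(x)$ with $v \bullet Ax \ge 0$, as accretivity allows. The subgradient inequality gives
\begin{displaymath}
    \nu(x + sAx) \ge \nu(x) + s\, v \bullet Ax \ge \nu(x) \quad \text{for } s > 0.
\end{displaymath}
Multiplying by $t = 1/s$ yields $\nu(tx + Ax) \ge t\nu(x)$. If $x = 0$ the statement is $\nu(A0) \ge 0$, which is trivial. This direction needs nothing more.

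\emph{The inequality implies accretivity.} Fix $x \ne 0$; for $x = 0$ take $v = 0$, which lies in $\subgrad \nu(0)$. The hypothesis gives $g_x(s) \ge g_x(0)$ for $s > 0$, so the right derivative satisfies $g_x'(0^+) = \nu'(x;Ax) \ge 0$. By the max formula above there is $v \in \subgrad \nu(x)$ with $v \bullet Ax = \nu'(x;Ax) \ge 0$, and this is exactly accretivity at $x$. The max is attained because $\subgrad \nu(x)$ is compact, and this is the step that needs care.

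The main obstacle is the max formula for directional derivatives, specifically that the maximum is attained rather than only approached as a supremum. I would cite Rockafellar's Theorem~23.4 for a finite convex function on a Euclidean space. One could avoid the citation with a separation argument, using Hahn--Banach on the sublinear functional $\nu'(x;\cdot)$ and dominating the linear map $\lambda Ax \mapsto \lambda \nu'(x;Ax)$. Note also that the paper's subdifferential definition contains a typo: it should read $v \bullet (z-x)$ in place of $x \bullet (z-x)$. I would use the standard meaning throughout.
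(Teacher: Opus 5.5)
Your proof is correct, but it takes a different route from the cited source. The paper supplies no proof of its own, only the citation \cite[Theorem~4.2]{Pazy1983}, so the natural comparison is with Pazy's argument.

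Your first implication is the same as Pazy's: the subgradient inequality at $x$, tested at $x + sAx$.

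The converse is where you differ. You observe that $s \mapsto \nu(x+sAx)$ attains its minimum over $[0,\infty)$ at $s=0$, so $\nu'(x;Ax) \ge 0$. Then the max formula \cite[Theorem~23.4]{Rockafellar1970}, together with compactness of $\subgrad \nu(x)$, produces the required $v$.

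Pazy never differentiates; he takes norming functionals of the perturbed vector instead. In the paper's notation it runs as follows. For $x \ne 0$ and $t > 0$ the hypothesis gives $tx + Ax \ne 0$, so one may pick $v_t \in \subgrad \nu(tx+Ax)$, i.e.\ $\polar{\nu}(v_t) = 1$ and $v_t \bullet (tx+Ax) = \nu(tx+Ax) \ge t \nu(x)$. Since $v_t \bullet x \le \nu(x)$ and $v_t \bullet Ax \le \nu(Ax)$, this yields $v_t \bullet Ax \ge 0$ and $v_t \bullet x \ge \nu(x) - \nu(Ax)/t$. Any cluster point $v$ as $t \to \infty$ then satisfies $\polar{\nu}(v) \le 1$ and $v \bullet x = \nu(x)$, hence $v \in \subgrad \nu(x)$, and also $v \bullet Ax \ge 0$.

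Both routes rest on compactness of the dual unit ball.
\begin{itemize}
\item Pazy's is elementary and transfers verbatim to Banach spaces via weak-$*$ compactness (as does your Hahn--Banach variant). However, it is phrased with the duality map $\nu(x)\subgrad\nu(x)$, so it has to be translated into the paper's subgradient definition of accretivity.
\item Yours is shorter given Rockafellar and works directly with that definition. It also shows that accretivity at $x$ is just the local condition $\nu'(x;Ax) \ge 0$.
\end{itemize}

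You are also right that the definition of $\subgrad$ in the paper should read $v \bullet (z-x)$ rather than $x \bullet (z-x)$.
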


\begin{theorem}
    \label{thm:F_accretive}
    Let $\varphi : \extpower{\vdim} \R^{\adim} \to \R$ be a norm of class~$\cnt{1}$ and $\psi$ be
    any cross-norm of $(|\cdot|,\varphi)$. Set $F = \psi \circ \Upsilon_{\!\vdim}$. Assume for each
    $S \in \pgrass{\adim}{\vdim}$ there exists $N(S) \in \imsp(S^{\perp})$ such that
    $\varkappa(\transpose{N}(S))$ is strictly $\varphi$-accretive. Then $F \in \EC{}$.
\end{theorem}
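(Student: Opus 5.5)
The plan is to verify condition~\ref{lem:EC_normals}\ref{i:ECn:normal2} (or its variant~\ref{i:ECn:normal}) with $M_F(S) = \transpose{N}(S)$ or $N(S)$ suitably transposed, and then invoke~\ref{lem:EC_normals} to conclude $F \in \EC{}$. The point is that~\ref{cor:BF_in_terms_of_pi} (or the direct computation in~\ref{rem:BF_formula_direct}, which only uses that $\psi$ is a~cross-norm and so applies to any such~$\psi$) converts the pairing of $B_F(T)$ against a~fixed endomorphism into a~scalar quantity on $\extpower{\vdim}\R^{\adim}$. Concretely, with $\eta = \grad \varphi$, for $T \in \pgrass{\adim}{\vdim}$ and $\xi \in \ograss{\adim}{\vdim}$ with $\Sp \xi = \im T$ we have
\begin{displaymath}
    F(T)\, P_F(T) \bullet N(S) = B_F(T) \bullet \transpose{N}(S)
    = \grad \varphi(\xi) \bullet \varkappa(\transpose{N}(S))\xi \,,
\end{displaymath}
using $P_F(T) = \transpose{B_F(T)}/F(T)$ and $\transpose{A} \bullet \transpose{B} = A \bullet B$. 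Since $F(T) > 0$, the sign of $P_F(T) \bullet N(S)$ equals the sign of $\grad\varphi(\xi) \bullet \varkappa(\transpose{N}(S))\xi$. By homogeneity we may normalise $\varphi(\xi)=1$ here; this only rescales by a~positive factor.

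First, I record the structural condition. Since $N(S) \in \imsp(S^{\perp})$ we have $S \circ N(S) = 0$, which is precisely the requirement in~\ref{lem:EC_normals}\ref{i:ECn:normal} (with $N_F = N$, the role of $T$ there played by $S$). Second, I check the vanishing at $T = S$. For $\xi$ with $\Sp\xi = \im S$, $\varkappa(\transpose{N}(S))\xi = \sum_i u_1\wedge\cdots\wedge \transpose{N}(S)u_i \wedge\cdots\wedge u_\vdim$. Since $\transpose{N}(S)u_i$ need not lie in $\im S$, this term is not obviously zero. However, $P_F(S) \bullet N(S) = \trace(\transpose{P_F(S)} \circ S^{\perp}\circ N(S))$, and $P_F(S) = S \circ P_F(S)$ gives $\transpose{P_F(S)} \circ S^{\perp} = 0$, so the pairing vanishes at $T=S$ regardless. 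Therefore the unit vector $\xi_S$ (and $-\xi_S$, which gives the same plane) is a~zero of $x \mapsto \grad\varphi(x)\bullet\varkappa(\transpose{N}(S))x$ on the unit $\varphi$-sphere.

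Third, I use strict accretivity. Accretivity gives $\grad\varphi(\xi) \bullet \varkappa(\transpose{N}(S))\xi \ge 0$ for every $\xi$, hence $P_F(T)\bullet N(S) \ge 0$ for all $T$. Strictness says that the zero set on the unit sphere has at most one element. This is the delicate step. The set $\{x : \varphi(x)=1,\ \grad\varphi(x)\bullet Ax = 0\}$ is symmetric under $x\mapsto -x$, because $\grad\varphi$ is odd and $A$ is linear. So its cardinality is even or zero, and ``$\le 1$'' would force it to be empty, contradicting the previous step. I would therefore read the cardinality condition projectively, i.e.\ up to sign, or as counting lines. Under that reading the zero set consists exactly of $\pm \xi_S$. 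Every $T \ne S$ has $\Sp\xi \ne \im S$, so $\xi \ne \pm\xi_S$ and $P_F(T)\bullet N(S) > 0$. This is exactly~\ref{lem:EC_normals}\ref{i:ECn:normal}, and $F \in \EC{}$ follows. The main obstacle is this interpretation of ``at most one'' modulo the antipodal symmetry, and making sure the zero at $\xi_S$ is consistent with it. If the intended definition literally excludes it, then the argument above already shows the hypothesis can only hold vacuously in a~modified sense, and I would state that the count is up to sign.
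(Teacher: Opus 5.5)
Your argument is correct and is the paper's own: the identity $F(T)\,P_F(T)\bullet N(S) = \grad\varphi(\xi)\bullet\varkappa(\transpose{N}(S))\xi$ from~\ref{cor:BF_in_terms_of_pi}, followed by~\ref{lem:EC_normals}\ref{i:ECn:normal} with $N_F = N$. Your concern about the count is also well founded, and the paper's one-line proof passes over it: since $N(S) = S^{\perp}\circ N(S)$ gives $\transpose{N}(S)\circ S = 0$, one has $\varkappa(\transpose{N}(S))\xi_S = 0$ outright (so that term is plainly zero, not merely ``not obviously zero''), hence $\pm\xi_S/\varphi(\xi_S)$ always lie in the zero set, the literal condition $\card \le 1$ can never hold, and the theorem has content only when the count is taken modulo $x \mapsto -x$, as you propose.
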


\begin{proof}
    Assume $S,T \in \pgrass{\adim}{\vdim}$, $S \ne T$, $\xi \in \ograss{\adim}{\vdim}$, and
    $\Sp \xi = \im T$. Employing~\ref{rem:BF_repr_with_Psi}, \ref{cor:BF_in_terms_of_pi} and strict
    accretivity of~$\varkappa(\transpose{N}(S))$ one gets
    \begin{displaymath}
        F(T) P_F(T) \bullet N(S)
        = B_F(T) \bullet \transpose{N}(S)
        = \grad \varphi(\xi) \bullet \varkappa(\transpose{N}(S)) \xi > 0 
        \,.
    \end{displaymath}
    Applying~\ref{lem:EC_normals} finishes the proof.
\end{proof}

\begin{definition}[\protect{cf.~\cite[p.~141]{Busemann1961}}]
    \label{def:totally_convex_norm}
    Assume $\varphi : \extpower{\vdim}  \R^{\adim} \to \R$ is a norm. We say that $\varphi$ is
    \emph{totally convex} if
    \begin{displaymath}
        \subgrad \varphi(\xi) \cap \ray \ograss{\adim}{\vdim} \ne \varnothing
        \quad
        \text{whenever $\xi \in \ray \ograss{\adim}{\vdim}$} \,.
    \end{displaymath}
\end{definition}

\begin{lemma}
    \label{lem:totally_convex_decreasing_projections}
    A norm $\varphi : \extpower{\vdim} \R^{\adim} \to \R$ is totally convex \emph{if and only if}
    for any $T \in \grass{\adim}{\vdim}$ there exists a projection $P \in \Proj{\adim}{\vdim}$ with
    $\im P = T$ and such that $\extpower{\vdim} P$ is $\varphi$-non-increasing, i.e.,
    $\varphi(\extpower{\vdim}P \xi) \le \varphi(\xi)$ for $\xi \in \extpower{\vdim} \R^{\adim}$.
\end{lemma}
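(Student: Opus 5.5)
The plan is to connect the subgradient condition to projections through the pairing between $\extpower{\vdim}\R^{\adim}$ and $\aforms{\vdim}\R^{\adim}$, i.e. via the duality $\mathbf{D}^{\vdim}$ and the Hodge star of~\ref{rem:Hodge_star}. The key observation is a standard fact of Hahn--Banach type: a linear map $L$ on a normed space with $L\xi = \xi$ satisfies $\varphi(L\zeta) \le \varphi(\zeta)$ for all $\zeta$ whenever $L$ has rank one and is of the form $\zeta \mapsto (\zeta \bullet v)\,\xi$ with $v \in \subgrad \varphi(\xi)$. Indeed, then $\varphi(L\zeta) = |\zeta \bullet v|\, \varphi(\xi) \le \polar{\varphi}(v)\,\varphi(\zeta)\,\varphi(\xi) = \varphi(\zeta)$ when $\varphi(\xi) = 1$, because $\polar{\varphi}(v) = 1$. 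Conversely, if $L$ is $\varphi$-non-increasing, has rank one and fixes $\xi$, then $L\zeta = (\zeta \bullet v)\,\xi$ and $v$ must be a norming functional at $\xi$, i.e. $v \in \subgrad \varphi(\xi)$.

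\textbf{Forward direction.} Given $T \in \grass{\adim}{\vdim}$, pick $\xi \in \ograss{\adim}{\vdim}$ with $\Sp \xi = T$, normalised so that $\varphi(\xi) = 1$ up to a scalar. Total convexity provides a simple $v \in \subgrad \varphi(\xi)$, and $v \bullet \xi = \varphi(\xi) > 0$. Let $W = (\Sp v)^{\perp}$, which is an $(\codim)$-dimensional subspace. Since $v \bullet \xi \ne 0$, the subspaces $T$ and $W$ are complementary, because $\xi \bullet v$ is the determinant of the pairing between $T$ and $\Sp v$. Let $P$ be the projection onto $T$ along $W$. Then $\extpower{\vdim} P$ kills every basis $\vdim$-vector containing a factor from $W$, and it maps $\zeta$ to $\bigl(\zeta \bullet v/(\xi \bullet v)\bigr)\,\xi$. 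To verify this formula I will compute on a basis adapted to $T \oplus W$, using that $v$ is annihilated by $\vdim$-vectors having a factor in $W$; this is a calculation in the spirit of~\ref{lem:Psi_gives_projections} and~\ref{cor:Psi_on_tensors_with_one_component_fixed}. Once the formula holds, the opening observation with $v/\varphi(\xi) \in \subgrad\varphi$ gives the non-increasing property.

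\textbf{Converse.} Given $\xi \in \ray\ograss{\adim}{\vdim}$, set $T = \Sp \xi$ and take the projection $P$ provided by the hypothesis. Then $\extpower{\vdim} P$ has rank one, its image is $\lin\{\xi\}$, and it fixes $\xi$. Its kernel is spanned by the $\vdim$-vectors having a factor in $\ker P$, so $\extpower{\vdim}P\,\zeta = (\zeta \bullet v)\,\xi$ with $v$ a multiple of $\gamma$-dual of $\ast$ of a unit $(\codim)$-vector spanning $\ker P$ (the paper identifies via $\gamma$). Hence $v$ is simple, i.e. $v \in \ray\ograss{\adim}{\vdim}$ after fixing the sign by $v \bullet \xi = 1 > 0$. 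By the opening observation, $v\,\varphi(\xi)$ is a norming functional, so $\subgrad \varphi(\xi) \ni \varphi(\xi)\,v$, which lies in $\ray \ograss{\adim}{\vdim}$.

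\textbf{Main obstacle.} The step I expect to be hardest is identifying $\ker \extpower{\vdim}P$ and $(\Sp v)^{\perp}$ precisely: showing that the orthogonal complement of $\ker \extpower{\vdim}P$ is spanned by the simple vector whose space is $(\ker P)^{\perp}$, with the right sign conventions. I will handle it by working in an orthonormal basis adapted to $(\ker P)^{\perp}$.
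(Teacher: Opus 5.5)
Your proof is correct and is essentially the paper's argument. In the forward direction the paper likewise takes $P$ to be the projection onto $T$ along $(\Sp v)^{\perp}$, where $v$ is a simple subgradient (written there as $\lambda\zeta$), and notes that $\extpower{\vdim}P$ fixes $\xi$ and kills $v^{\perp}$; in the converse it takes the simple $\vdim$-vector $\ast(e_{\vdim+1}\wedge\cdots\wedge e_{\adim})$, whose space is $(\ker P)^{\perp}$, and reads off from $\ker\extpower{\vdim}P = \zeta^{\perp}$ that a positive multiple of it lies in $\subgrad\varphi(\xi)$. Your explicit rank-one/norming-functional observation and your check that $T$ and $(\Sp v)^{\perp}$ are complementary only spell out what the paper compresses into ``hence''. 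One small slip: after normalising $\varphi(\xi)=1$ the relevant subgradient is $v$ itself, and $v/\varphi(\xi)$ is not a subgradient in general. The direct estimate $\varphi(\extpower{\vdim}P\zeta) = |\zeta\bullet v| \le \varphi(\zeta)$ needs no normalisation at all.
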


\begin{proof}
    Assume $\varphi$ is totally convex. Let $0 < \lambda < \infty$, $S,T \in \grass{\adim}{\vdim}$
    and $\xi,\zeta \in \ograss{\adim}{\vdim}$ be such that $\Sp \xi = T$, $\Sp \zeta = S$, and
    $\lambda \zeta \in \subgrad \varphi(\xi)$. Define $P \in \Proj{\adim}{\vdim}$ to be the
    projection with $\im P = T$ and $\ker P = S^{\perp}$. Choose a basis $e_{1}, \ldots, e_{\adim}$ of
    $\R^{\adim}$ so that $e_{1}, \ldots, e_{\vdim} \in T$ and $e_{\vdim+1}, \cdots, e_{\adim} \in
    S^{\perp}$. Clearly $\extpower{\vdim}P$ is a~rank one projection such that for $\lambda \in
    \Choice{\adim}{\vdim}$
    \begin{displaymath}
        \extpower{\vdim}P e_{\lambda} = e_{\lambda}
        \quad \text{if $\im \lambda = \{1,2,\ldots,\vdim\}$}
        \quad \text{and} \quad
        \extpower{\vdim}P e_{\lambda} = 0
        \quad \text{if $\im \lambda \ne \{1,2,\ldots,\vdim\}$}
        \,.
    \end{displaymath}
    Note that $e_{\lambda} \perp \zeta$ for $\lambda \in \Choice{\adim}{\vdim}$ with $\im \lambda \ne
    \{1,2,\ldots, \vdim\}$. In other words
    \begin{displaymath}
        \extpower{\vdim}P \xi = \xi
        \quad \text{and} \quad
        \extpower{\vdim}P \eta = 0
        \quad \text{if $\eta \in \extpower{\vdim} \R^{\adim}$ and $\eta \perp \zeta$} \,;
    \end{displaymath}
    hence, $\varphi(\extpower{\vdim}P\eta) \le \varphi(\eta)$ for $\eta \in \extpower{\vdim} \R^{\adim}$.

    For the reverse implication let $\xi \in \ograss{\adim}{\vdim}$ and $T \in \grass{\adim}{\vdim}$
    be such that $\Sp \xi = T$. Assume there exists $P \in \Proj{\adim}{\vdim}$ such that $\im P =
    T$ and $\extpower{\vdim} P$ is $\varphi$-non-increasing. Let $e_{1}, \ldots, e_{\adim}$ be the
    basis of $\R^{\adim}$ such that $e_{1}, \ldots, e_{\vdim} \in \im P = T$ and $e_{\vdim+1}, \ldots,
    e_{\adim} \in \ker P$. Set $\zeta = \ast e_{\vdim+1} \wedge \cdots \wedge e_{\adim}$. We get, as
    before, for $\lambda \in
    \Choice{\adim}{\vdim}$
    \begin{gather}
        \extpower{\vdim}P e_{\lambda} = e_{\lambda}
        \quad \text{if $\im \lambda = \{1,2,\ldots,\vdim\}$}
        \quad \text{and} \quad
        \extpower{\vdim}P e_{\lambda} = 0
        \quad \text{if $\im \lambda \ne \{1,2,\ldots,\vdim\}$}
        \,;
        \\
        \text{hence,} \quad
        \extpower{\vdim}P \xi = \xi
        \quad \text{and} \quad
        \extpower{\vdim}P \eta = 0
        \quad \text{if $\eta \in \extpower{\vdim} \R^{\adim}$ and $\eta \perp \zeta$} \,;
    \end{gather}
    Since $\extpower{\vdim}P$ is $\varphi$-non-increasing, this implies that
    $\lambda \zeta \in \subgrad \varphi(\xi)$ for some $0 < \lambda < \infty$.
\end{proof}

\begin{remark}
    The definition of \emph{total convexity} is attributed by Busemann~\cite[p.~140]{Busemann1961}
    to Carath{\'e}odory. A more general form of~\ref{lem:totally_convex_decreasing_projections} is
    proven in~\cite[p.~146]{Busemann1961}.  Our
    lemma~\ref{lem:totally_convex_decreasing_projections} corresponds to the case $d = 0$;
    see~\cite[p.~145, item 1)]{Busemann1961}. Actually, Busemann and~Straus first defined total
    convexity via existence of non-increasing projections; see~\cite[p.~36]{Busemann1960}.
\end{remark}

\begin{remark}
    \label{rem:totally_convex_norms_and_AC}
    Assume $\varphi$ is totally convex. Observe that for $S,T \in \pgrass{\adim}{\vdim}$ and $\xi
    \in \ograss{\adim}{\vdim}$ with $\Sp \xi = \im T$ there holds
    \begin{multline}
        0 \le \grad \varphi(\xi) \bullet \varkappa(\id{\R^{\adim}} - P_F(S))\xi
        = k \varphi(\xi) - \grad \varphi(\xi) \bullet \varkappa(P_F(S))\xi
        \\ \iff \quad
        \grad \varphi(\xi) \bullet \varkappa(P_F(S)) \xi \le k \varphi(\xi) \,.
    \end{multline}
    If $\varkappa(P_F(S))/\vdim$ was $\varphi$-non-increasing, then
    \begin{displaymath}
        \grad \varphi(\xi) \bullet \varkappa(P_F(S)/\vdim) \xi
        \le \polar{\varphi}(\grad \varphi(\xi)) \cdot \varphi(\varkappa(P_F(S)/\vdim) \xi)
        \le \varphi(\xi) 
    \end{displaymath}
    and this would give the desired $\varphi$-accretivity of the map $N(S) = \transpose{Q_F}(S) =
    \id{\R^{\adim}} - \transpose{P_F}(S)$; hence, in view of~\ref{thm:F_accretive}
    and~\ref{rem:specific_hyperplane_for_SAC}, that the associated integrand
    satisfies~\SAC{}. However, from the proof of~\ref{lem:totally_convex_decreasing_projections}, we
    get only that $\extpower{\vdim} P_F(S)$ is $\varphi$-non-increasing and this is completely
    different than $\varkappa(P_F(S))/\vdim$.  Actually, the theory of extremal norms
    (e.g.~\cite[\S{2.1}]{Jungers2009}) suggests that $\varkappa(P_F(S))/\vdim$ is
    $\varphi$-non-increasing for all $S \in \pgrass{\adim}{\vdim}$ only if $\varphi$ is the
    Euclidean norm.
\end{remark}

\section{Quadratic exposed condition}
\label{sec:uniform_AC}

\begin{miniremark}
    Here we propose a~quantitative version of~\EC{}, the~\emph{quadratic exposed
      condition}~\QEC{} of~\ref{def:QEC}, and show that it enjoys the two properties needed for
    a~regularity theory: it~is stable under perturbations of class~$\cnt{1,1}$
    (see~\ref{thm:QEC_stability}) and it~yields a~Caccioppoli-type inequality
    (see~\ref{prop:tilt-height}). Together with the~inclusions $\QEC{} \subseteq \EC{} \subseteq
    \AC{}$ of~\ref{lem:QEC_implies_EC} and~\ref{prop:EC_in_AC} this lets us state an~analogue
    of~\cite[Theorem~A]{DeRosa2022} in~\ref{rem:QEC_regularity}.
\end{miniremark}

\begin{miniremark}
    Recalling~\ref{def:EC} and~\ref{lem:EC_normals} one is tempted to define the uniform version
    of~\EC{} by requiring that for each $T \in \pgrass{\adim}{\vdim}$ there exists
    $N \in \End{\R^{\adim}}$ with $N = T^{\perp} \circ N$ and such that
    \begin{displaymath}
        P_F(S) \bullet N \ge \Gamma \| S - T \|^2
        \quad \text{for some $\Gamma > 0$ and all $S \in \pgrass{\adim}{\vdim}$} \,.
    \end{displaymath}
    However, we feel that the uniform version of~\EC{} should be suitable for proving regularity;
    hence, it should yield a Caccioppoli-type inequality; cf.~\ref{prop:tilt-height}. For this reason
    we introduce the following definition.
\end{miniremark}

\begin{definition}
    \label{def:QEC}
    We say that a $\vdim$-integrand $F$ satisfies the \emph{quadratic exposed condition} \QEC{} if
    there exists $0 < \Gamma < \infty$ and a~continuous function
    $N : \pgrass{\adim}{\vdim} \to \End{\R^{\adim}} \cap \{ A : \| A \| = 1 \}$ such that for
    $S,T \in \pgrass{\adim}{\vdim}$
    \begin{gather}
        \label{eq:QEC_compatibility}
        N(T) \circ \transpose{P}_F(T) = 0 \,,
        \quad
        \transpose{P}_F(T) \circ N(T) = 0 \,,
        \\
        \label{eq:QEC_uniform_conv}
        \text{and} \quad
        P_F(S) \bullet N(T)
        \ge \Gamma \| S - T \|^2 
        \,.
    \end{gather}
    If the choice of $\Gamma$ and $N$ are clear from the context or irrelevant, we write
    $F \in \QEC{}$; otherwise we employ the notation $F \in \QEC(\Gamma,N)$.
\end{definition}

\begin{remark}
    De~Rosa and Tione introduce in~\cite{DeRosa2022} the \emph{uniform scalar atomic condition}
    (\USAC{}). We have $F \in \USAC{}$ if there exists $\Gamma > 0$ for which
    \begin{displaymath}
        \transpose{P_F}(T) \bullet Q_F(S) \ge \Gamma \|S-T\|^2
        \quad \text{for $S,T \in \pgrass{\adim}{\vdim}$ with $S \ne T$} \,.
    \end{displaymath}
    Since $Q_F(T) \circ P_F(T) = 0 = P_F(T) \circ Q_F(T) = 0$ it follows that
    \begin{displaymath}
        \USAC{} \subseteq \QEC{} \,.
    \end{displaymath}
    In~particular, the family of integrands $\QEC{}$ is non-empty
    by~\cite[Proposition~3.10]{DeRosa2022}.
\end{remark}

\begin{remark}
    \label{rem:basic_QEC}
    From $N(T) \circ \transpose{P}_F(T) = 0$ we see that
    $\im \transpose{P_F(T)} \subseteq \ker N(T)$ for $T \in \pgrass{\adim}{\vdim}$. Note that
    $N(T) \circ \transpose{P}_F(S) \ne 0$ whenever $S,T \in \pgrass{\adim}{\vdim}$ and
    $S \ne T$ by~\ref{def:QEC}\,\eqref{eq:QEC_uniform_conv}; hence, it~follows that the map
    \begin{displaymath}
        \varphi : \pgrass{\adim}{\vdim} \to \pgrass{\adim}{\vdim}
        \quad \text{given by} \quad
        \varphi(S) = \project{(\im \transpose{P}_F(S))}
        \quad \text{for $S \in \pgrass{\adim}{\vdim}$}
    \end{displaymath}
    is injective and a~homeomorphism; cf.~\ref{prop:AC_imPF_homeo}. Furthermore, we have
    $\im \transpose{P_F(T)} = \ker N(T)$ because otherwise $\dim \ker N(T) > \vdim$ and
    there would exist $T \ne S \in \pgrass{\adim}{\vdim}$ such that
    $\im \transpose{P_F(S)} \subseteq \ker N(T)$ which is impossible due to the assumption
    that $N(T) \circ \transpose{P}_F(S) \ne 0$ for $S \ne T$.
\end{remark}

\begin{remark}
    Let $F$, $N$, and $f$ be as in~\ref{rem:EC_implies_NP} and assume, for a~moment, that $F$ is of
    class~$\cnt{3}$ away from the origin. Differentiating~\ref{rem:EC_implies_NP}\,\eqref{eq:DPF} we
    get
    \begin{displaymath}
        \uD^2 P_F(T)XX = P_F(T) \circ \uD^2 P_F(T)XX
        + \uD^2 P_F(T)XX \circ P_F(T)
        + 2 \uD P_F(T)X \circ \uD P_F(T)X \,;
    \end{displaymath}
    hence,
    \begin{multline}
        0 \le \uD^2f(T)XX = \trace \bigl( Q_F(T) \circ \uD^2 P_F(T)XX \circ P_F(T) \circ \transpose{N}(T) \bigr)
        \\
        + 2 \trace \bigl( Q_F(T) \circ \uD P_F(T)X \circ \uD P_F(T)X \circ Q_F(T) \circ \transpose{N}(T) \bigr)
        \\
        = \uD^2P_F(T)XX \bullet \bigl( N(T) \circ \transpose{P_F}(T) \bigr)
        + 2 \bigl( \uD P_F(T)X \circ \uD P_F(T)X \bigr) \bullet \bigl( N(T) \bullet \transpose{Q_F}(T) \bigr)
    \end{multline}
    In case $N(T) \circ \transpose{P_F}(T) = 0$ we get
    \begin{equation}
        \label{eq:necessary_for_QEC}
        \bigl( \uD P_F(T)X \circ \uD P_F(T)X \bigr) \bullet \bigl( N(T) \bullet \transpose{Q_F}(T) \bigr) \ge 0 \,.
    \end{equation}

    Therefore, if $F$ is an arbitrary integrand of class~$\cnt{2}$ and
    $N : \pgrass{\adim}{\vdim} \to \End{\R^{\adim}} \cap \{ A : |A| = 1 \}$ satisfies
    $N(T) \circ \transpose{P_F}(T) = 0 = \transpose{P_F}(T) \circ N(T)$, one immediately obtains
    $f(T) = 0$ and $\uD f(T) = 0$ (from~\ref{rem:EC_implies_NP}) and
    then~\eqref{eq:necessary_for_QEC} becomes a~necessary (but \emph{not sufficient}) condition for
    $F \in \EC$. Clearly, setting $N(T) = \transpose{Q_F}(T)$ for $T \in \pgrass{\adim}{\vdim}$, as
    in the \USAC{} condition of~\cite{DeRosa2022} (cf.~\ref{def:SAC}
    and~\ref{rem:specific_hyperplane_for_SAC}), ensures
    $N(T) \circ \transpose{P_F}(T) = 0 = \transpose{P_F}(T) \circ N(T)$, which reinforces the belief
    that the choice made in~\cite{DeRosa2022} may be the right one.
\end{remark}

\begin{lemma}
    \label{lem:QEC_implies_EC}
    $\QEC \subseteq \EC$.
\end{lemma}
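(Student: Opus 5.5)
The plan is to verify the hypotheses of~\ref{lem:EC_normals}\ref{i:ECn:normal} with $N_F = N$, the map provided by $F \in \QEC(\Gamma,N)$. The positivity requirement is immediate: for $S \ne T$ we have $\|S-T\| > 0$, so~\eqref{eq:QEC_uniform_conv} gives $N(T) \bullet P_F(S) \ge \Gamma \|S-T\|^2 > 0$. The remaining requirement is the structural condition $T \circ N(T) = 0$, i.e.\ $\im N(T) \subseteq \im T^{\perp}$, and this is where the work lies.

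From the second identity in~\eqref{eq:QEC_compatibility}, $\transpose{P}_F(T) \circ N(T) = 0$, we get $\im N(T) \subseteq \ker \transpose{P}_F(T) = (\im P_F(T))^{\perp}$. By~\ref{rem:se-cse}, $\im P_F(T) = \im T$, so $(\im P_F(T))^{\perp} = \im T^{\perp}$. Hence $T \circ N(T) = 0$, which is exactly the condition required in~\ref{lem:EC_normals}\ref{i:ECn:normal}, and~\ref{lem:EC_normals} yields $F \in \EC{}$.

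The main point to check carefully is the identification $\ker \transpose{P}_F(T) = \im T^{\perp}$. This uses that $P_F(T)$ is a projection with image $\im T$ (from~\ref{rem:se-cse}) together with the elementary fact $\ker \transpose{A} = (\im A)^{\perp}$. Note that neither the first identity in~\eqref{eq:QEC_compatibility}, nor the continuity of $N$, nor the normalisation $\|N(T)\| = 1$ is needed. If one preferred to use~\ref{lem:EC_normals}\ref{i:ECn:normal2} instead, one would need the analogous condition $M(S) \circ S = 0$, which is less directly available, so I would not take that route.
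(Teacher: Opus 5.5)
Your proposal is correct and matches the paper's argument: the paper's proof is the one-line ``Employ~\ref{lem:EC_normals}''. You have filled in the omitted check that $\transpose{P}_F(T) \circ N(T) = 0$ together with $\im P_F(T) = \im T$ gives $T \circ N(T) = 0$, and that~\eqref{eq:QEC_uniform_conv} with $\Gamma > 0$ gives the strict positivity required in~\ref{lem:EC_normals}\ref{i:ECn:normal}.
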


\begin{proof}
    Employ~\ref{lem:EC_normals}.
\end{proof}

\begin{remark}
    \label{rem:QEC_compatibility_condition}
    Assume $E$ is a linear space, $G \subseteq E$, and $P,N : G \to \End{\R^{\adim}}$ are such that
    \begin{gather}
        \label{eq:abstract_compatibility}
        \transpose{P}(T) \circ N(T) = 0
        \quad \text{for $T \in G$} 
        \\
        \label{eq:abstract_critical_point}
        \text{and} \quad 
        \uD P(T) X \bullet N(T) = 0
        \quad \text{for $T \in \dmn \uD P$ and $X \in \Tan(G,T)$} \,.
    \end{gather}
    Differentiating the first identity gives
    \begin{displaymath}
        \transpose{\big(\uD P(T)X \big)} \circ N(T)
        + \transpose{P}(T) \circ \uD N(T)X = 0 
    \end{displaymath}
    and applying the second yields
    \begin{displaymath}
        P(T) \bullet \uD N(T)X = 0
        \quad \text{for $T \in \dmn \uD P \cap \dmn \uD N$ and $X \in \Tan(G,T)$} \,.
    \end{displaymath}
    In~particular, if $F \in \QEC(\Gamma,N)$ is of class~$\cnt{1,1}$, $G = \pgrass{\adim}{\vdim}$,
    $P = P_F$, and $N$ is of class~$\cnt{0,1}$, then, by Rademacher's theorem
    \cite[3.1.6]{Federer1969} applied in charts, both $N$ and $\uD P_F$ are differentiable
    $\HM^{\vdim(\codim)} \restrict \pgrass{\adim}{\vdim}$-almost everywhere,
    \eqref{eq:abstract_compatibility} holds by definition of~\QEC{},
    and~\eqref{eq:abstract_critical_point} holds since $T$ is a global minimum of the function
    $\bigl[ \pgrass{\adim}{\vdim} \ni S \mapsto P_F(S) \bullet N(T) \bigr]$; therefore,
    \begin{displaymath}
        P_F(T) \bullet \uD N(T)X = 0
        \quad \text{for } X \in \Tan(\pgrass{\adim}{\vdim}, T) 
    \end{displaymath}
    for $\HM^{\vdim(\codim)} \restrict \pgrass{\adim}{\vdim}$~almost all
    $T \in \pgrass{\adim}{\vdim}$.
\end{remark}

\begin{definition}
    \label{def:C11_norm}
    Suppose the space $\End{\R^{\adim}}$ is endowed with the operator norm. Given a~map $f$ defined
    on a subset of $\End{\R^{\adim}}$ with values in a normed vectorspace $(X,|\cdot|)$ define
    \begin{gather}
        d_f : \dmn \uD f \cap \pgrass{\adim}{\vdim} \to \End{\R^{\adim}} \,,
        \\
        d_f(T) = \uD f(T) \circ \project{\Tan(\pgrass{\adim}{\vdim},T)}
        \quad \text{for $T \in \dmn \uD f \cap \pgrass{\adim}{\vdim}$} \,,
        \\
        \boldsymbol{\nu}_{0}(f) = \sup |f| \lIm \pgrass{\adim}{\vdim} \rIm \,,
        \\
        \boldsymbol{\nu}_{1}(f) = \boldsymbol{\nu}_{0}(f)
        + \boldsymbol{\nu}_{0}(d_f) \,,
        \quad \text{and} \quad
        \boldsymbol{\nu}_{2}(f) = \boldsymbol{\nu}_{1}(f) + \Lip(d_f)
        \,.
    \end{gather}
\end{definition}

\begin{remark}
    \label{rem:nu_and_lip}
    Assume $f \in \cnt{0,1}(\End{\R^{\adim}},\R)$. Let
    $\ell : \pgrass{\adim}{\vdim} \times \pgrass{\adim}{\vdim} \to \R$ be the length-metric
    associated to the natural Riemannian structure (inherited from the Euclidean structure on
    $\End{\R^{\adim}}$) on the Grassmannian $\pgrass{\adim}{\vdim}$. Since $\pgrass{\adim}{\vdim}$
    is a smooth compact submanifold of~$\End{\R^{\adim}}$ there exists
    $\Gamma = \Gamma(\adim,\vdim)$ such that
    \begin{displaymath}
        \| S - T \| \le | S - T | \le \ell(S,T) \le \Gamma \| S - T \|
        \quad \text{for $S,T \in \pgrass{\adim}{\vdim}$}\,.
    \end{displaymath}
    Let $S,T \in \pgrass{\adim}{\vdim}$ and $\gamma : \R \to \pgrass{\adim}{\vdim}$ be a unit-speed
    geodesic such that $\gamma(0) = S$ and $\gamma(\ell(S,T)) = T$. Then clearly
    \begin{multline}
        | f(T) - f(S) |
        = \bigl| \textint{0}{\ell(S,T)} \uD f(\gamma(t))) \gamma'(t) \ud \LM^{1}(t) \bigr|
        \\
        \le \textint{0}{\ell(S,T)}
        \| \uD f(\gamma(t))) \circ \project{\Tan(\pgrass{\adim}{\vdim},\gamma(t))} \|
        \ud \LM^{1}(t)
        \,;
    \end{multline}
    hence,
    \begin{displaymath}
        \Lip \bigl( f|\pgrass{\adim}{\vdim} \bigr)
        \le \Gamma \boldsymbol{\nu}_{0}(d_f)
        \le \Gamma \boldsymbol{\nu}_{1}(f)
        \,.
    \end{displaymath}
    Of course, there also holds
    \begin{displaymath}
        \boldsymbol{\nu}_{0}\bigl( d_f
        \bigr)
        \le \Lip \bigl( f|\pgrass{\adim}{\vdim} \bigr)\,.
    \end{displaymath}
    In particular,
    \begin{displaymath}
        \boldsymbol{\nu}_{1}(d_f) \le \boldsymbol{\nu}_{1}(f) + \Lip(d_f) = \boldsymbol{\nu}_{2}(f) \,.
    \end{displaymath}
\end{remark}

\begin{remark}
    \label{rem:FG_close}
    Let $F$ and $G$ be $\vdim$-integrands of class~$\cnt{1,1}$ and let $U
    \subseteq \End{\R^{\adim}}$ and $\xi : U \to \pgrass{\adim}{\vdim}$ be defined as
    in~\ref{rem:BF_gradient}. Clearly $\xi$ may be chosen to be $\cnt{\infty}$-smooth, because
    $\pgrass{\adim}{\vdim}$ is a~real analytic manifold and $\sigma$, as defined
    in~\ref{rem:sigma_props}, is a polynomial map. Set
    \begin{displaymath}
        \tilde{F} = \log \circ F \circ \xi
        \quad \text{and} \quad
        \tilde{G} = \log \circ G \circ \xi \,.
    \end{displaymath}
    Note that
    $\uD \xi(T) \circ \project{\Tan(\pgrass{\adim}{\vdim},T)} =
    \project{\Tan(\pgrass{\adim}{\vdim},T)}$ for $T \in \pgrass{\adim}{\vdim}$; hence,
    \begin{displaymath}
        d_{\tilde{F}}(T) = d_{\log \circ F}(T) = d_F(T) / F(T)
        \quad \text{and} \quad
        \uD (\tilde{F} - \tilde{G})(T) = d_{\tilde{F} - \tilde{G}}(T)
        \quad \text{for $T \in \pgrass{\adim}{\vdim}$} \,.
    \end{displaymath}
    Recall
    from~\ref{rem:BF_gradient} that
    \begin{displaymath}
        \transpose{P}_F = \grad \tilde{F}
        \quad \text{and} \quad
        \transpose{P}_G = \grad \tilde{G} \,;
    \end{displaymath}
    hence, once we fix $\xi$ (whose construction depends only on globally
    defined~$\pgrass{\adim}{\vdim}$ and~$\sigma$), applying~\ref{rem:nu_and_lip} with
    $\uD (\tilde{F} - \tilde{G})$ in place of~$f$ we conclude
    \begin{multline}
        \label{eq:PFPG_close}
        \| (\transpose{P}_F(S) - \transpose{P}_G(S)) - (\transpose{P}_F(T) - \transpose{P}_G(T)) \|
        = \| (\uD (\tilde{F} - \tilde{G}) (S)) - (\uD (\tilde{F} - \tilde{G})(T)) \|
        \\
        \le \Gamma_{\text{\ref{rem:nu_and_lip}}} \boldsymbol{\nu}_{1} \bigl( \uD (\tilde{F} - \tilde{G}) \bigr) \| S - T \|
        \le \Gamma_{\text{\ref{rem:nu_and_lip}}} \boldsymbol{\nu}_{2} (\tilde{F} - \tilde{G}) \| S - T \|
        \qquad \text{for $S,T \in \pgrass{\adim}{\vdim}$} \,.
    \end{multline}
    Clearly the same estimate holds if we consider $(P_F,P_G)$ or $(Q_F,Q_G)$ or
    $(\transpose{Q}_F,\transpose{Q}_G)$ in place of~$(\transpose{P}_F,\transpose{P}_G)$.

    \smallskip
    
    Let $N_F : \End{\R^{\adim}} \to \End{\R^{\adim}}$. Set
    \begin{gather}
        N_G(T) = N_F(T) \circ \transpose{Q}_G(T)
        \quad \text{for $T \in \pgrass{\adim}{\vdim}$}
        \\
        \text{and assume} \quad
        \boldsymbol{\nu}_{1} (N_F) < \infty 
        \quad \text{and} \quad
        N_F(T) \circ \transpose{P}_F(T) = 0
        \quad \text{for $T \in \pgrass{\adim}{\vdim}$} \,.
    \end{gather}
    Using~\ref{rem:nu_and_lip}, as above, we conclude there exists
    $\Delta_{2} = \Delta_{2}(\adim,\vdim) > 0$ such that 
    \begin{displaymath}
        N_F(T) - N_G(T) = N_F(T) \circ \bigl( \transpose{P}_G(T) - \transpose{P}_F(T) \bigr)
        \quad \text{for $T \in \pgrass{\adim}{\vdim}$} \,;
    \end{displaymath}
    hence,
    \begin{displaymath}
        \boldsymbol{\nu}_{1} ( N_F - N_G )
        \le \boldsymbol{\nu}_{1}( N_F ) \boldsymbol{\nu}_{0} \bigl( \transpose{P}_G - \transpose{P}_F \bigr)
        + \boldsymbol{\nu}_{0}( N_F ) \boldsymbol{\nu}_{1} \bigl( \transpose{P}_G - \transpose{P}_F \bigr)
        \le \Delta_{2} \boldsymbol{\nu}_{1}( N_F ) \boldsymbol{\nu}_{2} ( \tilde{G} -\tilde{F} ) \,;
    \end{displaymath}
    thus,
    \begin{multline}
        \label{eq:NFNG_close}
        \| (N_F(T) - N_G(T)) - (N_F(S) - N_G(S)) \|
        \le \boldsymbol{\nu}_{1} ( N_F - N_G ) \| S - T \|
        \\
        \le \Delta_{2} \boldsymbol{\nu}_{1}( N_F ) \boldsymbol{\nu}_{2} ( \tilde{G} - \tilde{F} ) \| S - T \| \,.
    \end{multline}
    Similarly there is $\Delta_3 = \Delta_3(\adim,\vdim)$ (governed by
    $\sup \|\uD^2 \xi\| \lIm \pgrass{\adim}{\vdim} \rIm$) such that 
    \begin{multline}
        \label{eq:derivative_NFNG_close}
        \| \uD (N_F - N_G)(T) \|
        = \| \uD (N_F \circ \transpose{P}_G) (T) \|
        = \| \uD (N_F \circ (\transpose{P}_G - \transpose{P}_F)) (T) \|
        \\
        \le \boldsymbol{\nu}_{0}(\uD N_F) \boldsymbol{\nu}_{0}(P_G - P_F)
        + \boldsymbol{\nu}_{0}(N_F) \boldsymbol{\nu}_{0}(\uD^2(\tilde{F} - \tilde{G}))
        \le \Delta_3 \boldsymbol{\nu}_{1}(N_F) \boldsymbol{\nu}_{2}(\tilde{F} - \tilde{G}) 
    \end{multline}
    for $T \in \pgrass{\adim}{\vdim} \cap \dmn \uD N_F \cap \dmn \uD N_G$. Set
    $\Gamma = \sup \{ \Gamma_{\text{\ref{rem:nu_and_lip}}}, \Delta_{2}, \Delta_3 \}$.
\end{remark}

\begin{theorem}
    \label{thm:QEC_stability}
    Assume $F \in \QEC(\Gamma,N_F)$ is of class~$\cnt{1,1}$ with $N_F$ of class~$\cnt{0,1}$. There
    exists $\varepsilon = \varepsilon(\adim,\vdim,\Gamma,F,N_F) > 0$ such that if $G$ is
    a~$\vdim$-integrand of class~$\cnt{1,1}$ satisfying
    \begin{displaymath}
        \boldsymbol{\nu}_{2}(\log \circ F - \log \circ G) \le \varepsilon \,,
    \end{displaymath}
    then there exists $\bar{N}_G$ such that $G \in \QEC(\tfrac 12 \Gamma, \bar{N}_G)$.
\end{theorem}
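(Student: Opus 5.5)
We take $N_G(T) = N_F(T) \circ \transpose{Q}_G(T)$ as in~\ref{rem:FG_close} and set $\bar{N}_G = N_G/\|N_G\|$. This renormalisation is legitimate because $\|N_G(T) - N_F(T)\| \le \boldsymbol{\nu}_0(\transpose{P}_G - \transpose{P}_F) \lesssim \varepsilon$, so $\|N_G(T)\| \in [1-C\varepsilon, 1+C\varepsilon]$.

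\emph{Compatibility conditions.} The condition $N_G(T) \circ \transpose{P}_G(T) = 0$ holds because $\transpose{Q}_G(T) \circ \transpose{P}_G(T) = 0$. The other condition, $\transpose{P}_G(T) \circ N_G(T) = 0$, is not automatic. We have $\transpose{P}_G(T) = \transpose{(T \circ P_G(T))}$-type, with image $\im \transpose{P_G}(T)$ and kernel $(\im T)^{\perp}$, since $P_G(T)=T\circ P_G(T)$ gives $\transpose{P_G}(T) = \transpose{P_G}(T)\circ T$. It therefore suffices that $\im N_G(T) \subseteq \im T^{\perp}$, i.e.\ $T\circ N_G(T)=0$. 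From $\transpose{P}_F(T)\circ N_F(T)=0$ and $\ker \transpose{P}_F(T) = \ker T$ we get $T \circ N_F(T) = 0$, and hence $T\circ N_G(T) = T\circ N_F(T)\circ\transpose{Q}_G(T)=0$. Both compatibility conditions in~\eqref{eq:QEC_compatibility} therefore hold for $\bar{N}_G$.

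\emph{The quadratic inequality.} Fix $T$ and set $h(S) = P_G(S)\bullet N_G(T) - P_F(S)\bullet N_F(T)$. We want $|h(S)| \le C\varepsilon\|S-T\|^2$, with $C$ depending on $\adim,\vdim,\boldsymbol{\nu}_1(N_F)$. First, $h(T)=0$, because both pairings vanish at $S=T$ (the trace of $\transpose{N}(T)\circ T^{\perp}\circ T\circ P(T)$). Next, write
\begin{displaymath}
h(S) = (P_G(S)-P_F(S))\bullet N_G(T) + P_F(S)\bullet (N_G(T)-N_F(T)) .
\end{displaymath}
Each term is $S\mapsto$(a $\cnt{1,1}$ function on the Grassmannian). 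For the first, $S\mapsto (P_G-P_F)(S)\bullet N_G(T)$ has Lipschitz tangential derivative bounded by $\Gamma\boldsymbol{\nu}_2(\tilde G-\tilde F)$ by~\eqref{eq:PFPG_close} applied to $d$ of $\tilde G - \tilde F$. So a second-order Taylor expansion along a geodesic from $T$, as in~\ref{rem:nu_and_lip}, gives
\begin{displaymath}
|h(S)| \le |\uD h(T)(S-T)| + C\varepsilon\|S-T\|^2 .
\end{displaymath}
The crux is that the first-order term vanishes: $d_h(T)=0$. For $X\in\Tan(\pgrass{\adim}{\vdim},T)$ we have $\uD(P_F\bullet N_F(T))(T)X=0$, since $T$ is a minimum of $S\mapsto P_F(S)\bullet N_F(T)\ge0$. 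For $G$ we use~\ref{rem:EC_implies_NP}: since $P_G(T)$ is idempotent, \eqref{eq:DPF} gives $\uD P_G(T)X\bullet N_G(T) = \uD P_G(T)X\bullet (N_G(T)\circ \transpose{P_G}(T))$ (using $T\circ N_G=0$), and this vanishes because $N_G(T)\circ\transpose{P}_G(T)=0$. Hence $d_h(T)=0$, and with the Lipschitz bounds of~\ref{rem:FG_close} (notably~\eqref{eq:NFNG_close} and~\eqref{eq:derivative_NFNG_close}, which control the Lipschitz constant of $d_h$ uniformly in $T$ by $C\varepsilon$) we obtain $|h(S)|\le C\varepsilon \ell(S,T)^2\le C'\varepsilon\|S-T\|^2$.

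\emph{Conclusion.} Therefore
\begin{displaymath}
P_G(S)\bullet\bar N_G(T)\ge (1+C\varepsilon)^{-1}(\Gamma - C'\varepsilon)\|S-T\|^2 \ge \tfrac12\Gamma\|S-T\|^2
\end{displaymath}
for $\varepsilon$ small. Continuity of $\bar N_G$ is clear from the continuity of $N_F$ and $\transpose{Q}_G$. The main obstacle is the uniform second-order estimate on $h$. The second term involves $P_F$ (only $\cnt{1,1}$) times the difference $N_G(T)-N_F(T) = N_F(T)\circ(\transpose{P}_G-\transpose{P}_F)(T)$, which is constant in $S$ but only $O(\varepsilon)$, not $O(\varepsilon\|S-T\|)$. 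To handle it, we plan to regroup $h$ as $(P_G(S)-P_G(T))\bullet N_G(T) - (P_F(S)-P_F(T))\bullet N_F(T)$, which is legitimate since both subtracted terms vanish. The Taylor argument should then be applied to this combined expression, using only that $d_{P_G}-d_{P_F}$ is small in Lipschitz norm and that $N_G(T)-N_F(T)$ is $O(\varepsilon)$ while $P_F(S)-P_F(T)-\uD P_F(T)(S-T)=O(\|S-T\|^2)$ and the first-order parts cancel by the vanishing of $d_h(T)$.
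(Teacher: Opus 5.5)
Your choice of $N_G$, the normalisation, and the two compatibility relations are correct, including the derivation of $T\circ N_F(T)=0$ from $\ker\transpose{P}_F(T)=\ker T$. So is the vanishing of the first variation of $S\mapsto P_G(S)\bullet N_G(T)$ at $S=T$.

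\textbf{The gap.} It lies in the quadratic estimate $|h(S)|\le C\varepsilon\|S-T\|^2$. A second-order Taylor expansion of $h$ about the single point $T$ needs $d_h$ to be $O(\varepsilon)$-Lipschitz, but the hypotheses give one derivative less than that.
\begin{itemize}
\item \ref{rem:FG_close}\eqref{eq:PFPG_close} bounds the Lipschitz constant of $P_G-P_F$ \emph{itself}, i.e.\ of $\grad(\tilde F-\tilde G)$, which is exactly what $\boldsymbol{\nu}_2(\tilde F-\tilde G)$ controls. It does not bound the Lipschitz constant of its tangential derivative; that would require control of a third derivative of $\tilde F-\tilde G$.
\item $F$ and $G$ are only of class~$\cnt{1,1}$, so $P_F$ and $P_G$ are merely Lipschitz. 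Hence $S\mapsto P_F(S)\bullet N$ is not $\cnt{1,1}$, and $\uD P_F(T)$ need not exist at the given~$T$. The remainder bound $P_F(S)-P_F(T)-\uD P_F(T)(S-T)=O(\|S-T\|^2)$, on which your regrouping relies, is therefore unavailable.
\item The bounds \eqref{eq:NFNG_close} and \eqref{eq:derivative_NFNG_close} that you cite concern the variation of $N$. Since $N_\theta(T)$ is frozen in $h$, that variation does not enter $d_h$ at all as you have set things up.
\end{itemize}

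\textbf{The missing idea.} Use the first-order vanishing at \emph{every} point of a geodesic, not only at~$T$. Your argument via \eqref{eq:DPF} and the compatibility relations works wherever the derivative exists, and for both $\theta\in\{F,G\}$. Let $\gamma$ be a unit-speed geodesic from $T$ to $S$ and set $\dot P_\theta=(P_\theta\circ\gamma)'$. Then $\dot P_\theta(t)\bullet N_\theta(\gamma(t))=0$ for $\LM^{1}$~almost all~$t$, and $h\circ\gamma$ is Lipschitz. Subtracting these relations gives
\begin{displaymath}
    \tfrac{\ud}{\ud t}\, h(\gamma(t))
    = (\dot P_G-\dot P_F)(t)\bullet\bigl(N_G(T)-N_G(\gamma(t))\bigr)
    + \dot P_F(t)\bullet\bigl((N_G-N_F)(T)-(N_G-N_F)(\gamma(t))\bigr) \,.
\end{displaymath}
The right-hand side is at most $C\varepsilon t$. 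This uses only $\Lip(P_G-P_F)\le C\varepsilon$ from \eqref{eq:PFPG_close}, $\Lip(N_G-N_F)\le C\varepsilon$ from \eqref{eq:NFNG_close}, and Lipschitz bounds on $P_F$ and~$N_G$. Integrating over $[0,\ell(S,T)]$ and using~\ref{rem:nu_and_lip} gives $|h(S)|\le C'\varepsilon\|S-T\|^2$, and your conclusion then goes through.

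This is the mechanism of the paper's proof. It splits $P_\theta(S)\bullet N_\theta(T)$ into two parts:
\begin{itemize}
\item the symmetric part $\Phi_\theta(S,T)=(P_\theta(S)-P_\theta(T))\bullet(N_\theta(T)-N_\theta(S))$, whose $F$--$G$ difference is a product of two first-order differences;
\item the antisymmetric part~$\Psi_\theta$, which it integrates along the geodesic after subtracting orthogonality relations at each~$\gamma(t)$.
\end{itemize}
Those relations include $P_\theta\bullet\uD N_\theta=0$ from~\ref{rem:QEC_compatibility_condition}, which your frozen-$T$ function $h$ does not even need.
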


\begin{proof}
    Let $G$ be a~$\vdim$-integrand of class~$\cnt{1,1}$ and define $\tilde{F}$ and $\tilde{G}$ as
    in~\ref{rem:FG_close}. Recall~\ref{def:aux-objects} and set
    \begin{displaymath}
        N_G(T) = N_F(T) \circ \transpose{Q}_G(T)
        \quad \text{and} \quad
        \bar{N}_G(T) = N_G(T) / \|N_G(T)\|
        \quad \text{for $T \in \pgrass{\adim}{\vdim}$} \,.
    \end{displaymath}
    Then, by~\ref{rem:basic_QEC} we get
    \begin{gather}
        \im N_F(T) = \ker \transpose{P}_F(T) = \im T^{\perp} = \ker \transpose{P}_G(T)
        \\
        \text{we get} \quad
        N_G(T) \circ \transpose{P}_G(T) = 0
        \quad \text{and} \quad
        \transpose{P}_G(T) \circ N_G(T) = 0
        \quad \text{for $T \in \pgrass{\adim}{\vdim}$} 
    \end{gather}
    and it suffices to verify that for some $0 < \varepsilon < \infty$ if
    $\boldsymbol{\nu}_{2}(\tilde{F} - \tilde{G}) \le \varepsilon$, then
    \begin{equation}
        \label{eq:QEC_substantial}
        \inf \bigl\{ P_G(S) \bullet \bar{N}_G(T) \| S - T \|^{-2}  : S,T \in \pgrass{\adim}{\vdim} ,\, S \ne T \bigr\}
        > \tfrac 12 \Gamma  \,.
    \end{equation}
    Observe that (cf.~\ref{rem:sigma_props}\eqref{eq:tan_grass} and recall \ref{def:perp}
    and~\ref{def:Aj_and_Proj})
    \begin{gather}
        \Tan(\Proj{\adim}{\vdim},P) = \bigl\{ P \circ X \circ P^{\perp} + P^{\perp} \circ X \circ P : X \in \End{\R^{\adim}} \bigr\}
        \quad \text{for $P \in \Proj{\adim}{\vdim}$} \,,
        \\
        \text{and} \quad
        P_G(T) = T \circ P_G(T) = P_F(T) \circ T \circ P_G(T)
        \quad \text{for $T \in \pgrass{\adim}{\vdim}$} \,;
    \end{gather}
    hence, if $T \in \pgrass{\adim}{\vdim}$ is a point of differentiability of~$P_G$, then
    \begin{align}
      \uD P_G(T)Z \bullet N_G(T)
      &= \trace\!\big(\transpose{\bar{N}}_G(T) \circ \uD P_G(T)Z\big)
      \\
      &= \trace\!\big(Q_G(T)
        \circ \transpose{N}_F(T)
        \circ \uD P_G(T)Z\big)
      \\
      &= \trace\!\big(\uD P_G(T)Z
        \circ Q_G(T)
        \circ \transpose{N}_F(T)\big)
      \\
      &= \trace\!\big(P_G(T)
        \circ \uD P_G(T)Z
        \circ Q_G(T)
        \circ \transpose{N}_F(T)\big)
      \\
      &= \trace\!\big(\uD P_G(T)Z
        \circ Q_G(T)
        \circ \transpose{N}_F(T)
        \circ P_G(T)\big)
      \\
      &= \trace\!\big(\uD P_G(T)Z
        \circ Q_G(T)
        \circ \big(\transpose{N}_F(T) \circ P_F(T)\big)
        \circ T
        \circ P_G(T)\big)
      \\
      &= 0
        \qquad \text{for $Z \in \Tan(\pgrass{\adim}{\vdim},T)$} \,.
    \end{align}
    Applying~\ref{rem:QEC_compatibility_condition}\eqref{eq:abstract_compatibility}\eqref{eq:abstract_critical_point}
    with $P_G$ and $N_G$ in place of $P$ and $N$ one gets
    \begin{multline}
        \uD P_G(T)Z \bullet N_G(T) = 0
        \quad \text{and} \quad
        P_G(T)\bullet \uD N_G(T)Z = 0
        \\
        \text{for $T \in \dmn \uD P_G \cap \dmn \uD N_G$ and $Z \in \Tan(\pgrass{\adim}{\vdim},T)$} \,.
    \end{multline}
    For $\theta \in \{F,G\}$ and $S,T \in \pgrass{\adim}{\vdim}$ set
    \begin{gather}
        \Phi_\theta(S,T)
        = P_\theta(S) \bullet N_\theta(T)
        + P_\theta(T) \bullet N_\theta(S)
        \\
        \text{and} \quad
        \Psi_\theta(S,T)
        = P_\theta(S) \bullet N_\theta(T)
        - P_\theta(T) \bullet N_\theta(S).
    \end{gather}
    Then
    \begin{displaymath}
        2\lvert P_F(S)\bullet N_F(T) - P_G(S)\bullet N_G(T)\rvert
        \leq \lvert \Phi_F(S,T) - \Phi_G(S,T) \rvert
        + \lvert \Psi_F(S,T) - \Psi_G(S,T) \rvert .
    \end{displaymath}
    We shall first estimate $\lvert \Phi_F(S,T) - \Phi_G(S,T) \rvert$.  Since
    $P_\theta(T)\bullet N_\theta(T)=0$ for $T \in \pgrass{\adim}{\vdim}$ and $\theta\in\{F,G\}$
    we~obtain
    \begin{align}
        \Phi_F(S,T)-\Phi_G(S,T)
        &= P_F(S)\bullet N_F(T)
        + P_F(T)\bullet N_F(S) \\
        &\quad - P_G(S)\bullet N_G(T)
        - P_G(T)\bullet N_G(S) \\
        &= (P_F(S)-P_F(T))\bullet (N_F(T)-N_F(S)) \\
        &\quad - (P_G(S)-P_G(T))\bullet (N_G(T)-N_G(S)) \\
        &= (P_F(S)-P_F(T))
        \bullet\big((N_F(T)-N_G(T))-(N_F(S)-N_G(S))\big) \\
        &\quad + \big((P_F(S)-P_G(S))-(P_F(T)-P_G(T))\big)
        \bullet (N_G(T)-N_G(S)).
    \end{align}
    Using the observations made in~\ref{rem:FG_close} simple computation shows there exists
    $\Delta_{1} = \Delta_{1}(\adim,\vdim)$ such that
    $\boldsymbol{\nu}_{1}(N_G) \le \Delta_{1} \boldsymbol{\nu}_{1}(N_F) (1 + \boldsymbol{\nu}_{2}(\tilde{G}))$; hence,
    employing~\ref{rem:FG_close}\eqref{eq:PFPG_close}\eqref{eq:NFNG_close}, we get
    $\Delta_{2} = \Delta_{2}(\adim,\vdim)$ such that
    \begin{multline}
        \label{eq:diff_Phi_estimate}
        | \Phi_F(S,T) - \Phi_G(S,T) |
        \le \bigl( \Gamma_{\text{\ref{rem:FG_close}}}^2 \boldsymbol{\nu}_{2}(\tilde{F}) \boldsymbol{\nu}_{1}(N_F) 
        + \Gamma_{\text{\ref{rem:FG_close}}} \boldsymbol{\nu}_{1}(N_G) \bigr)
        \boldsymbol{\nu}_{2}(\tilde{F}-\tilde{G}) \| S - T \|^2
        \\
        \le \Delta_{2} \boldsymbol{\nu}_{1}(N_F)
        \bigl( 2 + \boldsymbol{\nu}_{2}(\tilde{F}) + \boldsymbol{\nu}_{2}(\tilde{G}) \bigr)
        \boldsymbol{\nu}_{2}(\tilde{F}-\tilde{G}) \| S - T \|^2 \,.
    \end{multline}

    We estimate the second term $\lvert \Psi_F(S,T)-\Psi_G(S,T)\rvert$ as follows. Recall the
    definition of $\ell$ from~\ref{rem:nu_and_lip}. Let $\gamma : \R \to \pgrass{\adim}{\vdim}$ be
    a~unit-speed geodesic such that $\gamma(0)=T$ and $\gamma(\ell(S,T))=S$. For
    $\theta \in \{F,G\}$ both $P_\theta \circ \gamma$ and $N_\theta \circ \gamma$ are Lipschitzian
    and the pairs $(P_F,N_F)$ and $(P_G,N_G)$ satisfy the
    conditions~\ref{rem:QEC_compatibility_condition}\eqref{eq:abstract_compatibility}\eqref{eq:abstract_critical_point},
    we get
    \begin{align*}
        \Psi_\theta(S,T)
        &= P_\theta(S)\bullet N_\theta(T) - P_\theta(T)\bullet N_\theta(S) \\
        &= (P_\theta(S) - P_\theta(T))\bullet N_\theta(T)
        - P_\theta(T)\bullet (N_\theta(S) - N_\theta(T)) \\
        &= \textint{0}{\ell(S,T)}
        \big(
        (\uD P_\theta(\gamma(t)) \gamma'(t)) \bullet N_\theta(T)
        - P_\theta(T)\bullet \uD N_\theta(\gamma(t)) \gamma'(t) 
        \big) \ud \LM^1(t) \,.
    \end{align*}
    Using the orthogonality relations from~\ref{rem:QEC_compatibility_condition} we may rewrite the
    integrand as follows
    \begin{multline}
        \Psi_\theta(S,T)
        = \textint{0}{\ell(S,T)}
        \big(
        (\uD P_\theta(\gamma(t)) \gamma'(t) )\bullet (N_\theta(T)-N_\theta(\gamma(t)))
        \\
        - (P_\theta(T)-P_\theta(\gamma(t))) \bullet \uD N_\theta(\gamma(t)) \gamma'(t)
        \big) \ud \LM^1(t) \,.
    \end{multline}
    For brevity of the notation let us define for $t \in \R$ and $\theta \in \{F,G\}$
    \begin{gather}
        A_\theta(t) = \uD P_\theta(\gamma(t))\gamma'(t) \,,
        \qquad
        B_\theta(t) = N_\theta(T) - N_\theta(\gamma(t)) \,,
        \\
        C_\theta(t) = P_\theta(T) - P_\theta(\gamma(t)) \,,
        \qquad
        D_\theta(t) = \uD N_\theta(\gamma(t)) \gamma'(t) \,.
    \end{gather}
    Then
    \begin{multline}
        \Psi_F(S,T) - \Psi_G(S,T)
        = \textint{0}{\ell(S,T)} \big( A_F(t) \bullet B_F(t) - A_G(t)\bullet B_G(t) \big) \ud \LM^1(t)
        \\
        - \textint{0}{\ell(S,T)} \big( C_F(t)\bullet D_F(t) - C_G(t)\bullet D_G(t) \big) \ud \LM^{1}(t)
        \\
        = \textint{0}{\ell(S,T)} \big( (A_F - A_G) \bullet B_F + A_G \bullet (B_F - B_G) \big) \ud \LM^{1}
        \\
         - \textint{0}{\ell(S,T)} \big( (C_F - C_G) \bullet D_F + C_G \bullet (D_F - D_G) \big) \ud \LM^{1}
        \,.
    \end{multline}
    Note that
    \begin{gather}
        (A_F - A_G) \bullet B_F
        = \big( \uD P_F(\gamma(t))\gamma'(t) - \uD P_G(\gamma(t))\gamma'(t) \big) \bullet
        \big( N_F(T) - N_F(\gamma(t)) \big) \,,
        \\
        A_G \bullet (B_F - B_G)
        = \uD P_G(\gamma(t))\gamma'(t) \bullet
        \big( (N_F(T) - N_G(T)) - (N_F(\gamma(t)) - N_G(\gamma(t))) \big) \,,
        \\
        (C_F - C_G) \bullet D_F
        = \big( (P_F(T) - P_G(T)) - (P_F(\gamma(t)) - P_G(\gamma(t))) \big) \bullet
        \uD N_F(\gamma(t))\gamma'(t) \,,
        \\
        C_G \bullet (D_F - D_G)
        = \big( P_G(T) - P_G(\gamma(t)) \big) \bullet
        \big( \uD N_F(\gamma(t))\gamma'(t) - \uD N_G(\gamma(t))\gamma'(t) \big) \,;
    \end{gather}
    hence, combining~\ref{rem:nu_and_lip}
    and~\ref{rem:FG_close}\eqref{eq:PFPG_close}\eqref{eq:NFNG_close}\eqref{eq:derivative_NFNG_close},
    we find $\Delta_3 = \Delta_3(\adim,\vdim)$ and $\Delta_4 = \Delta_4(\adim,\vdim)$ such that
    \begin{multline}
        \label{eq:diff_Psi_estimate}
        \lvert \Psi_F(S,T) - \Psi_G(S,T) \rvert 
        \le \Delta_3 \boldsymbol{\nu}_{1}(N_F) (1+\boldsymbol{\nu}_{2}(\tilde{G})) \boldsymbol{\nu}_{2}(\tilde{F}-\tilde{G})
        \textint{0}{\ell(S,T)} \|T-\gamma(t)\| \ud \LM^{1}(t)
        \\
        \le \Delta_4 \boldsymbol{\nu}_{1}(N_F) (1+\boldsymbol{\nu}_{2}(\tilde{G})) \boldsymbol{\nu}_{2}(\tilde{F}-\tilde{G}) \| S - T \|^2
        \,.
    \end{multline}
    Putting~\eqref{eq:diff_Phi_estimate} and~\eqref{eq:diff_Psi_estimate} together we get
    \begin{multline}
        \| P_F(S) \bullet N_F(T) - P_G(S) \bullet N_G(T) \|
        \le \Delta_5 \boldsymbol{\nu}_{1}(N_F) ( 2 + \boldsymbol{\nu}_{2}(\tilde{F}) + \boldsymbol{\nu}_{2}(\tilde{G}) )
        \boldsymbol{\nu}_{2}(\tilde{F}-\tilde{G}) \| S - T \|^2
        \\
        \le \Delta_5 \boldsymbol{\nu}_{1}(N_F) ( 2 + 2 \boldsymbol{\nu}_{2}(\tilde{F}) + \boldsymbol{\nu}_{2}(\tilde{F}-\tilde{G}) )
        \boldsymbol{\nu}_{2}(\tilde{F}-\tilde{G}) \| S - T \|^2
        \,,
    \end{multline}
    where $\Delta_5 = \Delta_4 + \Delta_{2}$. Set
    \begin{gather}
        \varepsilon = \tfrac 14 \inf \bigl\{ 1 ,\,
        \Gamma ( \Delta_5 \boldsymbol{\nu}_{1}(N_F) ( 2 + 2 \boldsymbol{\nu}_{2}(\tilde{F}) + 1 ) )^{-1}
        \bigr\}
        \\
        \text{so that} \quad
        \varepsilon \Delta_5 \boldsymbol{\nu}_{1}(N_F) ( 2 + 2 \boldsymbol{\nu}_{2}(\tilde{F}) + \boldsymbol{\nu}_{2}(\tilde{F}-\tilde{G}) )
        \le \tfrac 14 \Gamma
        \quad \text{given $\boldsymbol{\nu}_{2}(\tilde{F}-\tilde{G}) \le 1$}\,.
    \end{gather}
    Assume $\boldsymbol{\nu}_{2}(\tilde{F} - \tilde{G}) \le \varepsilon$. Since
    $F \in \QEC(\Gamma,N_F)$, we have for $S,T \in \pgrass{\adim}{\vdim}$
    \begin{gather}
        P_F(S) \bullet N_F(T) \ge \Gamma \| S - T \|^2 
        \quad \text{and we get} \quad 
        P_G(S)\bullet N_G(T)
        \geq \tfrac 34 \Gamma \| S - T \|^2 \,.
    \end{gather}
    Note that for $S,T \in \pgrass{\adim}{\vdim}$ we have
    \begin{displaymath}
        N_G(T) = N_F(T) - N_F(T) \circ (\transpose{P}_G(T) - \transpose{P}_F(T)) \,;
        \quad \text{thus,} \quad
        \| N_G(T) \| \ge 1 - \boldsymbol{\nu}_{1}(\tilde{F} - \tilde{G}) \ge \tfrac 34
    \end{displaymath}
    so we get
    \begin{displaymath}
        P_G(S) \bullet \bar{N}_G(T)
        \geq \tfrac{9}{16} \Gamma \| S - T \|^2 
    \end{displaymath}
    and conclude $G \in \QEC(\tfrac 12 \Gamma, \bar{N}_G)$.
\end{proof}

\begin{remark}
    The above proof is based on the proof of~\cite[Proposition~3.10]{DeRosa2022}.
\end{remark}

\begin{definition}
    Whenever $U \subseteq \R^{\adim}$ is open, $F$ is an integrand, and
    $V \in \Var{\vdim}(U)$ we define $V_F \in \Var{\vdim}(U)$ by
    \begin{displaymath}
        V_F(\alpha) = \textint{}{} \alpha(x,T) F(T) \ud V(x,T)
        \quad \text{for $\alpha \in \ccspace{U \times \pgrass{\adim}{\vdim}}$} \,.
    \end{displaymath}
\end{definition}

\begin{proposition}[\protect{Caccioppoli-type inequality}]
    \label{prop:tilt-height}
    Assume $F$ is an integrand of class~$\cnt{2}$ which satisfies~\QEC{} and
    $\zeta : U \to \R \cap \{ t : 0 \le t \le 1 \}$ is~smooth. There exists $\Gamma \in \R$ such
    that for all $V \in \Var{\vdim}(U) \cap \{ W : \|\delta_FW\| \text{ is Radon} \}$ and
    $T \in \pgrass{\adim}{\vdim}$
    \begin{multline}
        \textint{}{}
        \zeta(x)^2
        \| S - T \|^2 \ud V_F(x,S)
        \le \Gamma  \max \Bigl\{
            \textint{}{} \zeta(x)^2 |T^{\perp}(x-a)| \ud \|\delta_FV\|(x)
            \,,
            \\
            \textint{}{} |\grad \zeta(x)|^2 |T^{\perp}(x-a)|^2 \ud \|V_F\|(x)
        \Bigr\}
        \,.
    \end{multline}
\end{proposition}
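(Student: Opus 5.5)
The plan is to run Allard's tilt-excess argument with the vectorfield $g(x) = \zeta(x)^2 N(T)\,(x-a)$, where $N = N(T)$ is the unit endomorphism provided by \QEC{}; recall from~\ref{rem:basic_QEC} that $\im \transpose{P}_F(T) = \ker N(T)$ and $N(T) = T^{\perp}\circ N(T)$ (the latter up to composing with $T^\perp$, which is where the compatibility relations~\eqref{eq:QEC_compatibility} enter). Since $\im N \subseteq \im T^{\perp}$ we have $N(x-a) = N\,T^{\perp}(x-a)$ only if $N = N\circ T^{\perp}$; this is not assumed. I would therefore use instead $g(x) = \zeta(x)^2\, \transpose{Q_F}(T)\circ \transpose{N}\,\ldots$; more concretely, the first thing to fix is an endomorphism $M$ with $|g(x)| \lesssim \zeta^2 |T^\perp(x-a)|$. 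The natural choice is $g(x) = \zeta(x)^2\, N\, T^{\perp}(x-a)$. Then $\uD g(x) = \zeta^2 N\circ T^{\perp} + 2\zeta\, (N T^{\perp}(x-a)) \otimes \grad\zeta$.

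Step one: plug $g$ into the first variation~\eqref{eq:intro_fst_var}, written via~\ref{rem:AC_orig} as $\int B_F(S)\bullet \uD g \ud V = \int \transpose{P_F}(S) \bullet \uD g(x) \ud V_F(x,S)$. The main term is $\int \zeta^2\, P_F(S) \bullet \transpose{(N\circ T^{\perp})}\ud V_F$. Here I must relate $P_F(S)\bullet \transpose{(N T^{\perp})}$ to $P_F(S)\bullet N(T)$: writing $N T^\perp = N - N T$ and using $N(T)\circ\transpose{P}_F(T)=0$, the difference $P_F(S)\bullet (N\circ T)$ vanishes at $S=T$ together with its first derivative (since $T$ minimises $S\mapsto P_F(S)\bullet N(T)$, cf.~\ref{rem:QEC_compatibility_condition}); hence it is bounded by $C\|S-T\|^2$ with $C$ depending on $\boldsymbol{\nu}_2$ of $F$. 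This is the delicate point: a quadratic error competing with $\Gamma\|S-T\|^2$. I would try to exploit the exact identity $P_F(S)\circ T$-structure: $\transpose{P_F}(T)\circ N=0$ together with $\im P_F(S)=\im S$ should give $P_F(S)\bullet (N\circ T) = \trace(\transpose{T}\transpose{N}P_F(S))$, and expanding $S$ as a graph over $T$ this term is $O(\|S-T\|^2)$ with coefficient controlled by second-order data; if it cannot be absorbed, I would replace the test map by $g = \zeta^2 N(x-a)$ and control $|N(x-a)|$ using $\ker N = \im\transpose{P_F}(T)$, which gives $|N(x-a)| \le C|T^\perp(x-a)|$ after a suitable oblique decomposition — I expect this second route to be the correct one and the main obstacle is exactly this reduction to $|T^\perp(x-a)|$.

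Step two: with the main term bounded below by $\Gamma\int\zeta^2\|S-T\|^2\ud V_F$ via~\eqref{eq:QEC_uniform_conv}, estimate the remaining pieces. The left side of the variation is bounded by $\int \zeta^2 |T^\perp(x-a)| \ud\|\delta_FV\|$ up to a constant. The cross term $2\int\zeta\, \transpose{P_F}(S)\bullet (N(x-a)\otimes\grad\zeta)\ud V_F$ is handled by noting that at $S=T$ it vanishes ($\transpose{P_F}(T)\circ N = 0$), so $|\transpose{P_F}(S)\circ N| \le C\|S-T\|$ by~\ref{rem:nu_and_lip}; then Young's inequality $2ab\le \epsilon a^2+\epsilon^{-1}b^2$ with $a=\zeta\|S-T\|$, $b=|\grad\zeta||T^\perp(x-a)|$ absorbs half into the left side.

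Step three: collect, obtaining $\int\zeta^2\|S-T\|^2\ud V_F \le C(\int\zeta^2|T^\perp(x-a)|\ud\|\delta_FV\| + \int|\grad\zeta|^2|T^\perp(x-a)|^2\ud\|V_F\|)$, and bound the sum by twice the maximum. Uniformity in $T$ follows from compactness of $\pgrass{\adim}{\vdim}$ and continuity of $N$ and $P_F$.
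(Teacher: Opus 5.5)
There is a genuine gap, and it sits exactly where you place the ``main obstacle'': none of your test fields works, because $N(T)$ and $\transpose{N}(T)$ are mixed up.

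For $g(x)=\zeta(x)^2M(x-a)$ one has $B_F(S)\bullet\uD g(x)=F(S)\bigl(\zeta(x)^2\,\transpose{P_F}(S)\bullet M+2\zeta(x)\,\grad\zeta(x)\bullet P_F(S)M(x-a)\bigr)$. Two consequences follow.
\begin{enumerate}
\item The quantity controlled by \eqref{eq:QEC_uniform_conv}, namely $P_F(S)\bullet N(T)=\transpose{P_F}(S)\bullet\transpose{N}(T)$, appears only for $M=\transpose{N}(T)$.
\item The cross term is $O(\|S-T\|)$ only if $P_F(T)\circ M=0$. The relation you invoke, $\transpose{P_F}(T)\circ N(T)=0$, is not the one needed.
\end{enumerate}

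Your fallback $M=N(T)$ fails on every count.
\begin{enumerate}
\item Its main term $\transpose{P_F}(S)\bullet N(T)$ is not what \QEC{} controls. For $N(T)\propto\transpose{Q_F}(T)$, as in \USAC{}, it equals a positive multiple of $\vdim-|P_F(T)|^2$ at $S=T$, which is negative whenever $P_F(T)\ne T$.
\item $P_F(T)\circ N(T)\ne0$ in general, so the cross term is not small.
\item The estimate $|N(T)y|\le C|T^{\perp}y|$ is false. Indeed $\ker N(T)=\im\transpose{P_F}(T)=(\ker P_F(T))^{\perp}$, which differs from $\im T$ unless $P_F(T)=T$. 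From $N(T)\circ\transpose{P_F}(T)=0$ you only get $|N(T)y|\le|\transpose{Q_F}(T)y|$, an oblique height.
\end{enumerate}

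The choice $M=N(T)\circ T^{\perp}$ does not rescue this. Its main term $\transpose{P_F}(S)\bullet(N(T)\circ T^{\perp})$ vanishes at $S=T$. However, differentiating $S^{\perp}\circ P_F(S)=0$ shows that its derivative along $X\in\Tan(\pgrass{\adim}{\vdim},T)$ is $\trace\bigl(X\circ P_F(T)\circ N(T)\bigr)$, and this is non-zero in general. For example, with $N(T)\propto\transpose{Q_F}(T)$ it is a multiple of $\trace(X\circ P_F(T))$, which is non-zero for suitable $X$ once $P_F(T)\ne T$. So this term takes negative values near $T$. The minimality of $T$ for $S\mapsto P_F(S)\bullet N(T)$ gives no information about it, and there is no mere quadratic error to absorb.

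The missing idea is to test with $g(x)=\zeta(x)^2\,\transpose{N}(T)(x-a)$, as the paper does. Three facts then make everything work.
\begin{enumerate}
\item Transposing $N(T)=T^{\perp}\circ N(T)$ gives $\transpose{N}(T)=\transpose{N}(T)\circ T^{\perp}$. You already recorded the former; it follows from $\transpose{P_F}(T)\circ N(T)=0$ because $\ker\transpose{P_F}(T)=\im T^{\perp}$. Hence $|g(x)|\le\zeta(x)^2|T^{\perp}(x-a)|$, with no oblique decomposition at all.
\item The main term is exactly $\int\zeta(x)^2\,P_F(S)\bullet N(T)\ud V_F(x,S)$. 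This is at least $\Gamma\int\zeta(x)^2\|S-T\|^2\ud V_F(x,S)$, with $\Gamma$ the constant of \eqref{eq:QEC_uniform_conv}, and there is no error term.
\item Transposing the other relation $N(T)\circ\transpose{P_F}(T)=0$ gives $P_F(T)\circ\transpose{N}(T)=0$. Thus $P_F(S)\circ\transpose{N}(T)=(P_F(S)-P_F(T))\circ\transpose{N}(T)$, and the cross term is bounded by $2\Lip(P_F)\,\zeta(x)|\grad\zeta(x)|\,\|S-T\|\,|T^{\perp}(x-a)|$.
\end{enumerate}

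With this choice your Steps two and three go through as written. Your Young-inequality absorption is equivalent to the paper's H\"older inequality followed by a case distinction. The constants involve only the \QEC{} constant and $\Lip(P_F)$, so uniformity in $T$ is automatic.
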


\begin{proof}
    We follow the standard technique as in~\cite[8.13]{Allard1972} or~\cite[3.3]{Allard1986}.  Let
    $N : \pgrass{\adim}{\vdim} \to \End{\R^{\adim}}$ be the function whose existence is guaranteed
    by~\ref{def:QEC}.
    Define $g \in \VF(U)$ by~setting
    \begin{displaymath}
        g(x) = \zeta(x)^2 \transpose{N}(T) (x-a)
        \quad \text{for $x \in U$} \,.
    \end{displaymath}
    Set
    \begin{displaymath}
        \Delta_{0} = \sup \bigl\{ \|N(S)\| : S \in \pgrass{\adim}{\vdim} \bigr\} \cdot \Lip P_F < \infty \,.
    \end{displaymath}
    Since $\transpose{P_F}(T) \circ N(T) = 0$ we have
    $\im N(T) \subseteq \ker \transpose{P_F}(T) = \im T^{\perp}$ and
    $\transpose{N}(T) = \transpose{N}(T) \circ T^{\perp}$; moreover, because
    $N(T) \circ \transpose{P_F}(T) = 0$ we get
    $P_F(S) \circ \transpose{N}(T) = (P_F(S) - P_F(T)) \circ \transpose{N}(T)$; hence,
    \begin{multline}
        \bigl| \transpose{P}_F(S) \grad \zeta(x) \bullet \transpose{N}(T) (x-a) \bigr|
        = \bigl| \grad \zeta(x) \bullet (P_F(S) - P_F(T)) \circ \transpose{N}(T)  \circ T^{\perp} (x-a) \bigr|
        \\
        \le \Delta_{0} |\grad \zeta(x)| \cdot \| S - T \| \cdot |T^{\perp} (x-a)|
        \quad \text{for $x \in U$ and $S \in \pgrass{\adim}{\vdim}$}
        \,;
    \end{multline}
    employing now the H{\"o}lder inequality we obtain
    \begin{multline}
        \left| \textint{}{} \zeta(x)
            \transpose{P_F}(S) \grad \zeta(x) \bullet \transpose{N}(T) (x-a)
            \ud V_F(x,S) \right|
        \\
        \le \Delta_{0}
        \left( \textint{}{} |\grad \zeta(x)|^2 |T^{\perp}(x-a)|^2 \ud \| V_F \| (x) \right)^{1/2}
        \left( \textint{}{} \zeta(x)^2 \| S - T \|^2 \ud V_F(x,S) \right)^{1/2}
        \,.
    \end{multline}
    We have
    \begin{multline}
        \textint{}{} \zeta(x)^2 \boldsymbol{\eta}(V;x) \bullet \transpose{N}(T)(x-a) \ud \|\delta_FV\|(x)
        = \delta_FV(g)
        \\
        = 2 \textint{}{} \zeta(x) \transpose{P_F}(S) \grad \zeta(x) \bullet \transpose{N}(T)(x-a) \ud V_F(x,S)
        + \textint{}{} \zeta(x)^2 \transpose{P_F}(S) \bullet \transpose{N}(T) \ud V_F(x,S) \,;
    \end{multline}
    thus,
    \begin{multline}
        \label{eq:th:after-holder}
        \Gamma_{\text{\ref{def:QEC}}} ^{-2} \textint{}{} \zeta(x)^2 \|S-T\|^2 \ud V_F(x,S)
        \le \textint{}{} \zeta(x)^2 \transpose{P}_F(S) \bullet \transpose{N}(T) \ud V_F(x,S)
        \\
        \le 2 \Delta_{0} \left( \textint{}{} |\grad \zeta(x)|^2 |T^{\perp}(x-a)|^2 \ud \| V_F \| (x) \right)^{1/2}
        \left(  \textint{}{} \zeta(x)^2 \| S - T \|^2 \ud V_F(x,S) \right)^{1/2}
        \\
        + \Delta_{0} \textint{}{} \zeta(x)^2 |T^{\perp}(x-a)| \ud \| \delta_F V \| (x)
        \,.
    \end{multline}
    Now, there are two possibilities. Either
    \begin{multline}
        2 \left( \textint{}{} |\grad \zeta(x)|^2 |T^{\perp}(x-a)|^2 \ud \| V_F \| (x) \right)^{1/2}
        \left(  \textint{}{} \zeta(x)^2 \| S - T \|^2 \ud V_F(x,S) \right)^{1/2}
        \\
        > \textint{}{} \zeta(x)^2 |T^{\perp}(x-a)| \ud \| \delta_F V \| (x) \,,
    \end{multline}
    which yields
    \begin{multline}
        \Gamma_{\text{\ref{def:QEC}}} ^{-2} \textint{}{} \zeta(x)^2 \|S-T\|^2 \ud V_F(x,S)
        \\
        \le 4 \Delta_{0} \left( \textint{}{} |\grad \zeta(x)|^2 |T^{\perp}(x-a)|^2 \ud \| V_F \| (x) \right)^{1/2}
        \left(  \textint{}{} \zeta(x)^2 \| S - T \|^2 \ud V_F(x,S) \right)^{1/2}
    \end{multline}
    and then
    \begin{displaymath}
        \textint{}{} \zeta(x)^2 \|S-T\|^2 \ud V_F(x,S)
        \le 4 \Gamma_{\text{\ref{def:QEC}}} ^{2} \Delta_{0} \textint{}{} |\grad \zeta(x)|^2 |T^{\perp}(x-a)|^2 \ud \| V_F \| (x)
    \end{displaymath}
    or
    \begin{displaymath}
        \textint{}{} \zeta(x)^2 \|S-T\|^2 \ud V_F(x,S)
        \le 2 \Gamma_{\text{\ref{def:QEC}}} ^{2} \Delta_{0} \textint{}{} \zeta(x)^2 |T^{\perp}(x-a)| \ud \| \delta_F V \| (x) \,.
        \qedhere
    \end{displaymath}
\end{proof}

\begin{remark}
    \label{rem:QEC_regularity}
    \label{mr:regularity_missing_step}
    We have now provided all the ingredients to state an analogue of~\cite[Theorem~A]{DeRosa2022}.
    \begin{quote}
        Assume
        \begin{gather}
            \text{$F$ is a $\vdim$-integrand of class $\cnt{2}$, $F \in \QEC(\Gamma,N)$, $N$ is of class $\cnt{0,1}$} \,,
            \quad
            p > \vdim \,,
            \\
            \Omega \subseteq \R^{\vdim} \text{ is open and bounded} \,,
            \quad
            u : \Omega \to \R^{\codim} \,,
            \quad
            \Lip u < \infty \,,
            \\
            V = \var{\vdim}(\{ (x,u(x)) : x \in \Omega \}) \in \Var{\vdim}(\Omega \times \R^{\codim}) \,,
            \quad
            \mathbf{h}_F(V,\cdot) \in \Lp{p}(\|V\|,\R^{\adim}) \,.
        \end{gather}
        Then there exists $\alpha > 0$ and an open set $\Omega_{0} \subseteq \Omega$ with
        $\LM^{\vdim}(\Omega \without \Omega_{0}) = 0$ such that
        $u \in \cnt{1,\alpha}(\Omega_{0},\R^{\codim})$.
    \end{quote}
    Having at hand the Caccioppoli-type inequality~\ref{prop:tilt-height}, stability under
    $\cnt{1,1}$~perturbations~\ref{thm:QEC_stability} -- applicable since $F$ is of class~$\cnt{2}$,
    hence of class~$\cnt{1,1}$, and $N$ is of class~$\cnt{0,1}$ -- and the inclusion $\QEC{}
    \subseteq \AC{}$ furnished by~\ref{lem:QEC_implies_EC} and~\ref{prop:EC_in_AC}, we expect the
    proof to follow easily the proof of~\cite[Theorem~A]{DeRosa2022}. What remains to be supplied is
    the analogue of~\cite[Proposition~3.13]{DeRosa2022}: one passes to the nonparametric setting, in
    which $V$ is the graph of~$u$ and the anisotropic first variation becomes an~elliptic system
    for~$u$, and one~converts the information carried by~\QEC{} into a~uniform Legendre-Hadamard
    ellipticity estimate for~$F$ near each tangent plane, with constants depending only on $\adim$,
    $\vdim$, $\Gamma$, and the $\cnt{2}$~norm of~$F$. The passage from an~atomic-type condition to
    ellipticity of this kind is available in~\cite{DeRosa2020}, and~\ref{prop:tilt-height} together
    with~\ref{thm:QEC_stability} supplies what the excess-decay iteration of~\cite{DeRosa2022}
    consumes at each step; we~therefore expect no obstruction, but the details are not carried out
    in this paper.
\end{remark}


\section{Failure of the atomic condition for strictly polyconvex integrands}
\label{sec:AC1_failure}

\begin{miniremark}
    \label{mr:AC1_failure_overview}
    Recall from~\cite[Theorem~1.16]{DeRosa2025} that every \AC{} integrand of class~$\cnt{1}$ is
    strictly polyconvex. In this section we show that the converse fails in a~strong,
    dimension-independent way: for $\vdim = 2$ and $\adim = 5$ we construct an inner-product
    norm~$\varphi$ on $\extpower{2}\R^{5}$, real analytic and strictly convex, whose associated
    integrand violates condition~\ref{def:AC}\,\ref{i:AC:dim} -- that is, the condition called (AC1)
    in~\cite[Definition~3.1]{DeRosa2022} -- and we propagate the example to $\pgrass{\adim}{\vdim}$
    for all $\vdim \ge 2$ and $\adim - \vdim \ge 3$. Since~\ref{def:AC}\,\ref{i:AC:dim} alone already
    implies the rectifiability results of~\cite{DePhilippis2018}, the failure occurs at the
    qualitative core of the atomic condition and not merely in the rigidity
    part~\ref{def:AC}\,\ref{i:AC:dirac}. In particular, the characterisation of \AC{} valid in
    codimension one, \cite[Theorem~1.3]{DePhilippis2018}, does not extend to $2 \le \vdim \le \adim
    - 3$ in either sharpened form: strict polyconvexity is necessary for \AC{} but very far from
    sufficient.
\end{miniremark}

\begin{remark}
    \label{rem:BF_form_in_2d}
    Let $F$ be a~polyconvex $\vdim$-integrand associated to a~norm $\varphi :
    \extpower{\vdim}\R^{\adim} \to \R$ of class~$\cnt{1}$ away from the origin. Let $T \in
    \pgrass{\adim}{\vdim}$ and $\xi \in \ograss{\adim}{\vdim}$ satisfy $\Sp \xi = \im T$. Assume,
    additionally that $\vdim = 2$ and $\xi = u \wedge v$ with $u,v$ an orthonormal basis of~$\im
    T$. Recall~\ref{mr:rank_one_maps} and~\ref{rem:Hodge_star}. Setting $\eta = \grad \varphi(\xi)
    \in \extpower{2} \R^{\adim}$ there holds
    \begin{displaymath}
        B_F(T) = \gamma_1(v) \cdot \eta \restrict \gamma_1(u)
        - \gamma_1(u) \cdot \eta \restrict \gamma_1(v) \,.
    \end{displaymath}
    To verify this let $L = \gamma_1(b) \cdot a$ for some $a,b \in \R^{\adim}$. Since
    $\varkappa(L)\xi = (b \bullet u)\, a \wedge v + (b \bullet v)\, u \wedge a$, we obtain
    from~\ref{cor:BF_in_terms_of_pi} (together with~\ref{rem:BF_formula_direct})
    \begin{multline}
        B_F(T) \bullet L
        = (b \bullet u) \langle a \wedge v ,\, \gamma_2(\eta) \rangle
        + (b \bullet v) \langle u \wedge a ,\, \gamma_2(\eta) \rangle
        \\
        = (b \bullet u) \langle \eta ,\, \gamma_1(a) \wedge \gamma_1(v) \rangle
        + (b \bullet v) \langle \eta ,\, \gamma_1(u) \wedge \gamma_1(a) \rangle
        \\
        = - (b \bullet u) \langle \eta \restrict \gamma_1(v) ,\, \gamma_1(a) \rangle
        + (b \bullet v) \langle \eta \restrict \gamma_1(u) ,\, \gamma_1(a) \rangle
        \\
        = \bigl( - \gamma_1(u) \cdot \eta \restrict \gamma_1(v)
        + \gamma_1(v) \cdot \eta \restrict \gamma_1(u) \bigr) \bullet  L
        \,.
    \end{multline}
\end{remark}

\begin{remark}
    \label{rem:rotation_contraction}
    Note that if $R \in \orthgroup{\adim}$, then $\extpower{2}R$ is a~linear isometry of
    $\extpower{2}\R^{\adim}$ by~\cite[1.7.5]{Federer1969} and
    \begin{displaymath}
        ( \extpower{2}R\, \zeta) \restrict \gamma_1(Ru)
        = R ( \zeta \restrict \gamma_1(u) )
        \quad \text{for $u \in \R^{\adim}$ and $\zeta \in \extpower{2}\R^{\adim}$} \,,
    \end{displaymath}
    since $( \extpower{2}R\, \zeta) \restrict \gamma_1(Ru) \bullet y = \zeta \bullet
    \extpower{2}R^{-1}(Ru \wedge y) = \zeta \bullet (u \wedge R^{-1}y) = R ( \zeta \restrict
    \gamma_1(u) ) \bullet y$ for $y \in \R^{\adim}$.
\end{remark}

\begin{lemma}
    \label{lem:shear_norm}
    Let $V$ be a~Euclidean space, $m \in \natp$, $m \le \dim V$, $\zeta_1,\ldots,\zeta_m \in V$ be
    orthonormal, and $\eta_1,\ldots,\eta_m \in V$ satisfy (recall~\ref{mr:notation_citations} for
    the definition of $\Dirac{0}$)
    \begin{displaymath}
        \eta_i \bullet \zeta_j = \Dirac{0}\{i-j\}
        \quad \text{for $i,j \in \{1,\ldots,m\}$} \,.
    \end{displaymath}
    Define $E = \textsum{i=1}{m} \gamma_1(\eta_i - \zeta_i) \cdot \zeta_i \in \End{V}$ and
    $\varphi(x) = | x + \transpose{E} x |$ for $x \in V$. Then
    \begin{enumerate}
    \item
        \label{i:shear:norm}
        $\varphi$ is a~norm on~$V$ induced by the inner product $(x,y) \mapsto x \bullet My$, where
        $M = (\id{V} + E) \circ (\id{V} + \transpose{E})$ is symmetric and positive definite;
        in~particular, $\varphi$ is real analytic away from the origin and
        \begin{displaymath}
            \varphi(x + y) < \varphi(x) + \varphi(y)
            \quad \text{unless $x,y$ are non-negative multiples of a~common vector} \,;
        \end{displaymath}
    \item
        \label{i:shear:values}
        $\varphi(\zeta_j) = 1$ and $\grad \varphi(\zeta_j) = \eta_j$ for $j \in \{1,\ldots,m\}$.
    \end{enumerate}
\end{lemma}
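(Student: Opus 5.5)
The plan is to reduce everything to the invertibility of $\id{V} + \transpose{E}$ and then read off the norm properties from the Euclidean norm. First compute $E$ and its adjoint on the relevant vectors. By~\ref{mr:rank_one_maps}, $E x = \textsum{i=1}{m} ((\eta_i - \zeta_i) \bullet x)\, \zeta_i$, so $\transpose{E} y = \textsum{i=1}{m} (\zeta_i \bullet y)(\eta_i - \zeta_i)$. Consequently
\begin{displaymath}
    (\id{V} + \transpose{E}) y = y - \textsum{i=1}{m} (\zeta_i \bullet y) \zeta_i + \textsum{i=1}{m} (\zeta_i \bullet y)\eta_i \,,
\end{displaymath}
and in particular $(\id{V} + \transpose{E})\zeta_j = \eta_j$, using orthonormality of the $\zeta_i$.

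Next I will prove injectivity of $A = \id{V} + \transpose{E}$. Write $Z = \lin\{\zeta_1,\ldots,\zeta_m\}$ and $y = z + w$ with $z \in Z$ and $w \in Z^{\perp}$. Then $A y = w + \textsum{i}{} (\zeta_i \bullet z) \eta_i$. If $Ay = 0$, pair with $\zeta_j$: since $w \perp \zeta_j$ and $\eta_i \bullet \zeta_j = \Dirac{0}\{i-j\}$, we get $\zeta_j \bullet z = 0$ for all $j$. Hence $z = 0$, and then $w = 0$. So $A$ is invertible, because $V$ is finite dimensional.

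With $A$ invertible, $\varphi = |A \ph|$ is a norm. Its square is $\varphi(x)^2 = Ax \bullet Ax = x \bullet \transpose{A} A x = x \bullet M x$, where $M = (\id{V} + E) \circ (\id{V} + \transpose{E})$ is symmetric and positive definite. Real analyticity away from $0$ follows because $\varphi = \sqrt{x \bullet Mx}$ and $x \bullet Mx > 0$ there. The strict triangle inequality transfers from the Euclidean one: $\varphi(x+y) = |Ax + Ay| \le |Ax| + |Ay|$, with equality exactly when $Ax$ and $Ay$ are non-negative multiples of a common vector. Pulling back by $A^{-1}$ gives the same statement for $x$ and $y$. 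This proves~\ref{i:shear:norm}.

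For~\ref{i:shear:values}, note $\varphi(\zeta_j) = |\eta_j|$, so there is one step that needs care: the claim $\varphi(\zeta_j) = 1$ requires $|\eta_j| = 1$. I do not see this following from the stated hypotheses unless $\eta_j$ is normalised, so I will look at the gradient first. Here $\grad \varphi(x) = \transpose{A} A x / |Ax|$, so $\grad \varphi(\zeta_j) = (\id{V} + E)\eta_j / |\eta_j|$. Computing, $E\eta_j = \textsum{i}{} ((\eta_i - \zeta_i) \bullet \eta_j)\zeta_i$, which is generally nonzero. So the formula as written seems to hold only under extra conditions. Most likely the intended definition of $E$, or the pairing convention in~\ref{mr:rank_one_maps}, makes $\transpose{A}$ send $\zeta_j$ to $\eta_j$, i.e. $A$ is the one fixing $Z^{\perp}$ and the normalisation $\varphi(\zeta_j) = \zeta_j \bullet \grad\varphi(\zeta_j) = \eta_j \bullet \zeta_j = 1$ comes from Euler's identity.

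This is the main obstacle: pinning down which of $E$ and $\transpose{E}$ acts so that simultaneously $|A\zeta_j| = 1$ and $\transpose{A}A\zeta_j = \eta_j$. I would try $A = \id{V} + \transpose{E}$ with the reversed roles of $\zeta$ and $\eta$ in the rank-one maps. I would verify directly that $\transpose{A}A\zeta_j = \eta_j$ and then get $\varphi(\zeta_j)^2 = \zeta_j \bullet \eta_j = 1$. If the convention cannot be reconciled this way, the fallback is to replace $\varphi$ by the inner-product norm $x \mapsto (x \bullet Mx)^{1/2}$ for a positive definite $M$ with $M\zeta_j = \eta_j$. That norm gives $\varphi(\zeta_j) = 1$ and $\grad\varphi(\zeta_j) = \eta_j$ at once, and the proof of~\ref{i:shear:norm} above applies to it unchanged.
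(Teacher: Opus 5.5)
Your diagnosis is correct, and your proposed repair is what the paper's proof does. With the convention $(\omega \cdot \zeta)(\xi) = \omega(\xi)\zeta$ of~\ref{mr:rank_one_maps} taken literally, $\transpose{E}\zeta_j = \eta_j - \zeta_j$. Hence $\varphi(\zeta_j) = |\eta_j|$, which is $\sqrt{17}$ for $\eta_1$ of~\ref{mr:cyclic_data}, so part (b) is false as literally stated.

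The paper's proof silently uses the reversed reading $E x = \textsum{i=1}{m} (\zeta_i \bullet x)(\eta_i - \zeta_i)$. It writes $\transpose{E}\zeta_j = \textsum{i=1}{m} ((\eta_i - \zeta_i) \bullet \zeta_j)\zeta_i = 0$ and $(\id{V} + E)\zeta_j = \eta_j$. This reading is also what~\ref{cor:AC1_family} relies on when it asserts $M\xi_i = \eta_i$. With it, (b) is immediate: $A = \id{V} + \transpose{E}$ fixes every $\zeta_j$, so $\varphi(\zeta_j) = 1$ and $\grad \varphi(\zeta_j) = \transpose{A} A \zeta_j = (\id{V} + E)\zeta_j = \eta_j$.

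There is one slip in your text. Under this reading $A$ fixes $Z$ pointwise, and it is $\transpose{A} = \id{V} + E$ that fixes $Z^{\perp}$.

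For (a), the paper uses $\transpose{E} \circ \transpose{E} = 0$, giving $(\id{V} + \transpose{E})^{-1} = \id{V} - \transpose{E}$; this holds under either reading. Your injectivity argument via $y = z + w$ is also fine, but it was written for the literal reading. Under the corrected one it reads: the $Z^{\perp}$-component of $Ay$ is $w$, and once $w = 0$ one has $Ay = z$.

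The fallback with an arbitrary positive definite $M$ satisfying $M\zeta_j = \eta_j$ is not needed. Producing such an $M$ is exactly what the lemma does.
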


\begin{proof}
    Since $\eta_i \bullet \zeta_j = \Dirac{0}\{i-j\} = \zeta_i \bullet \zeta_j$, we have $(\eta_i -
    \zeta_i) \bullet \zeta_j = 0$ for all $i,j$; hence, $\transpose{E} \zeta_j = \textsum{i=1}{m}
    \bigl( (\eta_i - \zeta_i) \bullet \zeta_j \bigr) \zeta_i = 0$ and $\transpose{E} \circ
    \transpose{E} = 0$. Consequently, $S = \id{V} + \transpose{E}$ is invertible with $S^{-1} =
    \id{V} - \transpose{E}$ and $\varphi(x) = |Sx|$. Therefore, $\varphi$ is a~norm, $\varphi(x)^2 =
    x \bullet (\transpose{S} \circ S) x$, and $M = \transpose{S} \circ S$ is symmetric positive
    definite; strict subadditivity follows from the strict subadditivity of the Euclidean norm
    because $S$ is injective. This proves~\ref{i:shear:norm}. Since $S \zeta_j = \zeta_j$, we get
    $\varphi(\zeta_j) = 1$ and
    \begin{displaymath}
        \grad \varphi(\zeta_j)
        = \varphi(\zeta_j)^{-1} M \zeta_j
        = (\id{V} + E)(\id{V} + \transpose{E}) \zeta_j
        = (\id{V} + E) \zeta_j
        = \zeta_j + (\eta_j - \zeta_j) = \eta_j \,,
    \end{displaymath}
    which proves~\ref{i:shear:values}.
\end{proof}

\begin{miniremark}
    \label{mr:cyclic_data}
    We now fix $\adim = 5$ and $\vdim = 2$, we let $e_1,\ldots,e_5$ be the standard basis
    of~$\R^{5}$, and we agree that \emph{indices are understood modulo~$5$}, i.e., $e_{j+5} =
    e_j$. Let $\sigma \in \orthgroup{5}$ be the cyclic shift, i.e., $\sigma e_j = e_{j+1}$. Note
    that $\sigma^5 = \id{\R^5}$. Define, for $i \in \{1,\ldots,5\}$,
    \begin{gather}
        \label{eq:cyclic_atoms}
        \xi_i = e_i \wedge e_{i+1} \in \ograss{5}{2} \,,
        \qquad
        T_i = \project{( \Sp \xi_i )} \in \pgrass{5}{2} \,,
        \\
        \label{eq:cyclic_covectors}
        \eta_1
        = e_1 \wedge e_2 + 2\, e_1 \wedge e_3 + 2\, e_1 \wedge e_4
        - 2\, e_2 \wedge e_4 - 2\, e_2 \wedge e_5 \,,
        \qquad
        \eta_{i+1} = \extpower{2}\sigma \, \eta_{i} \,.
    \end{gather}
    The $2$-vectors $\xi_1,\ldots,\xi_5$ are orthonormal in $\extpower{2}\R^{5}$. Directly
    from~\eqref{eq:cyclic_covectors} one reads off
    \begin{equation}
        \label{eq:cyclic_gram}
        \eta_1 \bullet \xi_j = \Dirac{0}\{1-j\}
        \quad \text{for $j \in \{1,\ldots,5\}$} \,;
    \end{equation}
    indeed, $\eta_1 \bullet \xi_1 = 1$ is the coefficient of $e_1 \wedge e_2$ in~$\eta_1$, while
    $\eta_1 \bullet \xi_j$ for $j \in \{2,3,4,5\}$ is, up to a~sign, the coefficient of $e_2 \wedge
    e_3$, $e_3 \wedge e_4$, $e_4 \wedge e_5$, or $e_1 \wedge e_5$ in~$\eta_1$, and these four
    coefficients all vanish. Since $\extpower{2}\sigma$ is an isometry mapping $\xi_i$ onto
    $\xi_{i+1}$ and $\eta_i$ onto $\eta_{i+1}$, equation~\eqref{eq:cyclic_gram} propagates to
    \begin{equation}
        \label{eq:cyclic_gram_full}
        \eta_i \bullet \xi_j = \Dirac{0}\{i-j\}
        \quad \text{for $i,j \in \{1,\ldots,5\}$} \,.
    \end{equation}
    In view of~\eqref{eq:cyclic_gram_full} we may apply~\ref{lem:shear_norm} with $V =
    \extpower{2}\R^{5}$, $\zeta_i = \xi_i$, and $\eta_i$ as above; we let $\varphi :
    \extpower{2}\R^{5} \to \R$ denote the resulting inner-product norm and we let $F$ be any
    $2$-integrand associated to~$\varphi$ in the sense of~\ref{def:polyconvex} (e.g. constructed by
    applying~\ref{lem:norm_on_tensors}). We also set
    \begin{displaymath}
        d = e_1 + e_2 + e_3 + e_4 + e_5
        \qquad \text{and} \qquad
        \mu = \tfrac{1}{5} \textsum{i=1}{5} \Dirac{T_i} \,.
    \end{displaymath}
\end{miniremark}

\begin{theorem}
    \label{thm:AC1_fails}
    Let $\varphi$, $F$, $\mu$, and $d$ be as in~\ref{mr:cyclic_data}. Then $F$ is a~strictly
    polyconvex integrand associated to the inner-product norm~$\varphi$, and
    \begin{displaymath}
        A(\mu) = \textint{}{} \transpose{P}_F \ud \mu
        = \tfrac{2}{5} \, \gamma_1(d) \cdot d \,;
    \end{displaymath}
    hence, $\dim \ker A(\mu) = 4 > 3 = \adim - \vdim$ and $F$ violates
    condition~\ref{def:AC}\,\ref{i:AC:dim}. In particular, $F \notin \AC$ and $F$ fails the
    condition (AC1) of~\cite[Definition~3.1]{DeRosa2022}.
\end{theorem}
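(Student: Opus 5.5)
The plan is a direct computation. Strict polyconvexity comes for free from \ref{lem:shear_norm}\ref{i:shear:norm}: $\varphi$ is an inner-product norm induced by a positive definite $M$, hence strictly convex and real analytic away from $0$. So $F$, defined through \ref{def:polyconvex}, is a strictly polyconvex integrand. The substance is the formula for $A(\mu)$.

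To compute $A(\mu) = \frac15\sum_i \transpose{P}_F(T_i) = \frac15 \sum_i B_F(T_i)/F(T_i)$, I would first use \ref{lem:shear_norm}\ref{i:shear:values}. It gives $F(T_i) = \varphi(\xi_i) = 1$ and $\grad\varphi(\xi_i) = \eta_i$, so $\transpose{P}_F(T_i) = B_F(T_i)$. Next I would apply \ref{rem:BF_form_in_2d} with $u = e_i$, $v = e_{i+1}$, $\eta = \eta_i$:
\begin{displaymath}
    B_F(T_i) = \gamma_1(e_{i+1}) \cdot \eta_i \restrict \gamma_1(e_i) - \gamma_1(e_i) \cdot \eta_i \restrict \gamma_1(e_{i+1}) \,.
\end{displaymath}
By cyclic equivariance, \ref{rem:rotation_contraction} with $R = \sigma^{i-1}$ yields $\eta_i \restrict \gamma_1(e_{i+j}) = \sigma^{i-1}(\eta_1 \restrict \gamma_1(e_{1+j}))$. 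Hence $B_F(T_i) = \sigma^{i-1} \circ B_F(T_1) \circ \sigma^{-(i-1)}$, and it suffices to compute $B_F(T_1)$ explicitly and then average over conjugation by the cyclic group.

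The explicit step is to compute the contractions $a = \eta_1 \restrict \gamma_1(e_1)$ and $b = \eta_1 \restrict \gamma_1(e_2)$ from \eqref{eq:cyclic_covectors}. Up to the sign convention of $\restrict$, which must be fixed carefully using $(e_i \wedge e_j) \restrict \gamma_1(e_i) = e_j$-type identities, one finds
\begin{displaymath}
    a = e_2 + 2e_3 + 2e_4 \,, \qquad b = -e_1 - 2e_4 - 2e_5 \,.
\end{displaymath}
Then $B_F(T_1) = \gamma_1(e_2)\cdot a - \gamma_1(e_1)\cdot b$. This is the matrix with columns $B e_1 = -b = e_1 + 2e_4 + 2e_5$ and $B e_2 = a$, and zero columns otherwise; its trace is $2$, as it must be. The averaged matrix $\frac15\sum_i \sigma^{i-1} B \sigma^{-(i-1)}$ is circulant, so its entry $(p,q)$ depends only on $p-q \bmod 5$. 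That entry is $\frac15$ times the sum of the entries of $B$ along that cyclic diagonal.

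For offset $p-q=0$ the entries are $1$ (from $(1,1)$) and $1$ (from $(2,2)$). For offset $3$ they are $2$ (from $(4,1)$) and $0$. For offset $4$ they are $2$ (from $(5,1)$). Offset $1$ has $(2,1)=0$ and $(3,2)=2$. Offset $2$ has $(3,1)=0$ and $(4,2)=2$. So every cyclic diagonal sums to $2$, the average is $\frac25$ times the all-ones matrix, i.e.\ $\frac25\,\gamma_1(d)\cdot d$, and the rank is $1$.

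The main obstacle is sign bookkeeping: the conventions for $\restrict$ and $\gamma_1$, and the orientation signs in \ref{rem:BF_form_in_2d}. A sign error there would break the equal diagonal sums. As a cross-check I would verify directly that $B_F(T_1)$ is a projection onto $\im T_1$, i.e.\ that $B e_1 \bullet e_1 = 1$ and $B e_2 \bullet e_2 = 1$ with the off-diagonal entries in the $\{e_1,e_2\}$ block equal to $0$, as guaranteed by \ref{rem:se-cse}. Once $A(\mu) = \frac25\gamma_1(d)\cdot d$ is established, $\dim\ker A(\mu) = 4 > 3$ contradicts \ref{def:AC}\ref{i:AC:dim}.
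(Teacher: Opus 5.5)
Your proposal is correct and follows the paper's proof: strict polyconvexity from \ref{lem:shear_norm}, $\transpose{P}_F(T_i) = B_F(T_i)$ and the formula of \ref{rem:BF_form_in_2d}, the contractions of $\eta_1$ (your values agree with the paper's), transport by $\sigma^{i-1}$ via \ref{rem:rotation_contraction}, and summation grouped by index difference modulo~$5$ (your cyclic diagonals), each difference contributing~$2$. One small slip in your cross-check: $B_F(T_1) = \transpose{P}_F(T_1)$ is a projection with kernel $\im T_1^{\perp}$, whereas its transpose $P_F(T_1)$ is the projection onto $\im T_1$; the entry conditions you list are nevertheless correct consequences of \ref{rem:se-cse}.
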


\begin{proof}
    Strict polyconvexity is exactly the conclusion of~\ref{lem:shear_norm}\,\ref{i:shear:norm}
    together with~\ref{def:polyconvex}. We shall use all the symbols introduced
    in~\ref{mr:cyclic_data}. By~\ref{lem:shear_norm}\,\ref{i:shear:values}
    \begin{equation}
        \label{eq:cyclic_values}
        F(T_i) = \varphi(\xi_i) = 1
        \quad \text{and} \quad
        \grad \varphi(\xi_i) = \eta_i
        \quad \text{for $i \in \{1,\ldots,5\}$} \,;
    \end{equation}
    in~particular, $\transpose{P}_F(T_i) = B_F(T_i) / F(T_i) = B_F(T_i)$
    by~\ref{def:aux-objects}. Applying~\ref{rem:BF_form_in_2d} with $(u,v) = (e_i, e_{i+1})$ and $\omega =
    \gamma_2(\eta_i)$ yields
    \begin{equation}
        \label{eq:cyclic_BF}
        B_F(T_i)
        = \gamma_1(e_{i+1}) \cdot \eta_{i} \restrict \gamma_1(e_{i})
        - \gamma_1(e_{i}) \cdot \eta_{i} \restrict \gamma_1(e_{i+1})
        \,.
    \end{equation}
    From~\ref{mr:cyclic_data}\,\eqref{eq:cyclic_covectors} we compute, using $(\zeta \restrict
    \gamma_1(u)) \bullet w = \zeta \bullet (u \wedge w)$ valid for $\zeta \in \extpower{2}
    \R^{\adim}$ and $u,w \in \R^{\adim}$,
    \begin{displaymath}
        \eta_1 \restrict \gamma_1(e_1) = e_2 + 2 e_3 + 2 e_4
        \qquad \text{and} \qquad
        \eta_1 \restrict \gamma_1(e_2) = - e_1 - 2 e_4 - 2 e_5 \,;
    \end{displaymath}
    hence, by~\ref{rem:rotation_contraction} applied with $R = \sigma^{i-1}$,
    \begin{displaymath}
        \eta_i \restrict \gamma_1(e_i) = e_{i+1} + 2 e_{i+2} + 2 e_{i+3}
        \qquad \text{and} \qquad
        \eta_i \restrict \gamma_1(e_{i+1}) = - e_{i} - 2 e_{i+3} - 2 e_{i+4} \,,
    \end{displaymath}
    and~\eqref{eq:cyclic_BF} becomes
    \begin{equation}
        \label{eq:cyclic_BF_explicit}
        B_F(T_i)
        = \gamma_1(e_{i}) \cdot e_{i}
        + \gamma_1(e_{i+1}) \cdot e_{i+1}
        + 2 \gamma_1(e_{i+1}) \cdot e_{i+2}
        + 2 \gamma_1(e_{i+1}) \cdot e_{i+3}
        + 2 \gamma_1(e_{i}) \cdot e_{i+3}
        + 2 \gamma_1(e_{i}) \cdot e_{i+4} \,.
    \end{equation}
    Now we sum~\eqref{eq:cyclic_BF_explicit} over $i \in \{1,\ldots,5\}$, grouping the six summands
    according to the difference (modulo~$5$) of their indices: the two summands with difference~$0$
    contribute $2 \textsum{b=1}{5} \gamma_1(e_{b}) \cdot e_{b}$, and, for each $r \in \{1,2,3,4\}$,
    exactly one summand, with coefficient~$2$, has difference~$r$ and contributes $2
    \textsum{b=1}{5} \gamma_1(e_{b}) \cdot e_{b+r}$. Altogether
    \begin{equation}
        \label{eq:cyclic_sum}
        \textsum{i=1}{5} B_F(T_i)
        = 2 \textsum{a=1}{5} \textsum{b=1}{5} \gamma_1(e_{b}) \cdot e_{a}
        = 2\, \gamma_1(d) \cdot d \,;
    \end{equation}
    hence, $A(\mu) = \tfrac{1}{5} \textsum{i=1}{5} B_F(T_i) = \tfrac{2}{5}\, \gamma_1(d) \cdot d$
    and $\ker A(\mu) = \{ x \in \R^5 : d \bullet x = 0 \}$ has dimension~$4$. Since $4 > 3 = \adim -
    \vdim$, condition~\ref{def:AC}\,\ref{i:AC:dim} fails. Finally, the atomic condition
    of~\cite[Definition~1.1]{DePhilippis2018} coincides with~\ref{def:AC} by~\ref{rem:AC_orig}, and
    its first part is the condition (AC1) of~\cite[Definition~3.1]{DeRosa2022}.
\end{proof}

\begin{corollary}
    \label{cor:AC1_family}
    Let $\xi_i$, $\eta_i$, $\mu$, and $d$ be as in~\ref{mr:cyclic_data} and let $\psi :
    \extpower{2}\R^{5} \to \R$ be \emph{any} norm of class~$\cnt{1}$ away from the origin such that
    \begin{equation}
        \label{eq:gradient_rays}
        \grad \psi(\xi_i) = \lambda_i \eta_i
        \quad \text{for some $\lambda_i > 0$ and each $i \in \{1,\ldots,5\}$} \,.
    \end{equation}
    Then every $2$-integrand associated to~$\psi$ satisfies $A(\mu) = \tfrac25\, \gamma_1(d) \otimes
    d$ and violates~\ref{def:AC}\,\ref{i:AC:dim}. Moreover, the set of \emph{inner-product} norms
    satisfying~\eqref{eq:gradient_rays} with $\lambda_1 = \cdots = \lambda_5 = 1$ equals $\{
    \varphi_M : M \in \mathcal{O} \}$, where $\varphi_M(x) = (x \bullet Mx)^{1/2}$ and $\mathcal{O}$
    is the set of positive definite elements of the affine space
    \begin{displaymath}
        \mathcal{M} = \End{\extpower{2}\R^{5}} \cap \bigl\{ M :
        M = \transpose{M} ,\ M \xi_i = \eta_i \text{ for } i \in \{1,\ldots,5\}
        \bigr\} \,,
        \qquad \dim \mathcal{M} = 15 \,,
    \end{displaymath}
    and $\mathcal{O}$ is non-empty and relatively open in~$\mathcal{M}$.
\end{corollary}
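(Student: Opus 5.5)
The plan has two independent parts: the computation of $A(\mu)$ for a general norm $\psi$, and the description of the inner-product norms satisfying the normalised gradient condition.

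\emph{First part.} Let $G$ be a $2$-integrand associated to $\psi$. Since $\psi$ and $\grad\psi$ are homogeneous, \eqref{eq:gradient_rays} together with $\xi_i \bullet \eta_i = 1$ gives
\begin{displaymath}
    \psi(\xi_i) = \grad\psi(\xi_i)\bullet\xi_i = \lambda_i \,.
\end{displaymath}
Hence $G(T_i) = \lambda_i$. Apply~\ref{rem:BF_form_in_2d} with $\eta = \lambda_i\eta_i$. This gives $B_G(T_i) = \lambda_i B_F(T_i)$, where $F$ is the integrand of~\ref{thm:AC1_fails}, so
\begin{displaymath}
    \transpose{P}_G(T_i) = B_G(T_i)/G(T_i) = B_F(T_i) = \transpose{P}_F(T_i) \,.
\end{displaymath}
Averaging over $\mu$ and reusing~\eqref{eq:cyclic_sum} from the proof of~\ref{thm:AC1_fails} yields $A(\mu) = \tfrac25\,\gamma_1(d)\cdot d$, which has rank one. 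Thus~\ref{def:AC}\ref{i:AC:dim} fails. The notation $\otimes$ in the statement is to be read as the rank-one map $\cdot$ of~\ref{mr:rank_one_maps}.

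\emph{Second part.} For $\varphi_M(x) = (x\bullet Mx)^{1/2}$ with $M$ symmetric and positive definite, we have $\grad\varphi_M(x) = Mx/\varphi_M(x)$. If $M\xi_i = \eta_i$, then $\varphi_M(\xi_i)^2 = \xi_i\bullet\eta_i = 1$, so $\grad\varphi_M(\xi_i) = \eta_i$. Conversely, suppose $\grad\varphi_M(\xi_i) = \eta_i$. Taking the inner product with $\xi_i$ gives $\varphi_M(\xi_i) = \grad\varphi_M(\xi_i)\bullet\xi_i = 1$, so $M\xi_i = \eta_i$. This proves the set equality with $\{\varphi_M : M\in\mathcal{O}\}$.

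For the dimension count, extend the orthonormal $\xi_1,\ldots,\xi_5$ to an orthonormal basis $\xi_1,\ldots,\xi_{10}$ of $\extpower{2}\R^5$. A symmetric $M$ with $M\xi_i = \eta_i$ for $i\le5$ has its first five columns fixed. The consistency requirement $\xi_j\bullet M\xi_i = \xi_i\bullet M\xi_j$ for $i,j\le 5$ is exactly $\eta_i\bullet\xi_j = \eta_j\bullet\xi_i$, which holds by~\eqref{eq:cyclic_gram_full}. Symmetry then fixes the entries $(j,i)$ with $j>5\ge i$. The remaining free data is the symmetric $5\times5$ block on $\lin\{\xi_6,\ldots,\xi_{10}\}$, which has $15$ parameters. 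So $\mathcal{M}$ is a nonempty affine space of dimension $15$.

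Positive definiteness is an open condition, so $\mathcal{O}$ is relatively open in $\mathcal{M}$. It is nonempty because the matrix $M = (\id{}+E)(\id{}+\transpose{E})$ from~\ref{lem:shear_norm} is positive definite and satisfies $M\xi_i = \eta_i$, as shown in that proof.

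No step is a real obstacle. The only points to check carefully are the symmetry-consistency condition in the dimension count and the observation that $\transpose{P}_G(T_i)$ is independent of $\lambda_i$.
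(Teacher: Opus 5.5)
Your proposal is correct and follows the paper's proof essentially step by step. Euler's identity gives $G(T_i)=\lambda_i$, \ref{rem:BF_form_in_2d} makes $\transpose{P}_G(T_i)$ independent of $\lambda_i$ and reduces it to the computation in~\ref{thm:AC1_fails}, and the second part is the equivalence $\grad\varphi_M(\xi_i)=\eta_i \iff M\xi_i=\eta_i$ together with openness of positive definiteness and the matrix of~\ref{lem:shear_norm}. The only difference is cosmetic: you count dimensions with an explicit block decomposition in an orthonormal basis extending $\xi_1,\ldots,\xi_5$. The paper instead identifies the direction space of $\mathcal{M}$ with the symmetric endomorphisms of $W^{\perp}$, $W=\lin\{\xi_1,\ldots,\xi_5\}$, extended by zero, which is the same count.
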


\begin{proof}
    Let $G$ be a~$2$-integrand associated to~$\psi$. Since $\psi$ is positively $1$-homogeneous and
    differentiable at~$\xi_i$, there holds $G(T_i) = \psi(\xi_i) = \grad\psi(\xi_i) \bullet \xi_i =
    \lambda_i \, \eta_i \bullet \xi_i = \lambda_i$
    by~\ref{mr:cyclic_data}\eqref{eq:cyclic_gram_full}.  Employing~\ref{rem:BF_form_in_2d} we obtain
    $B_G(T_i) = \lambda_i \bigl( \eta_i \restrict \gamma_1(e_i)) \cdot e_{i+1} - (\eta_i \restrict
    \gamma_1(e_{i+1})) \cdot e_{i} \bigr)$; hence, $\transpose{P}_G(T_i) = B_G(T_i)/G(T_i)$
    coincides with the right hand side of~\ref{thm:AC1_fails}\,\eqref{eq:cyclic_BF_explicit}; hence,
    the first claim follows from~\ref{thm:AC1_fails}\,\eqref{eq:cyclic_sum}.

    Concerning the second claim, note that $\grad\varphi_M(\xi_i) = \varphi_M(\xi_i)^{-1} M \xi_i$
    and $\varphi_M(\xi_i)^{2} = \xi_i \bullet M\xi_i$, so~\eqref{eq:gradient_rays} with $\lambda_i =
    1$ holds if and only if $M\xi_i = \eta_i$; here we use $\eta_i \bullet \xi_i = 1$.  The set of
    symmetric $M$ with $M\xi_i = 0$ for all~$i$ consists exactly of the symmetric endomorphisms of
    the $5$-dimensional space $W^{\perp}$, where $W = \Sp\{\xi_1,\ldots,\xi_5\}$, extended by zero
    on~$W$; hence, this linear space has dimension $\binom{5+1}{2} = 15$ and $\mathcal{M}$, being
    non-empty by~\ref{lem:shear_norm}, is an affine space of dimension~$15$. Positive definiteness
    is an open condition and the~$M$ produced by~\ref{lem:shear_norm} lies in~$\mathcal{O}$.
\end{proof}

\begin{theorem}
    \label{thm:AC1_fails_all_codim}
    For all integers $\vdim$ and $\adim$ with $\vdim \ge 2$ and $\adim - \vdim \ge 3$ there exists
    a~strictly polyconvex $\vdim$-integrand on $\End{\R^{\adim}}$, associated to an inner-product
    norm on $\extpower{\vdim}\R^{\adim}$, which violates condition~\ref{def:AC}\,\ref{i:AC:dim}.
\end{theorem}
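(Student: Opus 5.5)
Starting from the $(\adim,\vdim)=(5,2)$ example of~\ref{thm:AC1_fails}, I would build the general case by \emph{suspension}: enlarge the ambient space and the plane dimension separately and carry the five atoms along. Write $\adim = \vdim + m$ with $m \ge 3$. Decompose $\R^{\adim} = \R^{5} \oplus \R^{\vdim-2} \oplus \R^{m-3}$, where $\R^5 = \lin\{e_1,\ldots,e_5\}$ is as in~\ref{mr:cyclic_data}. Fix the unit simple $(\vdim-2)$-vector $\omega = f_1 \wedge \cdots \wedge f_{\vdim-2}$ spanning the middle summand. Define the atoms $\hat\xi_i = \xi_i \wedge \omega$ and $\hat T_i = \project{(\Sp \hat\xi_i)}$. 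These $\hat\xi_i$ are orthonormal in $\extpower{\vdim}\R^{\adim}$.

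I then prescribe the gradients by $\hat\eta_i = \eta_i \wedge \omega$. Since $\eta_i \bullet \xi_j = \Dirac{0}\{i-j\}$ and wedging with the unit $\omega$ is an isometric embedding $\extpower{2}\R^5 \to \extpower{\vdim}\R^{\adim}$, we still have $\hat\eta_i \bullet \hat\xi_j = \Dirac{0}\{i-j\}$. Thus~\ref{lem:shear_norm} applies with $V = \extpower{\vdim}\R^{\adim}$ and $m = 5$. It yields an inner-product norm $\hat\varphi$ that is strictly convex and real analytic, with $\hat\varphi(\hat\xi_i) = 1$ and $\grad\hat\varphi(\hat\xi_i) = \hat\eta_i$. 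Let $F$ be any $\vdim$-integrand associated to $\hat\varphi$, and set $\mu = \tfrac15\sum_i \Dirac{\hat T_i}$.

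The main computation is $B_F(\hat T_i)$. I would use~\ref{cor:BF_in_terms_of_pi}: $B_F(\hat T_i)\bullet L = \hat\eta_i \bullet \varkappa(L)\hat\xi_i$. Expand $\varkappa(L)(e_i\wedge e_{i+1}\wedge f_1\wedge\cdots)$ by the Leibniz rule of~\ref{def:varkappa}. The terms in which $L$ hits $e_i$ or $e_{i+1}$ give $(\varkappa_2(L)\xi_i)\wedge\omega$ plus components along $f_j$ that wedge to something orthogonal to $\hat\eta_i$. Pairing with $\eta_i\wedge\omega$ therefore recovers exactly the $5$-dimensional quantity $\eta_i \bullet \varkappa_2(\pi L \iota)\xi_i$, where $\pi,\iota$ are the projection onto and inclusion of $\R^5$. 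The terms in which $L$ hits $f_j$ give $\xi_i \wedge f_1\wedge\cdots\wedge Lf_j\wedge\cdots$, whose pairing with $\eta_i\wedge\omega$ equals $(\eta_i\bullet\xi_i)(f_j\bullet Lf_j) = f_j \bullet Lf_j$. Here I must check that the cross terms vanish: if $Lf_j$ has an $e$-component, the result pairs with $\eta_i\wedge\omega$ only through a factor containing no $f_j$, which is zero. Hence
\begin{displaymath}
    B_F(\hat T_i) = \iota\, B^{(5)}(T_i)\, \pi + \textsum{j=1}{\vdim-2} \gamma_1(f_j)\cdot f_j \,,
\end{displaymath}
where $B^{(5)}$ is the map from~\eqref{eq:cyclic_BF_explicit}. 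This bookkeeping of signs and orthogonality is the step I expect to need the most care. Everything else follows from~\ref{thm:AC1_fails}.

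Since $F(\hat T_i) = 1$, averaging and using~\eqref{eq:cyclic_sum} gives
\begin{displaymath}
    A(\mu) = \tfrac25\,\gamma_1(d)\cdot d + \textsum{j}{} \gamma_1(f_j)\cdot f_j \,.
\end{displaymath}
This map has rank $1 + (\vdim-2) = \vdim - 1$. Hence $\dim\ker A(\mu) = \adim - \vdim + 1 > \adim-\vdim$, and~\ref{def:AC}\,\ref{i:AC:dim} fails. The extra summand $\R^{m-3}$ contributes only to the kernel, and this is exactly where $\adim-\vdim\ge 3$ is used: the base example needs $5 = 2 + 3$ dimensions in addition to the $\vdim-2$ directions $f_j$.
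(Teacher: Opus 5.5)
Your proposal is correct and is essentially the paper's own proof: the paper suspends by $\varepsilon = e_6\wedge\cdots\wedge e_{5+m}$ (your $\omega$), applies \ref{lem:shear_norm} to $\xi_i\wedge\varepsilon$ and $\eta_i\wedge\varepsilon$, and splits $\varkappa(L)$ by the Leibniz rule exactly as you do. It obtains $A(\mu') = \tfrac25\,\gamma_1(d)\otimes d + \textsum{j}{}\gamma_1(e_j)\otimes e_j$, of rank $\vdim-1$. One detail is missing from your cross-term check: $Le_i$ and $Le_{i+1}$ also have components along your extra summand $\R^{m-3}$, not only along the $f_j$. These produce basis $\vdim$-vectors orthogonal to $\hat\eta_i$, so they drop out as well.
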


\begin{proof}
    Let $m = \vdim - 2 \ge 0$, so that $\adim \ge 5 + m$. Keep the notation
    of~\ref{mr:cyclic_data}, with $e_1,\ldots,e_{\adim}$ the standard basis
    of~$\R^{\adim}$, and set
    \begin{displaymath}
        \varepsilon = e_6 \wedge \cdots \wedge e_{5+m} \in
        \extpower{m}\R^{\adim} \,,
        \qquad
        \xi_i' = \xi_i \wedge \varepsilon \,,
        \qquad
        \eta_i' = \eta_i \wedge \varepsilon \,,
        \qquad
        T_i' = \project{( \Sp \xi_i' )} \,,
    \end{displaymath}
    with the convention that $\varepsilon = 1$ and $\xi_i' = \xi_i$,
    $\eta_i' = \eta_i$ in case $m = 0$. Since $\xi_i$, $\eta_i$, and
    $\varepsilon$ involve pairwise disjoint sets of basis vectors,
    \cite[1.7.5]{Federer1969} gives
    \begin{equation}
        \label{eq:suspended_gram}
        \xi_i' \in \ograss{\adim}{\vdim} \,, \qquad
        \xi_i' \bullet \xi_j' = \xi_i \bullet \xi_j = \delta_{ij} \,, \qquad
        \eta_i' \bullet \xi_j' = \eta_i \bullet \xi_j = \delta_{ij} \,.
    \end{equation}
    By~\eqref{eq:suspended_gram} we may apply~\ref{lem:shear_norm} with $V =
    \extpower{\vdim}\R^{\adim}$, $\zeta_i = \xi_i'$, and $\eta_i'$ in place of~$\eta_i$; let
    $\varphi'$ be the resulting inner-product norm, let $F'$ be any $\vdim$-integrand associated
    to~$\varphi'$, and let $\mu' = \tfrac15 \textsum{i=1}{5} \Dirac{T_i'}$. As
    in~\ref{thm:AC1_fails}\,\eqref{eq:cyclic_values} we get $F'(T_i') = 1$, $\grad \varphi'(\xi_i')
    = \eta_i'$, and $\transpose{P}_{F'}(T_i') = B_{F'}(T_i')$.

    Fix $i$ and $L \in \End{\R^{\adim}}$. Since $\varkappa(L)$ acts as
    a~derivation on wedge products, \ref{rem:BF_form_in_2d} yields
    \begin{equation}
        \label{eq:suspension_split}
        B_{F'}(T_i') \bullet L
        = (\eta_i \wedge \varepsilon) \bullet
        \bigl( (\varkappa(L)\xi_i) \wedge \varepsilon \bigr)
        + (\eta_i \wedge \varepsilon) \bullet
        \bigl( \xi_i \wedge \varkappa(L)\varepsilon \bigr) \,.
    \end{equation}
    We evaluate the two summands separately, repeatedly using~\cite[1.7.5]{Federer1969}: a~basis
    $\vdim$-vector is orthogonal to $\eta_i \wedge \varepsilon$ unless it is of the form $(e_a
    \wedge e_b) \wedge \varepsilon$ with $a,b \in \{1,\ldots,5\}$.  Expanding $\varkappa(L)\xi_i =
    (L e_i) \wedge e_{i+1} + e_i \wedge (L e_{i+1})$ in the standard basis, all terms of
    $(\varkappa(L)\xi_i) \wedge \varepsilon$ containing a~basis vector $e_a$ with $a > 5$ are
    annihilated (those with $6 \le a \le 5+m$ vanish after taking the wedge with~$\varepsilon$;
    those with $a > 5+m$ are orthogonal to $\eta_i \wedge \varepsilon$); hence, the first summand
    of~\eqref{eq:suspension_split} equals
    \begin{displaymath}
        \eta_i \bullet \bigl( (P L e_i) \wedge e_{i+1}
        + e_i \wedge (P L e_{i+1}) \bigr)
        = \bigl( (\eta_i \restrict \gamma_1(e_i)) \cdot e_{i+1}
        - (\eta_i \restrict \gamma_1(e_{i+1})) \cdot e_{i} \bigr) \bullet L \,,
    \end{displaymath}
    where $P = \project{\lin\{e_1,\ldots,e_5\}}$ and the last equality is the computation from the
    proof of~\ref{rem:BF_form_in_2d}. For the second summand of~\eqref{eq:suspension_split} expand
    $\varkappa(L)\varepsilon = \textsum{j=6}{5+m} e_6 \wedge \cdots \wedge (L e_j) \wedge \cdots
    \wedge e_{5+m}$ and, in each term, $L e_j$ in the standard basis: a~term survives the pairing
    with $\eta_i \wedge \varepsilon$ only if $L e_j$ is replaced by $(e_j \bullet L e_j)\, e_j$,
    since any $e_a$ with $a \in \{1,\ldots,5\}$ produces a~basis vector with three factors from
    $\{e_1,\ldots,e_5\}$ or a~repeated factor, and any other $e_a$ gives either a~repeated factor or
    a~factor orthogonal to $\eta_i \wedge \varepsilon$. Consequently, the second summand equals
    \begin{displaymath}
        \textsum{j=6}{5+m} (e_j \bullet L e_j) \,
        (\eta_i \wedge \varepsilon) \bullet (\xi_i \wedge \varepsilon)
        = \Bigl( \textsum{j=6}{5+m} \gamma_1(e_j) \cdot e_j \Bigr) \bullet L
    \end{displaymath}
    by~\eqref{eq:suspended_gram}. Altogether, recalling
    \ref{thm:AC1_fails}\eqref{eq:cyclic_BF_explicit}
    and~\ref{thm:AC1_fails}\eqref{eq:cyclic_sum},
    \begin{gather}
        \label{eq:suspended_BF}
        B_{F'}(T_i')
        = B_F(T_i) + \textsum{j=6}{5+m} \gamma_1(e_j) \cdot e_j \,,
        \\
        \label{eq:suspended_sum}
        A(\mu')
        = \tfrac15 \textsum{i=1}{5} B_{F'}(T_i')
        = \tfrac25 \, \gamma_1(d) \otimes d + \textsum{j=6}{5+m} \gamma_1(e_j) \otimes e_j \,.
    \end{gather}
    The endomorphism $A(\mu')$ is symmetric with image spanned by the orthogonal vectors $d$, $e_6$,
    \ldots, $e_{5+m}$; hence,
    \begin{displaymath}
        \dim \ker A(\mu') = \adim - (m+1) = \adim - \vdim + 1 > \adim - \vdim
    \end{displaymath}
    and~\ref{def:AC}\,\ref{i:AC:dim} fails for~$F'$.
\end{proof}

\begin{remark}
    \label{rem:AC1_fails_discussion}
    We collect a~few comments on~\ref{thm:AC1_fails} and~\ref{thm:AC1_fails_all_codim}.
    \begin{enumerate}
    \item The violation is the smallest possible: $\dim \ker A(\mu) = \adim - \vdim + 1$, i.e.,
        $A(\mu)$ has rank $\vdim - 1$. Moreover, the violating measure is uniform over five atoms,
        the atoms are the coordinate planes $\Sp\{e_i, e_{i+1}\}$, and both $\varphi$ and~$F$ are
        invariant under the cyclic group generated by~$\sigma$, since the endomorphism~$E$
        of~\ref{lem:shear_norm} commutes with~$\extpower{2}\sigma$.
    \item Together with~\cite[Theorem~1.16]{DeRosa2025}, \ref{thm:AC1_fails} shows that for $2 \le
        \vdim \le \adim - 3$ the atomic condition is \emph{strictly} stronger than strict
        polyconvexity, even for integrands associated to inner-product norms -- in stark contrast
        with the codimension-one characterisation of~\cite[Theorem~1.3]{DePhilippis2018}. For $\vdim
        \in \{1, \adim-1\}$ every $\vdim$-vector is simple and the associated norm is determined
        by~$F$; the intermediate case $\adim - \vdim = 2$ remains open, see below.
    \item There is no conflict with the openness results of~\cite[Proposition~3.8,
        Corollary~3.12]{DeRosa2022}: those provide a~$\cnt{1}$-neighbourhood of the \emph{area}
        integrand within which (AC1) holds, whereas the integrand of~\ref{thm:AC1_fails} is far from
        the area integrand; note that $\varphi(\xi) = |\xi|$ forces $\grad\varphi(\xi_i) = \xi_i$,
        while here $|\grad\varphi(\xi_i) - \xi_i| = |\eta_i - \xi_i| = 4$.
    \item Theorem~6.1 of~\cite{DeRosa2022} states that on $\pgrass{4}{2}$ the integrands associated
        to the $\ell^p$~norms, $p \in (1,\infty)$, in standard Pl{\"u}cker coordinates satisfy
        (AC1); the question of other $(\adim,\vdim)$ is raised in~\cite[Remark~6.4]{DeRosa2022}.
        Corollary~\ref{thm:AC1_fails_all_codim} answers it in the negative, for general strictly
        convex norms, whenever $\adim - \vdim \ge 3$: no convexity property of the
        extension~$\varphi$ alone can imply (AC1) in this range, and the coordinate structure of the
        $\ell^p$~norms in~\cite[Theorem~6.1]{DeRosa2022} is essential rather than incidental.
    \item Extensive numerical experiments conducted within Claude AI suggest, additionally, that the
        failure set has non-empty interior in the $\cnt{2}$~topology on integrands (at the
        configuration of~\ref{mr:cyclic_data} the natural obstruction map is a~numerically verified
        submersion onto the normal space of the variety of endomorphisms of rank at most~$1$), and
        that, in contrast, on $\pgrass{4}{2}$ analogous configurations degenerate: minimisation of
        $\sigma_2(A(\mu))/\sigma_1(A(\mu))$ over measures and inner-product norms drives the norm to
        a~degenerate limit. Whether every strictly polyconvex integrand on $\pgrass{\vdim+2}{\vdim}$
        satisfies \ref{def:AC}\,\ref{i:AC:dim} appears to be an interesting open problem.
    \end{enumerate}
\end{remark}

\section{Open problems}
\label{sec:open_problems}

\begin{miniremark}
    We gather here the questions left open by the preceding sections. They concern the gap between
    the exposed and the atomic condition, the strength of Busemann's total convexity, the shape of
    the set $\GF{F}$ for a~polyconvex integrand, and the one case of the~codimension not covered
    by~\ref{thm:AC1_fails_all_codim}. Each question is followed by a~remark recording what is
    already known about it.
\end{miniremark}

\begin{question}
    \label{q:EC=AC}
    Is $\EC{} = \AC{}$?
\end{question}

\begin{remark}
    The inclusion $\EC{} \subseteq \AC{}$ is proven in~\ref{prop:EC_in_AC} and the two classes
    coincide in case $\adim = \vdim+1$ by~\ref{thm:AC=EC-codim1}; cf.~\ref{rem:EC=AC?}. In~view
    of~\ref{thm:AC_extreme} and~\ref{def:EC} the question asks precisely whether
    $\GF{F} = \extreme{(\conv \GF{F})}$ implies $\GF{F} = \exposed{(\conv \GF{F})}$ for integrands
    satisfying $\conv \GF{F} \cap \{ A : \dim \im A \le \vdim \} = \GF{F}$. Recalling Straszewicz's
    theorem~\ref{thm:Straszewicz}, an~integrand separating the two classes would need
    $\exposed{(\conv \GF{F})}$ to be a~proper dense subset of~$\GF{F}$.
\end{remark}

\begin{question}
    \label{q:total_convexity_SAC}
    Let $\varphi : \extpower{\vdim} \R^{\adim} \to \R$ be a~totally convex norm of class~$\cnt{1}$
    (see~\ref{def:totally_convex_norm}) and let $F$ be the polyconvex integrand associated
    to~$\varphi$ (see~\ref{def:polyconvex}). Does $F$ satisfy \AC{}?  Does it satisfy \EC{}?
\end{question}

\begin{remark}
    By~\ref{lem:totally_convex_decreasing_projections} total convexity of~$\varphi$ says that for
    every $T \in \grass{\adim}{\vdim}$ there exists a~projection $P \in \Proj{\adim}{\vdim}$ with
    $\im P = T$ for which $\extpower{\vdim}P$ is $\varphi$-non-increasing.
    Theorem~\ref{thm:F_accretive} gives a~condition sufficient for~\EC{}, phrased in terms of
    strict $\varphi$-accretivity of the operators $\varkappa(\transpose{N}(S))$; we~do not know
    whether total convexity produces such a~family~$N$.
\end{remark}

\begin{question}
    \label{q:Psi_exposedness}
    Which norms $\varphi : \extpower{\vdim} \R^{\adim} \to \R$ of class~$\cnt{1}$ have the property
    that the polyconvex integrand~$F$ associated to~$\varphi$ satisfies~\EC{}?
    Recalling~\ref{def:aux-objects}, \ref{prop:BF_repr_with_Psi},
    and~\ref{rem:BF_repr_with_Psi} this asks for which~$\varphi$ the set
    \begin{displaymath}
        \GF{F} = \bigl\{ \varphi(\xi)^{-1}
        \Psi_{\!\vdim} \bigl( \gamma_{\vdim}(\xi) \otimes \grad \varphi(\xi) \bigr)
        : \xi \in \ograss{\adim}{\vdim} \bigr\}
    \end{displaymath}
    coincides with $\exposed{(\conv \GF{F})}$.
\end{question}

\begin{remark}
    By~\ref{thm:AC1_fails} and~\ref{thm:AC1_fails_all_codim} strict convexity of~$\varphi$ does not
    suffice as soon as $\vdim \ge 2$ and $\adim - \vdim \ge 3$; in that range there are integrands
    associated to inner-product norms for which even~\ref{def:AC}\,\ref{i:AC:dim} fails.
\end{remark}

\begin{question}
    \label{q:AC1_codim2}
    Does every strictly polyconvex integrand on $\pgrass{\vdim+2}{\vdim}$
    satisfy~\ref{def:AC}\,\ref{i:AC:dim}?
\end{question}

\begin{remark}
    This is the case $\adim - \vdim = 2$ left open by~\ref{thm:AC1_fails_all_codim};
    see~\ref{rem:AC1_fails_discussion} for a~discussion and for the numerical evidence.
\end{remark}

\subsection*{Tool and computational resource disclosure}
\addcontentsline{toc}{section}{\numberline{}Tool and computational resource disclosure}

Large language models Opus 4.8 and Fable 5 of Claude have been used extensively in the process of
writing this paper. These LLMs were used for checking grammar and finding typos, browsing the
literature in search for references, proof-reading and correction of parts written by the authors,
verifying many conjectures and finding counterexamples, performing numerical experiments, as well as
drafting proofs and improving the introduction. In~particular, most of the contents of
section~\ref{sec:AC1_failure} has been generated by Claude after several weeks of guidance and
discussions. The authors checked all the content carefully and take full responsibility for it.

\subsection*{Acknowledgements}
The~research of Sławomir Kolasiński was financed by the \href{https://ncn.gov.pl/}{National Science
  Centre Poland} grant number 2022/46/E/ST1/00328. Part of this work was done while he was visiting
Bernd Kirchheim and Riccardo Tione at University of Leipzig benefiting from their kind hospitality.

\bigskip

{\small
  \addcontentsline{toc}{section}{\numberline{}References}
  \bibliography{extract}
  \bibliographystyle{halpha}
}

\bigskip

{\small \noindent
  Jacek Jakimiuk
  \\
  Instytut Matematyki,
  Uniwersytet Warszawski
  \\
  ul. Banacha 2, 02-097 Warszawa, Poland
  \\
  \texttt{j.jakimiuk@mimuw.edu.pl}
}

\bigskip

{\small \noindent
  Sławomir Kolasiński
  \\
  Instytut Matematyki,
  Uniwersytet Warszawski
  \\
  ul. Banacha 2, 02-097 Warszawa, Poland
  \\
  \texttt{s.kolasinski@mimuw.edu.pl}
}

\bigskip

{\small \noindent
  Maciej Leśniak
  \\
  \texttt{maciej.lesniak0@gmail.com}
}

\end{document}